\documentclass[12pt, a4paper]{amsart}
\usepackage{amscd,amsmath,amssymb,amsthm,amsfonts}
\usepackage[alphabetic]{amsrefs}
\usepackage{mathtools,mathrsfs,yfonts}
\usepackage[shortlabels]{enumitem}
\usepackage[all]{xy}

\usepackage{xcolor}
\usepackage[hypertexnames=true, citecolor = green, colorlinks = false, linkcolor = red, linktoc = none, pdffitwindow = false, urlbordercolor = white]{hyperref}%

\setlength{\oddsidemargin}{3cm}
\setlength{\textwidth}{15.7cm}
\setlength{\textheight}{53.2pc}
\setlength{\evensidemargin}{.2cm}
\setlength{\oddsidemargin}{.2cm}

\def\ker{\operatorname{ker}}

\def\id{\operatorname{id}}

\def\min{\operatorname{min}}

\def\id{\operatorname{id}}

\def\kms{\operatorname{KMS}}
\def\Irr{\operatorname{Irr}}

\def\C{\mathbb{C}}

\def\R{\mathbb{R}}
\def\N{\mathbb{N}}
\def\Z{\mathbb{Z}}


 \newcommand{\IZ}[0]{\mathbb{Z}}

\newcommand{\ia}[0]{\mathfrak{a}} \newcommand{\ib}[0]{\mathfrak{b}}
\newcommand{\ic}[0]{\mathfrak{c}}


 
 \renewcommand{\CD}[0]{\mathcal{D}}
 
 \newcommand{\CH}[0]{\mathcal{H}}
 
 \newcommand{\CL}[0]{\mathcal{L}}
 
\newcommand{\CO}[0]{\mathcal{O}} 
\newcommand{\CQ}[0]{\mathcal{Q}} 
 \newcommand{\CT}[0]{\mathcal{T}}


\newcommand{\FM}[0]{\mathfrak{M}} \newcommand{\FN}[0]{\mathfrak{N}}
 \newcommand{\FP}[0]{\mathfrak{P}}

\def\ssubset{\subset\hspace*{-1.7mm}\subset}


\newtheorem{thm}{Theorem}[section]
\newtheorem{corollary}[thm]{Corollary}
\newtheorem{lemma}[thm]{Lemma}

\newtheorem{proposition}[thm]{Proposition}

\theoremstyle{definition}
\newtheorem{definition}[thm]{Definition}
\newtheorem{notation}[thm]{Notation}

\theoremstyle{remark}
\newtheorem{remark}[thm]{Remark}

\newtheorem{example}[thm]{Example}

\newtheorem{question}[thm]{Question}

\numberwithin{equation}{section}

\begin{document}

\title[$C^*$-algebras of right LCM monoids and their equilibrium states]{$C^*$-algebras of right LCM monoids and \\ their equilibrium states}

\author[N.~Brownlowe]{Nathan Brownlowe}
\address{School of Mathematics and Statistics \\ University of Sydney\\ Australia}
\email{Nathan.Brownlowe@sydney.edu.au, Jacqui.Ramagge@sydney.edu.au}

\author[N.S.~Larsen]{Nadia S.~Larsen}
\address{Department of Mathematics \\ University of Oslo \\ P.O. Box 1053 \\ Blindern \\ NO-0316 Oslo \\ Norway}
\email{nadiasl@math.uio.no, nicolsta@math.uio.no}

\author[J.~Ramagge]{Jacqui Ramagge}

\author[N.~Stammeier]{Nicolai Stammeier}

\thanks{This research was supported by the Research Council of Norway (RCN FRIPRO 240362), the Australian Research Council (ARC DP170101821) and the Bergen Research Foundation through the project ``Pure Mathematics in Norway''.}

\begin{abstract}
We study the internal structure of $C^*$-algebras of right LCM monoids by means of isolating the core semigroup $C^*$-algebra as the coefficient algebra of a Fock-type module on which the full semigroup $C^*$-algebra admits a left action. If the semigroup has a generalised scale, we classify the KMS-states for the associated time evolution on the semigroup $C^*$-algebra, and provide sufficient conditions for uniqueness of the KMS$_\beta$-state at inverse temperature $\beta$ in a critical interval.
\end{abstract}

\date{\today}
\maketitle

\section{Introduction}
The role of the $C^*$-algebra $C^*(G)$ associated to a discrete group $G$ is ubiquitous and fundamental in operator algebras.  Likewise, there are indispensable examples of $C^*$-algebras naturally associated to semigroups through representations by isometries, such as the Toeplitz algebra $\CT$ generated by a single isometry. While these prominent examples are well documented, a systematic theory built around $C^*$-algebras $C^*(S)$ associated to broad and abstract classes of discrete monoids $S$ has only recently emerged, largely as a consequence of an accelerating supply of new classes of monoids with good properties, viewed from a $C^*$-algebraic perspective. The example-driven insights have often brought about understanding of general properties, and vice versa. The present paper aims to strengthen our understanding of the interplay between right LCM monoids and their $C^*$-algebras by drawing on a varied array of examples, including some that are unexplored in the literature.

Before we explain our aim in more detail, we review recent developments around semigroup $C^*$-algebras. We refer to \cite{CELY} for an excellent introduction, and we restrict ourselves to highlighting a non-exhaustive list of recent work in this area. As a word of warning on terminology,  operator algebraists tend to not make a distinction between semigroups and monoids and casually refer to semigroup $C^*$-algebra in all situations. We shall adopt this habit, but since we are aware of big structural differences between the unital and the non-unital case in semigroup theory, we will specify monoid in all cases where the semigroup has an identity.

It was Nica's work in \cite{Nic} that decisively put forward $C^*$-algebras associated to nonabelian semigroups as a useful and versatile construction in operator algebras. Later work by Laca--Raeburn \cites{LR1,LR2} and Cuntz \cite{CuntzQ} provided new thrust in the form of new classes of $C^*$-algebras with interesting generating families of isometries and tractable properties. A general construction of full and reduced $C^*$-algebras for left cancellative monoids was introduced by Li, \cite{Li1}, and soon fundamental questions about  the interplay between nuclearity of the $C^*$-algebra and amenability of the monoid or its left inverse hull were raised, see \cites{Li2, Nor1}.  Subsequent work on semigroup $C^*$-algebras that centered on computing K-theory revealed impressively rich connections to number theory, see \cites{Li3, KT2}, and geometric group theory, see \cite{ELR}.

While  monoids in so-called quasi-lattice ordered pairs introduced by Nica dominated much of the early endeavours, the larger class of right LCM (for Least Common Multiple) monoids has in the past years received growing attention: The perspective from full $C^*$-algebras of right LCM monoids in a Zappa-Sz\'{e}p products enabled the authors of \cite{BRRW} to unify different constructions of $C^*$-algebras from \cites{LR2,Nek2,LRRW}. The structure of full and reduced semigroup $C^*$-algebras and natural quotients for right LCM monoids were analysed further in \cites{BLS1,Star,BS1,BLS2,BOS1}. More recent work indicates that there are more directions to be explored in this context, see for example \cites{B-Li,LOS,ACRS}.

Supplementing the study of semigroup $C^*$-algebras from the point of view of amenability, nuclearity, and K-theory, there has been intense activity on describing KMS-states (in honour of Kubo and Martin--Schwinger) for certain natural flows on the $C^*$-algebras, see \cites{LR2,BaHLR, CaHR} as well as the very recent \cites{ABLS,BLRS,ALN}.

The philosophy of unifying different case studies of $C^*$-algebras in a broad framework is the leitmotiv in \cite{ABLS}, where a first step was taken towards uncovering  internal structure of semigroup $C^*$-algebras via classification of KMS-states for what could be deemed as intrinsic one-parameter groups on $C^*(S)$.  The main result of \cite{ABLS} provides general methods for analysing KMS-states for $C^*$-algebras associated to \emph{admissible} right LCM monoids. This covers new cases such as algebraic dynamical systems \cite{BLS2}, and also offers a unified perspective onto this problem for $C^*$-algebras associated to the affine semigroup over the natural numbers \cite{LR2},  algebraic number fields with trivial class number \cite{CDL}, integer dilation matrices \cite{LRR}, self-similar group actions \cite{LRRW}, and quasi-lattice ordered Baumslag--Solitar monoids \cite{CaHR}.

The present work was triggered as we became aware of Spielberg's work \cite{Spi1} on Baumslag--Solitar monoids that are not quasi-lattice ordered, but right LCM. This class went unnoticed until after a discussion with Jack Spielberg at a conference in Newcastle, Australia, where the authors of the present paper realised that it fell outside the scope of \cite{ABLS}. The current paper grew out of our attempts to gain insight into what happens for this special class of Baumslag--Solitar monoids. For reasons that will become clearer later, we refer to these as Baumslag--Solitar monoids with \emph{positivity breaking}.

In this work we develop the full potential of classifying KMS-states for $C^*$-algebras of right LCM monoids as a means to uncover structural properties of  $C^*(S)$. We employ new techniques, which will allow us to distil the admissibility assumption of \cite{ABLS} to its indispensable part, namely the existence of a generalised scale which gives rise to the intrinsic one-parameter group on $C^*(S)$. Roughly speaking, a generalised scale is a special monoidal homomorphism from $S$ to the monoid $\N^\times$ of positive integers (with multiplication). The right LCM monoids covered in \cite{ABLS}*{Definition~3.1} are characterised by four admissibility conditions: the first two guarantee nice factorisation  properties for the right LCM monoid $S$ in question. The latter two conditions characterise the existence of a generalised scale. All the examples for right LCM monoids in \cites{LR2,LRR,BRRW,CaHR} suggested that these admissibility assumptions were quite modest. However, as discovered in Newcastle, the Baumslag--Solitar monoids with {positivity breaking} fail one of the two basic factorisation properties in an extreme way, see Section~\ref{subsec:BS3} for details.

The  factorisation properties of an admissible right LCM monoid $S$ are formulated with respect to its core submonoid $S_c := \{ a \in S \mid aS\cap sS \neq \emptyset \text{ for all } s \in S\}$, first considered for right LCM monoids in \cite{Star} with roots in \cite{CrispLaca}, and a counterpart of core irreducible elements $S_{ci}$, see \cite{ABLS}*{Subsection~2.1}. Faced with the absence of these factorisation properties for Baumslag--Solitar monoids with positivity breaking, we revert to techniques which stand in stark contrast to the ones in \cite{ABLS}. Our current analysis relies  on the study of the quotient space $S/_\sim$, where $s\sim t$ if $sa=tb$ for some $a,b \in S_c$, and the action $\alpha\colon S_c \curvearrowright S/_\sim$ induced by left multiplication. With these ingredients at hand, we develop a framework which will encompass admissible right LCM monoids as well as the Baumslag--Solitar monoids with {positivity breaking}, but also new examples, especially from self-similar group actions from virtual endomorphisms that may not exhibit the finite-state property.

Our approach to classifying KMS-states depends crucially on a special representation of $C^*(S)$, arising from the construction of a Fock-type right-Hilbert $C^*(S_c)$-module $\FM$ that we call the \emph{core Fock module}. We believe that this module is of independent interest as an abstract construction for arbitrary right LCM monoids, or equivalently, $0$-bisimple inverse monoids. 
Given the success of the induction process for classifying KMS-states on Pimsner algebras for ordinary Hilbert bimodules from \cite{LaNe1}, it is not surprising that a Fock module would be useful. The striking feature of the core Fock module is that it exists for every right LCM monoid. Its predecessor was constructed in \cite{ABLS}*{Theorem~8.4} using the factorisation properties encoded in the admissibility of $S$, which we now prove to be superfluous as we provide a  substantially refined and streamlined general construction.

To give a clue about the challenge for the new construction of $\FM$,  let us point out that admissibility of $S$ provides minimal elements for the equivalence classes of $S/_\sim$, similar to the natural numbers. Now, the Baumslag--Solitar monoids with positivity breaking are not admissible and so for these we must consider totally ordered equivalence classes that do not have minimal elements, similar to the integers.
Thus, all the techniques in \cite{ABLS} that exploited the existence of minimal representatives must be substituted by solutions that would work with very little extra structure, namely, that equivalence classes are directed with respect to the partial order given by $s\geq_r t$ if $t \in sS_c$.

A further key asset of this work in comparison with \cite{ABLS} is the precision with which we identify when there is a unique KMS-state at inverse temperatures from the critical interval. Our main result in this respect, Theorem~\ref{thm:KMS results}, is a generalisation of \cite{ABLS}*{Theorem~4.3}. The theorem is an improvement in two ways: admissibility is replaced by the existence of a generalised scale, and the sufficiency criteria for uniqueness of the $\kms_\beta$-state for $\beta$ in the critical interval $[1,\beta_c]$ are weakened and unified at the same time. They appear to be very common and may in fact also be necessary conditions for uniqueness in many cases. 

The essential idea behind these uniqueness conditions is a detailed analysis of the internal structure of $C^*(S)$ by means of the core submonoid $S_c$. Roughly speaking, these conditions can be explained as follows: the generalised scale $N$ gives rise to a grading on $S/_\sim$ over the directed right LCM monoid $N(S)$. This provides a direction so that we can consider limits, and allows us to measure proportions of special equivalence classes among those that relate to a given $n\in N(S)$. For instance, we can look at fixed points for $a,b\in S_c$ under $\alpha\colon S_c \curvearrowright S/_\sim$ at level $n\in N(S)$, that is, $[s] \in S/_\sim$ with $s \in N^{-1}(n)$ and $asc= bsd$ for some $c,d \in S_c$. But it also makes sense to consider stronger fixed points, points that we call absorbing as they satisfy $asc=bsc$ for some $c \in S_c$. In other words, there is $s'\sim s$ with $as'=bs'$. The uniqueness criteria state, with varying particularities, that the proportion of non-absorbing fixed points at level $n\in N(S)$ tends to zero as $n\to\infty$.

In the case of faithful self-similar group actions, our condition can be given a measure-theoretic reformulation: The group acts on the space of infinite paths (starting from the root of the regular tree), which is equipped with the Borel probability measure determined by uniform distributions among the finite paths of equal length. Then the uniqueness condition for $\beta=1$ holds if and only if the boundary of the set of fixed points has measure zero, see Proposition~\ref{prop:almost surely regular}. The topological analogue, namely that the complement of this set is a dense $G_\delta$-set, is easily seen to be true for every faithful self-similar group action. But the measure theoretic question is much more subtle, and there may be room for rather special specimen, which would, if existent, surely be of interest on their own.

At $\beta=1$ we discover a  characterisation of the $\kms_1$-states  which could be summarised by saying that they correspond to the \emph{thermally inert traces}. Explicitly, we construct the $\kms_\beta$-states on $C^*(S)$ for $\beta>1$ via induction $\tau \mapsto \psi_{\beta,\tau}$ from normalised traces $\tau$ on $C^*(S_c)$ using the core Fock module. The $\kms_\beta$-state $\psi_{\beta,\tau}$ in turn restricts to a normalised trace on $C^*(S_c)$, once we precompose it with the $*$-homomorphism $\varphi\colon C^*(S_c)\to C^*(S)$ induced by the inclusion $S_c\subset S$. In this way, each inverse temperature $\beta>1$ comes with a self-map $\chi_\beta$ of the trace simplex on $C^*(S_c)$. The analogue of such a self-map in the setting of finite-state self-similar actions of groupoids on graphs was recently shown to exhibit intriguing dynamical features, see \cite{CSi}: the unique KMS-state is a universal attractor with regards to the dynamics defined by $\chi_\beta$. Here we take a different route, namely we show in Theorem~\ref{thm:restricting to tau} that the $\kms_1$-states correspond precisely to the common fixed points of $\{ \chi_\beta \mid \beta>1\}$. We further show that the simplex of common fixed points of $\{ \chi_\beta \mid \beta>\beta_0\}$ for every $\beta_0 \in [1,\infty)$ embeds into the simplex of $\kms_{\beta_0}$-states, which reveals a new continuity feature of the system $(C^*(S),\sigma)$.

We  begin this article with a small background section, followed by a presentation of the main results on classification of KMS-states: Theorem~\ref{thm:KMS results} and Theorem~\ref{thm:restricting to tau}. In Section~\ref{sec:ingredients} we start on the proof of the parametrisation part from Theorem~\ref{thm:KMS results}. In particular, we prove our promised existence of a core Fock module $\FM$ and an associated $*$-homomorphism from $C^*(S)$ into $\CL(\FM)$, see Theorem~\ref{thm:core Fock module}. Section \ref{sec:param of KMS-states} provides a parametrisation of KMS-states using $\FM$-induced representations on $C^*(S)$ from GNS representations of $C^*(S_c)$ coming from normalised traces. In this section we also prove Theorem~\ref{thm:restricting to tau}.
In Section~\ref{sec:uniqueness} we motivate our notions of core regular and summably core regular monoids, and prove that in different contexts they are sufficient conditions for achieving a unique $\kms_\beta$-state for $\beta$ in the critical interval.  Here we also complete the proof of Theorem~\ref{thm:KMS results}. Sections~\ref{subsec:BS3}--\ref{subsec:q-a but not almost free} are devoted to applications. In Section~\ref{subsec:BS3}, we illustrate the features around the phenomenon of positivity breaking in Baumslag--Solitar monoids $BS(c,d)^+$ with opposite signs for $c$ and $d$.

Conditions for  uniqueness of KMS$_\beta$-states in a critical interval bearing the flavour of ergodic theoretic considerations will be illustrated with examples arising from virtual endomorphism of the discrete Heisenberg group as in \cite{BK}, see Proposition~\ref{prop:almost surely regular} and Corollary~\ref{cor:applying BK}.

With the monoid of so-called shadowed natural numbers we provide the first examples of right LCM monoids whose $C^*$-algebras possess a unique $\kms_1$-state, despite the fact that the  action $\alpha:S_c\curvearrowright S/_\sim$ is not faithful, see Proposition~\ref{prop:unfaithful, but neg q-a}. In view of Proposition~\ref{prop:r canc: neg q-a forces alpha faithful}, uniqueness of the KMS$_1$-state is only possible due to failure of right cancellation. This degree of freedom was unavailable in other contexts, such as the finite, strongly-connected higher-rank graph case in \cite{aHLRS}, whose \emph{periodicity group} may be thought of as a measure for faithfulness of the action corresponding to $\alpha$.

Finally, we also provide a new context for characterising uniqueness of the KMS-states in the critical interval with two examples that fail almost freeness of the action $\alpha$, a property crucial in \cite{ABLS} as well as in several of the supporting case studies. In both examples the monoids are core regular and summably core regular, see Proposition~\ref{prop:ZxZtimes} and Proposition~\ref{prop:shift space with permutations}.

\vskip 0.2cm
\noindent{Acknowledgement:}
This research was initiated in the aftermath of the workshop ``Interactions between semigroups and operator algebras" July 24-27, 2017, at the University of Newcastle, Australia, supported by AMSI (Australian Mathematical Science Institute), and the last three authors thank the organisers, N. Brownlowe and D. Robertson, for the invitation. N.L. and N.S. also acknowledge the hospitality of the other two authors during a  visit to the University of Sydney. The authors are grateful to Volodia Nekrashevych for valuable discussions on the notion of $G$-regularity in the context of self-similar actions and virtual endomorphisms.

\section{Preliminaries}\label{sec:prelim}
\subsection{Right LCM monoids}

Let $S$ be a right LCM monoid, by which we mean a left cancellative semigroup $S$ with identity $1$ so that for every $s,t\in S$, we have $sS\cap tS=rS$ for some $r\in S$ whenever $sS\cap tS\neq\emptyset$. We shall adopt the notation $s\Cap t$ from \cite{Spi1} to indicate that $sS\cap tS\neq \emptyset$, and we write $s\perp t$ when $sS\cap tS=\emptyset$, where $s,t\in S$.

The \emph{core} of $S$ is the right LCM submonoid $S_c := \{a \in S\mid \forall s \in S: aS\Cap sS\}$, see \cite{Star}. Two elements $s,t \in S$ are said to be \emph{core equivalent}, denoted $s \sim t$, if there are $a,b \in S_c$ satisfying $sa=tb$. The class of $t\in S$  will be denoted $[t]$. We recall from \cite[Lemma 3.9]{ABLS} that left multiplication induces an action $\alpha$ of the semigroup $S_c$ by  bijections of $S/_\sim$, so $\alpha_a([t])=[at]$ for $a\in S_c$ and $t\in S$.

According to \cite{BRRW}, a finite subset $F$ of $S$, denoted $F\ssubset S$, is a foundation set if for every $s \in S$ there is $f \in F$ with $sS \Cap fS$. A foundation set $F$ is accurate, cf. \cite{BS1}, if $f\perp f'$ holds for all $f,f'\in F$ with $f\neq f'$.

We will be interested in right LCM monoids $S$ that admit a \emph{generalised scale}.

\begin{definition}\label{def:gen-scale}(\cite[Definition 3.1]{ABLS}) 
A generalised scale for a right LCM monoid $S$ is a nontrivial homomorphism of monoids $N\colon S \to \N^\times$ such that $\lvert N^{-1}(n)/_\sim\rvert = n$ for all $n \in N(S)$, and for each $n \in N(S)$, every transversal of $N^{-1}(n)/_\sim$ is an accurate foundation set for $S$.
  \end{definition}
  
  Although there is a wide range of examples of right LCM monoids which admit a generalised scale,  its existence  puts the focus on monoids with a particularly structured growth behaviour. This abstract assumption was given an explicit, combinatorial characterisation in \cite{Sta5}*{Theorem~3.11}.

In the sequel it will be useful to have a reference for the following simple observation.

\begin{lemma}\label{lem:consequence-ABLS-N-homomorphism}
Suppose that $S$ is a right LCM semigroup with a generalised scale $N$. If $s,t$ in $S$ satisfy $s\Cap t$ and $N_s\in N_tN(S)$, then $sS\cap tS=saS$ for some $a\in S_c$.
\end{lemma}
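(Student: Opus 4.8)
The plan is to reduce the statement to the claim that $[s]$ is a ``descendant'' of $[t]$, i.e.\ that $s \sim tv$ for some $v \in S$, and then to extract this from the foundation-set structure carried by the generalised scale. Along the way I would use two easy facts: (a) $S_c = N^{-1}(1)$ (since $N(1) = 1$ and $\lvert N^{-1}(1)/_\sim\rvert = 1$ force $N^{-1}(1) = [1]$, while $[1] = S_c$ because $a \sim 1$ means $ab \in S_c$ for some $b \in S_c$, which by $abS \subseteq aS$ forces $a \in S_c$); and (b) for $h \in S$ and $\mu \in S_c$ one has $hS \cap \mu S = haS$ with $a \in S_c$ (writing $hS \cap \mu S = haS$, one checks that $aS \Cap gS \Leftrightarrow hgS \cap \mu S \ne \emptyset \Leftrightarrow hg \Cap \mu$ for every $g$, using left cancellativity and $hgS \subseteq hS$, and $hg \Cap \mu$ holds as $\mu \in S_c$).

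Since $s \Cap t$, write $sS \cap tS = rS$ with $r = sx$, $x \in S$; as $N_r = N_s N_x$, the lemma is equivalent to $N_x = 1$, that is $x \in S_c$. If we knew $s \sim tv$ for some $v \in S$, say $s\mu = tv\nu$ with $\mu, \nu \in S_c$, then $s\mu = t(v\nu) \in sS \cap tS = sxS$, so $\mu \in xS$, whence $N_x \mid N_\mu = 1$ and $x \in S_c$. So it suffices to produce $v$.

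To produce $v$, put $m := N_t$ and $k := N_s/N_t$, which lies in $N(S)$ by hypothesis, so $N_s = mk$. Fix a transversal $T$ of $N^{-1}(m)/_\sim$ and, for each $f \in T$, a transversal $T_f$ of $N^{-1}(k)/_\sim$; by Definition~\ref{def:gen-scale} these are accurate foundation sets, of sizes $m$ and $k$. A routine check with left cancellativity shows that $E := \{fg : f \in T,\ g \in T_f\}$ is again an accurate foundation set, consisting of $mk$ pairwise orthogonal (hence pairwise non-$\sim$) elements of $N^{-1}(mk)$; since $\lvert N^{-1}(mk)/_\sim\rvert = mk$, this makes $E$ a transversal of $N^{-1}(mk)/_\sim$. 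As $N_s = mk$, there are $f_0 \in T$, $g_0 \in T_{f_0}$ with $s \sim f_0 g_0$. Using $s \Cap t$ and orthogonality of distinct elements of $T$, one shows the unique $f \in T$ with $t \sim f$ is $f_0$: from $t \sim f$ and $s \Cap t$ one gets $s \Cap f$ (push a core element of $t \sim f$ through $s \Cap t$), while $s \sim f_0 g_0 \in f_0 S$ gives $s\mu_0 = f_0(g_0\mu_0')$ with $\mu_0 \in S_c$; forming the common right multiple of $s\mu_0$ with the $sS$-part of $s \Cap f$ and using $f_0 \perp f$ when $f \ne f_0$ yields a contradiction. Finally, from $t \sim f_0$ and fact (b) one transports $g_0$ across the relation $t \sim f_0$ to get $f_0 g_0 \sim tv$ for some $v \in S$, hence $s \sim tv$.

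The main obstacle is exactly this production of $v$: the structural fact that $s \Cap t$ together with $N_t \mid N_s$ forces $[s]$ to be a descendant of $[t]$. Bookkeeping with $N$ alone cannot do it, since core equivalence never lowers the value of $N$; one genuinely needs the generalised scale, in particular the delicate points that a product of transversals is again a transversal and that accuracy of $T$ prevents $s$ from meeting two distinct blocks of $T$ (which is what pins down $f_0$). If one could instead establish the stronger $s \in tS$ here the lemma would follow with $a = 1$; the descendant formulation above is the robust route to the stated claim.
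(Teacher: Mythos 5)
Your proof is correct in its essentials, but it takes a genuinely different and much longer route than the paper's. The paper disposes of the lemma in three lines by citing two facts from \cite{ABLS}*{Proposition~3.6}: part (i), $S_c = N^{-1}(1)$, and part (iv), that $N$ carries the right LCM of $s$ and $t$ to the right LCM of $N_s$ and $N_t$ in $N(S)$; with $N_s \in N_tN(S)$ the latter forces $N_{sa}=N_s$, hence $N_a=1$ and $a \in S_c$. You instead re-derive the needed structure directly from Definition~\ref{def:gen-scale} via transversals: the product-of-transversals construction and the pinning down of $f_0$ amount to reproving (a relative version of) \cite{ABLS}*{Proposition~3.6(ii)} --- that $s \Cap t$ together with $N_t \mid N_s$ forces $[s]$ to lie over $[t]$ --- which is more work but has the virtue of being self-contained. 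Your fact (b), the reduction to producing $v$ with $s\sim tv$, the verification that $E$ is an accurate transversal of $N^{-1}(mk)/_\sim$, the identification $f=f_0$, and the transport of $g_0$ across $t\sim f_0$ all check out. The one genuine (though small) gap is in your fact (a): the argument given only yields $N^{-1}(1) \subseteq [1] = S_c$, whereas the reverse inclusion $S_c \subseteq N^{-1}(1)$ --- equivalently $N_\mu=1$ for $\mu \in S_c$, which you use at the crucial step ``$N_x \mid N_\mu = 1$'' --- is asserted but not proved. It does follow from the definition: if some $a \in S_c$ had $N_a = n \geq 2$, then $a$ would be core equivalent to one member of an accurate transversal of $N^{-1}(n)/_\sim$ while also admitting a common right multiple with a second, orthogonal member (since $a\in S_c$), contradicting accuracy. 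This is exactly \cite{ABLS}*{Proposition~3.6(i)}, which the paper simply cites; with that supplied, your argument is complete.
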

\begin{proof}
Assume $s,t\in S$ satisfy $sS\cap tS=ss'S$ with $ss'=tt'$ for some $s',t'\in S$. By \cite{ABLS}*{Proposition~3.6(iv)}, $N_{ss'}$ is the right LCM of $N_s$ and $N_t$ in $N(S)$. So if $N_s \in N_tN(S)$, then we must have $N_{ss'}=N_{s}$, that is, $N_{s'} =1$. Thus $s'\in S_c$ by \cite{ABLS}*{Proposition~3.6(i)}.
\end{proof}

Given $S$ and $N$ we define a partition  function to be $\zeta(\beta) := \sum_{[s] \in S/_\sim} N_s^{-\beta}$ with $\beta \in \R$. The \emph{critical inverse temperature} $\beta_c$ is the smallest $\beta_0 \in \R\cup\{\infty\}$ such that $\zeta(\beta)$ converges for all $\beta > \beta_0$; thus $\beta_c=\infty$ refers to divergence of $\zeta$ on the real line.

Recall from \cite{Sta5}*{Proposition~2.1} that every generalised scale $N$ satisfies condition (A4) from \cite{ABLS}*{Definition~3.1}, that is, $N(S)$ is freely generated by its irreducible elements. For $I \subset \text{Irr}(N(S))$, we let $S_I := N^{-1}(\langle I \rangle)\subset S$ denote the submonoid of $S$ corresponding to $I$, and remark that $S_I$ is also a right LCM semigroup due to \cite{ABLS}*{Proposition~3.6(iv)}.  The $I$-restricted $\zeta$-function as introduced in  \cite{ABLS}*{Definition~4.2} is given by
\begin{equation}\label{eq:restricted-zetaI}
\begin{array}{c}
\zeta_I(\beta) = \sum\limits_{[t] \in S_I/_\sim} N_t^{-\beta} = \sum\limits_{n \in \langle I \rangle}\sum\limits_{[t] :  N_t=n} N_t^{-\beta} = \sum\limits_{n \in \langle I \rangle} n^{1-\beta},
\end{array}
\end{equation}
and admits a product formula $\zeta_I(\beta) = \prod_{p \in I} (1-p^{1-\beta})^{-1}$ by
\cite{ABLS}*{Remark~7.4}. For a finite subset $F$ of $I$, denoted $F\ssubset I$, we set $m_F := \prod_{n \in F} n$ and remark that, due to (A4), this is the right LCM of $F$. The finite subsets of $I$ form a directed set with respect to inclusion, that is, $F\leq F' \Leftrightarrow F\subset F'$.

\subsection{\texorpdfstring{$C^*$-algebras of right LCM monoids}{C*-algebras of right LCM monoids}}

A general construction of \cite{Li1} associates to any left cancellative monoid $S$ a full semigroup $C^*$-algebra $C^*(S)$. In the present work we concentrate on right LCM monoids, in which case the presentation of $C^*(S)$ is somewhat simpler. The $C^*$-algebra $C^*(S)$ is the universal $C^*$-algebra generated by isometries $v_s,s\in S$ subject to the relations $v_sv_t=v_{st}$ and
\[v_s^{\phantom{*}}v_s^*v_t^{\phantom{*}}v_t^* = \begin{cases} v_r^{\phantom{*}}v_r^* &\text{ if } sS\cap tS= rS;\\
0 &\text{ if } s\perp t,
\end{cases}\] for $s,t\in S.$
It follows that $C^*(S)$ admits a dense spanning set $v_sv_t^*$ with $s,t\in S$, see \cites{Nor1,BLS2}. We let $e_{sS}$ denote the projection $v_sv_s^*$ in $C^*(S)$, for each $s\in S$.

The algebra $C^*(S)$ has a natural quotient, the \emph{boundary quotient} $\CQ(S)$ constructed in \cite{BRRW}: Let $\pi$ be the quotient map $C^*(S) \to \CQ(S)$ obtained by adding the relation $\prod_{s \in F} (1-v_s^{\phantom{*}}v_s^*)=0$ for every foundation set $F\ssubset S$ to the defining relations of $C^*(S)$.

We denote by $w_a, a \in S_c$ the standard generating isometries of $C^*(S_c)$, and by $\varphi\colon C^*(S_c) \to C^*(S)$ the $*$-homomorphism induced by the inclusion $S_c\subset S$. We are grateful to Sergey Neshveyev for the observation that groupoid techniques show the following result, cf. \cite{NS1}, which allows us to improve and simplify our construction of the core Fock module in subsection~\ref{subsec:construction}.

There exists a conditional expectation $E\colon C^*(S) \to C^*(S_c)$ given by
\begin{equation}\label{eq:faithful varphi}
v_s^{\phantom{*}}v_t^* \mapsto \begin{cases} \varphi^{-1}(v_s^{\phantom{*}}v_t^*)& \text{if }s,t\in S_c;\\
0&\text{otherwise}.\end{cases}
\end{equation} 
In particular, the $*$-homomorphism $\varphi\colon C^*(S_c) \to C^*(S)$ arising from the inclusion $S_c \subset S$ is faithful since $E\circ\varphi=\id$.

Let us record the following observation on the existence of normalised traces:

\begin{lemma}\label{lem:trace on C*(S)}
For a right LCM monoid $T$, the $C^*$-algebra $C^*(T)$ admits a normalised trace if and only if $T$ is left reversible, that is, $s\Cap t$ holds for all $s,t \in T$.
\end{lemma}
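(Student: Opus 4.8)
The plan is to prove both implications. For the forward direction, suppose $\tau$ is a normalised trace on $C^*(T)$ and assume for contradiction that there exist $s,t\in T$ with $s\perp t$. Then in $C^*(T)$ we have the defining relation $v_s^{\phantom{*}}v_s^*v_t^{\phantom{*}}v_t^*=0$, which says $e_{sT}e_{tT}=0$. Using the trace property together with the fact that each $v_s$ is an isometry (so $v_s^*v_s=1$ and hence $\tau(e_{sT})=\tau(v_sv_s^*)=\tau(v_s^*v_s)=\tau(1)=1$), I would derive a contradiction: from $e_{sT}e_{tT}=0$ one gets $e_{sT}\leq 1-e_{tT}$, so $1=\tau(e_{sT})\leq\tau(1-e_{tT})=1-\tau(e_{tT})=0$. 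This forces $s\Cap t$ for all $s,t$, i.e. $T$ is left reversible.

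For the converse, assume $T$ is left reversible. The natural candidate is to produce a trace by composing a suitable state with a conditional-expectation-type map onto the coefficients of the canonical spanning set. Concretely, I would try to define $\tau$ on the dense spanning set by $\tau(v_sv_t^*) = 1$ if $s=t$ and $0$ otherwise, i.e. $\tau = \omega_1\circ\Phi$ where $\Phi$ collapses $v_sv_t^*$ to $v_{s^{-1}s}$-type diagonal terms. The cleaner route is to invoke the universal property: left reversibility of $T$ means any two elements have a common right multiple, so $T$ embeds in its group of left fractions $G=T^{-1}T$ (by the Ore condition, since $T$ is left cancellative and left reversible). The embedding $T\hookrightarrow G$, $t\mapsto t$, sends $v_t$ to the unitary $u_t\in C^*(G)$ and respects the right LCM relations because in $G$ we have $sG\cap tG = rG$ with $r$ a common right multiple, so $u_su_s^*u_tu_t^* = 1 = u_ru_r^*$ while the $s\perp t$ case never occurs. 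This gives a unital $*$-homomorphism $\rho\colon C^*(T)\to C^*(G)$, and then $\tau := \tau_G\circ\rho$, where $\tau_G$ is the canonical trace on $C^*(G)$ (the one with $\tau_G(u_g)=\delta_{g,e}$), is a normalised trace on $C^*(T)$.

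The main obstacle I anticipate is verifying that the map $T\to C^*(G)$, $v_t\mapsto u_t$, genuinely respects the presenting relations of $C^*(T)$ — in particular one must check that whenever $sT\cap tT=rT$ in $T$, the corresponding identity $u_su_s^*u_tu_t^*=u_ru_r^*$ holds in $C^*(G)$; since all $u_g$ are unitaries this reduces to $u_su_s^* = 1$ etc., so the relation becomes trivial, but care is needed to confirm that the $\perp$ case in the presentation of $C^*(T)$ is vacuous under left reversibility, which is exactly the hypothesis. A secondary point is confirming $\rho$ is unital and hence $\tau(1)=1$; this is immediate since $v_1\mapsto u_1 = 1$. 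I would also remark that the Ore embedding step can be bypassed entirely by defining $\tau_G$-type data directly, but routing through $C^*(G)$ keeps the traciality verification essentially free, since $\tau_G$ is manifestly a trace on the group $C^*$-algebra. One should double-check that no amenability hypothesis on $G$ is needed: we only require \emph{a} trace, and the canonical trace exists on the full group $C^*$-algebra of any discrete group.
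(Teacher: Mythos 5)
Your forward direction is correct and is essentially the paper's own argument: when $s\perp t$ the projections $e_{sT}$ and $e_{tT}$ are orthogonal, while traciality and $v_s^*v_s^{\phantom{*}}=1$ give $\tau(e_{sT})=\tau(e_{tT})=1$, forcing $2\leq 1$.

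The converse contains a genuine gap. You invoke Ore's theorem to embed $T$ into a group of fractions, but Ore's theorem requires cancellativity on \emph{both} sides, whereas a right LCM monoid is only assumed left cancellative. Left reversibility does not repair this: the paper's left-absorbing monoid $L_1=\langle \ia,\ib\mid \ia\ib=\ib^2\rangle^+$ is right LCM and left reversible (it coincides with its core), yet $\ia\ib=\ib\ib$ with $\ia\neq\ib$ shows it is not right cancellative, so it embeds in no group at all. Hence the group $G=T^{-1}T$ on which your construction rests need not exist. (A side remark: common \emph{right} multiples would produce right fractions $TT^{-1}$, not left fractions, even when Ore does apply.)

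The good news is that your verification of the relations never actually uses injectivity of $T\to G$, and that is exactly how the argument is repaired: all one needs is a unital monoid homomorphism $\rho\colon T\to G$ into \emph{some} group. Then $v_t\mapsto u_{\rho(t)}$ respects the presentation of $C^*(T)$ because the right LCM relation collapses to $1=1$ and the orthogonality relation is vacuous by left reversibility, so $\tau_G\circ\rho_*$ is a normalised trace; even the trivial group works here, yielding the character $v_t\mapsto 1$, which is already a normalised trace. The paper realises this scheme by taking $G$ to be the maximal group homomorphic image $G(T)$ of the left inverse hull $I_\ell(T)$, using that left reversibility is precisely what removes the zero element from $I_\ell(T)$ and makes $C^*(T)\cong C^*(I_\ell(T))$ map onto $C^*(G(T))$.
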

\begin{proof} Let $T$ be a right LCM monoid. Suppose first that there exist $s,t \in T$ with $s\perp t$. If $\tau$ were a normalised trace on $C^*(T)$, we would get
\[1 = \tau(1) \geq \tau(v_s^{\phantom{*}}v_s^*+v_t^{\phantom{*}}v_t^*) = \tau(v_s^{\phantom{*}}v_s^*)+ \tau(v_t^{\phantom{*}}v_t^*) = 2\tau(1)=2,\]
a contradiction.

Now suppose that $T$ is left reversible. Let $I_\ell(T)$ be the ($0$-bisimple) inverse monoid known as the left inverse hull of $T$, generated by the left translations $\lambda_t\colon T\to T, r\mapsto tr$ for $t\in T$. As observed in \cite[Lemma 3.29]{Nor1}, the inverse monoid $I_\ell(T)$ is without $0$ element. There is a maximal group homomorphic image of $I_\ell(T)$, denoted $G(T)$ in \cite[\S 3.4]{Nor1}, accompanied by a  quotient homomorphism $\alpha_T:I_\ell(T)\to G(T)$. At the level of $C^*$-algebras, there is a surjective $*$-homomorphism $\pi_T:C^*(I_\ell(T))\to C^*(G(T))$. Since $T$ is right LCM, then by following the proof of \cite[Proposition 3.27]{Nor1} we obtain a canonical isomorphism $\eta:C^*(T)\to C^*(I_\ell(T))$, $\eta(v_t)=\lambda_t$ for $t\in T$. We remark that  \cite[Proposition 3.27]{Nor1} is stated for an arbitrary right LCM monoid, and then $\eta$ maps onto $C_0^*(I_\ell(T))$, which is the quotient of $C^*(I_\ell(T))$ by the ideal corresponding to $0$ in $I_\ell(T)$. However, as we already pointed out, when $T$ is moreover left reversible then $0\notin I_\ell(T)$, and the map $\eta$ can be defined with image in $C^*(I_\ell(T))$.

By composing any normalised trace on $C^*(G(T))$ first with the $*$-homomorphism $\pi_T$, then the isomorphism $\eta$, we obtain a normalised trace on $C^*(T)$, as desired.
\end{proof}

\begin{proposition}\label{prop:trace on C*(S_c)}
The core $S_c$ is the largest submonoid $T$ of a right LCM monoid $S$ such that $C^*(T)$ admits a normalised trace. Moreover, $\tau_0$ given by $\tau_0(v_a^{\phantom{*}}v_b^*) = \delta_{a,b}$ for $a,b \in S_c$  defines a normalised trace on $C^*(S_c)$.
\end{proposition}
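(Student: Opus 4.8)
The plan is to deduce both halves of the statement from Lemma~\ref{lem:trace on C*(S)}, whose criterion — left reversibility — I would check for $S_c$ and rule out for any strictly larger submonoid, and then to realise the explicit trace.

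First I would verify that $S_c$ is left reversible. Let $a,b\in S_c$; since $b\in S_c$ we have $a\Cap b$, so $aS\cap bS=cS$ with $c=aa'=bb'$ for some $a',b'\in S$. A short chase shows $c\in S_c$: for $s\in S$ choose $d\in aS\cap sS$ (possible since $a\in S_c$); then $b\Cap d$, and any $e\in bS\cap dS$ lies both in $aS\cap bS=cS$ and in $dS\subseteq sS$, so $cS\Cap sS$. Similarly $a'\in S_c$: given $s\in S$, $c\in S_c$ yields $cS\cap asS\neq\emptyset$, while $cS\cap asS=aa'S\cap asS=a(a'S\cap sS)$ by left cancellativity, so $a'\Cap s$; symmetrically $b'\in S_c$. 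Hence $c\in aS_c\cap bS_c$, so $S_c$ is left reversible, and Lemma~\ref{lem:trace on C*(S)} gives a normalised trace on $C^*(S_c)$.

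For the maximality, suppose $T\subseteq S$ is a submonoid with $C^*(T)$ admitting a normalised trace; by Lemma~\ref{lem:trace on C*(S)}, $T$ is left reversible, and I would finish by showing that a left reversible submonoid of a right LCM monoid lies inside $S_c$ — concretely, for $t\in T$ and an arbitrary $s\in S$, producing a common right multiple of $t$ and $s$ from the internal left reversibility of $T$, the right LCM structure of $S$, and the defining property of the core. For the explicit trace, I would argue as follows. Since $S_c$ is left reversible, $0\notin I_\ell(S_c)$, and sending $v_a$ to the canonical unitary $\lambda_{\overline{a}}$ of the image $\overline{a}$ of $a$ in $G(S_c)$ respects the defining relations of $C^*(S_c)$ — the relation ``$v_a^{\phantom{*}}v_a^*v_b^{\phantom{*}}v_b^*=0$ for $a\perp b$'' is vacuous here because $a\Cap b$ for all $a,b\in S_c$ — hence extends to a representation of $C^*(S_c)$ on $\ell^2(G(S_c))$. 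The vector state at $\delta_e$ is then a normalised trace (it pulls back the canonical trace of $G(S_c)$ along the surjection $\pi_{S_c}\circ\eta$ from the proof of Lemma~\ref{lem:trace on C*(S)}), and on generators it is $v_a^{\phantom{*}}v_b^*\mapsto\delta_{\overline{a},\overline{b}}$. Matching this with the asserted $\tau_0(v_a^{\phantom{*}}v_b^*)=\delta_{a,b}$ needs $a\mapsto\overline{a}$ to be injective on $S_c$; a short computation in $I_\ell(S_c)$ (using that the idempotents are the identity maps on principal right ideals $rS_c$) identifies this with right cancellativity of $S_c$, since $\overline{a}=\overline{b}$ precisely when $ay=by$ for some $y\in S_c$. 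Normalisation is clear from $\tau_0(1)=\tau_0(v_1^{\phantom{*}}v_1^*)=1$.

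The main obstacles, as I see it, are two. The combinatorial heart of the maximality — that left reversibility of a submonoid already forces containment in $S_c$ — is where the exact definition of the core must be put to work. And the literal formula for $\tau_0$ hinges on the injectivity of $a\mapsto\overline{a}$, i.e. on right cancellativity of $S_c$; if cores need not be right cancellative one instead has to check directly, against the product rule $v_a^{\phantom{*}}v_b^*\,v_c^{\phantom{*}}v_d^*=v_{ap}^{\phantom{*}}v_{dq}^*$ for $bS_c\cap cS_c=bpS_c=cqS_c$, that $v_a^{\phantom{*}}v_b^*\mapsto\delta_{a,b}$ is well defined and tracial.
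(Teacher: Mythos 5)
Your verification that $S_c$ is left reversible is correct and usefully more detailed than the paper, which at this point simply invokes Lemma~\ref{lem:trace on C*(S)}; likewise your construction of the trace by pulling back the canonical trace of $C^*(G(S_c))$ along $\pi_{S_c}\circ\eta$ is exactly the route the paper's one-line proof intends. Your closing worry about the literal formula is also well founded rather than a technicality: the pulled-back trace is $v_a^{\phantom{*}}v_b^*\mapsto\delta_{\overline{a},\overline{b}}$, which agrees with the stated $\delta_{a,b}$ only when $a\mapsto\overline{a}$ is injective, i.e.\ when no relation $ay=by$ with $a\neq b$ holds in $S_c$. This is not automatic: in the paper's own Section~\ref{sec:unique despite unfaithful alpha} the core is $L_1=\langle \ia,\ib\mid\ia\ib=\ib^2\rangle^+$, where $\ia L_1\cap\ib L_1=\ib^2L_1$ with $\ia\ib=\ib\ib$ forces $v_{\ib}^*v_{\ia}^{\phantom{*}}=v_{\ib}^{\phantom{*}}v_{\ib}^*$ in $C^*(L_1)$, so every normalised trace sends $v_{\ia}^{\phantom{*}}v_{\ib}^*$ to $1$, not to $\delta_{\ia,\ib}=0$. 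So the formula must in general be read as $\delta_{\overline{a},\overline{b}}$ (equivalently, $1$ precisely when $ay=by$ for some $y\in S_c$), and your instinct to check this rather than assert it is the right one; you leave that check unperformed, and as stated it cannot succeed.

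The other genuine gap is the maximality claim, which you explicitly defer: you propose to finish by showing that every left reversible submonoid of a right LCM monoid lies inside $S_c$. No argument can deliver this, because the implication is false. In the free monoid $S=\{x,y\}^*$ one has $S_c=\{1\}$, yet $T=\langle x\rangle\cong\N$ is a left reversible submonoid and $C^*(\N)$ (the Toeplitz algebra) admits a normalised trace, so $T$ is a submonoid with a trace that is not contained in the core. Thus the step you call the ``combinatorial heart'' is not merely missing from your write-up --- the route through Lemma~\ref{lem:trace on C*(S)} alone cannot close it, and the first sentence of the proposition requires a more restrictive reading (or an additional hypothesis on $T$) to be true. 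For what it is worth, the paper's own proof is equally silent on this point: it establishes only that $C^*(S_c)$ carries a normalised trace induced from $C^*(G(S_c))$, which is the part of your proposal that does go through.
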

\begin{proof}
This follows immediately from Lemma~\ref{lem:trace on C*(S)} and its proof as the group $C^*$-algebra $C^*(G(S_c))$ admits the trivial trace which induces $\tau_0$ through $\pi_{S_c}$.
\end{proof}

\subsection{\texorpdfstring{KMS-states on $C^*$-algebras of right LCM monoids}{KMS-states on C*-algebras of right LCM monoids}} We briefly review the notions of KMS- and ground states for a given $C^*$-dynamical system $(A,\sigma)$ consisting of a $C^*$-algebra $A$ and a time evolution $\sigma:\R\curvearrowright A$, see \cite{BRII} for a standard introduction. An element $x \in A$ is  \emph{analytic} for $\sigma$ if the map $u \mapsto \sigma_u(x)$ with $u\in \R$ has a (unique) extension to an  analytic function $z \mapsto \sigma_z(x)$ from $\C$ into $A$. For $\beta>0$, a state $\phi$ of $A$ is a $\sigma$-KMS$_\beta$-state, or simply a KMS$_\beta$-state, if it satisfies the KMS$_\beta$ condition
\begin{equation}
\phi(xy) = \phi(y\sigma_{i\beta}(x))
\end{equation}
for all analytic $x,y \in A$.  To prove that a given $\sigma$-invariant state of $A$ is a KMS$_\beta$-state, it suffices to check the KMS$_\beta$-condition for all $x$ in a set of analytic elements that generate $A$ as a $C^*$-algebra and all $y$ in a  dense spanning subspace of $A$, see \cite[Lemma 1.9]{ALN}.

A \emph{KMS$_\infty$-state} of $A$ is a weak$^*$ limit of KMS$_{\beta_n}$-states as $\beta_n\to \infty$, see \cite{con-mar}. A state $\phi$ of $A$ is a \emph{ground state} of $A$ if $z\mapsto \phi(x\sigma_z(y))$ is bounded on the upper-half plane for all analytic $x,y\in A$.

Given a right LCM monoid $S$ with a generalised scale $N$, there is a time evolution $\sigma\colon \R \curvearrowright C^*(S)$ determined by $\sigma_u(v_s) := N_s^{iu} v_s$ for $s \in S, u \in \R$. Clearly, all isometries $v_s$, $s\in S$, are analytic.

Whenever $\phi$ is a $\kms_\beta$-state for $(C^*(S),\sigma)$, we let $(\pi_\phi,H_\phi, \xi_\phi)$ be the associated GNS representation and denote by $\tilde{\phi}$ its vector state extension to $\CL(\CH_\phi)$. Note that $\tilde{\phi}$ is a KMS$_\beta$-state on $\pi_\phi(C^*(S))$. For $I\subset \text{Irr}(N(S))$, we define a projection in $\pi_\phi(C^*(S))''$  by
\begin{equation}\label{eq:QI}
\begin{array}{c} Q_I:=\prod\limits_{n \in I}\bigl(1-\sum\limits_{[t] \in N^{-1}(n)/_\sim} \pi_\phi(e_{tS})\bigr). \end{array}
\end{equation}
These are well-defined by the following lemma, and will play a key role for the reconstruction formula for KMS-states established in Subsection~\ref{subsec:rec formula}.

\begin{lemma}\label{lem:core elts are unitaries under kms-states}
Let $S$ be a right LCM semigroup and suppose $\sigma$ is a time evolution on $C^*(S)$ such that $v_a$ is an analytic isometry in $C^*(S)$ with $\phi(v_a^{\phantom{*}}v_a^*)=1$ for all $a\in S_c$. If $\phi$ is a $\sigma$-$\kms_\beta$-state  for some $\beta\in \R$, then for all analytic elements $x,y \in C^*(S)$  and $a \in S_c$ we have $\phi(xe_{aS}y) = \phi(xy)$. In particular, we have $\pi_\phi(e_{sS}) = \pi_\phi(e_{tS})$  whenever $s \sim t$.
\end{lemma}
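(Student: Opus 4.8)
The plan is to use the KMS$_\beta$-condition directly to show that inserting a projection $e_{aS}$ for $a\in S_c$ between two analytic elements has no effect on the value of $\phi$, and then deduce the equality of the $\pi_\phi(e_{sS})$ for $s\sim t$ as a corollary. First I would establish the key identity $\phi(xe_{aS}y)=\phi(xy)$ for analytic $x,y$ and $a\in S_c$. The hypothesis gives $\phi(v_a^{\phantom{*}}v_a^*)=1$, so $1-e_{aS}$ is a projection of $\phi$-measure zero, hence (by Cauchy--Schwarz applied in the GNS space, or directly) $\pi_\phi(1-e_{aS})\xi_\phi=0$, which already shows $\pi_\phi(e_{aS})\xi_\phi=\xi_\phi$ and symmetrically $\xi_\phi^*\pi_\phi(e_{aS})=\xi_\phi^*$ in the appropriate sense. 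However, to get the two-sided insertion formula I would rather run the computation through the KMS-condition so the analytic shift $\sigma_{i\beta}$ is handled cleanly.

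The computational core is as follows. Since $v_a$ is analytic and $e_{aS}=v_a^{\phantom{*}}v_a^*$, for analytic $x,y$ we write $\phi(xe_{aS}y)=\phi(xv_a^{\phantom{*}}v_a^*y)$. Apply the KMS$_\beta$-condition with the analytic element $v_a^*y$ on one side: $\phi\bigl((xv_a)(v_a^*y)\bigr)=\phi\bigl((v_a^*y)\,\sigma_{i\beta}(xv_a)\bigr)=\phi\bigl(v_a^*y\,\sigma_{i\beta}(x)\sigma_{i\beta}(v_a)\bigr)$. Now $\sigma_{i\beta}(v_a)=N_a^{-\beta}v_a$ is a scalar multiple of $v_a$, so this equals $N_a^{-\beta}\phi\bigl(v_a^*y\,\sigma_{i\beta}(x)v_a\bigr)$. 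Apply the KMS$_\beta$-condition again, this time moving $v_a^*$ from the front to the back: $\phi\bigl(v_a^*\,(y\sigma_{i\beta}(x)v_a)\bigr)=\phi\bigl(y\sigma_{i\beta}(x)v_a\,\sigma_{i\beta}(v_a^*)\bigr)$, and $\sigma_{i\beta}(v_a^*)=N_a^{\beta}v_a^*$, so altogether the two factors $N_a^{-\beta}$ and $N_a^{\beta}$ cancel and we obtain $\phi(xe_{aS}y)=\phi\bigl(y\,\sigma_{i\beta}(x)\,v_a^{\phantom{*}}v_a^*\bigr)=\phi\bigl(y\,\sigma_{i\beta}(x)\,e_{aS}\bigr)$. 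Using $v_a^*v_a^{\phantom{*}}=1$ to collapse $v_a^{\phantom{*}}v_a^*v_a^{\phantom{*}}=v_a^{\phantom{*}}$ at an earlier stage, or reapplying the KMS-condition once more to bring $y\sigma_{i\beta}(x)$ back to the front, yields $\phi(xe_{aS}y)=\phi(y\sigma_{i\beta}(x))=\phi(xy)$, the last equality being the KMS$_\beta$-condition itself. (One must be slightly careful that $v_a^*y$ and $y\sigma_{i\beta}(x)v_a$ etc.\ are analytic, which is immediate since products of analytic elements are analytic and $v_a,v_a^*$ are analytic.)

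For the final assertion, suppose $s\sim t$, so there are $a,b\in S_c$ with $sa=tb$. Then $e_{saS}=v_{sa}^{\phantom{*}}v_{sa}^*=v_s^{\phantom{*}}e_{aS}v_s^*$ and likewise $e_{tbS}=v_t^{\phantom{*}}e_{bS}v_t^*$, and since $sa=tb$ these are equal: $v_s^{\phantom{*}}e_{aS}v_s^*=v_t^{\phantom{*}}e_{bS}v_t^*$. I would now test equality of the two positive operators $\pi_\phi(e_{sS})$ and $\pi_\phi(e_{tS})$ on the GNS space. Using the insertion formula with $x=v_s v_s^*$, $y=v_t v_t^*$ (both analytic, being polynomials in the $v$'s): $\phi(e_{sS}e_{tS})=\phi(v_s^{\phantom{*}}v_s^*v_t^{\phantom{*}}v_t^*)$; inserting $e_{aS}$ and $e_{bS}$ appropriately via the formula and using $v_s^{\phantom{*}}e_{aS}v_s^*=v_t^{\phantom{*}}e_{bS}v_t^*$, one computes $\phi(e_{sS}e_{tS})=\phi(e_{sS})=\phi(e_{tS})$; combined with $\phi(e_{sS})=\phi(e_{sS}e_{sS})$ and the parallelogram/Cauchy--Schwarz argument $\|\pi_\phi(e_{sS}-e_{tS})\xi_\phi\|^2=\phi(e_{sS})-2\operatorname{Re}\phi(e_{sS}e_{tS})+\phi(e_{tS})=0$, we get $\pi_\phi(e_{sS})\xi_\phi=\pi_\phi(e_{tS})\xi_\phi$. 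Finally, promoting this from the cyclic vector to all of $H_\phi$ uses that $\pi_\phi(e_{sS})$ and $\pi_\phi(e_{tS})$ are projections commuting with enough of $\pi_\phi(C^*(S))$—more cleanly, apply the insertion formula $\phi(x e_{sS} y)=\phi(x e_{tS} y)$ for all analytic $x,y$ (proved the same way, replacing $a$ by the word $sa=tb$ over $S_c$ conjugated appropriately), which says exactly that $\langle \pi_\phi(x)^*\xi_\phi, \pi_\phi(e_{sS}-e_{tS})\pi_\phi(y)\xi_\phi\rangle=0$ on a dense set, forcing $\pi_\phi(e_{sS})=\pi_\phi(e_{tS})$.

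\textbf{Main obstacle.} The genuinely delicate point is the bookkeeping of the analytic continuation: each application of the KMS-condition forces a shift $\sigma_{i\beta}$ on one argument, and I must make sure the $N_a^{\pm\beta}$ factors introduced by $\sigma_{i\beta}(v_a)$ and $\sigma_{i\beta}(v_a^*)$ cancel exactly and that no leftover $\sigma_{i\beta}$ remains on $x$ or $y$ at the end (it should reappear as the plain KMS-identity $\phi(y\sigma_{i\beta}(x))=\phi(xy)$). Getting the order of the two or three applications of the KMS-condition right, so that the net effect is the identity, is the part that needs care; everything else is formal manipulation with the relations $v_s v_t=v_{st}$, $v_a^*v_a=1$ for $a\in S_c$, and $e_{saS}=v_s e_{aS} v_s^*$.
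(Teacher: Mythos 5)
Your proposal is correct and ultimately lands on the same argument as the paper: an insertion identity $\phi(xe_{aS}y)=\phi(xy)$ obtained from the KMS-condition together with $\phi(e_{aS})=1$, followed by $e_{saS}=v_s^{\phantom{*}}e_{aS}^{\phantom{*}}v_s^*=v_t^{\phantom{*}}e_{bS}^{\phantom{*}}v_t^*=e_{tbS}$ and density of the analytic elements to upgrade $\phi(xe_{sS}y)=\phi(xe_{tS}y)$ to $\pi_\phi(e_{sS})=\pi_\phi(e_{tS})$. Two refinements. First, your KMS bookkeeping for the insertion identity invokes $\sigma_{i\beta}(v_a)=N_a^{-\beta}v_a$; that is an assumption on $\sigma$ not contained in the hypotheses of the lemma, which is stated for an arbitrary time evolution with $v_a$ analytic and $\phi(e_{aS})=1$ (and for the generalised-scale dynamics one has $N_a=1$ on $S_c$ anyway, so the factors you cancel are trivial). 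It is also unnecessary: the observation you make at the outset --- that $\phi(e_{aS})=1$ forces $\pi_\phi(e_{aS})\xi_\phi=\xi_\phi$, hence $\phi(ze_{aS})=\phi(e_{aS}z)=\phi(z)$ for all $z$ --- combined with a single application of the KMS-condition already gives $\phi(xe_{aS}y)=\phi(e_{aS}y\sigma_{i\beta}(x))=\phi(y\sigma_{i\beta}(x))=\phi(xy)$ in full generality; this is exactly the paper's one-line computation, and you need that GNS fact anyway to strip the trailing $e_{aS}$ at the end of your chain. Second, the detour through $\phi(e_{sS}e_{tS})$ and the norm computation only yields agreement of the two projections on the cyclic vector; as you correctly note, the step that actually closes the proof is the identity $\phi(xe_{sS}y)=\phi(xe_{saS}y)=\phi(xe_{tbS}y)=\phi(xe_{tS}y)$ on the dense span of analytic elements, which is precisely the paper's argument, so the detour can simply be deleted.
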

\begin{proof}
The assumption and the $\kms_\beta$-condition imply that
\[\phi(xe_{aS}y) = \phi(e_{aS}y\sigma_{i\beta}(x)e_{aS}) = \phi(y\sigma_{i\beta}(x)) = \phi(xy).\]
Now let $s\sim t$ in $S$. Thus $sa=tb$ for some $a,b \in S_c$, hence the previous step implies
\begin{align*}
\bigl(\pi_\phi(e_{sS}y)\xi_\phi\mid\pi_\phi(x)\xi_\phi\bigr)
&= \phi(x^*e_{sS}y) = \phi(x^*e_{saS}y) \\
&= \phi(x^*e_{tbS}y) = \bigl( \pi_\phi(e_{tS}y)\xi_\phi\mid\pi_\phi(x)\xi_\phi\bigr)
\end{align*}
for all analytic elements $x,y \in C^*(S)$. Since the linear span of analytic elements is dense in $C^*(S)$, we conclude that $\pi_\phi(e_{sS})=\pi_\phi(e_{tS})$.
\end{proof}

\section{Main results on KMS states}\label{sec:main result}

In this section we summarise our results on KMS-states in the form of two theorems. Our first result is Theorem~\ref{thm:KMS results}, which gives a complete characterisation of the $\kms_\beta$-state structure of $C^*(S)$ for $\beta$ outside the critical interval $[1,\beta_c]$,  and provides sufficient conditions under which we get unique $\kms_\beta$-state at each $\beta$ inside the critical interval. To prove Theorem~\ref{thm:KMS results} we follow the strategy of the proof of \cite{ABLS}*{Theorem~4.3}, which deals with the more specialised case of admissible semigroups. The arguments of \cite{ABLS} that we cannot use --- namely, those involving the factorisation properties of admissible monoids --- have been amended, indeed sharpened,  and are presented in Sections~\ref{sec:ingredients}, \ref{sec:param of KMS-states} and ~\ref{sec:uniqueness}.

To state the first result we need to introduce some notation. We keep the explanation for the following definitions at a minimum here, and refer to Section \ref{sec:uniqueness} for a thorough discussion. Throughout this section $S$ will denote a right LCM monoid with generalised scale $N$. For $a,b \in S_c$ with $a\neq b$ and $n \in N(S)$ we define
 \begin{align}
    F_n^{a,b} &  := \{ [s] \in N^{-1}(n)/_\sim \mid asc=bsd \text{ for some } c,d \in S_c\};\label{eq:F-n}\text{ and }\\
     A_n^{a,b} &  := \{ [s] \in N^{-1}(n)/_\sim \mid asc=bsc \text{ for some } c \in S_c\}.\label{eq:A-n}
 \end{align}
 We refer to the equivalence classes in $A_n^{a,b}$ as \emph{absorbing} elements.

Note that if $S$ is right cancellative, then these sets are empty. However, if $S$ is not right cancellative then $ A_n^{a,b}$ can be nonempty even though $S_c$ is right cancellative, for example a group (as in the case of self-similar actions).

We now introduce our criterion for uniqueness of KMS$_1$-states.
\begin{definition}\label{def:neg q-a}
Let $S$ be a right LCM monoid  with generalised scale $N$. For $a,b \in S_c$  we say that $S$ is  \emph{$(a,b)$-regular} if
\begin{equation}\label{eq:ab-qa}
\frac{\lvert F_n^{a,b}\setminus A_n^{a,b}\rvert}{n} \stackrel{n \to \infty}{\longrightarrow} 0.
\end{equation}
$S$ is called \emph{core regular} if it is $(a,b)$-regular for all $a,b \in S_c$.
\end{definition}

Our second criterion is modelled on systems with a critical interval of the form $(1,\beta_c]$.

\begin{definition}\label{def:sum q-a} 
Let $S$ be a right LCM monoid $S$ with generalised scale $N$. For $\beta \in (1,\beta_c]$ and $a,b \in S_c$ we say that $S$ is \emph{$\beta$-summably $(a,b)$-regular} if
\begin{equation}\label{eq:ab-summable-qa}
\begin{array}{c}\lim\limits_{I}\zeta_I(\beta)^{-1} \sum\limits_{n \in \langle I\rangle^+} n^{-\beta} \lvert F_n^{a,b} \setminus A_n^{a,b} \rvert = 0,\end{array}
\end{equation}
as $I\ssubset \text{Irr}(N(S))$ increases to $\text{Irr}(N(S))$. We say $S$ is \emph{$\beta$-summably core regular} if it is $\beta$-summably $(a,b)$-regular for all $a,b\in S_c$, and \emph{summably core regular} if it is $\beta$-summably core regular for all $\beta \in (1,\beta_c]$.
\end{definition}

\begin{thm}\label{thm:KMS results}
Suppose $S$ is a right LCM monoid with generalised scale $N$. For the time evolution $\sigma$ on $C^*(S)$ determined by $N$, we have:
\begin{enumerate}[(1)]
\item There are no $\kms_\beta$-states on $C^*(S)$ for $\beta < 1$.
\item For $\beta > \beta_c$, there is an affine homeomorphism between $\kms_\beta$-states on $C^*(S)$ and normalised traces on $C^*(S_c)$.
\item There is an affine homeomorphism between ground states on $C^*(S)$ and states on $C^*(S_c)$. In case that $\beta_c < \infty$, a ground state is a $\kms_\infty$-state if and only if it corresponds to a normalised trace on $C^*(S_c)$.
\item There is a $\kms_1$-state $\psi_1$ for $(C^*(S),\sigma)$  determined by
\[\psi_1(v_a^{\phantom{*}}v_b^*) = \lim\limits_{n \in N(S)} \frac{\lvert A_n^{a,b}\rvert}{n} \quad \text{for } a,b \in S_c.\]
If $S$ is core regular, then $\psi_1$ is the unique $\kms_1$-state.
\item There is a $\kms_\beta$-state $\psi_\beta$ for $(C^*(S),\sigma)$ for each  $\beta \in (1,\beta_c]$ determined by
\[\begin{array}{c}
\psi_\beta(v_a^{\phantom{*}}v_b^*) = \lim\limits_{I \ssubset \text{Irr}(N(S))} \zeta_I(\beta)^{-1} \sum\limits_{n \in \langle I\rangle^+} n^{-\beta} \lvert A_n^{a,b}\rvert \quad \text{for } a,b \in S_c.
\end{array}\]
If $S$ is $\beta$-summably core regular, then $\psi_\beta$ is the unique  KMS$_\beta$-state.
\end{enumerate}
\end{thm}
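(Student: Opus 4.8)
My plan is to follow the three-pillar architecture of \cite{ABLS}*{Theorem~4.3}, adapted so that every appeal there to minimal representatives of $S/_\sim$ (available only under admissibility) is replaced by an argument using the generalised scale $N$ and the directedness of core classes under $\geq_r$ alone; these adaptations will be carried out in Sections~\ref{sec:ingredients}, \ref{sec:param of KMS-states} and \ref{sec:uniqueness}. The three pillars are: a non-existence argument for $\beta<1$; a parametrisation of KMS- and ground states by normalised traces, resp.\ states, on $C^*(S_c)$ obtained by inducing GNS representations through the core Fock module; and a core-regularity estimate pinning down the trace, hence the state, for $\beta$ in the critical interval. Part (1) is the easiest: for a $\kms_\beta$-state $\phi$ and $s\in S$ the $\kms_\beta$-condition gives $\phi(e_{sS})=\phi(v_s^*\sigma_{i\beta}(v_s))=N_s^{-\beta}$, so fixing $n\in N(S)$ with $n>1$ and a transversal $T$ of $N^{-1}(n)/_\sim$ --- an accurate foundation set by Definition~\ref{def:gen-scale}, hence with pairwise orthogonal projections $e_{sS}$ --- we get $1\geq\sum_{s\in T}\phi(e_{sS})=n^{1-\beta}$; since $N(S)$ contains $n^{k}$ for all $k$, letting $n\to\infty$ forces $\beta\geq 1$.

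For parts (2) and (3), the existence direction will use Theorem~\ref{thm:core Fock module}: induce the GNS representation of a normalised trace $\tau$ on $C^*(S_c)$ (for ground states, an arbitrary state) through the core Fock module $\FM$ to a representation of $C^*(S)$, and define $\psi_{\beta,\tau}$ by weighting the $N(S)$-grading of $\FM$ with a density operator of trace $\zeta(\beta)$ --- finite precisely because $\beta>\beta_c$ --- while for ground states one uses the vector state on the lowest-degree summand and checks the boundedness condition by hand; the $\kms_\beta$-condition is verified on the generators $v_s$ against the dense spanning set $\{v_sv_t^*\}$, using the grading of $\FM$ and that $\tau$ is a trace. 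In the reconstruction direction, let $\phi$ be a $\kms_\beta$-state with $\beta>\beta_c$; $\sigma$-invariance forces $\phi(v_sv_t^*)=0$ unless $N_s=N_t$, and since $S_c=N^{-1}(1)$ we have $\phi(v_a^{\phantom{*}}v_a^*)=1$ for $a\in S_c$, so Lemma~\ref{lem:core elts are unitaries under kms-states} gives $\pi_\phi(e_{aS})=1$; then $\pi_\phi\circ\varphi$ is a $*$-representation on which the $\kms_\beta$-condition collapses to the trace identity, so $\tau:=\phi\circ\varphi$ is a normalised trace on $C^*(S_c)$. The decisive step is the reconstruction formula to be proved in Subsection~\ref{subsec:rec formula}: with $P_n:=\sum_{[t]\in N^{-1}(n)/_\sim}\pi_\phi(e_{tS})$, which are mutually commuting projections because each $e_{tS}e_{t'S}$ is self-adjoint, and $Q_I=\prod_{n\in I}(1-P_n)$, a telescoping identity writes $1$ as $Q_I$ plus first-return terms $P_n\prod_{n'\prec n}(1-P_{n'})$; pushing $v_a^{\phantom{*}}v_b^*$ through this decomposition, letting $I\to\text{Irr}(N(S))$ and using $\tilde{\phi}(Q_I)\to\zeta(\beta)^{-1}$ (positive precisely when $\beta>\beta_c$), one recovers $\phi=\psi_{\beta,\tau}$ as an absolutely convergent sum over $S/_\sim$. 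Affineness and weak$^*$-continuity of both assignments are routine. For the $\kms_\infty$ clause of (3), a $\kms_\infty$-state restricts via $\varphi$ to a weak$^*$-limit of traces, hence a trace; conversely, when $\tau$ is a trace, $\psi_{\beta,\tau}$ converges weak$^*$ to the ground state attached to $\tau$ as $\beta\to\infty$.

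For parts (4) and (5), the density-operator weighting degenerates throughout $[1,\beta_c]$ since $\zeta_I(\beta)\to\infty$, so I would instead induce the canonical trace $\tau_0$ of Proposition~\ref{prop:trace on C*(S_c)}, truncate to finite $I$ with normalising constant $\zeta_I(\beta)^{-1}$ (for $\beta=1$, use the level-$n$ average with constant $1/n$), and extract a weak$^*$-cluster point $\psi_\beta$ as $I\to\text{Irr}(N(S))$ (resp.\ $n\to\infty$); compactness makes $\psi_\beta$ a $\kms_\beta$-state. Running the same push-through computation against $\tau_0$, each level-$n$ contribution collapses to $|A_n^{a,b}|/n$, because $\tau_0(w_c^{\phantom{*}}w_d^*)=\delta_{c,d}$ detects exactly the absorbing classes (those $[s]$ with $as'=bs'$ for some $s'\sim s$); this yields the stated formulas, and an elementary monotonicity argument shows the nets genuinely converge. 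For uniqueness, take any $\kms_\beta$-state $\phi$ in the critical interval with $\tau=\phi\circ\varphi$ and apply the reconstruction formula, now in its $\zeta_I(\beta)$-normalised (Abel-summed) form with $\tilde{\phi}(Q_I)\to 0$: the difference $\phi(v_a^{\phantom{*}}v_b^*)-\psi_\beta(v_a^{\phantom{*}}v_b^*)$ is then dominated by a weighted sum over the non-absorbing fixed points $F_n^{a,b}\setminus A_n^{a,b}$ --- on an absorbing class $as'=bs'$ forces the $\tau$-contribution to agree with the $\tau_0$-contribution, while the remaining classes of $F_n^{a,b}$ are precisely where $\tau$ can make a difference --- so $\beta$-summable $(a,b)$-regularity (resp.\ $(a,b)$-regularity at $\beta=1$) makes the error vanish and $\phi=\psi_\beta$.

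I expect the main obstacle to be the reconstruction formula in the absence of minimal representatives: first, constructing the core Fock module and the left $C^*(S)$-action on the directed, possibly minimum-free classes of $S/_\sim$, i.e.\ Theorem~\ref{thm:core Fock module}, where the module structure must be defined purely through the partial order $\geq_r$; and second, in the uniqueness estimate, identifying $F_n^{a,b}\setminus A_n^{a,b}$ as exactly the obstruction set, which amounts to disentangling, level by level, how the trace datum $\tau$ versus the canonical $\tau_0$ enters the first-return decomposition. That analysis is the technical heart of Section~\ref{sec:uniqueness}.
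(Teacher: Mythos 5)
Your overall architecture --- existence of states by inducing traces through the core Fock module, reconstruction via the defect projections $Q_I$ computed in $\pi_\phi(C^*(S))''$, and uniqueness in the critical interval via an estimate over $F_n^{a,b}\setminus A_n^{a,b}$ --- is the same as the paper's, and parts (1), (3), (4) and (5) of your outline are essentially in order. There is, however, a genuine error in the reconstruction direction of part (2): you take the parametrising trace to be $\tau:=\phi\circ\varphi$ and assert that the reconstruction formula yields $\phi=\psi_{\beta,\tau}$. The trace that the first-return decomposition actually produces is not $\phi\circ\varphi$ but the \emph{compressed} trace $\phi_{\Irr(N(S))}\circ\varphi$, where $\phi_I(y)=\zeta_I(\beta)\,\tilde{\phi}(Q_I\pi_\phi(y)Q_I)$. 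Pushing $y$ through $1=\sum_{[s]}\pi_\phi(v_s^{\phantom{*}})Q_I\pi_\phi(v_s^*)$ and applying the $\kms_\beta$-condition gives $\phi(y)=\zeta_I(\beta)^{-1}\sum_{[s]}N_s^{-\beta}\phi_I(v_s^*yv_s^{\phantom{*}})$, and the corner states $\phi_I$ are not determined by $\phi\circ\varphi$.

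These two traces differ in general, so the error is not cosmetic. If your claim $\phi=\psi_{\beta,\phi\circ\varphi}$ held, then applying it to $\phi=\psi_{\beta,\tau}$ and using injectivity of $\tau\mapsto\psi_{\beta,\tau}$ (which follows from the identity $(\psi_{\beta,\tau})_{\Irr(N(S))}\circ\varphi=\tau$) would force $\psi_{\beta,\tau}\circ\varphi=\tau$ for every normalised trace $\tau$, i.e.\ the self-map $\chi_\beta$ of the trace simplex would be the identity. This fails already for Baumslag--Solitar monoids $BS(md,d)^+$: there $\psi_{\beta,\tau}(\varphi(w_{\ib^k}))$ vanishes for $k\notin d\N$ while $\tau(w_{\ib^k})$ need not, and more generally $\chi_\beta(\tau)$ involves the averaged values $\tau(w_{\ib^{km^\ell}})$. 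The nontriviality of $\chi_\beta$ is exactly what makes Theorem~\ref{thm:restricting to tau} meaningful. So the affine homeomorphism in part (2) must be $\phi\mapsto\phi_{\Irr(N(S))}\circ\varphi$ with inverse $\tau\mapsto\psi_{\beta,\tau}$, and closing the loop requires the separate computation that $\chi_{\tau,s}(q_I\varphi(x)q_I)=\delta_{[s],S_c}\,\tau(x)$ for $I$ large --- a step that uses accuracy of the foundation sets given by transversals of $N^{-1}(n)/_\sim$ and is absent from your outline.
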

It follows in particular that when $S$ is core regular and summably core regular, there is a unique $\kms_\beta$-state for each $\beta\in [1,\beta_c]$.

\begin{proof}[Proof of Theorem~\ref{thm:KMS results}] Part (1) is verbatim as in \cite[Theorem 4.3]{ABLS}. Statement (2) is the content of Proposition~\ref{prop:KMS-states parametrization above crit}. Part (3) combines Proposition~\ref{prop:construction of ground states} with the argument of the similar statement in \cite[Theorem 4.3]{ABLS}. Part (4) is proved in Proposition~\ref{prop:uniqueness at 1 if neg q-a}, and (5) follows from  Proposition~\ref{prop:uniqueness in the crit interval by s q-a}.
\end{proof}

\begin{remark}\label{rem:main thm extended}
Recall that $\pi\colon C^*(S) \to \CQ(S)$ denotes the $*$-epimorphism to the boundary quotient of $S$. Since $\sigma$ descends to a time evolution on $\CQ(S)$, we have that a $\kms_\beta$-state for $(C^*(S),\sigma)$ factors through $\pi$ precisely when $\beta=1$. Moreover, no ground state on $C^*(S)$ factors through $\pi$. The proofs of these two claims are the same as in \cite{ABLS}*{Theorem~4.3(5) and (8)} upon replacing $\pi_p$ in (5) with $\pi$.
\end{remark}

For every normalised trace $\tau$ on $C^*(S_c)$ and $\beta>1$, Proposition~\ref{prop:construction of KMS-states} will produce a KMS$_\beta$-state $\psi_{\beta,\tau}$ given by
\[\begin{array}{c} \psi_{\beta,\tau} = \lim\limits_{I \nearrow \text{Irr}(N(S))} \zeta_I(\beta)^{-1} \sum\limits_{n \in \langle I\rangle^+} n^{1-\beta}\psi_{\tau,n} \end{array}\]
with $\psi_{\tau,n}(x) = n^{-1}\sum_{[s] \in N^{-1}(n)/_\sim} \tau\circ E(v_s^*xv_s^{\phantom{*}})$ for $x \in C^*(S)$, where $E\colon C^*(S) \to C^*(S_c)$ is the conditional expectation from \eqref{eq:faithful varphi}. By Proposition~\ref{prop:alg char of KMS-states}, $\psi_{\beta,\tau}\circ \varphi$ is again a normalised trace on $C^*(S_c)$. We will be interested in the fixed points under the self-maps $\chi_\beta$ of the simplex of normalised traces on $C^*(S_c)$ given by $\tau\mapsto \psi_{\beta,\tau}\circ\varphi$ for $\beta \in (1,\infty)$.

\begin{definition}\label{def:trace simplex funct eq}
For $\beta_0 \in [1,\infty)$, let $T_{>\beta_0}(C^*(S_c))=\bigcap_{\beta \in (\beta_0,\infty)} T(C^*(S_c))^{\chi_\beta}$ denote the set of normalised traces $\tau$ on $C^*(S_c)$ satisfying
\begin{equation}\label{eq:KMS funct eq}
\chi_\beta(\tau) = \psi_{\beta,\tau}\circ \varphi = \tau \quad \text{for all }\beta>\beta_0.
\end{equation}
\end{definition}

We note that $T_{>\beta_0}(C^*(S_c))$ is a simplex.

\begin{thm}\label{thm:restricting to tau}
Suppose $S$ is a right LCM monoid with generalised scale $N$, and consider the time evolution $\sigma$ on $C^*(S)$ determined by $N$. For every $\beta \in [1,\infty)$, the map $\tau\mapsto \psi_{\beta,\tau}$ is an embedding of the simplex $T_{>\beta}(C^*(S_c))$ into the simplex of $\kms_\beta$-states. For $\beta=1$, this map is an affine homeomorphism between $T_{>1}(C^*(S_c))$ and the $\kms_1$-states.
\end{thm}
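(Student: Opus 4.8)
The plan is to establish the embedding statement first for general $\beta \in [1,\infty)$ and then upgrade it to an affine homeomorphism when $\beta = 1$. First I would verify that the assignment $\tau \mapsto \psi_{\beta,\tau}$ is well-defined and affine: this should be essentially immediate from the defining formula $\psi_{\beta,\tau} = \lim_I \zeta_I(\beta)^{-1}\sum_{n\in\langle I\rangle^+} n^{1-\beta}\psi_{\tau,n}$, since each $\psi_{\tau,n}$ depends affinely and weak$^*$-continuously on $\tau$, and the limit over $I$ (which exists by Proposition~\ref{prop:construction of KMS-states}) preserves both properties. Affinity and weak$^*$-continuity of $\tau\mapsto\psi_{\beta,\tau}$ on the compact simplex $T_{>\beta}(C^*(S_c))$ then reduce injectivity to the single point: I would show that $\psi_{\beta,\tau}$ determines $\tau$ on the generators $v_a^{\phantom{*}}v_b^*$, $a,b\in S_c$. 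The natural device is the conditional expectation $E\colon C^*(S)\to C^*(S_c)$ from \eqref{eq:faithful varphi} together with the fact that $\psi_{\beta,\tau}\circ\varphi = \chi_\beta(\tau)$; since $\tau \in T_{>\beta}(C^*(S_c))$ means precisely $\chi_\beta(\tau)=\tau$ for all such $\beta$, we recover $\tau = \psi_{\beta,\tau}\circ\varphi$, so $\tau$ is literally read off from $\psi_{\beta,\tau}$ by precomposition with $\varphi$. That same identity makes $\tau\mapsto\psi_{\beta,\tau}$ injective on $T_{>\beta}(C^*(S_c))$ and exhibits $\varphi^*$ as a continuous left inverse, which is what "embedding" requires.

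For the case $\beta = 1$ the additional claim is surjectivity onto the $\kms_1$-states, i.e. every $\kms_1$-state $\phi$ arises as $\psi_{1,\tau}$ for a (necessarily unique, by the above) $\tau \in T_{>1}(C^*(S_c))$. Here I would use Lemma~\ref{lem:core elts are unitaries under kms-states}, which applies since any $\kms_1$-state $\phi$ satisfies $\phi(v_a^{\phantom{*}}v_a^*)=1$ for $a\in S_c$ (the $\kms$-condition forces $\phi(e_{aS})=\phi(v_a^*v_a^{\phantom{*}})=1$): this gives $\pi_\phi(e_{sS})=\pi_\phi(e_{tS})$ whenever $s\sim t$, and more generally $\phi(xe_{aS}y)=\phi(xy)$. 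Set $\tau := \phi\circ\varphi$, which is a state on $C^*(S_c)$; the $\kms$-absorption property of core elements forces $\tau$ to be a trace (the same computation as in Lemma~\ref{lem:core elts are unitaries under kms-states} shows $\phi(\varphi(w_a)\varphi(w_b)) = \phi(\varphi(w_b)\varphi(w_a))$ after inserting the appropriate range projections, since all range projections of core isometries are $\phi$-invariant). Then I would invoke the reconstruction formula for $\kms_1$-states — referenced in the excerpt as living in Subsection~\ref{subsec:rec formula} and underlying formula \eqref{eq:QI} — to express $\phi$ in terms of its restriction $\tau$ on $C^*(S_c)$; comparing with the explicit formula $\psi_{1,\tau}(v_a^{\phantom{*}}v_b^*) = n^{-1}\sum_{[s]}\tau\circ E(v_s^* v_a^{\phantom{*}}v_b^* v_s^{\phantom{*}})$ identifies $\phi = \psi_{1,\tau}$. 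Finally, to land inside $T_{>1}(C^*(S_c))$, one checks $\chi_\beta(\tau) = \psi_{\beta,\tau}\circ\varphi = \tau$ for all $\beta>1$: this follows because $\psi_{1,\tau}=\phi$ is a $\kms_1$-state, hence factors through the boundary quotient $\pi$ (Remark~\ref{rem:main thm extended}), and the $\psi_{\beta,\tau}$ for $\beta>1$ all restrict along $\varphi$ to the \emph{same} trace as $\psi_{1,\tau}$ does — an input I would extract from Theorem~\ref{thm:restricting to tau}'s supporting Proposition~\ref{prop:alg char of KMS-states}.

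The main obstacle I anticipate is the $\beta=1$ surjectivity: establishing that an arbitrary $\kms_1$-state $\phi$ is completely determined by its core restriction $\tau = \phi\circ\varphi$ and that this $\tau$ genuinely lies in $T_{>1}(C^*(S_c))$. The determination part rests on the reconstruction/recovery formula for $\kms_1$-states, and making that rigorous requires controlling the projections $Q_I$ of \eqref{eq:QI} and showing that, at $\beta=1$ (where $\phi$ kills the ideal corresponding to the boundary quotient relations), the "boundary at infinity" contributes nothing — so that $\phi$ is recovered from finite-level data $\{\phi(v_s^{\phantom{*}}v_t^*) : s,t \text{ with } N_s=N_t\}$, which in turn reduce via $E$ and Lemma~\ref{lem:core elts are unitaries under kms-states} to values of $\tau$. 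Verifying the fixed-point condition $\chi_\beta(\tau)=\tau$ for all $\beta>1$ is the second delicate point: it amounts to a compatibility between the $\kms_1$ reconstruction and the $\kms_\beta$ reconstruction for $\beta>1$, and I would handle it by a limit argument letting $\beta\downarrow 1$, using weak$^*$-compactness of the trace simplex and continuity of $\beta\mapsto\psi_{\beta,\tau}$ where available. The general-$\beta$ embedding, by contrast, should be routine once the well-definedness of $\psi_{\beta,\tau}$ and the identity $\psi_{\beta,\tau}\circ\varphi = \chi_\beta(\tau)$ are in hand.
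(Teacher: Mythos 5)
Your skeleton is right---recover $\tau$ from $\psi_{\beta,\tau}$ by precomposing with $\varphi$, and for $\beta=1$ get surjectivity by showing $\tau:=\phi\circ\varphi$ is a fixed point---but the two steps that carry the actual weight are justified incorrectly. First, $T_{>\beta}(C^*(S_c))$ only requires $\chi_{\beta'}(\tau)=\tau$ for $\beta'$ \emph{strictly} greater than $\beta$, so you cannot ``recover $\tau=\psi_{\beta,\tau}\circ\varphi$'' by citing the defining condition at $\beta$ itself; indeed $\chi_1$ is not even defined, and for $\beta\in[1,\beta_c]$ the identity $\psi_{\beta,\tau}\circ\varphi=\tau$ is precisely what has to be proved. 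The paper gets it by letting $\beta'\downarrow\beta$: a weak$^*$-limit point of $(\psi_{\beta',\tau})_{\beta'>\beta}$ is a $\kms_\beta$-state whose restriction along $\varphi$ is $\tau$ (each term restricts to $\tau$ by hypothesis), and Proposition~\ref{prop:alg char of KMS-states} shows that a $\kms_\beta$-state is completely determined by that restriction, which identifies the limit with $\psi_{\beta,\tau}$ and yields the embedding. Your proposal invokes a $\beta\downarrow 1$ limit only later and for a different purpose.

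Second, for surjectivity at $\beta=1$ you must show $\psi_{\beta,\phi\circ\varphi}\circ\varphi=\phi\circ\varphi$ for every \emph{fixed} $\beta>1$; neither the boundary-quotient remark nor Proposition~\ref{prop:alg char of KMS-states} gives this (the latter says a $\kms_\beta$-state is determined by its restriction, not that the restriction is independent of $\beta$), and a limit $\beta\downarrow 1$ runs in the wrong direction for a statement about a fixed $\beta>1$. The paper's argument is a direct computation: for $x\in C^*(S_c)$ and $s\in S$ one checks that $v_s^*\varphi(x)v_s^{\phantom{*}}\in\varphi(C^*(S_c))$ whenever it is nonzero, so $\chi_{\phi\circ\varphi,s}(\varphi(x))=\phi(v_s^*\varphi(x)v_s^{\phantom{*}})$; the $\beta=1$ reconstruction formula \eqref{eq:rec formula beta=1} then gives $\psi_{\phi\circ\varphi,n}\circ\varphi=\phi\circ\varphi$ at every level $n\in N(S)$, and the defining series for $\psi_{\beta,\phi\circ\varphi}$ collapses termwise to $\phi\circ\varphi$. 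Once that is in place, $\phi\circ\varphi\in T_{>1}(C^*(S_c))$ and $\phi=\psi_{1,\phi\circ\varphi}$ follow from Proposition~\ref{prop:alg char of KMS-states} as you indicate. So both delicate points you flagged are real, but the mechanisms you propose for closing them would not do so.
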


Theorem~\ref{thm:restricting to tau} is an entirely new result that explores the theory in a complementary direction to the findings of \cite{CSi}. Its proof is presented at the end of Section~\ref{sec:param of KMS-states}.

\section{The toolkit trilogy: algebraic constraints, a reconstruction formula, and the core Fock module}\label{sec:ingredients}

We assume throughout this section that $S$ is a right LCM monoid with a generalised scale $N$ and we let $\sigma$ be the associated time evolution on $C^*(S)$.

\subsection{Algebraic constraints}\label{subsec:algebraic constraints}
The results on algebraic constraints for $\kms$-states from \cite{ABLS}*{Section~6} apply to $(C^*(S),\sigma)$ with the exception \cite{ABLS}*{Proposition~6.4}, which we shall now improve, and some minor parts of \cite{ABLS}*{Propositions~6.6 and 6.7}, which we will not need. For completeness, we note that $\kms_0$-states are the traces on $C^*(S)$, but under our assumption $C^*(S)$ is traceless. Indeed, $N$ is assumed nontrivial, so we can consider a transversal $\CT_n$ for $N^{-1}(n)/_\sim$, where $n \in N(S)\setminus\{1\}$, and by the definition of $N$, $\CT_n$ contains $n$ mutually orthogonal elements. Thus  a trace $\tau$ on $C^*(S)$ would satisfy $1= \tau(1)\geq \tau(\sum_{t \in \CT_n} e_{tS}) = n$, which is impossible. 

\begin{proposition}\label{prop:alg char of KMS-states}
Let $\beta \in \R\setminus\{0\}$. A state $\phi$ on $C^*(S)$ is a $\kms_\beta$-state if and only if $\phi\circ \varphi$ is a normalised trace on $C^*(S_c)$ and
\begin{equation}\label{eq:alg char of KMS-states}
\phi(v_s^{\phantom{*}}v_t^*) = \begin{cases} N_s^{-\beta} \phi(v_b^{\phantom{*}}v_a^*) & \text{ if } sa=tb \text{ for some } a,b \in S_c;\\
0 & \text{ otherwise}\end{cases}
\end{equation}
holds for all  $s,t \in S$.
\end{proposition}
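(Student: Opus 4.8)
The plan is to prove the two implications separately, with the forward direction being essentially bookkeeping and the reverse direction being the substantive half. For the forward direction, suppose $\phi$ is a $\kms_\beta$-state. First I would check that $\phi(v_a^{\phantom{*}}v_a^*)=1$ for all $a\in S_c$: since $a\in S_c$ means $aS\Cap sS$ for every $s$, one argues that the projections $1-e_{aS}$ cannot support any mass, e.g.\ by noting that $v_a$ is analytic and applying the $\kms_\beta$-condition to $v_a^*$ and $v_a$ to get $\phi(v_a^*v_a^{\phantom{*}})=\phi(v_a^{\phantom{*}}\sigma_{i\beta}(v_a^*))=N_a^{i\cdot i\beta}\cdots$; more simply, $N_a=1$ for $a\in S_c$ by \cite{ABLS}*{Proposition~3.6(i)}, so $v_a$ is $\sigma$-fixed, and $\phi(e_{aS})=\phi(v_a^{\phantom{*}}v_a^*)=\phi(v_a^*v_a^{\phantom{*}})=\phi(1)=1$ using the KMS condition with $x=v_a^*$, $y=v_a$. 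Hence Lemma~\ref{lem:core elts are unitaries under kms-states} applies. Then $\phi\circ\varphi$ is a state on $C^*(S_c)$ and it is tracial because for $a,b\in S_c$ the elements $v_a,v_b$ are $\sigma$-invariant, so $\phi(v_av_b)=\phi(v_b\sigma_{i\beta}(v_a))=\phi(v_bv_a)$, which suffices on the dense spanning set. To get \eqref{eq:alg char of KMS-states}, apply the $\kms_\beta$-condition with $x=v_s^{\phantom{*}}$: $\phi(v_s^{\phantom{*}}v_t^*)=\phi(v_t^*\sigma_{i\beta}(v_s^{\phantom{*}}))=N_s^{-\beta}\phi(v_t^*v_s^{\phantom{*}})$. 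If $s\perp t$ then $v_t^*v_s^{\phantom{*}}$ relates to $e_{tS}e_{sS}$; more carefully, $\phi(v_s^{\phantom{*}}v_t^*)=\phi(e_{sS}v_s^{\phantom{*}}v_t^*e_{tS})$ and if $s\perp t$ a Cauchy--Schwarz estimate against $v_s^{\phantom{*}}v_s^*v_t^{\phantom{*}}v_t^*=0$ forces this to vanish. If $s\Cap t$, write $sS\cap tS=sa'S=tb'S$; using $e_{tS}v_t^*=v_t^*$ and then the intertwining $e_{sS}e_{tS}=v_{sa'}^{\phantom{*}}v_{sa'}^*$ one computes $\phi(v_t^*v_s^{\phantom{*}})=\phi(v_{b'}^{\phantom{*}}v_{a'}^*)$; since $N_s\in N_sN(S)$ trivially, Lemma~\ref{lem:consequence-ABLS-N-homomorphism} style reasoning (or directly \cite{ABLS}*{Proposition~3.6}) gives $a',b'\in S_c$, and by Lemma~\ref{lem:core elts are unitaries under kms-states} the value depends only on the class, so \eqref{eq:alg char of KMS-states} follows with $a=a'$, $b=b'$ (and any other choice $a,b$ with $sa=tb$ gives $sa\sim sa'$, hence the same value).

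For the reverse direction, suppose $\phi\circ\varphi$ is a normalised trace on $C^*(S_c)$ and \eqref{eq:alg char of KMS-states} holds. First I would verify $\phi$ is a state: positivity is the delicate point, but \eqref{eq:alg char of KMS-states} together with the structure of $C^*(S)$ as spanned by the $v_s^{\phantom{*}}v_t^*$ should let one identify $\phi$ with a state already known to exist, or realise $\phi$ via a representation; alternatively one observes that \eqref{eq:alg char of KMS-states} forces $\phi$ to agree on the dense $*$-algebra with the putative KMS-state and then invokes that a $\sigma$-invariant linear functional satisfying the KMS inequality is automatically positive. To show $\phi$ is $\sigma$-invariant: $\sigma_u(v_s^{\phantom{*}}v_t^*)=N_s^{iu}N_t^{-iu}v_s^{\phantom{*}}v_t^*$, and \eqref{eq:alg char of KMS-states} is only nonzero when $sa=tb$ with $a,b\in S_c$, which forces $N_s=N_t$ (as $N_a=N_b=1$), so $\phi\circ\sigma_u=\phi$. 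Finally, by \cite[Lemma 1.9]{ALN} it suffices to check the $\kms_\beta$-condition $\phi(xy)=\phi(y\sigma_{i\beta}(x))$ for $x=v_s^{\phantom{*}}v_t^*$ and $y=v_p^{\phantom{*}}v_q^*$ ranging over the spanning set. This reduces to a computation: expand $v_s^{\phantom{*}}v_t^*v_p^{\phantom{*}}v_q^*$ using the right LCM relation for $v_t^*v_p^{\phantom{*}}$ (either $tS\cap pS=tc S=pd S$ giving $v_t^*v_p^{\phantom{*}}=v_c^{\phantom{*}}v_d^*$, or $t\perp p$ giving $0$), so $xy=v_{sc}^{\phantom{*}}v_{qd}^*$, evaluate via \eqref{eq:alg char of KMS-states}, and compare with $\sigma_{i\beta}(x)=N_s^{-\beta}N_t^{\beta}v_s^{\phantom{*}}v_t^*$ so that $y\sigma_{i\beta}(x)=N_s^{-\beta}N_t^{\beta}v_p^{\phantom{*}}v_q^*v_s^{\phantom{*}}v_t^*$, expand $v_q^*v_s^{\phantom{*}}$ similarly, and evaluate. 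The two sides match after using the trace property of $\phi\circ\varphi$ on $C^*(S_c)$ to commute the core contributions and the multiplicativity $N_s=N_tN_{\text{(LCM part)}}$ to account for the powers of $\beta$; this is exactly the pattern of the corresponding verification in \cite{ABLS}, with the bookkeeping organised around the fact that all LCM-``remainders'' landing in the core because of Lemma~\ref{lem:consequence-ABLS-N-homomorphism}.

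The main obstacle I anticipate is the positivity of $\phi$ in the reverse direction: formula \eqref{eq:alg char of KMS-states} defines $\phi$ only on a dense $*$-subalgebra and it is not obvious a priori that it extends to a state. The cleanest route is probably to show directly that the prescription \eqref{eq:alg char of KMS-states} coincides with $\psi_{\beta,\phi\circ\varphi}$ (or, for $\beta<0$ and other degenerate ranges, with the relevant induced functional) constructed independently in Section~\ref{sec:param of KMS-states} via the core Fock module, whose positivity comes for free from the module construction; then uniqueness of the functional satisfying \eqref{eq:alg char of KMS-states} given the trace $\phi\circ\varphi$ closes the argument. Failing that, one can argue as in \cite{ABLS}: a $\sigma$-invariant functional $\phi$ with $\phi(1)=1$ satisfying \eqref{eq:alg char of KMS-states} automatically satisfies the full $\kms_\beta$-identity on the dense algebra by the computation above, and a $\sigma$-invariant normalised functional obeying the KMS identity is positive by the standard argument (apply the identity to $x=y^*$ and use analyticity), so $\phi$ is a state. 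Everything else is routine manipulation of the defining relations of $C^*(S)$ together with the homomorphism property of $N$.
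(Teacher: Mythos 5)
Your overall architecture (forward direction by direct application of the KMS$_\beta$-condition, reverse direction by verification on the spanning set via \cite{ALN}*{Lemma~1.9}) matches the paper's, but the forward direction as written has a genuine gap in the case $s \Cap t$ with $s \not\sim t$. The ``otherwise'' branch of \eqref{eq:alg char of KMS-states} is not only the case $s\perp t$: it also covers pairs with $sS\cap tS = sa'S = tb'S$ where $a'$ or $b'$ fails to lie in $S_c$. Your argument asserts that $a',b'\in S_c$ whenever $s\Cap t$, justified by ``since $N_s\in N_sN(S)$ trivially'' and an appeal to Lemma~\ref{lem:consequence-ABLS-N-homomorphism}; but that lemma requires $N_s\in N_tN(S)$, which is not automatic, and even when $N_t$ divides $N_s$ properly one gets $a'\in S_c$ while $N_{b'}=N_s/N_t>1$, so $b'\notin S_c$. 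Your computation would then output $\phi(v_s^{\phantom{*}}v_t^*)=N_s^{-\beta}\phi(v_{b'}^{\phantom{*}}v_{a'}^*)$ with $b'\notin S_c$, which is not the claimed formula and, worse, does not show the value is $0$ as \eqref{eq:alg char of KMS-states} demands. The missing ingredient is to apply the KMS$_\beta$-condition \emph{twice} to obtain $\phi(v_s^{\phantom{*}}v_t^*) = (N_t/N_s)^\beta\,\phi(v_s^{\phantom{*}}v_t^*)$, which forces $\phi(v_s^{\phantom{*}}v_t^*)=0$ whenever $N_s\neq N_t$; for $N_s=N_t$, \cite{ABLS}*{Proposition~3.6(ii)} reduces matters to the dichotomy $s\perp t$ or $s\sim t$, and only then are the LCM remainders in $S_c$.

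Separately, the ``main obstacle'' you flag in the reverse direction is a phantom: the proposition quantifies over \emph{states} $\phi$ on $C^*(S)$, so positivity is part of the hypothesis in both directions and nothing needs to be extended from a dense subalgebra. Be warned that your proposed workaround --- identifying $\phi$ with $\psi_{\beta,\phi\circ\varphi}$ from the core Fock module construction --- would be circular, since Proposition~\ref{prop:construction of KMS-states} itself invokes the present proposition to certify that the induced states are KMS$_\beta$-states. With that paragraph deleted, your reverse direction is the same algebraic verification as the paper's; the only difference is that you test the KMS identity with $x=v_s^{\phantom{*}}v_t^*$ rather than the generator $x=v_s$, which is legitimate but multiplies the case analysis, and the crucial step you leave implicit --- that $\phi(xy)$ and $\phi(y\sigma_{i\beta}(x))$ vanish simultaneously, which again rests on $N_{st}=N_r$ forcing $st\sim r$ and on Lemma~\ref{lem:consequence-ABLS-N-homomorphism} placing the remainder in $S_c$ --- is exactly where the real work sits.
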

\begin{proof}
Suppose that $\phi$ is a $\kms_\beta$-state. Since $N(a)=1$ for all $a\in S_c$, see \cite{ABLS}*{Proposition~3.6(i)}, the KMS-condition implies that $\phi\circ\varphi$ is tracial. For $s,t \in S$, applying the $\kms_\beta$-condition twice gives
\[\phi(v_s^{\phantom{*}}v_t^*) = N_s^{-\beta}\phi(v_t^*v_s^{\phantom{*}}) = (N_t/N_s)^\beta \phi(v_s^{\phantom{*}}v_t^*).\]
Thus, if $N_s\neq N_t$, necessarily $\phi(v_s^{\phantom{*}}v_t^*)=0$. If $N_s=N_t$, then \cite{ABLS}*{Proposition~3.6(ii)} states that either $s\perp t$, in which case $\phi(v_s^{\phantom{*}}v_t^*) = N_s^{-\beta}\phi(v_t^*v_s^{\phantom{*}}) =0$, or $s\sim t$. In the later case, assume $sS\cap tS = saS, sa=tb$ for some $a,b \in S_c$. Then  $\phi(v_s^{\phantom{*}}v_t^*) = N_s^{-\beta}\phi(v_t^*v_s^{\phantom{*}}) = N_s^{-\beta} \phi(v_b^{\phantom{*}}v_a^*)$. Suppose $c,d \in S_c$ also satisfy $sc=td$. Since $sa$ is a right LCM of $s$ and $t$, there is $f \in S_c$ such that $sc=saf, td=tbf$. Left cancellation  implies $c=af$ and $d=bf$, and by Lemma~\ref{lem:core elts are unitaries under kms-states} we obtain  $\phi(v_d^{\phantom{*}}v_c^*) = \phi(v_b^{\phantom{*}}e_{fS}^{\phantom{*}}v_a^*) = \phi(v_b^{\phantom{*}}v_a^*)$, showing \eqref{eq:alg char of KMS-states}.

Conversely, suppose $\phi$ is a state such that $\phi\circ\varphi$ is a trace and \eqref{eq:alg char of KMS-states} holds. Note that $\phi$ is $\sigma$-invariant because $N_a=1$ for all $a\in S_c$.
In view of \cite{ALN}*{Lemma~1.9}, it suffices to establish that
\begin{equation}\label{eq:KMS-condition on spanning family halfway}
\phi(v_s^{\phantom{*}}v_t^{\phantom{*}}v_r^*) = N_s^{-\beta} \phi(v_t^{\phantom{*}}v_r^*v_s^{\phantom{*}}) \quad \text{for all } s,t,r \in S,
\end{equation}
By \eqref{eq:alg char of KMS-states}, $\phi(v_s^{\phantom{*}}v_t^{\phantom{*}}v_r^*)=0$  unless $st\sim r$, and $v_r^*v_s^{\phantom{*}} = 0$ unless $r \Cap s$. Assuming $rS \cap sS = ss'S, ss'=rr'$ for some $s',r' \in S$, we get $v_t^{\phantom{*}}v_r^*v_s^{\phantom{*}} = v_{tr'}^{\phantom{*}}v_{s'}^*$, so again by \eqref{eq:alg char of KMS-states} $\phi(v_t^{\phantom{*}}v_r^*v_s^{\phantom{*}}) = 0$ unless $tr'\sim s'$. Assume therefore that $tr'a=s'b$ for some  $a,b \in S_c$. This leads to $str'a = ss'b = rr'b$, hence to $N_{st} = N_r$ because $N$ is multiplicative on $S$ and takes value $1$ on $S_c$. It follows from \cite{ABLS}*{Proposition~3.6(ii)} that $st \sim r$ as well. So both sides of \eqref{eq:KMS-condition on spanning family halfway} are nonzero simultaneously, and it remains to show that they are equal in this case.

By Lemma~\ref{lem:consequence-ABLS-N-homomorphism},  $r' \in S_c$. Using that $\phi\circ \varphi$ is a trace on $C^*(S_c)$, we get
\[\phi(v_t^{\phantom{*}}v_r^*v_s^{\phantom{*}}) = \phi(v_{tr'}^{\phantom{*}}v_{s'}^*) \stackrel{\eqref{eq:alg char of KMS-states}}{=}  N_t^{-\beta} \phi(v_b^{\phantom{*}}v_a^*) = N_t^{-\beta} \phi(v_{r'b}^{\phantom{*}}v_{r'a}^*) \stackrel{\eqref{eq:alg char of KMS-states}}{=} N_s^\beta \phi(v_{st}^{\phantom{*}}v_r^*),\]
giving the desired claim.
\end{proof}

\subsection{The reconstruction formula}\label{subsec:rec formula}
The aim of this section is to obtain the reconstruction formula for $\kms_\beta$-states of  \cite{ABLS}*{Lemma~7.5} for all right LCM semigroups $S$ with generalised scale $N$. The result  was first proved in \cite{LR2}*{Lemma~10.1} for $S=\mathbb{N}\rtimes \mathbb{N}^\times$, and generalised to an admissible right LCM monoid $S$  in \cite{ABLS}*{Lemma~7.5}. Here we show that the existence of a generalised scale alone suffices to obtain the formula. While the strategy of proof is the same as for \cite{ABLS}*{Lemma~7.5}, the major difference is that the former result assumed the existence of minimal representatives for the equivalence classes in $S/_\sim$, see \cite{ABLS}*{Lemma~3.2}, which allows one to perform most steps in $C^*(S)$, whereas here we need to work almost entirely in $\pi_\phi(C^*(S))$, for instance because the summations appearing in \eqref{eq:QI} would not be well-defined when replacing $\pi_\phi(e_{tS})$ by $e_{tS}$.

\begin{lemma}\label{lem:rec formula}
Let $\phi$ be a $\kms_\beta$-state on $C^*(S)$ for some $\beta\in \R$, let $(\pi_\phi,\CH_\phi,\xi_\phi)$  its GNS-representation and $\tilde{\phi} = \bigl( \cdot \ \xi_\phi\mid \xi_\phi \bigr)$ the vector state extension of $\phi$ to $\CL(\CH_\phi)$. For every nonempty subset $I$ of $\Irr(N(S))$, $Q_I := \lim\limits_{F\ssubset I}Q_F$ defines a projection in $\pi_\phi(C^*(S))''$. If $\zeta_I(\beta) < \infty$, then the following statements hold:
\begin{enumerate}[(i)]
\item $\tilde{\phi}(Q_I) = \zeta_I(\beta)^{-1}$.
\item The map $y \mapsto \zeta_I(\beta) \ \tilde{\phi}(Q_I\pi_\phi(y)Q_I)$ defines a state $\phi_I$ on $C^*(S)$, for which $\phi_I \circ \varphi$ is a trace on $C^*(S_c)$.
\item The family $(\pi_\phi(v_s^{\phantom{*}})Q_I\pi_\phi(v_s^*))_{[s] \in S_I/_\sim}$ consists of mutually orthogonal projections, and
$Q^I := \sum_{[s] \in S_I/_\sim} \pi_\phi(v_s^{\phantom{*}})Q_I^{\phantom{*}}\pi_\phi(v_s^*)$
defines a projection such that $\tilde{\phi}(Q^I) = 1$.
\item There is a reconstruction formula for $\phi$ given by
\begin{equation}\label{eq:rec formula}
\begin{array}{c}
\phi(y) =  \zeta_I(\beta)^{-1} \sum\limits_{[s] \in S_I/_\sim} N_s^{-\beta} \ \phi_I(v_s^*yv_s^{\phantom{*}}) \quad \text{for all } y\in C^*(S).
\end{array}
\end{equation}
\end{enumerate}
\end{lemma}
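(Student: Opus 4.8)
The plan is to establish the four parts in order, using the projections $Q_F$ for finite $F\ssubset I$ as building blocks and passing to limits where $\zeta_I(\beta)<\infty$ guarantees convergence. First I would verify that $Q_I=\lim_{F\ssubset I}Q_F$ exists as a projection in $\pi_\phi(C^*(S))''$: each $Q_F$ is a decreasing net of projections (in $F$) since enlarging $F$ multiplies by another factor $\bigl(1-\sum_{[t]\in N^{-1}(n)/_\sim}\pi_\phi(e_{tS})\bigr)$, which is a projection commuting with the previous factors by Lemma~\ref{lem:core elts are unitaries under kms-states} (the $\pi_\phi(e_{tS})$ depend only on $[t]$, and for distinct $n$ the relevant sums are mutually orthogonal projections by the accurate-foundation-set property of transversals); so the net converges strongly to a projection $Q_I$. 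For part~(i), I would compute $\tilde\phi(Q_F)=\prod_{n\in F}\bigl(1-\sum_{[t]\in N^{-1}(n)/_\sim}\phi(e_{tS})\bigr)$, using that the cross terms vanish: $\phi(e_{tS}e_{t'S})=\phi(e_{tS\cap t'S})=0$ when $t\perp t'$, which holds for $t,t'$ in a transversal of the same level and also across levels after a short computation with $N$ multiplicative. By the KMS$_\beta$-condition and \cite{ABLS}*{Proposition~3.6(i)}, $\phi(e_{tS})=N_t^{-\beta}$, so each factor equals $1-\sum_{[t]:N_t=n}n^{-\beta}=1-n^{1-\beta}$, whence $\tilde\phi(Q_F)=\prod_{n\in F}(1-n^{1-\beta})=\zeta_F(\beta)^{-1}$ by the product formula \eqref{eq:restricted-zetaI}; letting $F\nearrow I$ gives $\tilde\phi(Q_I)=\zeta_I(\beta)^{-1}$.

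For part~(ii), positivity and normalisation of $\phi_I$ are immediate from $\tilde\phi(Q_I)=\zeta_I(\beta)^{-1}\ne 0$ and $Q_I\pi_\phi(\cdot)Q_I$ being positive. That $\phi_I\circ\varphi$ is tracial follows because $Q_I$ commutes with $\pi_\phi(C^*(S_c))$: indeed $\pi_\phi(e_{tS})$ for $t\in N^{-1}(n)$ commutes with $\pi_\phi(w_a)$-images by Lemma~\ref{lem:core elts are unitaries under kms-states} applied to $x=\pi_\phi(v_a),y=\pi_\phi(v_a^*)$ (or rather since $\phi(e_{aS}z)=\phi(z)$ for $a\in S_c$ forces $\pi_\phi(e_{aS})\xi_\phi=\xi_\phi$ and hence, via the cyclic vector, $Q_I$ acts as a "corner" compatible with the trace), so $\phi_I(\varphi(x)\varphi(y))=\zeta_I(\beta)\tilde\phi(Q_I\pi_\phi\varphi(xy)Q_I)=\zeta_I(\beta)\phi(\varphi(xy))$ up to the already-known traciality of $\phi\circ\varphi$ from Proposition~\ref{prop:alg char of KMS-states}. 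For part~(iii), mutual orthogonality of $\{\pi_\phi(v_s)Q_I\pi_\phi(v_s^*)\}_{[s]\in S_I/_\sim}$ comes from: for $s\perp t$ one has $v_s^*v_t=0$; for $s\not\sim t$ but $s\Cap t$ in $S_I$ the LCM $ss'=tt'$ has $N_{s'}\ne 1$ so $s'\notin S_c$ and $e_{s'S}Q_I=0$ (as $s'$ lies over some $n\in\langle I\rangle$ and $Q_I$ annihilates $\pi_\phi(e_{s'S})\le\sum_{[u]:N_u=n}\pi_\phi(e_{uS})$, using Lemma~\ref{lem:core elts are unitaries under kms-states} and the relation $\pi_\phi(e_{s'S})\le\pi_\phi(e_{uS})$ for appropriate $u\sim$-reachable factor); and the class $[s]$ being well-defined because $e_{saS}=e_{sS}e_{aS}$-type relations and Lemma~\ref{lem:core elts are unitaries under kms-states} make $\pi_\phi(v_s)Q_I\pi_\phi(v_s^*)$ depend only on $[s]$. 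Summing, $\tilde\phi(Q^I)=\sum_{[s]\in S_I/_\sim}\phi(v_sv_s^*\text{-conjugate of }Q_I)$; using $\tilde\phi(\pi_\phi(v_s)Q_I\pi_\phi(v_s^*))=N_s^{-\beta}\tilde\phi(Q_I)=N_s^{-\beta}\zeta_I(\beta)^{-1}$ by the KMS$_\beta$-condition (shifting $v_s^*\cdot v_s$ past), we get $\tilde\phi(Q^I)=\zeta_I(\beta)^{-1}\sum_{[s]\in S_I/_\sim}N_s^{-\beta}=1$ by definition of $\zeta_I$.

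Finally, part~(iv): since $Q^I$ is a projection with $\tilde\phi(Q^I)=1$, the cyclic vector $\xi_\phi$ lies in its range, so $\tilde\phi(\pi_\phi(y))=\tilde\phi(Q^I\pi_\phi(y)Q^I)$ for all $y\in C^*(S)$. Expanding $Q^I=\sum_{[s]}\pi_\phi(v_s)Q_I\pi_\phi(v_s^*)$ and using orthogonality of the summands together with the KMS$_\beta$-condition to move $\pi_\phi(v_s^*)\cdots\pi_\phi(v_s)$ inward (picking up the factor $N_s^{-\beta}$) yields
\begin{equation*}
\phi(y)=\sum_{[s]\in S_I/_\sim}\tilde\phi\bigl(\pi_\phi(v_s)Q_I\pi_\phi(v_s^*)\pi_\phi(y)\pi_\phi(v_s)Q_I\pi_\phi(v_s^*)\bigr)=\zeta_I(\beta)^{-1}\sum_{[s]\in S_I/_\sim}N_s^{-\beta}\,\phi_I(v_s^*yv_s^{\phantom{*}}),
\end{equation*}
which is \eqref{eq:rec formula}. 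The main obstacle I anticipate is part~(iii): making rigorous that $\pi_\phi(v_s)Q_I\pi_\phi(v_s^*)$ genuinely depends only on the class $[s]\in S_I/_\sim$ and that distinct classes give orthogonal projections requires careful bookkeeping with the LCM relations in $S_I$, the fact that $S_I$ is itself right LCM (via \cite{ABLS}*{Proposition~3.6(iv)}), and repeated use of Lemma~\ref{lem:core elts are unitaries under kms-states} to absorb core elements; one also has to check convergence of the (possibly infinite) sum defining $Q^I$ in the strong topology, which is where $\zeta_I(\beta)<\infty$ is essential. The steps in (i)–(ii) and the final telescoping in (iv) are comparatively routine once the projection calculus of (iii) is in place.
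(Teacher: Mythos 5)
Your overall architecture --- decreasing commuting projections $Q_F\searrow Q_I$, computing $\tilde\phi(Q_F)$, orthogonality of the translates $\pi_\phi(v_s^{\phantom{*}})Q_I\pi_\phi(v_s^*)$ via an irreducible divisor of $N_{s'}$, and then telescoping through $Q^I$ --- is the same as the paper's, and parts (iii) and (iv) are essentially right. But there are two concrete errors. First, in (i) you justify $\tilde\phi(Q_F)=\prod_{n\in F}\bigl(1-\sum_{[t]}\phi(e_{tS})\bigr)$ by claiming that the cross terms $\phi(e_{tS}e_{t'S})$ vanish ``across levels''. They do not: every transversal of $N^{-1}(n')/_\sim$ is a foundation set, so each $t$ at level $n$ has a common multiple with some $t'$ at level $n'$. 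Moreover, if the cross terms did vanish, expanding the product would give $1-\sum_{n\in F}n^{1-\beta}$, not $\prod_{n\in F}(1-n^{1-\beta})$, so the claimed reason cannot produce the claimed formula. What actually happens (and what the paper establishes through \eqref{eq:defect proj prod as sum}) is that the cross terms reassemble: the products $e_{t_nS}e_{t_{n'}S}$ run over a transversal of $N^{-1}(nn')/_\sim$, so their $\phi$-values sum to $(nn')^{1-\beta}=n^{1-\beta}(n')^{1-\beta}$; it is this multiplicativity, not orthogonality, that yields $\tilde\phi(Q_F)=\zeta_F(\beta)^{-1}$.

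Second, in (ii) your chain ends with $\tilde\phi(Q_I\pi_\phi(\varphi(xy))Q_I)=\phi(\varphi(xy))$, which is false: it would force $\phi_I\circ\varphi=\zeta_I(\beta)\,(\phi\circ\varphi)$, which is not a state when $\zeta_I(\beta)\neq 1$. Your underlying idea is sound and arguably cleaner than the paper's: each $\pi_\phi(v_a)$ with $a\in S_c$ is a unitary (by Lemma~\ref{lem:core elts are unitaries under kms-states}) that conjugates $\sum_{[t]\in N^{-1}(n)/_\sim}\pi_\phi(e_{tS})$ to itself because $\alpha_a$ permutes $N^{-1}(n)/_\sim$, so $Q_I$ commutes with $\pi_\phi(\varphi(C^*(S_c)))$. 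But from there, traciality of $\phi_I\circ\varphi$ requires one more application of the KMS$_\beta$-condition for $\tilde\phi$, using that $Q_I$ is fixed by the (extended) dynamics, to get $\tilde\phi(\pi_\phi(\varphi(x))Q_I\pi_\phi(\varphi(y)))=\tilde\phi(Q_I\pi_\phi(\varphi(yx)))$; traciality of $\phi\circ\varphi$ alone does not absorb the inserted $Q_I$. The paper instead verifies traciality directly by expanding $\phi_F\circ\varphi$ as a double sum over $N^{-1}(m_{A\cup B})/_\sim$ of terms $\phi(v_r^*xv_r^{\phantom{*}})$ and permuting these sums with $\alpha$. Finally, note that for infinite $I$ the KMS manipulations involving $Q_I$ and $Q^I$ take place in $\pi_\phi(C^*(S))''$ rather than in $C^*(S)$; the paper sidesteps the attendant technicalities by proving each statement for finite $I$ and then invoking continuity, and your write-up should do the same.
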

\begin{proof}
The strategy of proof is similar to \cite{ABLS}*{Lemma~7.5}, so we only indicate what changes can be made to avoid using core irreducible elements (whose collection in $S$ may reduce to $1$, see Proposition~\ref{prop:BS no core irreds}).

Note that $(Q_F)_{F\ssubset I}$ with $Q_F$ as in \eqref{eq:QI} is a family of commuting projections such that $F\subset F'$ implies $Q_F \geq Q_{F'}$. Thus they converge weakly to a projection $\lim_{F\ssubset I} Q_F =: Q_I$. To obtain (i), we note that
\begin{equation}\label{eq:defect proj prod as sum}\begin{array}{c}
Q_F =  \sum\limits_{A\subset F} (-1)^{\lvert A\rvert}\sum\limits_{([t_n])_{n \in A} \in \prod\limits_{n \in A} N^{-1}(n)/_\sim}\hspace*{-5mm}\pi_\phi(e_{\bigcap\limits_{n \in A}t_nS}) = \sum\limits_{A\subset F} (-1)^{\lvert A\rvert} \sum\limits_{[t] \in N^{-1}(m_A)/_\sim}\hspace*{-5mm}\pi_\phi(e_{tS})
\end{array}\end{equation}
 for $F\ssubset I$. Then the argument employed in \cite{ABLS}*{Lemma~7.5} gives that
  $\tilde{\phi}(Q_F) = \zeta_F(\beta)^{-1}$, so the claim (i) follows by continuity.
This implies that $\phi_I$ is a state on $C^*(S)$. To prove (ii) it suffices, since $\phi_I$ is the w*-limit of $(\phi_F)_{F\ssubset I}$, to show that $\phi_F\circ\varphi$ is tracial for an arbitrary, but fixed $F\ssubset I$. We claim that
\[\begin{array}{lclcl}
\phi_F(x) &=& \zeta_I(\beta)\sum\limits_{A,B\subset F} (-1)^{\lvert A\rvert+\lvert B\rvert} \sum\limits_{[r] \in N^{-1}(m_{A\cup B})/_\sim} N(r)^{-\beta}\phi(v_r^*xv_r^{\phantom{*}}),
\end{array}\]
for all $x \in \varphi(C^*(S_c))$.
To prove this claim, note that by \eqref{eq:defect proj prod as sum}, $\phi_F(x)$ will contain summands of the form $\phi(e_{sS}xe_{tS})$ with $[s]\in N^{-1}(m_{A})$, $[t]\in N^{-1}(m_{B})$, which by the $\kms$-condition vanish unless $s\Cap t$. For any pair $[s],[t]$ with $sS\cap tS=rS$, we get summands $\phi(xe_{rS})$, which by Lemma~\ref{lem:core elts are unitaries under kms-states} only depend on the equivalence class $[r]$ in $S/_\sim$. Further, since $N$ preserves right LCMs by \cite{ABLS}*{Proposition 3.6},  the right LCM of $m_A$ and $m_B$ in $N(S)$ is given by $m_{A\cup B}$. Finally, the claim follows by applying the $\kms_\beta$-condition to $\phi(xe_{rS})$.

Next we recall that the bijection $\alpha_a\colon S/_\sim \to S/_\sim$ restricts to a bijection on $N^{-1}(n)/_\sim$ for $a \in S_c$, see \cite{ABLS}*{Lemma~3.9} (the proof given there only used the existence of a generalised scale on $S$, although the result was stated with further assumptions on $S$). Using this,  for arbitrary $a,b,c,d \in S_c$ and $n \in N(S)$ we have
\[\begin{array}{c}
\sum\limits_{[q] \in N^{-1}(n)/_\sim}\hspace*{-3mm}\phi(v_q^*v_a^{\phantom{*}}v_b^*v_c^{\phantom{*}}v_d^*v_q^{\phantom{*}}) \stackrel{\alpha_a}{=} \sum\limits_{[q] \in N^{-1}(n)/_\sim}\hspace*{-3mm}\phi(v_{bq}^*v_c^{\phantom{*}}v_d^*v_a^{\phantom{*}}v_b^*v_{bq}^{\phantom{*}})
\stackrel{\alpha_b}{=} \sum\limits_{[q] \in N^{-1}(n)/_\sim}\hspace*{-3mm}\phi(v_q^*v_c^{\phantom{*}}v_d^*v_a^{\phantom{*}}v_b^*v_q^{\phantom{*}}).
\end{array}\]
From this observation and the above claim it follows that $\phi_F\circ \varphi$ is a normalised trace on $C^*(S_c)$ for every $F\ssubset I$, and hence $\phi_I\circ \varphi$ is also tracial.

To prove (iii), fix projections $\pi_\phi(v_s^{\phantom{*}})Q_I\pi_\phi(v_s^*)$ and $\pi_\phi(v_t^{\phantom{*}})Q_I\pi_\phi(v_t^*)$ for $[s]\neq [t]$ in $S_I/_\sim$. Then $[s]\in N^{-1}(n)/_\sim$ and $[t]\in N^{-1}(m)/_\sim$ for some $m,n\in N(S)$. Moreover, $s\not\sim t$. If $m=n$ we get $v_s^*v_t=0$ exactly as in the proof of \cite{ABLS}*{Lemma~7.5}. Suppose that $m\neq n$. If $s\perp t$ then again $v_s^*v_t=0$, so we may assume $sS\cap tS=ss'S, ss'=tt'$ for some $s',t'\in S$, and thus $v_s^*v_t^{\phantom{*}}=v_{s'}^{\phantom{*}}v_{t'}^*$. Moreover,
since $s\not\sim t$, we have $N_{s'}>1$ or $N_{t'}>1$. Assume $N_{s'}>1$ and pick a divisor $n'\in \text{Irr}(N(S))$ of $N_{s'}$ in $N(S)$. Then $Q_I\pi_\phi(v_s^*v_t)Q_I$ contains a factor
\[
\begin{array}{c}\bigl(1-\sum\limits_{[r]\in N^{-1}(n')/_\sim}\pi_\phi(e_{rS})\bigr)\pi_\phi(e_{s'S}),\end{array}
\]
and there is at least one $r \in N^{-1}(n')$ satisfying $s'S\subset rS$. The latter yields $\pi_\phi(e_{rS}e_{s'S}) = \pi_\phi(e_{s'S})$, so the whole expression $Q_I\pi_\phi(v_s^*v_t)Q_I$ vanishes. The case of $N_{t'}>1$ is analogous and orthogonality of the family follows. For $I$ finite, we can now conclude that
\[\begin{array}{c}
\tilde{\phi}(Q^I) = \sum\limits_{[s] \in S_I/_\sim} \tilde{\phi}(\pi_\phi(v_s)Q_I\pi_\phi(v_s)^*) = \sum\limits_{[s] \in S_I/_\sim} N_s^{-\beta} \tilde{\phi}(Q_I) \stackrel{(i)}{=} 1
\end{array}\]
using $Q_I \in \pi_\phi(C^*(S))$ and the KMS-condition for $\tilde{\phi}$. Part (iii) is now completed for arbitrary $I$ by invoking continuity.

Claim (iv) follows from (iii) together with the KMS-condition for $\tilde{\phi}$: For $I$ finite and $y \in C^*(S)$, we first get
\[\begin{array}{lcl}
\phi(y) &=& \sum\limits_{[s],[t] \in S_I/_\sim} \tilde{\phi}(\pi_\phi(v_s)Q_I\pi_\phi(v_s^*yv_t^{\phantom{*}})Q_I\pi_\phi(v_t)^*) \\
&=& \sum\limits_{[s],[t] \in S_I/_\sim} N_s^{-\beta} \tilde{\phi}(Q_I\pi_\phi(v_s^*yv_t^{\phantom{*}})Q_I\pi_\phi(v_t)^*\pi_\phi(v_s)Q_I).
\end{array}\]
According to (iii), the summands vanish unless $[s]=[t]$, which gives $\pi_\phi(v_s)Q_I\pi_\phi(v_s)^* = \pi_\phi(v_t)Q_I\pi_\phi(v_t)^*$. Thus we can let $t=s$ so that
\[\begin{array}{lcl}
\phi(y) &=& \sum\limits_{[s] \in S_I/_\sim} N_s^{-\beta} \tilde{\phi}(Q_I\pi_\phi(v_s^*yv_t^{\phantom{*}})Q_I) \\
&\stackrel{(ii)}{=}& \zeta_I(\beta)^{-1}\sum\limits_{[s],[t] \in S_I/_\sim} N_s^{-\beta} \phi_I(v_s^*yv_t^{\phantom{*}}).
\end{array}\]
The case of arbitrary $I$ is obtained by a continuity argument.
\end{proof}

\begin{remark}\label{rem:reconstruction for beta=1}
Lemma~\ref{lem:rec formula} bears no implications for $\beta=1$ as $\zeta_I(1)=\infty$ for all non-empty infinite $I\subset \text{Irr}(N(S))$. However, we can obtain a simpler reconstruction formula, as follows. If $\phi$ is a $\kms_1$-state on $C^*(S)$ and $n\in N(S)$, then
\[\begin{array}{c}
\tilde{\phi}(1-Q_{\{n\}}) = \sum\limits_{[s] \in N^{-1}(n)/_\sim} \phi(e_{sS}) =1.
\end{array}\]
As in the proof of Lemma~\ref{lem:rec formula}~(iv) we arrive at
\begin{equation}\label{eq:rec formula beta=1}
\begin{array}{c} \phi(y) = \sum\limits_{[s] \in N^{-1}(n)/_\sim} n^{-1}\phi(v_s^*yv_s^{\phantom{*}}) \quad \text{for all } y \in C^*(S), n \in N(S). \end{array}
\end{equation}
\end{remark}

\begin{remark}\label{rem:ALN-paper}
There is an apparent similarity between this type of reconstruction formula and \cite{ALN}*{Theorem~6.8}. We need Lemma~\ref{lem:rec formula} in the style of \cite{ABLS} as we will make use of the intermediate results we get for finite subsets in the case where $1 <\beta \leq \beta_c$. This will be the key to proving Proposition~\ref{prop:uniqueness in the crit interval by s q-a}, which is the  uniqueness result for KMS-states of infinite type in the case of very high-dimensional dynamics, that is, where the critical inverse temperature is strictly larger than $1$.
\end{remark}

\subsection{The core Fock module}\label{subsec:construction}
The aim of this subsection is to introduce a $C^*(S)$-$C^*(S_c)$-module for every right LCM monoid $S$ that can be employed to induce KMS-states on $C^*(S)$ from normalised traces on $C^*(S_c)$, and ground states from states.  

For each $s \in S$, we let $\FM_{0,s}$ denote a copy of $C^*(S_c)$ equipped with the standard right $C^*(S_c)$-module structure. We write $\varphi_s\colon \FM_{0,s} \to C^*(S_c)$ for the natural identification map, which is an isometric isomorphism of Banach spaces, and set $\varphi_{s,t}:=\varphi_s^{-1}\circ \varphi_t^{\phantom{1}}\colon \FM_{0,t} \to \FM_{0,s}$. For convenience, let $\tilde{\varphi}_t := \varphi\circ \varphi_t: \FM_{0,t} \to C^*(S)$.

\begin{lemma}\label{lem:inner product}
On the right $C^*(S_c)$-module $\FM_0 := \bigoplus_{s \in S} \FM_{0,s}$, the map
\[\begin{array}{rcl}
\langle \cdot, \cdot \rangle\colon \FM_{0,s} \times \FM_{0,t} &\to& C^*(S_c) \\
(\xi, \eta) &\mapsto& E((v_s\tilde{\varphi}_s(\xi))^*v_t\tilde{\varphi}_t(\eta))
\end{array}\]
determines a positive semidefinite sesquilinear form on $\FM_0$.
\end{lemma}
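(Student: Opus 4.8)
The goal is to show that the form $\langle\cdot,\cdot\rangle$ on $\FM_0 = \bigoplus_{s\in S}\FM_{0,s}$ is sesquilinear (clear from the formula, since $E$, $\varphi$, and the $\tilde\varphi_t$ are all linear, conjugate-linear, or module maps in the appropriate variable), hence the only real content is \emph{positivity}: for any finitely supported $\xi = (\xi_s)_{s\in S} \in \FM_0$ one must verify $\langle\xi,\xi\rangle \geq 0$ in $C^*(S_c)$. The natural approach is to push everything into $C^*(S)$ and use that $\varphi$ is a faithful $*$-homomorphism with a conditional expectation $E$ onto its image, so positivity in $C^*(S_c)$ will follow from positivity of a related element in $C^*(S)$.

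First I would write $\langle\xi,\xi\rangle = \sum_{s,t} E\bigl((v_s\tilde\varphi_s(\xi_s))^* v_t\tilde\varphi_t(\xi_t)\bigr)$, and set $y_s := v_s\,\varphi(\varphi_s(\xi_s)) = v_s\tilde\varphi_s(\xi_s) \in C^*(S)$, so that $\langle\xi,\xi\rangle = E\bigl(\bigl(\sum_s y_s\bigr)^*\bigl(\sum_t y_t\bigr)\bigr) = E(z^*z)$ where $z = \sum_s y_s \in C^*(S)$. Since $E\colon C^*(S)\to C^*(S_c)$ is a conditional expectation (in particular a positive map, indeed completely positive), $E(z^*z)\geq 0$ in $C^*(S_c)$, which is exactly the claim. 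The identity $\langle\xi,\xi\rangle = E(z^*z)$ is just bilinearity of the sum together with the definition of the form on each summand $\FM_{0,s}\times\FM_{0,t}$; one should note that the formula in the lemma is stated for $\xi\in\FM_{0,s}$, $\eta\in\FM_{0,t}$ individually and then extended sesquilinearly to all of $\FM_0$, which is precisely the bookkeeping that turns $\langle\xi,\xi\rangle$ into $E(z^*z)$.

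The sesquilinearity should be dispatched first, briefly: $\varphi_t$ is $\C$-linear, $\varphi$ is linear, and $v_t(\,\cdot\,)$ is linear, so $\xi\mapsto \tilde\varphi_t(\xi)$ and then $\eta\mapsto v_t\tilde\varphi_t(\eta)$ are linear in the second variable and conjugate-linear (via the adjoint) in the first; composing with the linear map $E$ preserves this. One must also check the form is well-defined, i.e. that the sum defining $\langle\xi,\eta\rangle$ is finite — this holds because elements of $\FM_0 = \bigoplus_{s\in S}\FM_{0,s}$ are by definition finitely supported.

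The only place where something could go wrong — and hence the ``main obstacle'' — is making sure the conditional expectation $E$ of \eqref{eq:faithful varphi} is genuinely available as a \emph{positive} (or completely positive) map on all of $C^*(S)$; this is asserted in the paragraph preceding Lemma~\ref{lem:trace on C*(S)} (``There exists a conditional expectation $E\colon C^*(S)\to C^*(S_c)$''), attributed to Neshveyev via groupoid techniques, so I would simply invoke it. Given that, the proof is a one-line computation: $\langle\xi,\xi\rangle = E(z^*z)\geq 0$. No positive-semidefiniteness subtleties remain, since we are not yet claiming the form is definite — that (and the quotient by the null space) presumably comes in the construction of $\FM$ itself, after this lemma.
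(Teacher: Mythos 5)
Your proof is correct and follows essentially the same route as the paper: both identify $\langle\xi,\xi\rangle$ with $E(z^*z)$ for $z=\sum_s v_s\tilde\varphi_s(\xi_s)\in C^*(S)$ and conclude positivity from the standing properties of $E$ and $\varphi$, after dispatching sesquilinearity as routine. If anything, your final step (positivity of the conditional expectation $E$ applied to $z^*z$) is slightly cleaner than the paper's, which writes $\varphi(\langle\xi,\xi\rangle)=\tilde\xi^*\tilde\xi$ and invokes faithfulness of $\varphi$ --- an identity that glosses over the cross terms annihilated by $E$ but not necessarily zero in $C^*(S)$ --- so your version is the safer formulation of the same idea.
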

\begin{proof}
It is straightforward to check that $\langle \cdot, \cdot \rangle$ is sesquilinear, so let $\xi= \sum_{s \in F} \xi_s$ with $\xi_s \in \FM_{0,s}$ for every $t \in F\ssubset S$. Then $\varphi(\langle \xi, \xi\rangle) = \tilde{\xi}^*\tilde{\xi} \geq 0$ in $C^*(S)$ for $\tilde{\xi}:= \sum_{s \in F} v_s\varphi_s(\xi_s)$. Since $\varphi$ is faithful, we deduce $\langle \xi, \xi\rangle \geq 0$.
\end{proof}

We can thus form the right Hilbert $C^*(S_c)$-module $\FM := \overline{\FM_0/ \FN}^{\|\cdot\|}$ for $\FN := \{ \xi \in \FM_0 \mid \langle\xi,\xi\rangle=0\}$ and the norm given by $\|\xi\|^2:= \|\langle \xi,\xi\rangle\|$.

It follows from the definition of $\langle\cdot,\cdot\rangle$ that for $\xi= \sum_{t \in S} \xi_t, \eta=\sum_{r \in S} \eta_r$ in $\FM_0$, if we let $\xi_{[s]} := \sum_{t \in [s]} \xi_t$ and similarly for $\eta_{[s]}$, then
\begin{equation}\label{eq:inner product decomposition}
\begin{array}{c}
\langle \xi,\eta\rangle = \sum\limits_{[s] \in S/_\sim} \sum\limits_{t,r \in [s]}\langle \xi_t,\eta_r\rangle = \sum\limits_{[s] \in S/_\sim} \langle \xi_{[s]},\eta_{[s]}\rangle,
\end{array}
\end{equation}
 because all terms $\langle \xi_t,\eta_r\rangle$ vanish for $[t]\neq [r]$. From this perspective, it is natural to view $\FM_0$ as $\FM_0= \bigoplus_{[s] \in S/_\sim} \FM_{0,[s]}$ with $\FM_{0,[s]} := \bigoplus_{t \in [s]} \FM_{0,t}$.

\begin{thm}\label{thm:core Fock module}
For every right LCM monoid $S$, $\FM$ is a $C^*(S)-C^*(S_c)$ right Hilbert bimodule with left action determined by $V_s([\xi]) := [\varphi_{st,t}(\xi)]$ for $\xi \in \FM_{0,t}$.
\end{thm}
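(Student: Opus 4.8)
The plan is to verify that the prescribed formula $V_s([\xi]):=[\varphi_{st,t}(\xi)]$ for $\xi\in\FM_{0,t}$ descends to a well-defined adjointable operator on $\FM$ and that $s\mapsto V_s$ is a representation of $S$ satisfying the defining relations of $C^*(S)$, so that by universality it extends to a $*$-homomorphism $C^*(S)\to\CL(\FM)$. First I would work on the pre-Hilbert module $\FM_0$: for each $s\in S$, the map $V_s^{(0)}$ sending the summand $\FM_{0,t}$ into $\FM_{0,st}$ via $\varphi_{st,t}$ is a well-defined $C^*(S_c)$-linear map on $\FM_0$, since $\varphi_{st,t}$ intertwines the right actions. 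The key computation is that $V_s^{(0)}$ is inner-product preserving up to the correct defect: for $\xi\in\FM_{0,t}$, $\eta\in\FM_{0,r}$,
\[
\langle V_s^{(0)}\xi, V_s^{(0)}\eta\rangle = E\bigl((v_{st}\tilde\varphi_{st}\varphi_{st,t}(\xi))^*v_{sr}\tilde\varphi_{sr}\varphi_{sr,r}(\eta)\bigr) = E\bigl((v_s v_t\tilde\varphi_t(\xi))^* v_s v_r\tilde\varphi_r(\eta)\bigr),
\]
using $v_{st}=v_sv_t$ and $\tilde\varphi_{st}\circ\varphi_{st,t}=\varphi\circ\varphi_{st}\circ\varphi_{st}^{-1}\circ\varphi_t=\tilde\varphi_t$. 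Since $v_s^*v_s=1$, this equals $E((v_t\tilde\varphi_t(\xi))^*v_r\tilde\varphi_r(\eta))=\langle\xi,\eta\rangle$; so $V_s^{(0)}$ is actually isometric for $\langle\cdot,\cdot\rangle$. In particular $V_s^{(0)}$ maps $\FN$ into $\FN$, hence descends to an isometry $V_s$ on $\FM$.

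Next I would establish adjointability and the relations. The natural candidate for the adjoint of $V_s$ is the map $V_s^*$ that acts on $\FM_{0,t}$ by: if $tS\cap sS\neq\emptyset$, write $t=s t'$ — more precisely use that $v_s^*v_t$ lands in the span of $v_{t'}$-type terms — and otherwise returns $0$; concretely one checks $\langle V_s^{(0)}\xi,\eta\rangle = \langle \xi, (V_s)^*\eta\rangle$ by the same $E$-computation, reading $v_s^*$ against $v_t$ inside the conditional expectation. This shows $V_s\in\CL(\FM)$ with $V_s^*V_s=1$, i.e. each $V_s$ is an isometry, and the relation $V_sV_t=V_{st}$ is immediate from $\varphi_{str,r}=\varphi_{str,tr}\circ\varphi_{tr,r}$. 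The remaining relation to verify is
\[
V_s V_s^* V_t V_t^* = \begin{cases} V_r V_r^* & \text{if } sS\cap tS=rS;\\ 0 & \text{if } s\perp t,\end{cases}
\]
which I would check by computing the range projections: $V_sV_s^*$ is the projection onto $\clsp\{[\varphi_{st,t}(\xi)]: t\in S,\ \xi\in\FM_{0,t}\}$, i.e. onto the closure of $\bigoplus_{u\in sS}\FM_{0,u}$ (modulo $\FN$), and then $sS\cap tS=rS$ or $\emptyset$ gives the claim at the level of index sets, compatibly with passing to the quotient $\FM$. By universality of $C^*(S)$, the family $(V_s)_{s\in S}$ induces a $*$-homomorphism $C^*(S)\to\CL(\FM)$, which is exactly the asserted left action making $\FM$ a $C^*(S)$-$C^*(S_c)$ bimodule; right $C^*(S_c)$-linearity of the left action is clear since each $\varphi_{st,t}$ is right $C^*(S_c)$-linear.

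\textbf{Main obstacle.} The routine parts are the algebraic relations $V_sV_t=V_{st}$ and the inner-product computation on $\FM_0$. The subtle point — and the one where faithfulness of $\varphi$ (hence of $E\circ\varphi=\id$, from \eqref{eq:faithful varphi}) is essential — is controlling how $V_s^*$ acts: one must show that $v_s^*v_t\tilde\varphi_t(\eta)$, when fed into $E$ against arbitrary test vectors, depends only on the class $[\eta]\in\FM$ and produces something of the form $v_{t'}(\text{element of }\varphi(C^*(S_c)))$ when $t\in sS$, using right LCM-ness to organise the overlap $sS\cap tS$. Equivalently, the genuine work is verifying that the formula for $V_s^*$ is well-defined on the quotient and adjointable, rather than merely a densely-defined operator on $\FM_0$; once that is in hand the range-projection identity, and thus the defining relations, follow by bookkeeping over the index set $S$ partitioned into $\sim$-classes as in \eqref{eq:inner product decomposition}.
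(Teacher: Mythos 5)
Your setup and the first half of your argument track the paper's proof closely: the operators $V_{0,s}$ on $\FM_0$, the computation $\langle V_{0,s}\xi,V_{0,s}\eta\rangle=\langle\xi,\eta\rangle$ via $\tilde\varphi_{st}\circ\varphi_{st,t}=\tilde\varphi_t$ and $v_s^*v_s=1$, the descent to isometries on $\FM$, and $V_sV_t=V_{st}$ are all correct, and you rightly locate the difficulty in the adjoint. The gap is in the step where you verify the remaining defining relation of $C^*(S)$.

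First, the support of $V_s^*$ is not what you state: $V_s^*$ annihilates $\FM_{0,t}$ unless $[t]\in sS/_\sim$, i.e.\ unless $sS\cap tS=ta_tS$ with $a_t\in S_c$; the condition $sS\cap tS\neq\emptyset$ is not sufficient, and getting this support right is exactly what makes the vanishing of $V_s^*V_t$ match that of $V_{s'}V_{t'}^*$. More seriously, the relation $V_s^{\phantom{*}}V_s^*V_t^{\phantom{*}}V_t^*=V_r^{\phantom{*}}V_r^*$ does not ``follow at the level of index sets.'' After quotienting by $\FN$, the module decomposes over $S/_\sim$ rather than over $S$ (see \eqref{eq:inner product decomposition}), and within a class $[u]$ the summands $\FM_{0,u'}$, $u'\in[u]$, are glued together nontrivially; the range of $V_s$ meets the component indexed by $[u]$ in the closed image of $\bigoplus_{u'\in[u]\cap sS}\FM_{0,u'}$, which is in general a proper subspace, so the intersection of the ranges of $V_s$ and $V_t$ inside a single component is not determined by the intersection $sS\cap tS$ of index sets. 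Moreover, even granting that the intersection of ranges equals the range of $V_r$, the product of two projections is the projection onto the intersection of their ranges only when they commute, which is itself something to prove. The paper avoids all of this by proving the stronger identity $V_s^*V_t^{\phantom{*}}=V_{s'}^{\phantom{*}}V_{t'}^*$ (when $sS\cap tS=ss'S$, $ss'=tt'$, and $0$ when $s\perp t$) directly from the explicit formula \eqref{eq:adjoint of V_S}; this requires checking that the formula is independent of the chosen right-LCM data $(a_t,\overline{t})$ and then comparing the two choices $(a_r,s'\overline{r})$ and $(a_{tr},\overline{tr})$ that arise on the two sides. That comparison is the real content of the proof and is missing from your sketch; once the identity $V_s^*V_t^{\phantom{*}}=V_{s'}^{\phantom{*}}V_{t'}^*$ is established, the range-projection relation you want is an immediate corollary.
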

\begin{proof}
We need to show that the linear maps $(V_s)_{s \in S}$ are well-defined and adjointable on $\FM_0/\FN$ so that they extend to adjointable, linear maps on $\FM$ that we again denote by $V_s$, and that these define a representation of $C^*(S)$. First let $V_{0,s}$ be the linear operator on $\FM$ given by $V_{0,s} (\xi_t) := \varphi_{st,t}(\xi_t)$ for $\xi_t \in \FM_{0,t}$ and $s,t \in S$. For $r \in S$ and $\xi \in \FM_{0,t},\eta \in \FM_{0,r}$, we have
\[\begin{array}{lclcl}
\langle V_{0,s}(\xi), V_{0,s}(\eta)\rangle &=& E(\tilde{\varphi}_{st}(\varphi_{st,t}(\xi))^*v_{st}^*v_{sr}^{\phantom{*}}\tilde{\varphi}_{sr}(\varphi_{sr,r}(\eta))) \vspace*{2mm}\\
&=& E(\tilde{\varphi}_t(\xi)^*v_t^*v_r^{\phantom{*}}\tilde{\varphi}_r(\eta))\\
&=& \langle \xi,\eta\rangle,
\end{array}\]
because $[st]=[sr]$ precisely when $ [t]=[r]$ due to left cancellation. Thus $V_{0,s}$ induces a well-defined inner-product preserving linear operator $V_s$  on $\FM_0/\FN$. In particular, if $V_s$ is shown to be adjointable, it will be an isometry.

Fix $s \in S$. For each $t \in S$ such that $[t] \in sS/_\sim$, we fix $a_t \in S_c, \overline{t}\in S$ with the property that $sS\cap tS = ta_tS$ and  $ta_t=s\overline{t}$. We then define a linear operator $V_{0,s}'$ on $\FM_0$ by sending $\xi_t \in \FM_{0,t}$ to
\begin{equation}\label{eq:adjoint of V_S}
\begin{array}{c}
V_{0,s}' (\xi_t) = \begin{cases} \tilde{\varphi}_{\overline{t}}^{-1}(v_{a_t}^*\tilde{\varphi}_t(\xi_t)) &\text{ if } sS\cap tS = ta_tS, ta_t=s\overline{t} \text{ with } a_t \in S_c, \overline{t}\in S;\\
0 &\text{ otherwise.}\end{cases}
\end{array}
\end{equation}
We claim that $\langle V_{0,s}'(\xi),\eta\rangle = \langle \xi,V_{0,s}(\eta)\rangle$ holds for all $\xi,\eta \in \FM_0$. Due to the definition of $\langle\cdot,\cdot\rangle$ and linearity, it suffices to consider the case where $\xi \in \FM_{0,t}$ and $\eta \in \FM_{0,r}$ for arbitrary $t,r \in S$. We have $\langle V_{0,s}'(\xi),\eta\rangle = 0$ unless $[t] \in sS/_\sim$ and $\eta \in \FM_{0,r}$ with $r\sim \overline{t}$, that is, by left cancellation in $S$, unless  $t\sim s\overline{t}\sim sr$. By \eqref{eq:inner product decomposition}, we see that $\langle \xi,V_{0,s}(\eta)\rangle=0$ unless $sr\sim t$. Assuming $t\sim sr$, we compute that
\[\begin{array}{lclclcl}
\langle V_{0,s}' (\xi), \eta \rangle
&=& E(\tilde{\varphi}_t(\xi)^*v_{a_t}^{\phantom{*}}v_{\overline{t}}^*v_r^{\phantom{*}}\tilde{\varphi}_r(\eta)) \vspace*{2mm}\\
&=& E(\tilde{\varphi}_t(\xi)^*v_t^*v_{sr}^{\phantom{*}}\tilde{\varphi}_r(\eta))
&=&  \langle \xi,V_{0,s}(\eta)\rangle.
\end{array}\]
It follows that $\langle V_{0,s}'(\xi),\eta\rangle = \langle \xi,V_{0,s}(\eta)\rangle$ holds for all $\xi,\eta \in \FM_0$. In particular, $\FN \subset \ker~V_{0,s}'$ for all $s \in S$ and $\|V_{0,s}'\|\leq 1$, so that we obtain a well-defined bounded linear operator $V_s'$ on $\FM$. We conclude that $V_s$ is an adjointable isometry on $\FM$ with $V_s'=V_s^*$ for every $s \in S$, and $V_sV_t = V_{st}$ for $s,t \in S$ is clear.

We note for later use that the expression for $V_{0,s}' (\xi_t)$ in \eqref{eq:adjoint of V_S} depends on the chosen pair $a_t$ and $\overline{t}$. We can replace this by any other pair $f\in S_c$ and $w\in S$ such that $rS\cap sS=sfS$ and $tf=sw$ upon multiplying $a_t$ and $\overline{t}$ on the right with an element $x$ in $S^*$. The resulting $V_{0,s}' (\xi)$ will land in a summand $\FM_{0,\overline{t}x}$, but the final operator $V_s^*$ will not, as it is defined on the module $\FM$.

In order to obtain a representation of $C^*(S)$, we need to show that
\begin{equation}\label{eq:C*(S) key relation}
V_s^*V_t^{\phantom{*}}=
\begin{cases}
V_{s'}^{\phantom{*}}V_{t'}^* & \text{ if } sS\cap tS=ss'S, ss'=tt'; \\
0 &\text{ if } s\perp t.
\end{cases}
\end{equation}
holds for all $s,t \in S$. So let $s,t \in S$. If $s\perp t$, then  $s \perp tr$ for all $r \in S$, and therefore $V_{0,t}(\FM_{0,r}) = \FM_{0,tr}$, implying $V_s^*V_t^{\phantom{*}}=0$. So let us assume that there are $s',t' \in S$ with $sS\cap tS=ss'S, ss'=tt'$. Fix $r \in S$ and $\xi \in \FM_{0,r}$, and note that
\begin{equation}\label{eq:C*(S) key relation ideals}
sS\cap trS = sS\cap tS\cap trS = t(t'S\cap rS).
\end{equation}
By our definition of adjoint in \eqref{eq:adjoint of V_S}, we have $V_s^*V_t^{\phantom{*}}([\xi]) = 0$ unless $[tr]\in sS/_\sim$, and
similarly $V_{s'}^{\phantom{*}}V_{t'}^*([\xi])$ vanishes unless $[r]\in t'S/_\sim$. In case the former inclusion holds, we have $trb=ss''$ for some $b\in S_c$, $s''\in S$, thus $trb\in sS\cap tS=tt'S$, and hence by left cancellation $rb=t'r'$ for some $r'\in S$. This is exactly the condition $[r]\in t'S/_\sim$. Conversely, from $[r]\in t'S/_\sim$ we get $[tr]\in sS/_\sim$ by \eqref{eq:C*(S) key relation ideals}. Thus it suffices to prove equality of the terms in \eqref{eq:C*(S) key relation} evaluated at $[\xi]$ under the assumption that they both are non-zero. We choose
 $a_r \in S_c$ and $ \overline{r} \in S$ with $t'S\cap rS = ra_rS, ra_r=t'\overline{r}$, so that
 \[
 V_{s'}^{\phantom{*}}V_{t'}^*([\xi])
= [\varphi_{s'\overline{r},\overline{r}}(\tilde{\varphi}_{\overline{r}}^{-1}(v_{a_r}^*\tilde{\varphi}_r(\xi)))].
 \]
 Note that by left cancellation for $t$ and \eqref{eq:C*(S) key relation ideals}, we equivalently have  $sS\cap trS= tra_rS$ with $tra_r = tt'\overline{r}= ss'\overline{r}$. Similarly, choose
  $a_{tr}\in S_c$ and $\overline{tr}\in S$ such that $trS\cap sS=tra_{tr}S$ and $tra_{tr}=s\overline{tr}$ to obtain
\[
V_s^*V_t^{\phantom{*}}([\xi]) = [\tilde{\varphi}_{\overline{tr}}^{-1}(v_{a_{tr}}^*\tilde{\varphi}_{tr}(\varphi_{tr,r}(\xi)))].
\]
Since we may compute $V_s^*$ using the pair $a_r$, $s'\overline{r}$ or $a_{tr}$, $\overline{tr}$, we therefore get
\[\begin{array}{lclcl}
V_{s'}^{\phantom{*}}V_{t'}^*([\xi])
&=& [\tilde{\varphi}_{s'\overline{r}}^{-1}(v_{a_r}^*\tilde{\varphi}_r(\xi))] \vspace*{2mm}\\
&=& [\tilde{\varphi}_{\overline{tr}}^{-1}(v_{a_{tr}}^*\tilde{\varphi}_{tr}(\varphi_{tr,r}(\xi)))]
&=& V_s^*V_t^{\phantom{*}}([\xi])
\end{array}\]
By continuity, we get \eqref{eq:C*(S) key relation}, so that $V$ defines a left action of $C^*(S)$ on $\FM$.
\end{proof}

\begin{definition}\label{def:core Fock module}
For a right LCM monoid $S$, we let the \emph{core Fock module} of $S$ be the right Hilbert $C^*(S)$-$C^*(S_c)$-module $\FM$ of Theorem~\ref{thm:core Fock module}. We let $\pi_\FM$ denote the $*$-homomorphism $C^*(S) \to \CL(\FM)$ from Theorem~\ref{thm:core Fock module}.
\end{definition}

By virtue of Theorem~\ref{thm:core Fock module}, every representation $\pi\colon C^*(S_c) \to B(H)$ of $C^*(S_c)$ on a Hilbert space $H$ gives rise to an induced representation $\text{Ind}^\FM \pi\colon C^*(S) \to \CL(\FM \otimes_\pi H)$ of $C^*(S)$. In particular, this applies to the GNS-representation $(\pi_\rho,H_\rho,\xi_\rho)$ associated to a state $\rho$ on $C^*(S_c)$.

\begin{notation}\label{not:unit vectors}
For $r \in S$, let $\omega_r:= [\varphi_r^{-1}(1)]$. For every state $\rho$ on $C^*(S_c)$, define a state on $C^*(S)$ by
$\chi_{\rho,r}(x) := \langle \text{Ind}^\FM\pi_\rho(x)(\omega_r \otimes \xi_\rho),\omega_r \otimes \xi_\rho \rangle$ for $x\in C^*(S)$.
\end{notation}

\begin{lemma}\label{lem:evaluation under state}
Let $r \in S$ and $\rho$ be a state on $C^*(S_c)$. Then the following  hold:
\begin{enumerate}[(i)]
\item $\chi_{\rho,r}(x) = \rho(E(v_r^*xv_r^{\phantom{*}}))$ for all $r \in S, x \in C^*(S)$.
In particular, $\chi_{\rho,1}\circ \varphi = \rho$.
\item If $\rho$ is a trace, then $\chi_{\rho,r}=\chi_{\rho,r'}$ whenever $r,r' \in S$ satisfy $r\sim r'$.
\item If $\rho$ is a trace and $s,t \in S$, then $\chi_{\rho,r}(v_s^{\phantom{*}}v_t^*)=0$, unless $s\sim t$, say $sS\cap tS=saS, sa=tb$ for some $a,b \in S_c$, and there exists $r''\in S$ with $sar''=tbr''\sim r$. The map $[r]\mapsto [r'']$ is a one-to-one correspondence between $saS/_\sim$ and $S/_\sim$ with $\chi_{\rho,r}(v_s^{\phantom{*}}v_t^*)= \chi_{\rho,r''}(v_a^*v_b^{\phantom{*}})$.
\end{enumerate}
\end{lemma}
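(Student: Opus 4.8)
The plan is to unwind Notation~\ref{not:unit vectors} using the definition of the inner product on $\FM$ and the left action $V$, and to read off each assertion from the resulting formula.

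First I would prove (i). By definition, $\chi_{\rho,r}(x) = \langle \Ind^\FM\pi_\rho(x)(\omega_r\otimes\xi_\rho),\omega_r\otimes\xi_\rho\rangle$, and the inner product on $\FM\otimes_\rho H_\rho$ is $\langle \eta\otimes\zeta,\eta'\otimes\zeta'\rangle = (\pi_\rho(\langle\eta',\eta\rangle)\zeta\mid\zeta')$. Since $\omega_r = [\varphi_r^{-1}(1)]$ lives in the summand $\FM_{0,r}$, and for a spanning element $x = v_s v_t^*$ the left action $\pi_\FM(v_s v_t^*)$ sends $\FM_{0,r}$ into the appropriate summand, the key computation is $\langle \pi_\FM(x)\omega_r,\omega_r\rangle = E((v_r\varphi(1))^* x\, v_r\varphi(1)) = E(v_r^* x v_r)$, using $\tilde\varphi_r(\varphi_r^{-1}(1)) = \varphi(1) = 1$ in $C^*(S)$ and Lemma~\ref{lem:inner product}. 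Applying $\pi_\rho$ and pairing with $\xi_\rho$ then gives $\chi_{\rho,r}(x) = \rho(E(v_r^* x v_r))$; this extends to all $x\in C^*(S)$ by linearity and continuity. For the last clause, $\chi_{\rho,1}(\varphi(y)) = \rho(E(v_1^*\varphi(y)v_1)) = \rho(E(\varphi(y))) = \rho(y)$ since $E\circ\varphi = \id$.

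Next, (ii) and (iii) follow by combining (i) with the algebraic identities already available. For (ii), if $r\sim r'$, then $v_r^* x v_r$ and $v_{r'}^* x v_{r'}$ differ, after inserting the relation $ra = r'b$ with $a,b\in S_c$, by elements of the form $v_a e_{\cdot} v_b^*$ inside the expectation; since $\rho$ is a trace on $C^*(S_c)$ and $E$ intertwines appropriately (in the spirit of the manipulation in Lemma~\ref{lem:rec formula} and Proposition~\ref{prop:alg char of KMS-states}), one gets $\rho(E(v_r^* x v_r)) = \rho(E(v_{r'}^* x v_{r'}))$. Concretely it is cleanest to observe that $\chi_{\rho,r}$ depends only on the state $\rho$ and the vector $\omega_r\otimes\xi_\rho$ up to the $\FN$-equivalence, and $\omega_r$ and $\omega_{r'}$ have the same image in $\FM$ when $\rho$ is tracial, by the inner product formula. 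For (iii), compute $\chi_{\rho,r}(v_s v_t^*) = \rho(E(v_r^* v_s v_t^* v_r))$; this vanishes unless $r\Cap s$ and $r\Cap t$ and, after reduction, unless $s\sim t$. Writing $sS\cap tS = saS$ with $sa = tb$, $a,b\in S_c$, and analysing $v_r^* v_s v_t^* v_r$ via the right LCM relations, one finds it is nonzero exactly when there is $r''$ with $sar'' = tbr'' \sim r$, i.e. $[r]\in saS/_\sim$, and the passage $[r]\mapsto [r'']$ is the bijection $saS/_\sim \to S/_\sim$ induced by $sa$ being a (left) multiplier; the surviving term rearranges, using (i) and the trace property, to $\rho(E(v_{r''}^* v_a^* v_b v_{r''})) = \chi_{\rho,r''}(v_a^* v_b)$.

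The main obstacle I expect is bookkeeping in (iii): carefully tracking how $v_r^* v_s v_t^* v_r$ simplifies through the right LCM relations of $C^*(S)$ and the conditional expectation $E$, and verifying that the correspondence $[r]\leftrightarrow[r'']$ is genuinely well-defined and bijective on $\sim$-classes (independence of the choice of representatives $a$, $b$, and of $r$ within its class). This is where I would lean on Lemma~\ref{lem:core elts are unitaries under kms-states}-style arguments and the left-cancellation identities already used in the proof of Theorem~\ref{thm:core Fock module}, rather than reproving them. Parts (i) and (ii) should be essentially formal once the inner product is unwound.
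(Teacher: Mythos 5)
Your overall route is the paper's own: establish the formula in (i) by evaluating the module inner product on the spanning elements $v_s^{\phantom{*}}v_t^*$ (the only nontrivial point being the adjoint $V_t^*$, which the paper handles via the explicit formula \eqref{eq:adjoint of V_S}), then deduce (ii) from traciality of $\rho$ and (iii) from (i)--(ii) together with the right LCM combinatorics. Two points, however, need repair. First, in (ii) your ``cleanest'' observation is false: $\omega_r$ and $\omega_{r'}$ do \emph{not} have the same image in $\FM$ (which in any case does not depend on $\rho$), nor do $\omega_r\otimes\xi_\rho$ and $\omega_{r'}\otimes\xi_\rho$ coincide in $\FM\otimes_\rho H_\rho$ for tracial $\rho$: one computes $\langle \omega_r-\omega_{ra},\omega_r-\omega_{ra}\rangle = 2-w_a^{\phantom{*}}-w_a^*$, so $\lVert(\omega_r-\omega_{ra})\otimes\xi_\rho\rVert^2 = 2-2\Re\rho(w_a)$, which is $2$ for $\rho=\tau_0$ and $a\neq 1$. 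The vector \emph{states} agree, but not the vectors. The correct argument is the one you gesture at first, and it should be made precise: $\chi_{\rho,ra}(x)=\rho(E(v_a^*v_r^*xv_r^{\phantom{*}}v_a^{\phantom{*}}))=\rho(w_a^*E(v_r^*xv_r^{\phantom{*}})w_a^{\phantom{*}})=\rho(E(v_r^*xv_r^{\phantom{*}})w_a^{\phantom{*}}w_a^*)=\rho(E(v_r^*xv_r^{\phantom{*}}))$, using that $E$ is a $C^*(S_c)$-bimodule map, traciality, and $\rho(w_a^{\phantom{*}}w_a^*)=\rho(w_a^*w_a^{\phantom{*}})=1$.

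Second, in (iii) the step ``after reduction, unless $s\sim t$'' hides the one place where the generalised scale is genuinely used. What drops out of (i) is that $\chi_{\rho,r}(v_s^{\phantom{*}}v_t^*)$ vanishes unless $sr'\sim r\sim tr'$ for some $r'\in S$; to pass from $sr'\sim tr'$ to $s\sim t$ one needs $N_{s}=N_{t}$ together with the dichotomy of \cite{ABLS}*{Proposition~3.6(ii)} (for $N_s=N_t$, either $s\perp t$ or $s\sim t$). In a bare right LCM monoid this implication can fail, so you should invoke the standing hypothesis that $S$ carries a generalised scale at exactly this point. The remaining bookkeeping you flag (well-definedness and bijectivity of $[r]\mapsto[r'']$, and the rearrangement to $\chi_{\rho,r''}(v_a^*v_b^{\phantom{*}})$ via $sar''=tbr''$ and part (ii)) proceeds as you describe and matches the paper.
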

\begin{proof}
We first show that $\langle V_s^{\phantom{*}}V_t^*(\omega_r),\omega_r\rangle = E(v_r^*v_t^{\phantom{*}}v_s^*v_r^{\phantom{*}})$ holds for all $s,t,r \in S$, which will then imply (i) as $C^*(S) = \overline{\text{span}}\{ v_s^{\phantom{*}}v_t^* \mid s,t \in S\}$. Due to \eqref{eq:adjoint of V_S}, $V_t^*(\omega_r)=0$ unless $r\in tS/_\sim$, in which case writing $rS\cap tS = ra_rS, ra_r=t\overline{r}$ for some $a_r \in S_c $ and $ \bar{r} \in S$ implies that $V_t^*(\omega_r)\in \FM_{0,\bar{r}}$. Thus $\langle V_s^{\phantom{*}}V_t^*(\omega_r),\omega_r\rangle=0$
 unless we have $r\in tS/_\sim$ and $r\sim s\bar{r}$. Assume therefore $r\in tS/_\sim$ and $r\sim s\bar{r}$, with $a_r, \bar{r}$ defined as above.  It follows as in the proof of \eqref{eq:C*(S) key relation} that
 \[
 \langle V_s^{\phantom{*}}V_t^*(\omega_r),\omega_r\rangle = E(v_a^{\phantom{*}}v_{s\overline{r}}^*v_r^{\phantom{*}}) = E(v_r^*v_t^{\phantom{*}}v_s^*v_r^{\phantom{*}}).
 \]
Likewise, $v_r^*v_t^{\phantom{*}}v_s^*v_r^{\phantom{*}}$ is zero, unless $rS\cap tS = tt'S, rr'=tt'$ for some $r',t' \in S$. In this case, we have $v_r^*v_t^{\phantom{*}}v_s^*v_r^{\phantom{*}}= v_{r'}^{\phantom{*}}v_{st'}^*v_r^{\phantom{*}}$, which belongs to $\varphi(C^*(S_c))$ if and only if $st' \sim r$.

To prove part (ii) note that $\rho(w_a^{\phantom{*}}w_a^*) =1$ for all $a \in S_c$ because $\rho$ is a trace and $w_a$ is an isometry, thus (i) implies $\chi_{\rho,ra}= \chi_{\rho,r}$  for all $r \in S$. As $s\sim t$ is equivalent to $sa=tb$ for some $a,b \in S_c$, we get (ii).

For (iii), let $s,t\in S$. Part (i) implies that $\chi_{\rho,r}(v_s^{\phantom{*}}v_t^*)$ vanishes unless $sr' \sim r \sim tr'$ for some $r' \in S$. If these equivalences hold, then $s \sim t$ by \cite{ABLS}*{Proposition~3.6(ii)} and $N_sN_{r'} = N_r = N_tN_{r'}$. So fix $a,b \in S_c$ satisfying $sS\cap tS=saS $ and $ sa=tb$. Moreover, $sr'\sim tr'$ gives $sr'c=tr'd \in sS\cap tS = saS$ for suitable $c,d \in S_c$, so there is $[r''] \in S/_\sim$ with $[r] = [sr'] = [sar'']$ and $sar'' = tbr''$. Conversely, every $[r''] \in S$ yields a distinct class $[sar'']$. We thus obtain a one-to-one correspondence $[r] \mapsto [r'']$ between $saS/_\sim = tbS/_\sim$ and $S/_\sim$. Using this correspondence, we get
\[\chi_{\rho,r}(v_s^{\phantom{*}}v_t^*) \stackrel{(ii)}{=} \chi_{\rho,sar''}(v_s^{\phantom{*}}v_t^*) \stackrel{(i)}{=} \rho(E(v_{sar''}^*v_s^{\phantom{*}}v_t^*v_{tbr''}^{\phantom{*}}))
\stackrel{(i)}{=} \chi_{\rho,r''}(v_a^*v_b^{\phantom{*}}).\]
\end{proof}

\section{Parametrisation of KMS-states}\label{sec:param of KMS-states}
\subsection{Ground states and KMS-states of finite type}\label{subsec:GS and finite type}
With the core Fock module in place, we proceed with the discussion of KMS-states for $(C^*(S),\sigma)$. We first address ground states in Proposition~\ref{prop:construction of ground states}, then we produce KMS$_\beta$-states from normalised traces on $C^*(S_c)$ in Proposition~\ref{prop:construction of KMS-states} for $\beta \in (1,\infty)$ and obtain a parametrisation for the $\kms_\beta$-states with  $\beta \in (\beta_c,\infty)$ in Proposition~\ref{prop:KMS-states parametrization above crit}. In the spirit of \cite[Definition 6.4]{ALN}, we refer to these last ones as states of finite type.

\begin{proposition}\label{prop:construction of ground states}
There exists an affine homeomorphism between the states on $C^*(S_c)$ and the ground states on $C^*(S)$ given by $\rho \mapsto \psi_\rho := \chi_{\rho,1}$.
\end{proposition}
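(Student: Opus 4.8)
The claim is that $\rho \mapsto \psi_\rho := \chi_{\rho,1}$ is an affine homeomorphism from the state space of $C^*(S_c)$ onto the ground states of $(C^*(S),\sigma)$. I would break this into four pieces: (a) each $\psi_\rho$ is a ground state; (b) the map is injective; (c) the map is surjective; (d) the map and its inverse are affine and weak$^*$-continuous.

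\emph{Ground state property.} By Notation~\ref{not:unit vectors}, $\psi_\rho(x) = \langle \Ind^\FM\pi_\rho(x)(\omega_1\otimes \xi_\rho), \omega_1\otimes\xi_\rho\rangle$, and since $\pi_\FM$ is a $*$-homomorphism and $\omega_1\otimes\xi_\rho$ a unit vector (note $\langle\omega_1,\omega_1\rangle = E(v_1^*v_1) = 1$), $\psi_\rho$ is a state. To check the ground state condition I would verify that $z\mapsto \psi_\rho(x\sigma_z(y))$ is bounded on the upper half-plane for $x,y$ in the spanning set $\{v_sv_t^*\}$. Writing $y = v_sv_t^*$ gives $\sigma_z(y) = N_s^{iz}N_t^{-iz}v_sv_t^* = (N_s/N_t)^{iz}v_sv_t^*$, whose norm on $\Im z \geq 0$ is $(N_s/N_t)^{-\Im z}$; this is bounded precisely when $N_s \leq N_t$, so the point is to show $\psi_\rho(x v_sv_t^*) = 0$ whenever $N_s > N_t$. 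Using $\psi_\rho(xv_sv_t^*) = \rho(E(v_1^* x v_s v_t^* v_1))$ from Lemma~\ref{lem:evaluation under state}(i) and expanding $x$ in the spanning set, each term reduces to $\rho(E(v_p^* v_q v_s v_t^* v_1))$-type expressions; by the computation in the proof of Lemma~\ref{lem:evaluation under state}, such a term is nonzero only if a certain element of $S$ lies in $S_c$ after applying $E$, and the multiplicativity of $N$ together with $N|_{S_c}=1$ forces the relevant $N$-values to match, ruling out $N_s > N_t$. This mirrors the ground-state argument in \cite{ABLS}*{Theorem~4.3}.

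\emph{Injectivity and surjectivity.} Injectivity is immediate from Lemma~\ref{lem:evaluation under state}(i): $\psi_\rho\circ\varphi = \chi_{\rho,1}\circ\varphi = \rho$, so $\rho$ is recovered from $\psi_\rho$. For surjectivity, given a ground state $\psi$ of $C^*(S)$, set $\rho := \psi\circ\varphi$, a state on $C^*(S_c)$; I must show $\psi = \psi_\rho$. The standard mechanism (again as in \cite{ABLS}) is that a ground state, by the boundedness characterisation above, must vanish on $v_sv_t^*$ whenever $N_s > N_t$, and by taking adjoints ($\psi(a^*) = \overline{\psi(a)}$) also whenever $N_s < N_t$; so $\psi$ is supported on terms with $N_s = N_t$. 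For such terms, by \cite{ABLS}*{Proposition~3.6(ii)} either $s\perp t$ (where a positivity/Cauchy--Schwarz argument against $e_{sS}$ gives $\psi(v_sv_t^*)=0$) or $s\sim t$, and in the latter case one uses $sa = tb$ with $a,b\in S_c$ and the relation $v_sv_t^* = v_s e_{aS}\cdots$ — more precisely $v_s v_t^* v_{tb} = v_{sa}$, hence $v_s v_t^* = v_{sa} v_b^* (v_t v_t^*)\cdots$ — to reduce $\psi(v_sv_t^*)$ to $\rho$ evaluated on core elements, and then check this matches the formula for $\psi_\rho(v_sv_t^*)$ obtained from Lemma~\ref{lem:evaluation under state}. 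I would need to confirm these two expressions agree; Lemma~\ref{lem:evaluation under state}(i) packages $\psi_\rho$ concretely enough that this should be a direct comparison.

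\emph{Affinity and continuity.} Affinity of $\rho\mapsto\psi_\rho$ is clear from linearity of $x\mapsto \Ind^\FM\pi_\rho(x)$ in the vector-state formula (GNS vectors combine affinely). Weak$^*$-continuity in both directions follows because $\psi_\rho(v_sv_t^*)$ is, by Lemma~\ref{lem:evaluation under state}(i), a fixed continuous linear functional of $\rho$ evaluated at an element of $C^*(S_c)$, so $\rho_i\to\rho$ weak$^*$ implies $\psi_{\rho_i}\to\psi_\rho$ weak$^*$ on the spanning set and hence everywhere by a uniform-boundedness/density argument; conversely $\rho = \psi_\rho\circ\varphi$ depends weak$^*$-continuously on $\psi_\rho$. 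Since the state space of $C^*(S_c)$ is weak$^*$-compact and the ground states form a weak$^*$-closed (hence compact) subset of the state space of $C^*(S)$, a continuous bijection between them is automatically a homeomorphism.

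\emph{Main obstacle.} The delicate step is surjectivity — specifically, showing that a general ground state is forced to agree with $\psi_\rho$ on the $s\sim t$ terms, which requires carefully tracking how $E$ and the core-equivalence relation interact and checking the resulting formula coincides with the explicit one from Lemma~\ref{lem:evaluation under state}(iii). The boundedness-forces-vanishing argument for $N_s\neq N_t$ and the orthogonal case are routine, but matching the nontrivial core contributions is where the core Fock module machinery (in particular the conditional expectation $E$ and faithfulness of $\varphi$) really has to be used.
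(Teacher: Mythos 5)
Your overall architecture --- show $\psi_\rho$ is a ground state supported on the core, recover $\rho$ as $\psi_\rho\circ\varphi$, show every ground state is likewise supported on the core and hence determined by its restriction --- is the same as the paper's, which records exactly the support observation ($\psi_\rho(v_s^{\phantom{*}}v_t^*)\neq 0$ forces $s,t\in S_c$) and then defers to \cite{ABLS}*{Proposition~6.2}. However, there are two concrete problems. First, the boundedness criterion is reversed: for $z=u+iv$ with $v\geq 0$ one has $\lvert (N_s/N_t)^{iz}\rvert = (N_t/N_s)^{v}$, which is bounded precisely when $N_s\geq N_t$, so the ground state condition forces $\phi(xv_s^{\phantom{*}}v_t^*)=0$ when $N_s<N_t$, not when $N_s>N_t$. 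Your stated goal ``$\psi_\rho(xv_s^{\phantom{*}}v_t^*)=0$ whenever $N_s>N_t$'' is in fact false: take $x=v_s^*$ and $y=v_s^{\phantom{*}}=v_s^{\phantom{*}}v_1^*$, so that $\psi_\rho(xy)=\psi_\rho(1)=1$ while $N_s>N_t=1$. The computation you sketch with $E$ does give the correct statement --- writing $v_q^*v_s^{\phantom{*}}=v_{q'}^{\phantom{*}}v_{s'}^*$ one finds $E(v_p^{\phantom{*}}v_q^*v_s^{\phantom{*}}v_t^*)=E(v_{pq'}^{\phantom{*}}v_{ts'}^*)\neq 0$ forces $N_t=1$, hence $N_s\geq N_t$ --- so the fix is only to flip the inequality, but as written the intermediate claim cannot be proved.

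Second, and more substantively, the surjectivity step has a gap. After reducing to $N_s=N_t$ you propose to handle $s\sim t$ with $N_s=N_t>1$ by rewriting $v_s^{\phantom{*}}v_t^*$ via $sa=tb$ and ``reducing to $\rho$ on core elements''. A ground state satisfies no KMS identity, so there is no way to move the isometry $v_{sa}=v_{tb}$ past the state; the manipulation $v_s^{\phantom{*}}e_{aS}^{\phantom{*}}v_t^* = v_t^{\phantom{*}}(v_b^{\phantom{*}}v_a^*)v_t^*$ leaves outer factors $v_t^{\phantom{*}}$, $v_t^*$ that cannot be cancelled. What actually resolves this case is that it is vacuous: applying the boundedness condition to $y=v_t^*=v_1^{\phantom{*}}v_t^*$ (where $N_1=1<N_t$) forces $\psi(xv_t^*)=0$ for every analytic $x$ whenever $N_t>1$, and taking adjoints gives $\psi(v_s^{\phantom{*}}v_t^*)=0$ unless $N_s=N_t=1$, i.e.\ unless $s,t\in S_c$. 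Since $\psi_\rho$ has the same support property and $\psi_\rho\circ\varphi=\rho=\psi\circ\varphi$, the two states agree on the dense span of the $v_s^{\phantom{*}}v_t^*$ and hence everywhere. With these two corrections your plan, including the affinity and compactness argument at the end, goes through and coincides with the paper's route.
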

\begin{proof}
Given a state $\rho$ on $C^*(S_c)$, the map $\psi_\rho$ is a state on $C^*(S)$ such that $0 \neq \psi_\rho (v_s^{\phantom{*}}v_t^*)$ forces $[1] \in (sS\cap tS)/_\sim$, see Lemma~\ref{lem:evaluation under state}~(i). As this condition is equivalent to $s,t \in S_c$, the proof then follows as in \cite{ABLS}*{Proposition~6.2}.
\end{proof}

Before we construct $\kms_\beta$-states on $C^*(S)$ from traces $\tau$ on $C^*(S_c)$ by use of the GNS representation $\pi_\tau$, we make note of an intermediate step which produces states on $C^*(S)$ from traces on $C^*(S_c)$. Recall that we view the finite subsets of $\text{Irr}(N(S))$ as a (countable) directed set when ordered by inclusion.

\begin{lemma}\label{lem:construction of KMS-states}
Let $\tau$ be a normalised trace on $C^*(S_c)$. For every $n \in N(S)$, the state $\psi_{\tau,n} := n^{-1}\sum_{[s] \in N^{-1}(n)/_\sim} \chi_{\tau,s}$ on $C^*(S)$ restricts to a normalised trace $\psi_{\tau,n}\circ \varphi$ on $C^*(S_c)$. In particular, for all $\beta >1$ and $I \subset \text{Irr}(N(S))$ with $\zeta_I(\beta) < \infty$,
\[\begin{array}{c}
\psi_{\beta,\tau,I} := \zeta_I(\beta)^{-1} \sum\limits_{[s] \in S_I/_\sim} N_s^{-\beta} \chi_{\tau,s} = \zeta_I(\beta)^{-1} \sum\limits_{n \in \langle I\rangle^+} n^{1-\beta}\psi_{\tau,n}
\end{array}\]
defines a state on $C^*(S)$ such that $\psi_{\beta,\tau,I} \circ \varphi$ is tracial on $C^*(S_c)$.
\end{lemma}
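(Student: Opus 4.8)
The statement asks for two things: first, that each $\psi_{\tau,n}$ is a state on $C^*(S)$ whose restriction $\psi_{\tau,n}\circ\varphi$ is a normalised trace on $C^*(S_c)$; and second, that the convex combinations $\psi_{\beta,\tau,I}$ assemble into a state with the same tracial restriction property, together with the claimed identity between the two expressions for $\psi_{\beta,\tau,I}$. Since each $\chi_{\tau,s}$ is already a state on $C^*(S)$ by Notation~\ref{not:unit vectors} (it is a vector state for the induced representation), any convex combination of them is again a state; so the only genuine content is the tracial property of the restrictions and the reindexing identity. The plan is to reduce everything to the properties of $\chi_{\tau,s}$ recorded in Lemma~\ref{lem:evaluation under state}.

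First I would prove that $\psi_{\tau,n}\circ\varphi$ is tracial. Fix $a,b\in S_c$ and compute $\psi_{\tau,n}(\varphi(w_a^{\phantom{*}}w_b^*)) = n^{-1}\sum_{[s]\in N^{-1}(n)/_\sim}\chi_{\tau,s}(v_a^{\phantom{*}}v_b^*)$. The key point is that $\chi_{\tau,s}(v_a^{\phantom{*}}v_b^*) = \tau(E(v_s^*v_a^{\phantom{*}}v_b^*v_s^{\phantom{*}}))$ by Lemma~\ref{lem:evaluation under state}(i), and since $\tau$ is a trace on $C^*(S_c)$, one can exploit the bijection $\alpha_a$ of $N^{-1}(n)/_\sim$ (from \cite{ABLS}*{Lemma~3.9}, valid under the generalised-scale hypothesis) exactly in the style of the computation in the proof of Lemma~\ref{lem:rec formula}(ii): reindex the sum over $[s]$ by $[as]$ and then by $[bs]$ to swap the roles of $a$ and $b$, using the tracial identity $\tau(E(v_{bs}^*v_a^{\phantom{*}}v_b^*v_{bs}^{\phantom{*}})) = \tau(E(v_s^*v_b^{\phantom{*}}v_a^*v_s^{\phantom{*}}))$ on the core. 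This shows $\psi_{\tau,n}(\varphi(w_a^{\phantom{*}}w_b^*)) = \psi_{\tau,n}(\varphi(w_b^{\phantom{*}}w_a^*))$ on the dense spanning set, hence $\psi_{\tau,n}\circ\varphi$ is tracial; it is normalised because $\chi_{\tau,s}$ is a state and $\lvert N^{-1}(n)/_\sim\rvert = n$, so $\psi_{\tau,n}(1) = n^{-1}\cdot n = 1$.

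Next, for the second assertion, I would establish the reindexing identity. Splitting the sum $\sum_{[s]\in S_I/_\sim}$ according to the value $n = N_s\in\langle I\rangle$ and using that $N_s^{-\beta} = n^{-\beta}$ is constant on $N^{-1}(n)/_\sim$, one has $\sum_{[s]\in S_I/_\sim} N_s^{-\beta}\chi_{\tau,s} = \sum_{n\in\langle I\rangle} n^{-\beta}\sum_{[s]\in N^{-1}(n)/_\sim}\chi_{\tau,s} = \sum_{n\in\langle I\rangle} n^{-\beta}\cdot n\,\psi_{\tau,n} = \sum_{n\in\langle I\rangle} n^{1-\beta}\psi_{\tau,n}$, and the $n=1$ term contributes $\psi_{\tau,1}$, matching $\langle I\rangle^+$ once one checks conventions (either $\langle I\rangle^+ = \langle I\rangle$ or the $n=1$ term is $\psi_{\tau,1}\circ\varphi = \tau$ which is harmless); by \eqref{eq:restricted-zetaI} the normalising constant is $\zeta_I(\beta) = \sum_{n\in\langle I\rangle} n^{1-\beta}$, so $\psi_{\beta,\tau,I}$ is a genuine convex combination of the states $\psi_{\tau,n}$, hence a state on $C^*(S)$. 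Its restriction $\psi_{\beta,\tau,I}\circ\varphi = \zeta_I(\beta)^{-1}\sum_n n^{1-\beta}(\psi_{\tau,n}\circ\varphi)$ is then a convex combination of normalised traces on $C^*(S_c)$, hence tracial.

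\textbf{Main obstacle.} The only non-formal step is the tracial computation for $\psi_{\tau,n}\circ\varphi$: one must be careful that the two reindexings by $\alpha_a$ and $\alpha_b$ are legitimate bijections of the (possibly infinite but countable) index set $N^{-1}(n)/_\sim$ and that the manipulation of $E$ and $\tau$ is valid termwise — this is precisely the subtlety already handled inside the proof of Lemma~\ref{lem:rec formula}(ii), so I would cite that argument rather than repeat it, noting only that here the situation is simpler because no defect projections $Q_F$ are involved and one works directly with $\chi_{\tau,s}$ on $C^*(S)$ rather than inside $\pi_\phi(C^*(S))''$.
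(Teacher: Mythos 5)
Your overall architecture is right — reduce everything to the properties of $\chi_{\tau,s}$ from Lemma~\ref{lem:evaluation under state}, observe that $\psi_{\tau,n}$ and $\psi_{\beta,\tau,I}$ are convex combinations of states, and get the reindexing identity by splitting $S_I/_\sim$ over $n=N_s$ — and the second half of your argument (the identity between the two expressions for $\psi_{\beta,\tau,I}$, its statehood, and the fact that a convex combination of tracial states is tracial) matches the paper's ``immediate consequence'' step. But there is a genuine gap in the traciality argument for $\psi_{\tau,n}\circ\varphi$.

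To verify that a state $\psi$ on $C^*(S_c)$ is tracial you must check $\psi(xy)=\psi(yx)$ for $x,y$ ranging over a \emph{linear} spanning set, which here means all pairs of monomials: $\psi(w_a^{\phantom{*}}w_b^*w_c^{\phantom{*}}w_d^*)=\psi(w_c^{\phantom{*}}w_d^*w_a^{\phantom{*}}w_b^*)$ for all $a,b,c,d\in S_c$. You only verify the two-index identity $\psi(w_a^{\phantom{*}}w_b^*)=\psi(w_b^{\phantom{*}}w_a^*)$, which is the special case $b=c=1$ of the four-index condition (and as written it is not even the trace condition for the pair $(w_a,w_b^*)$ — that would read $\psi(w_a^{\phantom{*}}w_b^*)=\psi(w_b^*w_a^{\phantom{*}})$; the identity $\psi(w_a^{\phantom{*}}w_b^*)=\psi(w_b^{\phantom{*}}w_a^*)$ only says $\psi(w_a^{\phantom{*}}w_b^*)$ is real). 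Since $S_c$ is generally not a group (e.g.\ $S_c\cong\N$ for Baumslag--Solitar monoids, or $L_1$ for the shadowed natural numbers), the two-index condition does not imply the four-index one, so the conclusion ``hence $\psi_{\tau,n}\circ\varphi$ is tracial'' does not follow. The paper's proof is organised precisely around the four-index case: one shows that $\chi_{\tau,s}(v_a^{\phantom{*}}v_b^*v_c^{\phantom{*}}v_d^*)$ is supported on $\operatorname{Fix}\alpha_a^{\phantom{1}}\alpha_b^{-1}\alpha_c^{\phantom{1}}\alpha_d^{-1}$, relates this set to $\operatorname{Fix}\alpha_c^{\phantom{1}}\alpha_d^{-1}\alpha_a^{\phantom{1}}\alpha_b^{-1}$ via the bijections $\alpha_c$ and $\alpha_d$ of $N^{-1}(n)/_\sim$, and establishes the termwise identity $\chi_{\tau,ds}(v_a^{\phantom{*}}v_b^*v_c^{\phantom{*}}v_d^*)=\chi_{\tau,cs}(v_c^{\phantom{*}}v_d^*v_a^{\phantom{*}}v_b^*)$ using the trace property of $\tau$. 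Your single reindexing by $\alpha_a$ and $\alpha_b$ would need to be replaced by this two-sided matching of fixed-point sets. (A minor additional point: well-definedness of the sum over classes $[s]$ requires Lemma~\ref{lem:evaluation under state}(ii), which you use implicitly but should cite.)
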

\begin{proof}
By Lemma~\ref{lem:evaluation under state}~(ii) and the trace property for $\tau$, the map $\psi_{\tau,n}$ is a well-defined state on $C^*(S)$. We claim that
\begin{equation}\label{eq:trace on level n}
\begin{array}{c}
\sum\limits_{[s] \in N^{-1}(n)/_\sim} \chi_{\tau,s}(v_a^{\phantom{*}}v_b^*v_c^{\phantom{*}}v_d^*) = \sum\limits_{[s] \in N^{-1}(n)/_\sim} \chi_{\tau,s}(v_c^{\phantom{*}}v_d^*v_a^{\phantom{*}}v_b^*)
\end{array}
\end{equation}
holds for all $a,b,c,d \in S_c$ and $n \in N(S)$. The term $\chi_{\tau,s}(v_a^{\phantom{*}}v_b^*v_c^{\phantom{*}}v_d^*)$ vanishes unless $[s] \in \text{Fix}~\alpha_a^{\phantom{1}}\alpha_b^{-1}\alpha_c^{\phantom{1}}\alpha_d^{-1}$, see Lemma~\ref{lem:evaluation under state}~(i). Likewise, $\chi_{\tau,s'}(v_c^{\phantom{*}}v_d^*v_a^{\phantom{*}}v_b^*)$ is zero unless $[s'] \in \text{Fix}~\alpha_c^{\phantom{1}}\alpha_d^{-1}\alpha_a^{\phantom{1}}\alpha_b^{-1}$. These two sets are related by the bijections
\begin{equation}\label{eq:permutation makes sets meet}
\text{Fix}~\alpha_a^{\phantom{1}}\alpha_b^{-1}\alpha_c^{\phantom{1}}\alpha_d^{-1} \stackrel{\alpha_d}{\longleftarrow} \text{Fix}~\alpha_d^{-1}\alpha_a^{\phantom{1}}\alpha_b^{-1}\alpha_c^{\phantom{1}} \stackrel{\alpha_c}{\longrightarrow} \text{Fix}~\alpha_c^{\phantom{1}}\alpha_d^{-1}\alpha_a^{\phantom{1}}\alpha_b^{-1}.
\end{equation}
We note that every $s \in S$ satisfies 
\begin{equation}\label{eq:permutation makes tau values meet}
\begin{array}{lclcl}
\chi_{\tau,ds}(v_a^{\phantom{*}}v_b^*v_c^{\phantom{*}}v_d^*) &=& \tau(\varphi^{-1}(v_{ds}^*v_a^{\phantom{*}}v_b^*v_c^{\phantom{*}}v_d^*v_{ds}^{\phantom{*}})) &=& \tau(\varphi^{-1}(v_{ds}^*v_a^{\phantom{*}}v_b^*v_{cs}^{\phantom{*}})) \\
&=&\tau(\varphi^{-1}(v_{cs}^*v_c^{\phantom{*}}v_d^*v_a^{\phantom{*}}v_b^*v_{cs}^{\phantom{*}})) &=& \chi_{\tau,cs}(v_c^{\phantom{*}}v_d^*v_a^{\phantom{*}}v_b^*).
\end{array}
\end{equation}
As $\alpha$ restricts to an action of $S_c$ by bijections on $N^{-1}(n)/_\sim$, Lemma~\ref{lem:evaluation under state}~(ii) gives
\[\{\chi_{\tau,es} \mid [s] \in N^{-1}(n)/_\sim \} = \{\chi_{\tau,s} \mid [s] \in N^{-1}(n)/_\sim \} \quad \text{for all } n \in N(S), e\in S_c.\]
This leads to
\[\begin{array}{lcl}
\sum\limits_{[s] \in N^{-1}(n)/_\sim} \chi_{\tau,s}(v_a^{\phantom{*}}v_b^*v_c^{\phantom{*}}v_d^*) &=& \sum\limits_{\substack{[s] \in N^{-1}(n)/_\sim, \\ [s] \in \text{Fix}~\alpha_a^{\phantom{1}}\alpha_b^{-1}\alpha_c^{\phantom{1}}\alpha_d^{-1}}} \chi_{\tau,s}(v_a^{\phantom{*}}v_b^*v_c^{\phantom{*}}v_d^*) \vspace*{2mm}\\
&=& \sum\limits_{\substack{[s] \in N^{-1}(n)/_\sim, \\ [s] \in \text{Fix}~\alpha_d^{-1}\alpha_a^{\phantom{1}}\alpha_b^{-1}\alpha_c^{\phantom{1}}}} \chi_{\tau,ds}(v_a^{\phantom{*}}v_b^*v_c^{\phantom{*}}v_d^*) \vspace*{2mm}\\
&\stackrel{\eqref{eq:permutation makes sets meet},\eqref{eq:permutation makes tau values meet}}{=}& \sum\limits_{\substack{[s] \in N^{-1}(n)/_\sim, \\ [s] \in \text{Fix}~\alpha_d^{-1}\alpha_a^{\phantom{1}}\alpha_b^{-1}\alpha_c^{\phantom{1}}}} \chi_{\tau,cs}(v_c^{\phantom{*}}v_d^*v_a^{\phantom{*}}v_b^*) \vspace*{2mm}\\
&=& \sum\limits_{[s] \in N^{-1}(n)/_\sim} \chi_{\tau,s}(v_c^{\phantom{*}}v_d^*v_a^{\phantom{*}}v_b^*),
\end{array}\]
which establishes \eqref{eq:trace on level n}. Hence $\psi_{\tau,n} \circ \varphi$ is a trace on $C^*(S_c)$. The claim concerning $\psi_{\beta,\tau,I}$ is an immediate consequence hereof.
\end{proof}

\begin{proposition}\label{prop:construction of KMS1-states}
For every normalised trace $\tau$ on $C^*(S_c)$, $\psi_{1,\tau} := \lim\limits_{n \in N(S)} \psi_{\tau,n}$ defines a $\kms_1$-state on $C^*(S)$.
\end{proposition}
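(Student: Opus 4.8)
The plan is to show that $\psi_{1,\tau}$ is a well-defined state on $C^*(S)$ satisfying the algebraic characterisation of $\kms_1$-states given by Proposition~\ref{prop:alg char of KMS-states}, which reduces the task to two things: (a) that the net $(\psi_{\tau,n})_{n \in N(S)}$ actually converges in the weak$^*$ topology, so that $\psi_{1,\tau}$ is a well-defined state; and (b) that the limit state $\psi_{1,\tau}$ restricts to a normalised trace on $C^*(S_c)$ and obeys the scaling identity \eqref{eq:alg char of KMS-states} with $\beta=1$. Since $C^*(S)=\clsp\{v_s v_t^* \mid s,t\in S\}$, it is enough to analyse $\psi_{\tau,n}(v_s v_t^*)$ for $s,t\in S$.

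First I would pin down the value $\psi_{\tau,n}(v_s v_t^*)$ using Lemma~\ref{lem:evaluation under state}. By part (i), $\chi_{\tau,s'}(v_s v_t^*) = \tau(E(v_{s'}^* v_s v_t^* v_{s'}))$, which vanishes unless $s' \sim s s'' \sim t s''$ for some $s''$; combined with \cite{ABLS}*{Proposition~3.6(ii)} this forces $s\sim t$ with $N_s=N_t$, and then the one-to-one correspondence of Lemma~\ref{lem:evaluation under state}(iii) rewrites the sum over $[s']\in N^{-1}(n)/_\sim$ that hit $saS/_\sim$ as a sum over $[r'']\in S/_\sim$, after splitting $n = N_s \cdot m$. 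Because $\tau$ is a trace on $C^*(S_c)$ (which is traceful by Proposition~\ref{prop:trace on C*(S_c)}), the only surviving contributions come from the \emph{absorbing} classes — those $[r'']$ with $ar''=br''$, giving $\tau(w_a^{\phantom{*}}w_a^*)=1$ — so one obtains $\psi_{\tau,n}(v_s v_t^*) = N_s^{-1}\, n^{-1}\,\lvert A_{n/N_s}^{a,b}\rvert$ when $sa=tb$ with $a\neq b$, and the obvious count $n^{-1}\lvert \{[s']\in N^{-1}(n)/_\sim : s'\sim s\}\rvert$ when $a=b$ (which equals $N_s^{-1}$ by the defining property of the generalised scale). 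Here I expect to reuse the bijection/counting bookkeeping already developed in Lemma~\ref{lem:rec formula} and Lemma~\ref{lem:construction of KMS-states}.

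The convergence of $(\psi_{\tau,n})_n$ along the directed set $N(S)$ is the main obstacle, since at $\beta=1$ we do not have the summable $\zeta_I$-weights available and cannot invoke Lemma~\ref{lem:rec formula}. The idea is to establish a \emph{consistency relation}: using \eqref{eq:rec formula beta=1}-type reasoning, or directly the action of $\alpha\colon S_c\curvearrowright S/_\sim$ on levels together with multiplicativity of $N$, one shows that $\psi_{\tau,nm}(v_s v_t^*)$ is independent of $m$ once $N_s \mid n$, because passing from level $n$ to level $nm$ refines each class into $m$ classes with matching absorption behaviour — i.e. $\lvert A_{nm}^{a,b}\rvert = m\,\lvert A_n^{a,b}\rvert$ is false in general, but $\lvert A_{nm}^{a,b}\rvert / (nm)$ is eventually constant, or more precisely the relevant ratios stabilise because absorption ($as'=bs'$ for some $s'\sim s$) is detected at a fixed finite level and is inherited by all higher refinements. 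I would make this precise by showing that $n \mapsto \psi_{\tau,n}$ is constant on the cofinal subset $\{n : N_s \mid n\}$ for each fixed generating element $v_sv_t^*$, hence the net converges pointwise on the spanning set, and a standard $\varepsilon$-argument upgrades this to weak$^*$ convergence on all of $C^*(S)$.

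Finally, once $\psi_{1,\tau}$ is known to exist as a state, verifying the $\kms_1$-condition is quick: $\psi_{1,\tau}\circ\varphi$ is a trace because each $\psi_{\tau,n}\circ\varphi$ is (Lemma~\ref{lem:construction of KMS-states}) and weak$^*$-limits of traces are traces; and the scaling identity $\psi_{1,\tau}(v_s v_t^*) = N_s^{-1}\psi_{1,\tau}(v_b v_a^*)$ for $sa=tb$, $a,b\in S_c$, follows by passing to the limit in the level-$n$ formula derived above, using that $\psi_{1,\tau}(v_b v_a^*) = \lim_m (nm)^{-1}\lvert A_m^{a,b}\rvert$ matches $N_s\cdot\psi_{1,\tau}(v_sv_t^*)$ after the index shift $n\mapsto n/N_s$. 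By Proposition~\ref{prop:alg char of KMS-states} this is exactly the statement that $\psi_{1,\tau}$ is a $\kms_1$-state, completing the proof. I would also remark that the formula in Theorem~\ref{thm:KMS results}(4), $\psi_1(v_av_b^*)=\lim_n \lvert A_n^{a,b}\rvert/n$, is recovered as the special case $\tau=\tau_0$, since $\tau_0(w_cw_d^*)=\delta_{c,d}$ makes every non-absorbing term drop out from the start.
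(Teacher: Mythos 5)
Your overall strategy (verify the algebraic characterisation of Proposition~\ref{prop:alg char of KMS-states} level by level, then pass to the limit) matches the paper's, but two of your key steps are wrong, and together they undermine both the value computation and the convergence argument.

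First, the formula $\psi_{\tau,n}(v_s^{\phantom{*}}v_t^*) = N_s^{-1}\,n^{-1}\lvert A_{n/N_s}^{a,b}\rvert$ does not hold for a general normalised trace $\tau$. After the index shift of Lemma~\ref{lem:evaluation under state}(iii), a non-absorbing class $[r'']\in F_m^{a,b}\setminus A_m^{a,b}$ contributes $\chi_{\tau,r''}(v_a^*v_b^{\phantom{*}})=\tau(w_g^{\phantom{*}}w_f^*)$ for some $f\neq g$ in $S_c$, and there is no reason for this to vanish unless $\tau=\tau_0$. The clean counting formula $\psi_{\tau_0,n}(v_a^{\phantom{*}}v_b^*)=n^{-1}\lvert A_n^{a,b}\rvert$ is exactly equation \eqref{eq:unique KMS1-state} and is special to $\tau_0$; it is what yields Theorem~\ref{thm:KMS results}(4), not the general Proposition. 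What survives for general $\tau$ is only the recursion $\psi_{\tau,n}(v_s^{\phantom{*}}v_t^*) = N_s^{-1}\psi_{\tau,m}(v_a^*v_b^{\phantom{*}})$ with $n=N_s m$, which is the identity the paper actually establishes and which already suffices to verify \eqref{eq:alg char of KMS-states} in the limit.

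Second, your convergence argument rests on the claim that $n\mapsto\psi_{\tau,n}(v_s^{\phantom{*}}v_t^*)$ is eventually constant on $\{n : N_s\mid n\}$ because ``absorption is detected at a fixed finite level''. This is false: by Remark~\ref{rem:kappa monotone increasing} the ratios $\lvert A_n^{a,b}\rvert/n$ are only monotone increasing, and new absorbing classes genuinely appear at deeper levels (e.g.\ for self-similar actions, via Proposition~\ref{prop:neg q-a explained for SSA}, longer words $w$ with $g(w)=w$ and $g|_w=1_G$ need not begin with a shorter such word). For general $\tau$ one does not even have monotonicity of the relevant quantity. The paper instead obtains the limit by weak$^*$-compactness along some sequence $n_k\to\infty$, checks that any such subsequential limit is a $\kms_1$-state via the recursion above, and then uses the $\beta=1$ reconstruction formula \eqref{eq:rec formula beta=1} of Remark~\ref{rem:reconstruction for beta=1} — precisely the tool you discarded when you noted that Lemma~\ref{lem:rec formula} is unavailable at $\beta=1$ — to see that the limit is independent of the chosen sequence. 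You need to replace your stabilisation claim with an argument of this kind.
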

\begin{proof}
If the limit exists, it is necessarily a state whose restriction to $\varphi(C^*(S_c))$ is tracial. It thus suffices to show that \eqref{eq:alg char of KMS-states} holds for any given $s,t \in S$ for $\psi_{\tau,n}$ for all large enough $n \in N(S)$. According to Lemma~\ref{lem:evaluation under state}~(iii), $\chi_{\tau,r}(v_s^{\phantom{*}}v_t^*)$ vanishes for all $r$ unless $s\sim t$, so let us assume that $sS\cap tS=saS, sa=tb$ for some $a,b \in S_c$. Let $n$ be large enough so that $N_s=N_t$ divides $n$ in $N(S)$, say $n=N_sm$ for some $m\in N(S)$. Next, we note that the one-to-one correspondence $[r]\mapsto [r'']$ from $saS/_\sim$ to $S/_\sim$ from Lemma~\ref{lem:evaluation under state}~(iii) restricts to a bijection $\{[r] \in N^{-1}(n)/_\sim \mid s\Cap r\} \to N^{-1}(m)/_\sim$. We infer that
\[\begin{array}{c}
\psi_{\tau,n}(v_s^{\phantom{*}}v_t^*) = n^{-1}\sum\limits_{[r] \in (N^{-1}(n)\cap saS)/_\sim} \chi_{\tau,r}(v_s^{\phantom{*}}v_t^*) \stackrel{\eqref{lem:evaluation under state}(iii)}{=} N_s^{-1}\psi_{\tau,m}(v_a^*v_b^{\phantom{*}}).
\end{array}\]
Since $\psi_{\tau,m}\circ \varphi$ is tracial and $n\nearrow \infty$ is the same as $m \nearrow \infty$, this establishes \eqref{eq:alg char of KMS-states} for $\psi_{1,\tau}$. The limit exists by weak*-compactness: we first get a limit for some sequence $(n_k)_{k \in \N} \subset N(S)$ with $n_k\to \infty$, and then by using \eqref{eq:rec formula beta=1} we get that this limit does not depend on the choice of  sequence.
\end{proof}

\begin{proposition}\label{prop:construction of KMS-states}
For $\beta > 1$ and every normalised trace $\tau$ on $C^*(S_c)$, there is a sequence $(I_k)_{k \geq 1}$ of finite subsets of $\text{Irr}(N(S))$ such that $\psi_{\beta,\tau,I_k}$ weak*-converges to a $\kms_\beta$-state $\psi_{\beta,\tau}$ on $C^*(S)$ as $I_k \nearrow \text{Irr}(N(S))$. For $\beta \in (\beta_c,\infty)$, the state $\psi_{\beta,\tau}$ is given by $\psi_{\beta,\tau,\text{Irr}(N(S))}$.
\end{proposition}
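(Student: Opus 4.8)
The plan is to verify the KMS$_\beta$-condition for the candidate states $\psi_{\beta,\tau,I}$ as $I$ increases, extract a convergent subnet (or sequence, by separability), and then identify the limit as a genuine $\kms_\beta$-state using the algebraic characterisation from Proposition~\ref{prop:alg char of KMS-states}. By Lemma~\ref{lem:construction of KMS-states}, each $\psi_{\beta,\tau,I}$ is a state on $C^*(S)$ whose restriction along $\varphi$ is a normalised trace on $C^*(S_c)$, so the only thing standing between us and a KMS$_\beta$-state is the relation \eqref{eq:alg char of KMS-states}, i.e. the identity $\phi(v_s^{\phantom{*}}v_t^*) = N_s^{-\beta}\phi(v_b^{\phantom{*}}v_a^*)$ when $sa=tb$ with $a,b\in S_c$, and $\phi(v_s^{\phantom{*}}v_t^*)=0$ otherwise.

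First I would fix $s,t\in S$. Using Lemma~\ref{lem:evaluation under state}~(iii), $\chi_{\tau,r}(v_s^{\phantom{*}}v_t^*)$ vanishes for every $r$ unless $s\sim t$; so assume $sS\cap tS=saS$, $sa=tb$ with $a,b\in S_c$. The one-to-one correspondence $[r]\mapsto[r'']$ from $saS/_\sim$ onto $S/_\sim$ of Lemma~\ref{lem:evaluation under state}~(iii), together with the fact that $N$ preserves right LCMs (\cite{ABLS}*{Proposition~3.6}), restricts for each level $n$ divisible by $N_s$ in $N(S)$ to a bijection between $\{[r]\in N^{-1}(n)/_\sim \mid s\Cap r\}$ and $N^{-1}(n/N_s)/_\sim$. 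This yields, exactly as in the proof of Proposition~\ref{prop:construction of KMS1-states},
\[
\psi_{\tau,n}(v_s^{\phantom{*}}v_t^*) = N_s^{-1}\,\psi_{\tau,n/N_s}(v_a^*v_b^{\phantom{*}})
\]
whenever $N_s \mid n$, and $\psi_{\tau,n}(v_s^{\phantom{*}}v_t^*)=0$ otherwise. Summing these identities against the weights $\zeta_I(\beta)^{-1}n^{1-\beta}$ over $n\in\langle I\rangle^+$ and re-indexing $m=n/N_s$ produces
\[
\psi_{\beta,\tau,I}(v_s^{\phantom{*}}v_t^*) = N_s^{-\beta}\,\zeta_I(\beta)^{-1}\sum_{m\in\langle I\rangle^+,\ N_sm\in\langle I\rangle^+} m^{1-\beta}\,\psi_{\tau,m}(v_a^*v_b^{\phantom{*}}).
\]
The point is that as $I\nearrow\text{Irr}(N(S))$, the constraint $N_sm\in\langle I\rangle^+$ eventually becomes automatic once $I$ contains the (finitely many) irreducible factors of $N_s$, so the right-hand side differs from $N_s^{-\beta}\psi_{\beta,\tau,I}(v_a^*v_b^{\phantom{*}})$ only by the finitely many terms $m$ with $N_sm\notin\langle I\rangle^+$, which contribute $o(1)$ in norm as $I$ grows (here one uses that $\zeta_I(\beta)\to\zeta(\beta)<\infty$ when $\beta>\beta_c$, or more generally that the tail of the sum is controlled). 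Thus along any net on which $\psi_{\beta,\tau,I}$ converges weak$^*$, the limit $\psi_{\beta,\tau}$ satisfies \eqref{eq:alg char of KMS-states}, and its restriction along $\varphi$ is a weak$^*$-limit of normalised traces, hence a normalised trace; so $\psi_{\beta,\tau}$ is a $\kms_\beta$-state by Proposition~\ref{prop:alg char of KMS-states}. Weak$^*$-compactness of the state space of the separable $C^*$-algebra $C^*(S)$ delivers a convergent sequence $(I_k)$, and the reconstruction formula \eqref{eq:rec formula} shows the limit is independent of the chosen sequence. When $\beta>\beta_c$ we may simply take $I=\text{Irr}(N(S))$ from the start, since $\zeta(\beta)<\infty$ makes $\psi_{\beta,\tau,\text{Irr}(N(S))}$ well-defined, and the same computation shows it satisfies \eqref{eq:alg char of KMS-states} directly.

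The main obstacle is the interchange-of-limits bookkeeping around the constraint $N_sm\in\langle I\rangle^+$: one must argue carefully that the "missing" index set $\{m\in\langle I\rangle^+ : N_sm\notin\langle I\rangle^+\}$ has negligible total weight as $I$ exhausts $\text{Irr}(N(S))$. For $\beta>\beta_c$ this is immediate from absolute convergence of $\zeta(\beta)$; for $\beta\in(1,\beta_c]$ (which is within the scope of the statement when read alongside the later Proposition on the critical interval) one does not have absolute convergence, and the correct reading is that the \emph{sequence} $(I_k)$ is chosen first by compactness and then the algebraic identity is checked on the limit using that for each fixed $s$ only finitely many irreducibles are involved, so that for $k$ large the constraint disappears entirely on the relevant finite set of $m$'s contributing to any fixed matrix coefficient after a further diagonal/truncation argument. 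Establishing this cleanly — and confirming that the limit does not depend on the sequence via \eqref{eq:rec formula} — is where the real work lies; everything else is the symmetry bookkeeping already carried out in Lemma~\ref{lem:construction of KMS-states} and Proposition~\ref{prop:construction of KMS1-states}.
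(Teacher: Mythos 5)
Your proposal is correct and follows essentially the same route as the paper: weak*-compactness to extract a convergent sequence, Lemma~\ref{lem:evaluation under state}~(iii) together with the bijection $[r]\mapsto[r'']$ to verify \eqref{eq:alg char of KMS-states} on the limit, and Proposition~\ref{prop:alg char of KMS-states} to conclude. The only point worth noting is that your perceived ``main obstacle'' is vacuous: once $I$ contains the finitely many irreducible divisors of $N_s$, freeness of $N(S)$ on its irreducibles makes the constraint $N_sm\in\langle I\rangle^+$ equivalent to $m\in\langle I\rangle^+$, so the identity $\psi_{\beta,\tau,I}(v_s^{\phantom{*}}v_t^*)=N_s^{-\beta}\psi_{\beta,\tau,I}(v_a^*v_b^{\phantom{*}})$ holds exactly with no error term for such $I$ --- which is precisely how the paper argues, and no truncation or interchange-of-limits bookkeeping is needed.
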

\begin{proof}
Let $\beta>1$ and $\tau$ be a normalised trace on $C^*(S_c)$. Due to weak*-compactness of the state space on $C^*(S)$, there is a sequence $(I_k)_{k \geq 1}$ of finite subsets of $\text{Irr}(N(S))$ with $I_k \nearrow \text{Irr}(N(S))$ such that $(\psi_{\beta,\tau,I_k})_{k \geq 1}$ obtained from Lemma~\ref{lem:construction of KMS-states} converges to some state $\psi_{\beta,\tau}$ in the weak* topology. By Proposition~\ref{prop:alg char of KMS-states}, $\psi_{\beta,\tau}$ is a $\kms_\beta$-state if and only if $\psi_{\beta,\tau}\circ \varphi$ defines a trace on $C^*(S_c)$ and \eqref{eq:alg char of KMS-states} holds. Note that $\psi_{\beta,\tau} \circ \varphi$ is tracial on $C^*(S_c)$ because each of the $\psi_{\beta,\tau,I_k}$ has this property by Lemma~\ref{lem:construction of KMS-states}.

It remains to prove \eqref{eq:alg char of KMS-states}. Let $s,t\in S$.
For $k\geq 1$, Lemma~\ref{lem:evaluation under state}~(iii) shows that $\chi_{\tau,r}(v_s^{\phantom{*}}v_t^*)$ vanishes for $[r] \in S_{I_k}/_\sim$ unless $s\sim t$ and $sr' \sim r \sim tr'$ for some $r' \in S$. In particular, $k$ must be so large that $N_s=N_t \in \langle I_k \rangle^+$, which we assume from now on. Hence $\psi_{\beta,\tau}(v_s^{\phantom{*}}v_t^*) = 0$ unless $[s]=[t]$, so fix $a,b \in S_c$ satisfying $sS\cap tS=saS $ and $ sa=tb$. Noting that the one-to-one correspondence $[r]\mapsto [r'']$ from Lemma~\ref{lem:evaluation under state}~(iii) maps $saS_{I_k}/_\sim$ to $S_{I_k}/_\sim$ as $r\sim sar''$ and $N_r,N_s \in \langle I_k\rangle^+$ force $N_{r''} \in \langle I_k\rangle^+$, we conclude that the equality $\psi_{\beta,\tau,I_k}(v_s^{\phantom{*}}v_t^*) = \delta_{[s],[t]} \ N_s^{-\beta} \psi_{\beta,\tau,I_k}(v_a^*v_b^{\phantom{*}})$ holds for $k$ large enough. Thus the limit $\psi_{\beta,\tau}$ satisfies \eqref{eq:alg char of KMS-states} for all $s,t \in S$, and hence is a $\kms_\beta$-state. The claim for $\beta \in (\beta_c,\infty)$ follows immediately from $I_k \nearrow \text{Irr}(N(S))$ because the formula from Lemma~\ref{lem:construction of KMS-states} makes sense for $I = \text{Irr}(N(S))$.
\end{proof}



\begin{proposition}\label{prop:KMS-states parametrization above crit}
Let $\beta \in (\beta_c,\infty)$. Then $\phi \mapsto \phi_{\text{Irr}(N(S))} \circ \varphi$ defines an affine homeomorphism between the $\kms_\beta$-states on $C^*(S)$ and the normalised traces on $C^*(S_c)$. Its inverse is given by $\tau \mapsto \psi_{\beta,\tau}$.
\end{proposition}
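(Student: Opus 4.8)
The plan is to realise the two maps $\Psi\colon\tau\mapsto\psi_{\beta,\tau}$ and $\Phi\colon\phi\mapsto\phi_{\Irr(N(S))}\circ\varphi$ as mutually inverse, and then deduce the homeomorphism statement formally. Put $I:=\Irr(N(S))$. Since $\beta>\beta_c$ we have $\zeta_I(\beta)=\zeta(\beta)<\infty$, so Lemma~\ref{lem:rec formula} applies with this $I$ and $\Phi(\phi)=\phi_I\circ\varphi$ is a well-defined normalised trace on $C^*(S_c)$ for every $\kms_\beta$-state $\phi$; likewise $\Psi(\tau)=\psi_{\beta,\tau}$ is a $\kms_\beta$-state by Proposition~\ref{prop:construction of KMS-states}, and $\Psi$ is affine and weak$^*$-continuous because $\psi_{\beta,\tau}(v_s^{\phantom{*}}v_t^*)=\zeta(\beta)^{-1}\sum_{[r]\in S/_\sim}N_r^{-\beta}\tau(E(v_r^*v_s^{\phantom{*}}v_t^*v_r^{\phantom{*}}))$ and the summands are dominated by the summable sequence $N_r^{-\beta}$.

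The identity $\Psi\circ\Phi=\id$ follows once we know that $Q_I$ ``sees only the core'', i.e.\ that for every $\kms_\beta$-state $\phi$ and all $s,t\in S$ one has $Q_I\pi_\phi(v_s^{\phantom{*}}v_t^*)Q_I=0$ unless $s,t\in S_c$, equivalently $\phi_I=(\phi_I\circ\varphi)\circ E$. To prove this I would first show $Q_I\pi_\phi(e_{uS})=0$ whenever $N_u\neq 1$: pick an irreducible divisor $n'$ of $N_u$ and the transversal $\CT_{n'}$ of $N^{-1}(n')/_\sim$, which is an accurate foundation set; then there is $u'\in\CT_{n'}$ with $u'\Cap u$ and, by Lemma~\ref{lem:consequence-ABLS-N-homomorphism}, $uS\cap u'S=uqS$ with $q\in S_c$, so Lemma~\ref{lem:core elts are unitaries under kms-states} gives $\pi_\phi(e_{uS})=\pi_\phi(e_{uqS})\leq\pi_\phi(e_{u'S})\leq\sum_{[w]\in N^{-1}(n')/_\sim}\pi_\phi(e_{wS})$, using that the latter projections are pairwise orthogonal by accuracy; hence $Q_{\{n'\}}\pi_\phi(e_{uS})=0$, and a fortiori $Q_I\pi_\phi(e_{uS})=0$. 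As $v_s^{\phantom{*}}v_t^*=e_{sS}v_s^{\phantom{*}}v_t^*=v_s^{\phantom{*}}v_t^*e_{tS}$, this annihilates $Q_I$ on one side as soon as $s$ or $t$ leaves $S_c$. Substituting $\phi_I=(\phi_I\circ\varphi)\circ E$ into the reconstruction formula of Lemma~\ref{lem:rec formula}(iv) then yields, for every $y\in C^*(S)$,
\[\phi(y)=\zeta(\beta)^{-1}\sum_{[s]\in S/_\sim}N_s^{-\beta}\,\phi_I(v_s^*yv_s^{\phantom{*}})=\zeta(\beta)^{-1}\sum_{[s]\in S/_\sim}N_s^{-\beta}\,(\phi_I\circ\varphi)\bigl(E(v_s^*yv_s^{\phantom{*}})\bigr)=\psi_{\beta,\Phi(\phi)}(y),\]
i.e.\ $\Psi(\Phi(\phi))=\phi$.

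The identity $\Phi\circ\Psi=\id$ is the computational heart. Fix a normalised trace $\tau$ and $w\in C^*(S_c)$, and use the expansion of $\phi_I$ on $\varphi(C^*(S_c))$ recorded in the proof of Lemma~\ref{lem:rec formula}(ii) to write
\[(\psi_{\beta,\tau})_I(\varphi(w))=\lim_{F\ssubset I}\ \zeta(\beta)\sum_{A,B\subseteq F}(-1)^{\lvert A\rvert+\lvert B\rvert}\sum_{[r]\in N^{-1}(m_{A\cup B})/_\sim}N_r^{-\beta}\,\psi_{\beta,\tau}(v_r^*\varphi(w)v_r^{\phantom{*}}).\]
By Lemma~\ref{lem:evaluation under state}(i) and $v_r^{\phantom{*}}v_s^{\phantom{*}}=v_{rs}^{\phantom{*}}$ one has $\psi_{\beta,\tau}(v_r^*\varphi(w)v_r^{\phantom{*}})=\zeta(\beta)^{-1}\sum_{[s]\in S/_\sim}N_s^{-\beta}\chi_{\tau,rs}(\varphi(w))$. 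Substituting this and re-indexing the doubly indexed sum over $[r]$ (at level $m_{A\cup B}$) and $[s]$ by $[u]:=[rs]$ --- a bijection of $N^{-1}(m_{A\cup B})/_\sim\times S/_\sim$ onto $\{[u]\in S/_\sim\mid m_{A\cup B}\text{ divides }N_u\}$, with injectivity coming from accuracy of the transversal, surjectivity from Lemma~\ref{lem:consequence-ABLS-N-homomorphism} together with the foundation set property, and $N_r^{-\beta}N_s^{-\beta}=N_u^{-\beta}$ --- collapses the expression to
\[(\psi_{\beta,\tau})_I(\varphi(w))=\lim_{F\ssubset I}\ \sum_{[u]\in S/_\sim}N_u^{-\beta}\chi_{\tau,u}(\varphi(w))\,c_F(u),\qquad c_F(u):=\sum_{\substack{A,B\subseteq F\\ m_{A\cup B}\mid N_u}}(-1)^{\lvert A\rvert+\lvert B\rvert}.\]
Now $m_{A\cup B}\mid N_u$ holds exactly when $A$ and $B$ are contained in the (finite) set of irreducible divisors of $N_u$, so $c_F(u)$ factors as the square of an alternating sum and equals $1$ when $F$ contains no irreducible divisor of $N_u$ and $0$ otherwise. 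Since $\zeta(\beta)<\infty$, dominated convergence lets us pass the limit inside the sum, and $c_F(u)\to 0$ for $N_u\neq1$ while $c_F(u)=1$ for $N_u=1$; as $\{[u]\mid N_u=1\}=\{[1]\}$, only the vacuum term survives and $(\psi_{\beta,\tau})_I(\varphi(w))=\chi_{\tau,1}(\varphi(w))=\tau(w)$ by Lemma~\ref{lem:evaluation under state}(i). Hence $\Phi(\Psi(\tau))=\tau$.

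Combining the two identities, $\Psi$ is an affine weak$^*$-continuous bijection between the weak$^*$-compact convex sets of normalised traces on $C^*(S_c)$ and of $\kms_\beta$-states on $C^*(S)$; a continuous bijection of compact Hausdorff spaces is a homeomorphism, so $\Psi$ is an affine homeomorphism with inverse $\Phi=\Psi^{-1}$, which is the assertion. I expect the main obstacle to be the third paragraph: making the substitution $([r],[s])\mapsto[rs]$ rigorous on transversal representatives and genuinely bijective (this is exactly where accuracy of the foundation sets coming from a generalised scale is essential), and then carefully bookkeeping the inclusion--exclusion so that every contribution except the vacuum one cancels.
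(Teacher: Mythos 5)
Your proposal is correct, and it splits naturally into a half that mirrors the paper and a half that does not. For the direction $\psi_{\beta,\phi_{\Irr(N(S))}\circ\varphi}=\phi$ you follow the same route as the paper, namely the reconstruction formula of Lemma~\ref{lem:rec formula}(iv); your explicit verification that $Q_I\pi_\phi(e_{uS})=0$ for $N_u\neq 1$, hence $\phi_I=(\phi_I\circ\varphi)\circ E$, is a step the paper leaves implicit when it identifies $\phi_{\Irr(N(S))}(v_s^*xv_s^{\phantom{*}})$ with $\chi_{\phi_{\Irr(N(S))}\circ\varphi,\,s}(x)$, so making it explicit is a genuine improvement in rigour. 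For the direction $(\psi_{\beta,\tau})_{\Irr(N(S))}\circ\varphi=\tau$ your computation is genuinely different: the paper fixes transversals $(T_n)_{n\in I}$, works with the defect projection $q_I=\prod_{n\in I}(1-\sum_{t\in T_n}e_{tS})$, and shows term by term that $\chi_{\tau,s}(q_I\varphi(x)q_I)=\delta_{[s],S_c}\,\tau(x)$ for $I$ large --- the core case via the conditional expectation killing cross-terms, the non-core case via the accurate-foundation-set identity $v_s^*(1-e_{tS})=(1-e_{aS})v_s^*$ and traciality of $\tau$. You instead expand $(\psi_{\beta,\tau})_I(\varphi(w))$ through the inclusion--exclusion formula from the proof of Lemma~\ref{lem:rec formula}(ii), substitute the defining series for $\psi_{\beta,\tau}$, reindex the double sum by $[u]=[rs]$ (which is legitimate once done on transversal representatives: left cancellation gives injectivity of $[s]\mapsto[rs]$, and accuracy plus Lemma~\ref{lem:consequence-ABLS-N-homomorphism} shows the sets $rS/_\sim$, $r\in\CT_{m_{A\cup B}}$, partition $\{[u]\mid m_{A\cup B}\mid N_u\}$), and collapse the coefficients $c_F(u)$ as the square of an alternating sum; dominated convergence, available precisely because $\beta>\beta_c$, then isolates the vacuum class $[1]$. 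This M\"obius-type collapse buys a cleaner bookkeeping of why only the core contribution survives, at the cost of leaning on the intermediate formula inside the proof of Lemma~\ref{lem:rec formula}(ii) (where one should read $\zeta_F(\beta)$ rather than $\zeta_I(\beta)$ in front of $\phi_F$; your placement of the single factor $\zeta_I(\beta)$ outside the limit over $F$ is the correct normalisation). You also supply the continuity and compactness argument for the homeomorphism claim, which the paper omits.
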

\begin{proof}
We shall prove the following:
\begin{enumerate}[(i)]
\item If $\phi$ is a $\kms_\beta$-state on $C^*(S)$, then $\psi_{\beta,\phi_{\Irr(N(S))} \circ \varphi} = \phi$.
\item If $\psi_{\beta, \tau}$ denotes the $\kms_\beta$-state obtained in Proposition~\ref{prop:construction of KMS-states} for $\text{Irr}(N(S))$ and a trace $\tau$ on $C^*(S_c)$, then $(\psi_{\beta,\tau})_{\text{Irr}(N(S))} \circ \varphi = \tau$.
\end{enumerate}
We start with (i): As $\phi_{\Irr(N(S))} \circ \varphi$ is a normalised trace on $C^*(S_c)$ by Lemma~\ref{lem:rec formula}~(ii), $\psi_{\beta,\phi_{\Irr(N(S))} \circ \varphi}$ is a $\kms_\beta$-state, see Proposition~\ref{prop:construction of KMS-states}. For $x \in C^*(S))$, we get
\[\begin{array}{c}
\psi_{\beta,\phi_{\Irr(N(S))} \circ \varphi}(x) \stackrel{\ref{lem:evaluation under state}(i)}{=} \zeta_S(\beta)^{-1} \sum\limits_{s \in S/_\sim} N_s^{-\beta} \phi_{\Irr(N(S))}(v_s^*xv_s^{\phantom{*}}) \stackrel{\ref{lem:rec formula}~(iv)}{=} \phi(x).
\end{array}\]

For (ii), we fix $x \in C^*(S_c)$ and get
\begin{equation}\label{eq:parametrization is surj}
\begin{array}{lcl}
(\psi_{\beta,\tau})_{\text{Irr}(N(S))}(\varphi(x)) &=& \zeta_S(\beta) \tilde{\psi}_{\beta,\tau}(Q_{\text{Irr}(N(S))}\pi_{\psi_{\beta,\tau}}(\varphi(x)) Q_{\text{Irr}(N(S))}) \vspace*{2mm}\\
&=& \lim\limits_{\substack{I \nearrow \text{Irr}(N(S))\\ \lvert I \rvert < \infty}} \zeta_S(\beta) \tilde{\psi}_{\beta,\tau}(Q_{I}\pi_{\psi_{\beta,\tau}}(\varphi(x)) Q_{I}).
\end{array}
\end{equation}
Let $(T_n)_{n \in I}$ be a family of transversals for $(N^{-1}(n)/_\sim)_{n \in I}$ (which we fix for the remainder of this proof). Note that by \eqref{eq:QI}, the defect projection $q_{I} = \prod_{n \in I}(1-\sum_{t \in T_n} e_{tS})$ is a preimage of $Q_I$ under $\pi_{\psi_{\beta,\tau}}$, so that $\tilde{\psi}_{\beta,\tau}(Q_I\pi_{\psi_{\beta,\tau}}(\varphi(x)) Q_I) = \psi_{\beta,\tau}(q_I\varphi(x)) q_I)$. By definition of $\psi_{\beta,\tau}$, we have
\begin{equation}\label{eq:param surj sum of chis}
\begin{array}{lcl}
\zeta_S(\beta) \psi_{\beta,\tau}(q_{I}\varphi(x) q_{I}) &=& \sum\limits_{[s] \in S/_\sim} N_s^{-\beta} \chi_{\tau,s}(q_{I}\varphi(x)q_{I}).
\end{array}
\end{equation}
We claim that $\chi_{\tau,s}(q_{I}\varphi(x)q_{I}) = \delta_{[s], S_c} \tau(x)$ for $I$ large enough and all $s$.
First, let $s \in S_c$. Since $\tau$ is a trace, Lemma~\ref{lem:evaluation under state}~(ii) yields $\chi_{\tau,s}(q_I\varphi(x)q_I) = \chi_{\tau,1}(q_I\varphi(x)q_I) = \tau(E(q_I\varphi(x)q_I))$, where $E$ is the conditional expectation from \eqref{eq:faithful varphi} given by $E(v_t^{\phantom{*}}v_r^*) = \chi_{S_c}(t)\chi_{S_c}(r) \ w_t^{\phantom{*}}w_r^*$. The term $q_I\varphi(x)q_I$ is of form $\varphi(x)+y$, where $y$ corresponds to the collection of cross-terms. Thus $y$ is a finite sum of elements of the form $\pm e_{tS}\varphi(x)e_{rS}$ with $t \notin S_c$ or $r \notin S_c$. But such terms belong to $\varphi(C^*(S_c))$ if and only if they vanish, so we get $\chi_{\tau,s}(q_I\varphi(x)q_I) = \tau(E(\varphi(x))=\tau(x)$.

Now suppose $s \in S\setminus S_c$, that is, $N_s >1$. For $I$ large enough, there is $n \in I$ with $N_s \in nN(S)$. Since $T_n$ is a foundation set, there is $t \in T_n$ such that $s \Cap t$, hence by Lemma~\ref{lem:consequence-ABLS-N-homomorphism} we can write $sS\cap tS = saS$ and $sa=tu$ for some $u\in S$ and $a \in S_c$. We claim that $s\perp t'$ for all $t' \in T_n$ such that $ t\neq t'$. If this were not true, then from $s\Cap t'$ for  $t'\in T_n$ with $t\neq t'$ we get as above $sS\cap t'S=sbS$ and $sb=t'r$ for some $r\in S$ and $b\in S_c$. Now pick $c,d\in S$ such that $ac=bd$ and note that $tuc=sac=sbd=t'rd$, which implies $t\Cap t'$ and contradicts the fact that $T_n$ is accurate.
 It follows that $v_s^*q_I\varphi(x)q_Iv_s$ begins with the factor $v_s^*q_{\{n\}}$, which by the claim is $v_s^*(1-e_{tS})$. From the choice of $a$, it is easy to see that $v_s^*(1-e_{tS})=(1-e_{aS})v_s^*$. By Lemma~\ref{lem:evaluation under state}~(ii) we write $\chi_{\tau,s}(q_I\varphi(x)q_I)=\tau(E(v_s^*q_{\{n\}}q_{I\setminus\{n\}}\varphi(x)q_Iv_s^{\phantom{*}}))$. Now denoting $z=q_{I\setminus\{n\}}\varphi(x)q_Iv_s^{\phantom{*}}$, we obtain
\[
\chi_{\tau,s}(q_I\varphi(x)q_I)=\tau(E((1-e_{aS})v_s^*z))=\tau((1-w_a^{\phantom{*}}w_a^*)E(z)),
\]
which vanishes because $\tau$ is a trace on $C^*(S_c)$. We have shown that for every $[s] \in S/_\sim, [s] \neq S_c$ the contribution $\chi_{\tau,s}(q_{I}\varphi(x)q_{I})$ vanishes for all large enough $I$, and we conclude thus from \eqref{eq:parametrization is surj} that  $(\psi_{\beta,\tau})_{\text{Irr}(N(S))}(\varphi(x)) = \tau(x)$ for all $x \in C^*(S_c)$.
\end{proof}



\begin{proof}[Proof of Theorem~\ref{thm:restricting to tau}]
Let $\beta_0 \geq 1$ and suppose $\tau$ is a normalised trace on $C^*(S_c)$ satisfying \eqref{eq:KMS funct eq}. By weak*- compactness of the state space, a subnet of $(\psi_{\beta,\tau})_{\beta >\beta_0}$, where the index set is ordered by the reverse order, converges to a state $\phi_{\beta_0,\tau}$. It is evident that $\phi_{\beta_0,\tau}$ is a KMS$_{\beta_0}$-state with $\phi_{\beta_0,\tau}\circ \varphi = \tau$. By Proposition~\ref{prop:alg char of KMS-states}, the latter determines the KMS$_{\beta_0}$-state completely, so every convergent subnet converges to this state and $\phi_{\beta_0,\tau}=\psi_{\beta_0,\tau}$. Moreover, $\phi_{\beta_0,\tau}\circ \varphi = \tau$ shows that we get an embedding of the simplex of normalised traces satisfying \eqref{eq:KMS funct eq} into the simplex of KMS$_{\beta_0}$-states.

Now suppose $\beta_0=1$ and let $\phi$ be a KMS$_1$-state. In order to show that the embedding obtained before is a surjection, we will show that $\psi_{\beta,\phi\circ \varphi}\circ \varphi = \phi\circ \varphi$ for all $\beta >1$. To do so, we claim first that for $x \in C^*(S_c)$ we have $v_s^*\varphi(x)v_s^{\phantom{*}} \in C^*(S_c)$ for all $s \in S$. To see this, it suffices to consider  $x=w_a^{\phantom{*}}w_b^*$ with $a,b \in S_c$ such that $0 \neq v_s^*\varphi(x)v_s^{\phantom{*}}$. The latter amounts to $\alpha_a^{-1}([s])=\alpha_b^{-1}([s])$, so that there exist $c,d \in S_c$ with $v_s^*\varphi(x)v_s^{\phantom{*}} = v_c^{\phantom{*}}v_d^*$. Hence
\[\chi_{\phi\circ\varphi,s}(\varphi(x)) = \phi\circ\varphi\circ E(v_s^*\varphi(x)v_s^{\phantom{*}}) =  \phi(v_s^*\varphi(x)v_s^{\phantom{*}}).\]
Invoking Lemma~\ref{lem:construction of KMS-states}, we get $\psi_{\phi\circ\varphi,n}\circ\varphi = n^{-1}\sum_{[s] \in N^{-1}(n)/_\sim} \chi_{\phi\circ\varphi,s}\circ\varphi \stackrel{\eqref{eq:rec formula beta=1}}{=} \phi\circ\varphi$, and then
\[\begin{array}{c}
\psi_{\beta,\phi\circ\varphi}\circ \varphi = \lim\limits_{I\nearrow \text{Irr}(N(S))} \zeta_I(\beta)^{-1}\sum\limits_{n \in \langle I\rangle^+} n^{1-\beta} \psi_{\phi\circ\varphi,n}\circ\varphi = \phi \circ\varphi
\end{array}\]
for all $\beta>1$. Therefore, the normalised trace $\phi\circ\varphi$ satisfies \eqref{eq:KMS funct eq} for $\beta_0=1$, and $\phi_{1,\phi\circ\varphi}\circ \varphi = \phi\circ\varphi$ entails $\phi_{1,\phi\circ\varphi} = \phi$ because of Proposition~\ref{prop:alg char of KMS-states}.
\end{proof}

\begin{corollary}\label{cor:kms states via functional equation} The following hold:
\begin{enumerate}[(i)]
\item For $1\leq \beta_1 \leq \beta_2$, the simplex $T_{>\beta_1}(C^*(S_c))$ is a face in $T_{>\beta_2}(C^*(S_c))$.
\item For every $\beta>1$, the simplex of KMS$_1$-states embeds into the simplex of KMS$_\beta$-states.
\item If $T_{>\beta_1}(C^*(S_c))$ is not a singleton, then $(C^*(S),\sigma)$ does not have a unique KMS$_{\beta_2}$-state for all $\beta_2\geq \beta_1$.
\end{enumerate}
\end{corollary}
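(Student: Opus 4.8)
Parts (ii) and (iii) are quick consequences of Theorem~\ref{thm:restricting to tau}. Since $\beta_1\le\beta_2$ forces $(\beta_2,\infty)\subseteq(\beta_1,\infty)$, we have $T_{>\beta_1}(C^*(S_c))\subseteq T_{>\beta_2}(C^*(S_c))$; for (iii), if $T_{>\beta_1}(C^*(S_c))$ contained distinct traces $\tau\ne\tau'$, then for any real $\beta_2\ge\beta_1$ both lie in $T_{>\beta_2}(C^*(S_c))$ and the injective affine map $\rho\mapsto\psi_{\beta_2,\rho}$ of Theorem~\ref{thm:restricting to tau} would send them to distinct $\kms_{\beta_2}$-states, so there is no unique $\kms_{\beta_2}$-state. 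For (ii), Theorem~\ref{thm:restricting to tau} at $\beta=1$ identifies the $\kms_1$-states with $T_{>1}(C^*(S_c))$ via $\phi\mapsto\phi\circ\varphi$; composing this identification with the inclusion $T_{>1}(C^*(S_c))\subseteq T_{>\beta}(C^*(S_c))$ and then with the embedding $\rho\mapsto\psi_{\beta,\rho}$ of $T_{>\beta}(C^*(S_c))$ into the $\kms_\beta$-states (again Theorem~\ref{thm:restricting to tau}) yields the required embedding $\phi\mapsto\psi_{\beta,\phi\circ\varphi}$ of simplices.

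For (i) the plan is to reduce everything to the assertion that $T_{>1}(C^*(S_c))$ is a \emph{face} of the full trace simplex $T(C^*(S_c))$. First observe that for $\rho\in T_{>1}(C^*(S_c))$ one has $\rho=\psi_{1,\rho}\circ\varphi$, and the computation in the proof of Theorem~\ref{thm:restricting to tau} combined with the reconstruction formula~\eqref{eq:rec formula beta=1} gives $\psi_{\rho,n}\circ\varphi=\rho$ for \emph{every} $n\in N(S)$; averaging over $n$ with the normalised weights defining $\psi_{\beta,\rho,I}$ and passing to the limit yields $\psi_{\gamma,\rho}\circ\varphi=\rho$, that is $\chi_\gamma(\rho)=\rho$, for every $\gamma>1$. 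Consequently $T_{>1}(C^*(S_c))$ coincides with $\operatorname{Fix}(\chi_1)$ (where $\chi_1\colon\tau\mapsto\psi_{1,\tau}\circ\varphi$), and $\chi_1$ is an affine idempotent retraction of $T(C^*(S_c))$ onto $T_{>1}(C^*(S_c))$ — idempotency because $\psi_{1,\psi_{1,\tau}\circ\varphi}=\psi_{1,\tau}$, exactly as in the proof of Theorem~\ref{thm:restricting to tau}. Granting that $T_{>1}(C^*(S_c))$ is a face of $T(C^*(S_c))$, the general case follows: if $\tau\in T_{>\beta_1}(C^*(S_c))\subseteq T_{>1}(C^*(S_c))$ decomposes as $t\tau_1+(1-t)\tau_2$ with $t\in(0,1)$ and $\tau_i\in T_{>\beta_2}(C^*(S_c))\subseteq T(C^*(S_c))$, then $\tau_i\in T_{>1}(C^*(S_c))$, hence $\chi_\gamma(\tau_i)=\tau_i$ for all $\gamma>\beta_1$ by the above, i.e. $\tau_i\in T_{>\beta_1}(C^*(S_c))$, which is the face property.

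It therefore remains to prove that $T_{>1}(C^*(S_c))$ is a face of $T(C^*(S_c))$, and this is the step I expect to be the main obstacle. Both are metrisable Choquet simplices, so by uniqueness of barycentric measures it is enough to check $\operatorname{ext}T_{>1}(C^*(S_c))\subseteq\operatorname{ext}T(C^*(S_c))$; under the homeomorphism of Theorem~\ref{thm:restricting to tau} at $\beta=1$ an extreme point of $T_{>1}(C^*(S_c))$ has the form $\phi\circ\varphi$ with $\phi$ an extremal, hence factorial, $\kms_1$-state on $C^*(S)$, so the task is to show that the restricted trace $\phi\circ\varphi$ is factorial on $C^*(S_c)$. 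My plan here is to use that $\phi$ factors through the boundary quotient $\CQ(S)$ (Remark~\ref{rem:main thm extended}) together with the conditional expectation $E\colon C^*(S)\to C^*(S_c)$ from~\eqref{eq:faithful varphi}: one analyses the GNS representation of $\phi$ so as to bound the relative commutant of $\pi_\phi(\varphi(C^*(S_c)))''$ inside the factor $\pi_\phi(C^*(S))''$ and thereby force the centre of $\pi_{\phi\circ\varphi}(C^*(S_c))''$ to be trivial. A possible alternative is to show directly that the retraction $\chi_1$, written as the pointwise limit of the affine maps $\tau\mapsto\psi_{\tau,n}\circ\varphi$, preserves the extreme boundary of $T(C^*(S_c))$ — which would give $\operatorname{ext}T_{>1}(C^*(S_c))\subseteq\chi_1(\operatorname{ext}T(C^*(S_c)))\subseteq\operatorname{ext}T(C^*(S_c))$ at once — by exploiting the $S_c$-equivariance of these maps recorded in Lemma~\ref{lem:evaluation under state} and the proof of Lemma~\ref{lem:construction of KMS-states}.
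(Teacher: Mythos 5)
Parts (ii) and (iii) of your proposal are correct and coincide with the paper's route: both follow from Theorem~\ref{thm:restricting to tau} together with the inclusion $T_{>\beta_1}(C^*(S_c))\subseteq T_{>\beta_2}(C^*(S_c))$. The problem is part (i), which the paper dismisses as ``just an observation'' but which you rightly treat as the substantive point --- and do not actually prove. There are two concrete issues. First, your reduction of the general case to the claim that $T_{>1}(C^*(S_c))$ is a face of $T(C^*(S_c))$ rests on the inclusion $T_{>\beta_1}(C^*(S_c))\subseteq T_{>1}(C^*(S_c))$, which is backwards: since $\beta_0\mapsto (\beta_0,\infty)$ is decreasing, $\beta_0\mapsto T_{>\beta_0}(C^*(S_c))$ is increasing, i.e.\ $T_{>1}\subseteq T_{>\beta_1}\subseteq T_{>\beta_2}$ --- exactly the monotonicity you state (correctly) in your first paragraph. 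So even granting your key claim, you would only obtain (i) in the case $\beta_1=1$. Second, the key claim itself is left open: you offer two plans (factoriality of $\phi\circ\varphi$ for extremal $\phi$, or extreme-point preservation of $\chi_1$) but carry out neither, and what you do establish --- that $\chi_1$ is an affine idempotent retraction onto $T_{>1}(C^*(S_c))$ --- does not suffice, since the image of an affine idempotent retraction of a simplex need not be a face (e.g.\ the retraction of a triangle onto one of its medians).

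There is a much shorter route, which may well be the intended ``observation'' and which makes (i) degenerate. Fix $\tau\in T_{>\beta_2}(C^*(S_c))$ and $x\in C^*(S_c)$. For $\beta>\max(\beta_2,\beta_c,2)$ the identity $\chi_\beta(\tau)=\tau$ reads $\sum_{n\in N(S)}n^{1-\beta}\bigl((\psi_{\tau,n}\circ\varphi)(x)-\tau(x)\bigr)=0$; this is a Dirichlet series in $\beta$ with coefficients bounded by $2n\lVert x\rVert$, hence absolutely convergent on the half-line in question, and vanishing there. Uniqueness of Dirichlet coefficients forces $\psi_{\tau,n}\circ\varphi=\tau$ for every $n\in N(S)$, whence $\chi_\beta(\tau)=\tau$ for all $\beta>1$, i.e.\ $\tau\in T_{>1}(C^*(S_c))$. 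Thus $T_{>\beta_1}(C^*(S_c))=T_{>\beta_2}(C^*(S_c))$ for all $1\leq\beta_1\leq\beta_2<\infty$, and (i) holds trivially because a simplex is a face of itself; no Choquet-theoretic analysis of extreme traces is needed. I suggest replacing your second and third paragraphs by this argument, or at minimum correcting the direction of the inclusion and completing one of your two sketched strategies.
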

\begin{proof}
The first claim is just an observation, which in combination with Theorem~\ref{thm:restricting to tau} proves both (ii) and (iii).
\end{proof}

\begin{question}\label{que:kms states via functional equation}
Under which conditions can one extend the characterisation of the KMS$_{\beta_0}$-states for $\beta_0=1$ from Theorem~\ref{thm:restricting to tau}  to other values of $\beta_0$, say $\beta_0 \in (1,\beta_c]$?
\end{question}

\section{Uniqueness for KMS-states in the critical interval}\label{sec:uniqueness}
Let $S$ be a right LCM monoid with generalised scale $N\colon S\to \N^\times$. It follows from Proposition~\ref{prop:construction of KMS1-states} and Proposition~\ref{prop:construction of KMS-states} that every $\beta$ in the critical interval $[1,\beta_c]$ has at least one KMS$_\beta$-state on $(C^*(S),\sigma)$. In this section, we will address sufficient conditions for uniqueness of these states.

Our first criterion is the notion of core regularity introduced in Definition~\ref{def:neg q-a}. This is pertinent to uniqueness of KMS$_\beta$-state when the critical interval degenerates to the value $\beta=1$. In order to justify our definition, first observe that if $\phi$ is a $\kms_1$-state, $a,b\in S_c$ and $n\in N(S)$, then by equation \eqref{eq:rec formula beta=1} and  the KMS$_1$-condition we obtain
\begin{equation}\label{eq:sum-matters}
\begin{array}{c}
  \phi(v_a^{\phantom{*}}v_b^*) = \phi(v_b^*v_a^{\phantom{*}}) = \sum\limits_{[s] \in N^{-1}(n)/_\sim}\hspace*{-4mm} n^{-1}\phi(v_{bs}^{*}v_{as}^{\phantom{*}})
   = \sum\limits_{[s] \in N^{-1}(n)/_\sim}\hspace*{-4mm} n^{-1} \delta_{[as], [bs]} \ \phi(v_{bs}^*v_{as}^{\phantom{*}}).
\end{array}
\end{equation}
   The notion of core regularity says that
 for every choice of distinct elements $a,b \in S_c$, there may only be a small proportion of elements $[s] \in S/_\sim$ such that $[as] = [bs]$ but $asc \neq bsc$ for all $c \in S_c$.

Fix $a,b\in S_c$ and suppose $s\in S$ such that $[as]=[bs]$. We claim that if for some $s'\in [s]$ there is $c\in S_c$ such that $as'c=bs'c$, then there exists $d\in S_c$ such that $asd=bsd$. This will say that the property of absorbing the difference between $a$ and $b$ is invariant under $\sim$, and hence a property of $[s]$. To prove the claim, let $sS\cap s'S= seS, se=s'f$. Since $S_c$ is itself right LCM, we can further take $fS_c\cap cS_c=fgS, fg=ch$ with $g,h\in S_c$. Then we get $aseg=as'fg=as'ch=bs'ch=bs'fg=bseg$, so that $d:= eg$ yields the claim. Diagramatically, we can illustrate this as follows, with the bottom full arrows arising from the first equivalence $as\sim bs$:
\[ \xymatrix{
&&& \ar@/_/@{.>}[dll]_g \ar@/^/@{.>}[drr]^g \ar@{.>}[d]^h \\
&\ar[dl]_e \ar[dr]^f&&\ar@{-->}[dl]_c \ar@{-->}[dr]^c&&\ar[dl]_f \ar[dr]^e\\
\ar[drrr]^{as} &&\ar[dr]^{as'}&&\ar[dl]_{bs'}&&\ar[dlll]_{bs}\\
&&&
}\]

\begin{remark}\label{rem:neg q-a for core being a group}
Let $a,b,c \in S_c$ such that $a=bc$. Then left cancellation implies $F_n^{a,b} = F_n^{c,1}$ and  $A_n^{a,b} = A_n^{c,1}$. If instead we have  $a=cb$, then $[s]\in F_n^{a,b}$ is equivalent to
$[bs] \in F_n^{c,1}$. Hence $F_n^{a,b}=\alpha_b^{-1}(F_n^{c,1})$. Likewise, $A_n^{a,b} = \alpha_b^{-1}(A_n^{c,1})$. It follows that if all pairs in $S_c$ are comparable with respect to one of the partial orders given by $a \geq_r b \Leftrightarrow a \in bS_c$ and $a\geq_\ell b\Leftrightarrow a \in S_cb$, e.g.~if $S_c$ is a group or $\N$, then it suffices to consider $\lvert F_n^{a,1}\setminus A_n^{a,1}\rvert$ for arbitrary $a \in S_c$ in order to determine $\lvert F_n^{a,b}\setminus A_n^{a,b}\rvert$ for all $a,b \in S_c$. In this case, we shall simplify the notation to $F_n^a:= F_n^{a,1}$ and $A_n^a:= A_n^{a,1}$.
\end{remark}

\begin{remark}\label{rem:kappa monotone increasing}
The sequence $(\lvert A_n^{a,b}\rvert/n)_{n \in N(S)} \subset [0,1]$ is monotone increasing for all $a,b \in S_c$ in the sense that if $[s] \in A_m^{a,b}$ and $t \in N^{-1}(n)$, then $[st]\in A_{mn}^{a,b}$. To see this, pick $e \in S_c$ satisfying $ase=bse$, then $tS\cap eS=tfS, tf=et'$ for some $f \in S_c, t' \in S$. This implies $astf = aset' = bset' = bstf$, as needed. 
\end{remark}

Self-similar actions $(G,X)$ illustrate the meaning of the sets $A_n^{a,b}\subset F_n^{a,b}$ very nicely:

\begin{proposition}\label{prop:neg q-a explained for SSA}
Suppose $S=X^*\bowtie G$ for a self-similar action $(G,X)$. For $g \in G$, the equivalence class $[(w,1_G)] \in S/_\sim$ for a word $w \in X^*$ with $n:= \lvert X\rvert^{\ell(w)}$ belongs to $F_n^{(\varnothing,g)}$ if and only if $g(w) = w$. It belongs to $A_n^{(\varnothing,g)}$ if and only if $g(w)=w$ and $g|_w=1_G$.
\end{proposition}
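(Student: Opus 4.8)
The plan is to translate the abstract sets $F_n^{a,b}$ and $A_n^{a,b}$ into the combinatorics of the self-similar action by using the explicit description of the Zappa--Sz\'ep product $S = X^*\bowtie G$. First I would recall that the multiplication on $S$ is $(u,g)(v,h) = (u\,g(v),\, (g|_v) h)$, that the units of $S$ are exactly the pairs $(\varnothing,g)$ with $g\in G$, and that these units coincide with the core, so that $S_c\cong G$; moreover $(u,g)\sim(v,h)$ if and only if $u=v$, so $S/_\sim\cong X^*$ with $\alpha_{(\varnothing,g)}([v]) = [g(v)]$, and $N(v,g) = |X|^{\ell(v)}$. In particular $[(w,1_G)]$ lies in $N^{-1}(n)/_\sim$ for $n = |X|^{\ell(w)}$, and since $S_c$ is a group the shorthand $F_n^{(\varnothing,g)} = F_n^{(\varnothing,g),1}$ and $A_n^{(\varnothing,g)} = A_n^{(\varnothing,g),1}$ of Remark~\ref{rem:neg q-a for core being a group} is in force. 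The only computational input beyond this dictionary is the elementary identity $(\varnothing,g)(w,1_G) = (g(w),\, g|_w)$, immediate from the product rule.

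For the characterisation of $F_n^{(\varnothing,g)}$, I would spell out membership directly: $[(w,1_G)]\in F_n^{(\varnothing,g)}$ means that $(\varnothing,g)(w,1_G)\,c = (w,1_G)\,d$ for some $c=(\varnothing,c')$ and $d=(\varnothing,d')$ in $S_c$. Using the identity above and the product rule, the left-hand side equals $(g(w),\, g|_w c')$ and the right-hand side equals $(w, d')$. Comparing $X^*$-coordinates forces $g(w)=w$; conversely, if $g(w)=w$ then $c'=1_G$, $d'=g|_w$ is a witness. Hence $[(w,1_G)]\in F_n^{(\varnothing,g)}$ if and only if $g(w)=w$.

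For $A_n^{(\varnothing,g)}$ the same computation applies, but now the defining condition demands the \emph{same} core element on both sides: $(\varnothing,g)(w,1_G)\,c = (w,1_G)\,c$ for some $c=(\varnothing,c')\in S_c$, i.e.\ $(g(w),\, g|_w c') = (w, c')$. The $X^*$-coordinate again yields $g(w)=w$, while the $G$-coordinate now reads $g|_w c' = c'$ in the group $G$, equivalently $g|_w = 1_G$; conversely these two conditions clearly suffice. Thus $[(w,1_G)]\in A_n^{(\varnothing,g)}$ if and only if $g(w)=w$ and $g|_w=1_G$.

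I do not anticipate a genuine obstacle here: the proposition is essentially a dictionary entry, and both equivalences drop out of the single identity $(\varnothing,g)(w,1_G) = (g(w),\, g|_w)$. The only points requiring care are bookkeeping ones --- fixing the convention for the Zappa--Sz\'ep product (and hence the order in which $g$ acts on $w$ and restricts), confirming that the core is the group of units, and exploiting that $S_c$ is a group so that the equation $g|_w c' = c'$ collapses to $g|_w = 1_G$. This last point is precisely where $A_n^{(\varnothing,g)}$ is strictly more restrictive than $F_n^{(\varnothing,g)}$.
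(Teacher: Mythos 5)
Your proof is correct and follows essentially the same route as the paper: identify $S_c=\{\varnothing\}\times G$, note that core equivalence classes are determined by the word so that $\alpha_{(\varnothing,g)}([(w,1_G)])=[(g(w),g|_w)]$ settles the $F_n$ claim, and for $A_n$ reduce to $g|_w c'=c'$ and cancel using invertibility in $G$. No issues.
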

\begin{proof}
We have $S_c = \{\varnothing\}\times G$, and $[(w,h)] \in S/_\sim$ is determined by $w$. Therefore $\alpha_{(\varnothing,g)}([(w,1_G)]) = [(g(w),g|_w)]$ shows the first claim. For the second part, one direction is obvious. So suppose there is $h\in G$ such that $(w,g|_wh)=(\varnothing,g)(w,h) = (w,h)$. By left cancellation we get $g|_wh=h$, which amounts to $g|_w=1_G$ since $h$ is invertible.
\end{proof}

\begin{proposition}\label{prop:uniqueness at 1 if neg q-a}
There is a $\kms_1$-state $\psi_1$ for $(C^*(S),\sigma)$  determined by
\[\psi_1(v_a^{\phantom{*}}v_b^*) = \lim\limits_{n \in N(S)} \frac{\lvert A_n^{a,b}\rvert}{n} \quad \text{for } a,b \in S_c.\]
If $S$ is core regular, then $\psi_1$ is the unique $\kms_1$-state.
\end{proposition}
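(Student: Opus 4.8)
The plan is to separate the statement into the existence-and-formula claim (valid for any right LCM monoid $S$ with generalised scale) and the uniqueness claim (under core regularity).

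\emph{Existence and the formula.} I would take $\tau_0$ to be the canonical normalised trace on $C^*(S_c)$ from Proposition~\ref{prop:trace on C*(S_c)} and set $\psi_1 := \psi_{1,\tau_0}$, which is a $\kms_1$-state by Proposition~\ref{prop:construction of KMS1-states}. Since $\psi_1 = \lim_{n\in N(S)}\psi_{\tau_0,n}$ with $\psi_{\tau_0,n} = n^{-1}\sum_{[s]\in N^{-1}(n)/_\sim}\chi_{\tau_0,s}$ (and $\chi_{\tau_0,s}$ depends only on $[s]$ by Lemma~\ref{lem:evaluation under state}(ii)), everything reduces to the identity $\chi_{\tau_0,s}(v_a^{\phantom{*}}v_b^*) = \mathbf{1}_{A_{N_s}^{a,b}}([s])$ for $a\neq b$ in $S_c$ (the case $a=b$ being trivial). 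Granting this, $\psi_{\tau_0,n}(v_a^{\phantom{*}}v_b^*) = \lvert A_n^{a,b}\rvert/n$ for every $n$, and passing to the limit yields both the convergence and the stated value of $\psi_1(v_a^{\phantom{*}}v_b^*)$.

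\emph{Proof of the identity.} I would use $\chi_{\tau_0,s}(v_a^{\phantom{*}}v_b^*) = \tau_0(E(v_s^*v_a^{\phantom{*}}v_b^*v_s^{\phantom{*}}))$ from Lemma~\ref{lem:evaluation under state}(i) and evaluate $v_s^*v_a^{\phantom{*}}v_b^*v_s^{\phantom{*}}$ in $C^*(S)$. Two applications of Lemma~\ref{lem:consequence-ABLS-N-homomorphism} let one write $bS\cap sS = b\overline{s}S = sa'S$ with $a'\in S_c$, $sa'=b\overline{s}$, and then $sS\cap a\overline{s}S = scS = a\overline{s}c'S$ with $c,c'\in S_c$, $sc=a\overline{s}c'$; a short computation gives $v_s^*v_a^{\phantom{*}}v_b^*v_s^{\phantom{*}} = v_c^{\phantom{*}}v_{a'c'}^*$ with $c,a'c'\in S_c$, whence $\chi_{\tau_0,s}(v_a^{\phantom{*}}v_b^*) = \tau_0(w_c^{\phantom{*}}w_{a'c'}^*) = \delta_{c,a'c'}$. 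Using $sc=a\overline{s}c'$, $sa'c'=b\overline{s}c'$ and left cancellation, $c=a'c'$ is equivalent to $a(\overline{s}c') = b(\overline{s}c')$; since $\overline{s}c'\sim s$, the $\sim$-invariance of the absorbing property (the diagram preceding the proposition) gives $c=a'c' \Rightarrow [s]\in A_{N_s}^{a,b}$. For the converse, if $[s]\in A_{N_s}^{a,b}$ I pick $c_0\in S_c$ with $asc_0=bsc_0$ and form the right LCM $sc_0S\cap\overline{s}c'S = sc_0e_1S = \overline{s}c'e_2S$ with $e_1,e_2\in S_c$; since $\overline{s}c'e_2 = sc_0e_1$, applying $a$ and $b$ gives $a(\overline{s}c'e_2) = b(\overline{s}c'e_2)$, hence $ce_2 = a'c'e_2$, and then $c=a'c'$ — either by right cancellativity of $S_c$, or, avoiding it, from the short computation $\tau_0(w_{a'c'}^*w_c^{\phantom{*}}) = \tau_0(w_{e_2}^{\phantom{*}}w_{e_2}^*) = 1$ (the projection $w_{e_2}^{\phantom{*}}w_{e_2}^*$ has full $\tau_0$-measure and $w_{ce_2}^{\phantom{*}}=w_{a'c'e_2}^{\phantom{*}}$ is an isometry) together with $\tau_0(w_c^{\phantom{*}}w_{a'c'}^*) = \delta_{c,a'c'}$. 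This establishes the identity, hence the existence claim.

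\emph{Uniqueness under core regularity.} Let $\phi$ be any $\kms_1$-state. By Proposition~\ref{prop:alg char of KMS-states} it is completely determined by the numbers $\phi(v_a^{\phantom{*}}v_b^*)$, $a,b\in S_c$, so it suffices to compute these, and again $a=b$ is trivial. Equation~\eqref{eq:sum-matters} gives $\phi(v_a^{\phantom{*}}v_b^*) = n^{-1}\sum_{[s]\in F_n^{a,b}}\phi(v_{bs}^*v_{as}^{\phantom{*}})$ for every $n$. For $[s]\in A_n^{a,b}$ I choose $c\in S_c$ with $asc=bsc$; then $v_{bs}^*v_{as}^{\phantom{*}}e_{cS} = v_{bs}^*v_{bsc}^{\phantom{*}}v_c^* = v_c^{\phantom{*}}v_c^* = e_{cS}$, so Lemma~\ref{lem:core elts are unitaries under kms-states} applied with the core element $c$ yields $\phi(v_{bs}^*v_{as}^{\phantom{*}}) = \phi(e_{cS}) = 1$. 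Since $\lvert\phi(v_{bs}^*v_{as}^{\phantom{*}})\rvert\leq 1$ on the remaining $[s]\in F_n^{a,b}\setminus A_n^{a,b}$, we obtain $\lvert\phi(v_a^{\phantom{*}}v_b^*) - \lvert A_n^{a,b}\rvert/n\rvert \leq \lvert F_n^{a,b}\setminus A_n^{a,b}\rvert/n$, which tends to $0$ as $n\to\infty$ because $S$ is $(a,b)$-regular. Hence $\phi(v_a^{\phantom{*}}v_b^*) = \lim_n\lvert A_n^{a,b}\rvert/n = \psi_1(v_a^{\phantom{*}}v_b^*)$ for all $a,b\in S_c$, so $\phi=\psi_1$. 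The main obstacle is the first identity, in particular the implication $[s]\in A_{N_s}^{a,b}\Rightarrow\chi_{\tau_0,s}(v_a^{\phantom{*}}v_b^*)=1$: this cannot be read off the right LCM data for a single representative and needs both the $\sim$-invariance of absorption and the interplay between right LCMs and the trace $\tau_0$; the remaining steps are routine uses of Lemma~\ref{lem:core elts are unitaries under kms-states} and the algebraic characterisation of $\kms_1$-states.
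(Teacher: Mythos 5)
Your overall strategy is the paper's: existence via $\psi_1:=\psi_{1,\tau_0}$ together with the computation $\psi_{\tau_0,n}(v_a^{\phantom{*}}v_b^*)=n^{-1}\lvert A_n^{a,b}\rvert$, and uniqueness via \eqref{eq:sum-matters} plus the estimate $\lvert\phi(v_a^{\phantom{*}}v_b^*)-\lvert A_n^{a,b}\rvert/n\rvert\leq\lvert F_n^{a,b}\setminus A_n^{a,b}\rvert/n$. The uniqueness half is correct and essentially identical to the paper's; your direct verification that $\phi(v_{bs}^*v_{as}^{\phantom{*}})=1$ for $[s]\in A_n^{a,b}$, via $v_{bs}^*v_{as}^{\phantom{*}}e_{cS}=e_{cS}$ and Lemma~\ref{lem:core elts are unitaries under kms-states}, is a clean substitute for the paper's appeal to Proposition~\ref{prop:alg char of KMS-states}.

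The existence half, however, contains a genuine error in the key identity. From $sa'=b\overline{s}$ you may only conclude $[s]=[sa']=[b\overline{s}]=\alpha_b([\overline{s}])$, i.e.\ $[\overline{s}]=\alpha_b^{-1}([s])$; your claim ``$\overline{s}c'\sim s$'' amounts to $[\overline{s}]=[s]$, i.e.\ to $[\overline{s}]$ being a fixed point of $\alpha_b$, which fails in general (recall $\sim$ is generated by \emph{right} multiplication by core elements, whereas $b$ multiplies on the left). What your computation actually proves is $\chi_{\tau_0,s}(v_a^{\phantom{*}}v_b^*)=\mathbf{1}_{A_{N_s}^{a,b}}\bigl(\alpha_b^{-1}([s])\bigr)$, not $\mathbf{1}_{A_{N_s}^{a,b}}([s])$: concretely, for $S=X^*\bowtie G$, $s=(w,1_G)$ and $b=(\varnothing,h)$ with $h\neq 1_G$ one gets $\overline{s}=(h^{-1}(w),h^{-1}|_w)$, and the value detects whether $h^{-1}(w)$, not $w$, is an absorbing fixed word. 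Your converse direction breaks at the same point, since $sc_0S\cap\overline{s}c'S$ is empty unless $[s]=[\overline{s}]$. The formula $\psi_{\tau_0,n}(v_a^{\phantom{*}}v_b^*)=n^{-1}\lvert A_n^{a,b}\rvert$ nevertheless survives, because $\alpha_b$ permutes $N^{-1}(n)/_\sim$ and summing the corrected pointwise identity over $[s]$ still counts $\lvert A_n^{a,b}\rvert$. The paper avoids the issue by first using that $\psi_{\tau_0,n}\circ\varphi$ is tracial (Lemma~\ref{lem:construction of KMS-states}) to replace $v_a^{\phantom{*}}v_b^*$ by $v_b^*v_a^{\phantom{*}}$, which turns the computation into one about the right LCM of $ar$ and $br$ and yields the indicator of $A_n^{a,b}$ at $[r]$ directly; note this flip is only valid after summing over $N^{-1}(n)/_\sim$, since the individual $\chi_{\tau_0,s}$ are not tracial on $\varphi(C^*(S_c))$.
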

\begin{proof}
According to Proposition~\ref{prop:trace on C*(S_c)}, $\tau_0$ given by $\tau_0(v_a^{\phantom{*}}v_b^*) =\delta_{a,b}$ defines a normalised trace on $C^*(S_c)$. By Proposition~\ref{prop:construction of KMS1-states}, this yields a $\kms_1$-state $\psi_1:= \psi_{1,\tau_0}$. For $a,b \in S_c$ and $n \in N(S)$, we compute
\[\begin{array}{c}
\psi_{\tau_0,n}(v_a^{\phantom{*}}v_b^*) = \psi_{\tau_0,n}(v_b^*v_a^{\phantom{*}}) = n^{-1}\sum\limits_{[r] \in N^{-1}(n)/_\sim} \tau_0\circ E(v_{br}^*v_{ar}^{\phantom{*}}).
\end{array}\]
Since $\tau_0\circ E(v_{br}^*v_{ar}^{\phantom{*}})$ vanishes unless $arc=brc$ for some $c \in S_c$, we arrive at
\begin{equation} \label{eq:unique KMS1-state}
\psi_{\tau_0,n}(v_a^{\phantom{*}}v_b^*) = n^{-1} \lvert A_n^{a,b}\rvert,
\end{equation}
which immediately implies the claim for $\psi_1$. Now suppose $\phi$ is any $\kms_1$-state on $C^*(S)$. Let $a,b\in S_c$ and $n\in N(S)$. According to \eqref{eq:sum-matters},
\[\begin{array}{lclclcl}
\phi(v_a^{\phantom{*}}v_b^*) &=& \sum\limits_{[s] \in N^{-1}(n)/_\sim} n^{-1} \delta_{[as], [bs]} \phi(v_{bs}^*v_{as}^{\phantom{*}})
&=& \sum\limits_{[s] \in F_n^{a,b}} n^{-1}\phi(v_{bs}^*v_{as}^{\phantom{*}}) \vspace*{2mm}\\
&=&  \sum\limits_{[s] \in A_n^{a,b}}n^{-1}{\phi(v_{bs}^*v_{as}^{\phantom{*}})} + \sum\limits_{[s] \in F_n^{a,b}\setminus A_n^{a,b}} n^{-1}{\phi(v_{bs}^*v_{as}^{\phantom{*}})}.
\end{array}\]
By Proposition~\ref{prop:alg char of KMS-states}, $\phi(v_{bs}^*v_{as}^{\phantom{*}})$ has form $\phi(v_{g}^{\phantom{*}}v_{f}^*)$ for some $f,g\in S_c$. Each summand corresponding to $[s]\in A_n^{a,b}$ will satisfy $g=f$, and since $w_g$ is unitary in $C^*(S_c)$ and $\phi\circ \varphi$ is a trace on $C^*(S_c)$ we have $\phi(v_{g}^{\phantom{*}}v_{f}^*)=1$. For the summands where $[s]\in F_n^{a,b}\setminus A_n^{a,b}$, we  estimate that $|\phi(v_{g}^{\phantom{*}}v_{f}^*)|\leq 1$. Thus the assumption on $S$ implies  $\phi = \psi_1$, as needed.
\end{proof}

Proposition~\ref{prop:uniqueness at 1 if neg q-a} improves \cite{ABLS}*{Proposition~9.2} significantly. If $S$ satisfies the hypotheses of \cite{ABLS}*{Proposition~9.2}, namely faithfulness of $\alpha\colon S_c \curvearrowright S/_\sim$ and finite propagation, then \cite{ABLS}*{Lemma~9.5} establishes that $S$ is core regular. In Section~\ref{sec:unique despite unfaithful alpha} we present an example that has a unique KMS$_1$-state (with $\beta_c=1$) and for which $\alpha$ is not faithful.

The value $\beta_c$, when finite, can be interpreted as a critical inverse temperature above which KMS$_\beta$-states are of Gibbs type in the sense of \cite{ALN}. The value $\beta_1=1$, on the other hand, has the relevance of an inverse temperature below which no KMS$_\beta$-states can exist. The question of deciding in general when these values $\beta_1$ and $\beta_c$ coincide is a very interesting one and far from fully answered, see though \cite{ALN}.

There are  classes of examples where $\beta_1<\beta_c$, and for this situation it is interesting to decide if there is uniqueness of  KMS$_\beta$-states for $\beta\in (\beta_1,\beta_c]$. Our second criterion, that of summably core regular monoid  from Definition~\ref{def:sum q-a}, is tailored to the case of $\beta \in (1,\beta_c]$, which occurs for instance for the affine semigroup over the natural numbers from \cite{LR2}. Up to now, it was only almost freeness of $\alpha$ that was used to detect uniqueness in this situation, compare \cite{ABLS}*{Proposition~9.1}.

 \begin{lemma}\label{lem:kappa monotone increasing for s q-a}
Let $\beta\in (1,\beta_c]$ and $a,b\in S_c$. The sequence $\{\zeta_I(\beta)^{-1} \sum_{n \in \langle I\rangle^+} n^{-\beta} \lvert A_n^{a,b} \rvert\}$ ranging over $I\ssubset \text{Irr}(N(S))$ is monotone increasing, and hence converges to a limit
\[\begin{array}{c}
\kappa_{\beta,a,b}:=  \lim\limits_{I \ssubset \text{Irr}(N(S))} \zeta_I(\beta)^{-1} \sum\limits_{n \in \langle I\rangle^+} n^{-\beta} \lvert A_n^{a,b}\rvert \in [0,1].
\end{array}\]
 \end{lemma}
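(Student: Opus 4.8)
The plan is to mimic the argument for Remark~\ref{rem:kappa monotone increasing}, where the analogous monotonicity $\lvert A_m^{a,b}\rvert/m \leq \lvert A_{mn}^{a,b}\rvert/(mn)$ was established, and then to convert this ``pointwise'' monotonicity in the level $n$ into monotonicity of the averaged quantities over finite $I \ssubset \text{Irr}(N(S))$. First I would record the product structure: for $I \subset I'$ finite, every $n' \in \langle I'\rangle^+$ factors uniquely as $n' = n n''$ with $n \in \langle I\rangle^+ \cup \{1\}$ and $n''\in \langle I'\setminus I\rangle^+\cup\{1\}$, since $N(S)$ is freely generated by its irreducibles by (A4). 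Using the product formula $\zeta_{I'}(\beta) = \zeta_I(\beta)\,\zeta_{I'\setminus I}(\beta)$ from \cite{ABLS}*{Remark~7.4}, the inequality
\[
\zeta_I(\beta)^{-1}\sum_{n \in \langle I\rangle^+} n^{-\beta}\lvert A_n^{a,b}\rvert \ \leq\ \zeta_{I'}(\beta)^{-1}\sum_{n' \in \langle I'\rangle^+} (n')^{-\beta}\lvert A_{n'}^{a,b}\rvert
\]
would follow once I can compare, term by term in $n''\in \langle I'\setminus I\rangle^+\cup\{1\}$, the contribution of the sub-sum over $n'=nn''$ against a $\zeta_{I'\setminus I}(\beta)$-weighted copy of the $I$-sum; the weight of $n''$ is $(n'')^{-\beta}$ and these sum to $\zeta_{I'\setminus I}(\beta)$, so it suffices to have $\lvert A_n^{a,b}\rvert \leq \lvert A_{nn''}^{a,b}\rvert$ for each fixed $n''$ — but this is exactly Remark~\ref{rem:kappa monotone increasing} (take $t \in N^{-1}(n'')$ and use that $[s]\mapsto[st]$ is injective on $S/_\sim$ by left cancellation, mapping $A_n^{a,b}$ into $A_{nn''}^{a,b}$). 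Care is needed with the $n=1$ or $n''=1$ edge cases: the value $n=1$ contributes $\lvert A_1^{a,b}\rvert$, which is $0$ unless $a=b$, and one should check the bookkeeping so that the sub-sum over $\langle I\rangle^+$ (excluding $1$) still dominates correctly — the cleanest route is to include $n=1$ throughout and note $A_1^{a,b}\subseteq A_{n''}^{a,b}$ holds too, then subtract the common $n'=1$ term at the end.

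Having monotonicity, boundedness is immediate: $\lvert A_n^{a,b}\rvert \leq \lvert N^{-1}(n)/_\sim\rvert = n$ by the defining property of a generalised scale, so each averaged quantity is at most $\zeta_I(\beta)^{-1}\sum_{n\in\langle I\rangle^+} n^{-\beta}\cdot n = \zeta_I(\beta)^{-1}(\zeta_I(\beta)-1) < 1$; it is clearly $\geq 0$. A bounded monotone net over the directed set of finite subsets of $\text{Irr}(N(S))$ converges, which gives the limit $\kappa_{\beta,a,b}\in[0,1]$.

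The main obstacle I anticipate is purely combinatorial rather than conceptual: organising the double sum over $\langle I'\rangle^+$ via the factorisation $n'=nn''$ so that the $\zeta_{I'\setminus I}(\beta)$ normalisation cancels cleanly against the extra weight, while correctly handling whether $1$ is or is not in $\langle I\rangle^+$. Once that reindexing is set up, the inequality $\lvert A_n^{a,b}\rvert\leq\lvert A_{nn''}^{a,b}\rvert$ is the only genuine input and it has already been proved in Remark~\ref{rem:kappa monotone increasing}, so the proof should be short.
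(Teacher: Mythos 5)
Your overall strategy (factor $n'=nn''$ using (A4), use the product formula $\zeta_{I'}(\beta)=\zeta_I(\beta)\zeta_{I'\setminus I}(\beta)$, and feed in the monotonicity of Remark~\ref{rem:kappa monotone increasing}) is the right one and matches the paper, but there is a genuine quantitative gap in the key step. You write that ``the weight of $n''$ is $(n'')^{-\beta}$ and these sum to $\zeta_{I'\setminus I}(\beta)$, so it suffices to have $\lvert A_n^{a,b}\rvert\leq\lvert A_{nn''}^{a,b}\rvert$.'' This is false: by \eqref{eq:restricted-zetaI} the restricted zeta function is $\zeta_J(\beta)=\sum_{n\in\langle J\rangle}n^{1-\beta}$, not $\sum_n n^{-\beta}$, so the weights $(n'')^{-\beta}$ sum to something \emph{strictly smaller} than $\zeta_{I'\setminus I}(\beta)$ whenever $I'\setminus I\neq\emptyset$. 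With only the injection $[s]\mapsto[st]$ for a single $t\in N^{-1}(n'')$, i.e.\ only $\lvert A_{nn''}^{a,b}\rvert\geq\lvert A_n^{a,b}\rvert$, your reindexing yields
\[
\zeta_{I'}(\beta)^{-1}\sum_{n'}(n')^{-\beta}\lvert A_{n'}^{a,b}\rvert\ \geq\ \Bigl(\zeta_{I'\setminus I}(\beta)^{-1}\sum_{n''}(n'')^{-\beta}\Bigr)\cdot\zeta_I(\beta)^{-1}\sum_n n^{-\beta}\lvert A_n^{a,b}\rvert,
\]
and the prefactor is $<1$, so monotonicity does not follow.

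The fix is to use the full strength of Remark~\ref{rem:kappa monotone increasing}, which you cite but do not exploit: it gives $\lvert A_{nn''}^{a,b}\rvert\geq n''\,\lvert A_n^{a,b}\rvert$ (all $n''$ classes $[st]$, $[t]\in N^{-1}(n'')/_\sim$, land in $A_{nn''}^{a,b}$, not just one of them — this is exactly the statement that $\lvert A_n^{a,b}\rvert/n$ is increasing). The extra factor $n''$ upgrades the weight $(n'')^{-\beta}$ to $(n'')^{1-\beta}$, and \emph{these} do sum to $\zeta_{I'\setminus I}(\beta)$, so the normalisation cancels exactly and monotonicity follows. This is precisely the computation in the paper's proof. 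Your boundedness argument ($\lvert A_n^{a,b}\rvert\leq n$, hence each term is at most $1$) and the convergence of a bounded monotone net are fine as stated.
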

\begin{proof}
Suppose $I,J \ssubset \text{Irr}(N(S))$ are disjoint, then $\zeta_{I\cup J}(\beta)=\zeta_I(\beta)\zeta_J(\beta)$ by the definition of the restricted $\zeta$-function in \eqref{eq:restricted-zetaI}. Since $\lvert A_{mn}^{a,b}\rvert \geq \lvert A_{m}^{a,b}\rvert n$ for all $m,n \in N(S)$ due to  Remark~\ref{rem:kappa monotone increasing}, we estimate that
\[\begin{array}{lcl}
\zeta_{I\cup J}(\beta)^{-1}\sum\limits_{k \in \langle I\cup J\rangle^+} k^{-\beta} \lvert A_k^{a,b}\rvert
&=& \zeta_I(\beta)^{-1} \sum\limits_{m \in \langle I\rangle^+} m^{-\beta} \zeta_J(\beta)^{-1}\sum\limits_{n \in \langle J\rangle^+} n^{-\beta} \lvert A_{mn}^{a,b}\rvert \vspace*{2mm}\\
&\geq& \zeta_I(\beta)^{-1} \sum\limits_{m \in \langle I\rangle^+} m^{-\beta} \lvert A_m^{a,b}\rvert \zeta_J(\beta)^{-1}\sum\limits_{n \in \langle J\rangle^+} n^{-\beta+1} \vspace*{2mm}\\
&=& \zeta_I(\beta)^{-1} \sum\limits_{m \in \langle I\rangle^+} m^{-\beta} \lvert A_m^{a,b}\rvert.
\end{array}\]
Since Thus the sequence is monotone increasing as claimed, and since all its terms lie in $[0,1]$ the lemma follows.
\end{proof}

\begin{proposition}\label{prop:uniqueness in the crit interval by s q-a}
There is a $\kms_\beta$-state $\psi_\beta$ for $(C^*(S),\sigma)$ for each  $\beta \in (1,\beta_c]$ determined by
\[\begin{array}{c}
\psi_\beta(v_a^{\phantom{*}}v_b^*) = \lim\limits_{I \ssubset \text{Irr}(N(S))} \zeta_I(\beta)^{-1} \sum\limits_{n \in \langle I\rangle^+} n^{-\beta} \lvert A_n^{a,b}\rvert \quad \text{for } a,b \in S_c.
\end{array}\]
If $S$ is $\beta$-summably core regular, then $\psi_\beta$ is the unique the KMS$_\beta$-state.
\end{proposition}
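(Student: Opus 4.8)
The plan is to mirror the structure of the proof of Proposition~\ref{prop:uniqueness at 1 if neg q-a}, but working with the restricted $\zeta$-functions and the finite-subset reconstruction formula of Lemma~\ref{lem:rec formula} in place of \eqref{eq:rec formula beta=1}. First, for the \emph{existence} part, I would apply Proposition~\ref{prop:construction of KMS-states} to the canonical trace $\tau_0$ on $C^*(S_c)$ from Proposition~\ref{prop:trace on C*(S_c)} to obtain a $\kms_\beta$-state $\psi_\beta:=\psi_{\beta,\tau_0}$. Then I would compute $\psi_\beta(v_a^{\phantom{*}}v_b^*)$ using the defining formula $\psi_{\beta,\tau_0,I}=\zeta_I(\beta)^{-1}\sum_{[s]\in S_I/_\sim}N_s^{-\beta}\chi_{\tau_0,s}$ from Lemma~\ref{lem:construction of KMS-states}, together with Lemma~\ref{lem:evaluation under state}~(i), which gives $\chi_{\tau_0,s}(v_a^{\phantom{*}}v_b^*)=\tau_0(E(v_s^*v_a^{\phantom{*}}v_b^*v_s^{\phantom{*}}))$. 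As in the computation leading to \eqref{eq:unique KMS1-state}, this expression vanishes unless $asc=bsc$ for some $c\in S_c$, i.e.\ unless $[s]\in A_n^{a,b}$ where $n=N_s$, and equals $1$ in that case; hence $\psi_{\beta,\tau_0,I}(v_a^{\phantom{*}}v_b^*)=\zeta_I(\beta)^{-1}\sum_{n\in\langle I\rangle^+}n^{-\beta}\lvert A_n^{a,b}\rvert$, which converges to $\kappa_{\beta,a,b}$ by Lemma~\ref{lem:kappa monotone increasing for s q-a}. Since by Lemma~\ref{lem:kappa monotone increasing for s q-a} the limit exists along \emph{all} of $I\ssubset\text{Irr}(N(S))$ (not just the subsequence $(I_k)$ from Proposition~\ref{prop:construction of KMS-states}), the values $\psi_\beta(v_a^{\phantom{*}}v_b^*)$ are as claimed and independent of choices.

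For the \emph{uniqueness} part under $\beta$-summable core regularity, I would take an arbitrary $\kms_\beta$-state $\phi$ and show $\phi=\psi_\beta$ by evaluating on the spanning elements $v_a^{\phantom{*}}v_b^*$ with $a,b\in S_c$. The key is the reconstruction formula of Lemma~\ref{lem:rec formula}~(iv): for $I\ssubset\text{Irr}(N(S))$,
\[\phi(v_a^{\phantom{*}}v_b^*)=\zeta_I(\beta)^{-1}\sum_{[s]\in S_I/_\sim}N_s^{-\beta}\,\phi_I(v_s^*v_a^{\phantom{*}}v_b^*v_s^{\phantom{*}}),\]
where $\phi_I\circ\varphi$ is a normalised trace on $C^*(S_c)$ by Lemma~\ref{lem:rec formula}~(ii). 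By Lemma~\ref{lem:core elts are unitaries under kms-states} and Proposition~\ref{prop:alg char of KMS-states} applied inside $\phi_I$, the summand $\phi_I(v_s^*v_a^{\phantom{*}}v_b^*v_s^{\phantom{*}})$ vanishes unless $[s]\in F_n^{a,b}$ (with $n=N_s$); it equals $1$ when $[s]\in A_n^{a,b}$, because then $v_s^*v_a^{\phantom{*}}v_b^*v_s^{\phantom{*}}$ reduces to a unitary $w_g w_g^*$ in $C^*(S_c)$ on which the trace $\phi_I\circ\varphi$ takes value $1$; and it has modulus at most $1$ for $[s]\in F_n^{a,b}\setminus A_n^{a,b}$. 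Splitting the sum into the $A_n^{a,b}$-part and the $(F_n^{a,b}\setminus A_n^{a,b})$-part, we get
\[\Bigl|\phi(v_a^{\phantom{*}}v_b^*)-\zeta_I(\beta)^{-1}\sum_{n\in\langle I\rangle^+}n^{-\beta}\lvert A_n^{a,b}\rvert\Bigr|\le\zeta_I(\beta)^{-1}\sum_{n\in\langle I\rangle^+}n^{-\beta}\lvert F_n^{a,b}\setminus A_n^{a,b}\rvert,\]
which tends to $0$ as $I\nearrow\text{Irr}(N(S))$ precisely by the definition of $\beta$-summable $(a,b)$-regularity \eqref{eq:ab-summable-qa}. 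Hence $\phi(v_a^{\phantom{*}}v_b^*)=\kappa_{\beta,a,b}=\psi_\beta(v_a^{\phantom{*}}v_b^*)$ for all $a,b\in S_c$, and since both states are $\kms_\beta$-states they are determined by their restriction to $\varphi(C^*(S_c))$ by Proposition~\ref{prop:alg char of KMS-states}, so $\phi=\psi_\beta$.

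The step I expect to require the most care is verifying that $\phi_I(v_s^*v_a^{\phantom{*}}v_b^*v_s^{\phantom{*}})$ behaves exactly like $\tau_0$ on absorbing classes and is bounded by $1$ otherwise — that is, pinning down that on $A_n^{a,b}$ the relevant product collapses to a single unitary generator of $C^*(S_c)$ (using that $asc=bsc$ forces, via left cancellation and the right LCM structure, $v_s^*v_a^{\phantom{*}}v_b^*v_s^{\phantom{*}}$ to equal $w_{s'}w_{s'}^*$-type terms controlled by Lemma~\ref{lem:core elts are unitaries under kms-states}), and that on $F_n^{a,b}\setminus A_n^{a,b}$ it is a product of isometries yielding $\phi_I(v_g^{\phantom{*}}v_f^*)$ with $g\neq f$ in $S_c$, hence of modulus $\le 1$. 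This is the analogue of the corresponding argument in Proposition~\ref{prop:uniqueness at 1 if neg q-a}, but one must be attentive that the intermediate state $\phi_I$ (rather than $\phi$ itself) is the one to which Proposition~\ref{prop:alg char of KMS-states} and Lemma~\ref{lem:core elts are unitaries under kms-states} get applied, which is legitimate since $\phi_I\circ\varphi$ is a normalised trace and $\phi_I$ inherits the algebraic relations needed; a brief remark to that effect should be included.
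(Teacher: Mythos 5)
Your proposal is correct and follows essentially the same route as the paper: existence via $\psi_{\beta,\tau_0}$ from Proposition~\ref{prop:construction of KMS-states} together with the computation behind \eqref{eq:unique KMS1-state}, and uniqueness via the reconstruction formula of Lemma~\ref{lem:rec formula}~(iv), splitting the sum over $A_n^{a,b}$ and $F_n^{a,b}\setminus A_n^{a,b}$ and invoking $\beta$-summable core regularity. The only cosmetic difference is that the paper first applies the KMS-condition to rewrite $\phi(v_a^{\phantom{*}}v_b^*)=\phi(v_b^*v_a^{\phantom{*}})$ so that the summands become $\phi_I(v_{bs}^*v_{as}^{\phantom{*}})$, which makes the reduction on absorbing classes slightly more transparent than working with $\phi_I(v_s^*v_a^{\phantom{*}}v_b^*v_s^{\phantom{*}})$ directly.
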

\begin{proof}
Let $\beta \in (1,\beta_c]$. As in the proof of Proposition~\ref{prop:uniqueness at 1 if neg q-a}, the normalised trace $\tau_0$ given by $\tau_0(v_a^{\phantom{*}}v_b^*) =\delta_{a,b}$ gives rise to $\kms_\beta$-state $\psi_\beta:= \psi_{\beta,\tau_0}$ using Proposition~\ref{prop:construction of KMS-states}. By construction, $\psi_\beta$ is the weak* limit of the $\psi_{\beta,\tau,I} = \zeta_I(\beta)^{-1} \sum_{n \in \langle I\rangle^+} n^{1-\beta}\psi_{\tau,n}$ with $I\ssubset \text{Irr}(N(S))$. Therefore, \eqref{eq:unique KMS1-state} shows that $\psi_\beta$ has the claimed form.

Now let $\phi$ be any KMS$_\beta$-state and $a,b\in S_c$ be fixed but arbitrary. The KMS$_\beta$-condition gives $\phi(v_a^{\phantom{*}}v_b^*)=\phi(v_b^*v_a^{\phantom{*}})$, and we note that for all $s \in S$, the expression $v_{bs}^*v_{as}^{\phantom{*}}$ vanishes unless $[s] \in F_{N_s}^{a,b}$. Hence, by Lemma~\ref{lem:rec formula}~(iv), we have
\[\begin{array}{lcl}
\phi(v_a^{\phantom{*}}v_b^*) &=& \lim\limits_{I \ssubset \text{Irr}(N(S))} \zeta_I(\beta)^{-1} \sum\limits_{[s] \in S_I/_\sim} N_s^{-\beta} \phi_I(v_{bs}^*v_{as}^{\phantom{*}})\vspace*{2mm}\\
&=&\lim\limits_{I \ssubset \text{Irr}(N(S))} \zeta_I(\beta)^{-1} \sum\limits_{n \in \langle I\rangle^+}\sum\limits_{[s] \in F_n^{a,b}} n^{-\beta} \phi_I(v_{bs}^*v_{as}^{\phantom{*}}).
\end{array}\]
Now we split the inner sum over $[s]\in F_n^{a,b}\setminus A_n^{a,b}$ and $[s]\in A_n^{a,b}$ and use that $\phi_I(v_{bs}^*v_{as}^{\phantom{*}}) = 1$ whenever $[s] \in A_n^{a,b}$ as $\phi_I\circ \varphi$ is tracial, which can be seen similar to the proof of Proposition~\ref{prop:uniqueness at 1 if neg q-a}. Therefore invoking summable quasi-absorption at $\beta$ we obtain
\[\begin{array}{lcl}
|\phi(v_a^{\phantom{*}}v_b^*)-\kappa_{\beta,a,b}|
&=& \lim\limits_{I \ssubset \text{Irr}(N(S))} \zeta_I(\beta)^{-1} \sum\limits_{n \in \langle I\rangle^+}\sum\limits_{[s] \in F_n^{a,b}\setminus A_n^{a,b}} n^{-\beta} \underbrace{|\phi_I(v_{bs}^*v_{as}^{\phantom{*}})|}_{\leq 1} \vspace*{2mm}\\
&\leq&  \lim\limits_{I \ssubset \text{Irr}(N(S))} \zeta_I(\beta)^{-1} \sum\limits_{n \in \langle I\rangle^+}n^{-\beta} \lvert F_n^{a,b}\setminus A_n^{a,b}\rvert  = 0.
\end{array}\]
Since Proposition~\ref{prop:alg char of KMS-states} says that $\phi$ is determined by its values on monomials $v_a^{\phantom{*}}v_b^*$, we conclude  that there is a unique such state $\phi=\psi_\beta$.
\end{proof}

\begin{proposition}\label{prop:neg q-a vs. other conditions}
Suppose $S$ is a right LCM monoid with generalised scale. If $\alpha$ is almost free, then $S$ is core regular and summably core regular.
\end{proposition}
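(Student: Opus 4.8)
The plan is to extract from almost freeness of $\alpha$ (in the sense of \cite{ABLS}) the single consequence we actually need, namely that $\lvert F_n^{a,b}\rvert/n\to 0$ as $n\to\infty$ in $N(S)$, for every pair of distinct $a,b\in S_c$ (the density of fixed points of $\alpha_a\alpha_b^{-1}$ at level $n$ vanishes). With this in hand, core regularity is immediate: the inclusion $F_n^{a,b}\setminus A_n^{a,b}\subseteq F_n^{a,b}$ gives $\lvert F_n^{a,b}\setminus A_n^{a,b}\rvert/n\le\lvert F_n^{a,b}\rvert/n\to 0$, which is precisely condition \eqref{eq:ab-qa} of Definition~\ref{def:neg q-a}. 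So the content of the proposition lies in deducing $\beta$-summable core regularity for each $\beta\in(1,\beta_c]$.

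Fix such a $\beta$ and distinct $a,b\in S_c$, and abbreviate $\varepsilon_n:=\lvert F_n^{a,b}\setminus A_n^{a,b}\rvert/n$. Since $\lvert N^{-1}(n)/_\sim\rvert=n$, one has $\varepsilon_n\in[0,1]$ for every $n\in N(S)$, and $\varepsilon_n\to 0$ by the previous paragraph. Writing $n^{-\beta}\lvert F_n^{a,b}\setminus A_n^{a,b}\rvert=n^{1-\beta}\varepsilon_n$, the quantity in Definition~\ref{def:sum q-a} whose limit over $I$ must be shown to be $0$ is $\zeta_I(\beta)^{-1}\sum_{n\in\langle I\rangle^+}n^{1-\beta}\varepsilon_n$. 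Two elementary facts suffice: by \eqref{eq:restricted-zetaI} we have $\sum_{n\in\langle I\rangle^+}n^{1-\beta}=\zeta_I(\beta)-1\le\zeta_I(\beta)$, and $\zeta_I(\beta)=\prod_{p\in I}(1-p^{1-\beta})^{-1}$ increases to $\zeta(\beta)=\infty$ as $I$ exhausts $\Irr(N(S))$, the divergence holding because $\beta$ lies in the critical interval.

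The estimate is an $\varepsilon/2$-split. Given $\delta>0$, choose $K$ so that $\varepsilon_n<\delta/2$ for every $n\in N(S)$ with $n>K$, and set $C_K:=\sum_{n\in N(S),\,n\le K}n^{1-\beta}$, a finite constant independent of $I$. For any finite $I\subseteq\Irr(N(S))$, split the sum over $\langle I\rangle^+$ at $K$: on the part with $n\le K$ use $\varepsilon_n\le 1$, so this contributes at most $C_K$; on the part with $n>K$ use $\varepsilon_n<\delta/2$, so it contributes at most $\frac{\delta}{2}\sum_{n\in\langle I\rangle^+}n^{1-\beta}\le\frac{\delta}{2}\zeta_I(\beta)$. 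Dividing by $\zeta_I(\beta)$ yields
\[\zeta_I(\beta)^{-1}\sum_{n\in\langle I\rangle^+}n^{1-\beta}\varepsilon_n\ \le\ \frac{C_K}{\zeta_I(\beta)}+\frac{\delta}{2}.\]
Since $\zeta_I(\beta)\to\infty$, there is a finite $I_0$ with $C_K/\zeta_I(\beta)<\delta/2$ for all finite $I\supseteq I_0$, so the left-hand side is $<\delta$ for such $I$. As the displayed quantity is nonnegative and $\delta>0$ was arbitrary, its limit over $I\nearrow\Irr(N(S))$ exists and is $0$; this is $\beta$-summable $(a,b)$-regularity. Since $\beta\in(1,\beta_c]$ and the distinct pair $a,b$ were arbitrary, $S$ is summably core regular.

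The only point requiring care is the interaction of the two limiting processes: $\varepsilon_n\to 0$ concerns $n\to\infty$, whereas the limit to be computed runs over the net of finite subsets $I$, and $\langle I\rangle^+$ always contains small $n$ however large $I$ is. What rescues the estimate is that the normalisation $\zeta_I(\beta)$ is unbounded across the critical interval, so the bounded contribution of the low levels is diluted away; correspondingly, the argument genuinely relies on $\zeta(\beta)=\infty$ for $\beta\in(1,\beta_c]$. Everything else is bookkeeping.
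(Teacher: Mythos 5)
Your proof is correct, and for the summable part it takes a genuinely different route from the paper. The paper uses the full strength of almost freeness: for $a\neq b$ the set $F_n^{a,b}$ is empty for all but finitely many $n$, so the total count $\sum_{n\in N(S)}\lvert F_n^{a,b}\rvert$ is a \emph{finite constant}; since $n^{-\beta}\leq 1$, the numerator $\sum_{n\in\langle I\rangle^+}n^{-\beta}\lvert F_n^{a,b}\setminus A_n^{a,b}\rvert$ is bounded uniformly in $I$, and dividing by $\zeta_I(\beta)\nearrow\infty$ finishes the argument in one line. You instead extract only the vanishing density $\varepsilon_n=\lvert F_n^{a,b}\setminus A_n^{a,b}\rvert/n\to 0$ and run a regular-summation (Toeplitz-type) argument: the weights $n^{1-\beta}/\zeta_I(\beta)$ have total mass at most $1$ and the mass on any fixed finite block of levels is washed out as $\zeta_I(\beta)\to\infty$. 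What your approach buys is the stronger intermediate statement that core regularity alone (together with divergence of $\zeta_I(\beta)$ on the critical interval) already implies $\beta$-summable core regularity, with almost freeness used only to produce $\varepsilon_n\to 0$; what it costs is the $\varepsilon/2$ bookkeeping that the paper's constant bound makes unnecessary. One small point deserves care: your split at $K$ needs $\varepsilon_n<\delta/2$ for \emph{all} $n\in N(S)$ with $n>K$ in the usual order on the integers, so that $C_K$ is a finite sum. Under almost freeness this is automatic, since $F_n^{a,b}=\emptyset$ for all but finitely many $n$; but if you wanted to state the implication ``core regular $\Rightarrow$ summably core regular'' in isolation, you would need to confirm that the limit in Definition~\ref{def:neg q-a} is along the usual order on $N(S)$ rather than along the divisibility net, since for the latter the complement of a cofinal set need not have finite $\sum n^{1-\beta}$.
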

\begin{proof}
The set $S/_\sim$ is infinite because the generalised scale is nontrivial by definition. If $\alpha$ is almost free, then, for given $a,b \in S_c, a \neq b$, the set $F_n^{a,b}$ is empty for almost all $n \in N(S)$. This implies that  $S$ is core regular. Next, suppose we have $\beta\in (1,\beta_c]$ and $a,b\in S_c$. Since $0 < n^{-\beta} \leq 1$, the series $\sum_{n \in \langle I\rangle^+} n^{-\beta} \lvert F_n^{a,b} \setminus A_n^{a,b} \rvert$ is bounded by the total number of fixed points for $\alpha_a^{-1}\alpha_b$, that is, $\sum_{n \in N(S)} \lvert F_n^{a,b} \rvert <\infty$. But $\zeta_I(\beta) \nearrow \infty$ as $I\nearrow \text{Irr}(N(S))$ because $\beta \in (1,\beta_c]$, so $S$ is $\beta$-summably $(a,b)$-regular. Since $\beta$, $a$ and $b$ were arbitrary, we get the claim.
\end{proof}

In Section~\ref{subsec:q-a but not almost free} we will discuss a class of examples with $\beta_c=2$ that satisfy both types of core regularity, but for which $\alpha$ is not almost free.

\begin{proposition}\label{prop:r canc: neg q-a forces alpha faithful}
Suppose $S$ is right cancellative. Then $A_n^{a,b} = \emptyset$ holds for all $a\neq b$ and $n\in N(S)$. If $S$ is core regular, then $\alpha$ is faithful.
\end{proposition}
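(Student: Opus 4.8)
The plan is to handle the two claims in turn: the first is an immediate consequence of right cancellation, and the second follows by contraposition, combining the first with the defining property of a generalised scale.

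For the statement $A_n^{a,b}=\emptyset$, I would suppose for contradiction that some class $[s]$ lies in $A_n^{a,b}$ for a pair of distinct $a,b\in S_c$ and some $n\in N(S)$. By \eqref{eq:A-n} there is then $c\in S_c$ with $asc=bsc$. Cancelling $c$ on the right gives $as=bs$, and cancelling $s$ on the right gives $a=b$, a contradiction. Hence $A_n^{a,b}=\emptyset$ for all distinct $a,b\in S_c$ and all $n\in N(S)$. This step is purely formal.

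For the second statement I would argue contrapositively: assuming $\alpha$ is \emph{not} faithful, I will produce a pair $a\neq b$ in $S_c$ for which \eqref{eq:ab-qa} fails, so that $S$ is not $(a,b)$-regular and hence not core regular. Recall that $\alpha$ is an action of $S_c$ by bijections of $S/_\sim$ (see \cite{ABLS}*{Lemma~3.9}), so non-faithfulness means there are distinct $a,b\in S_c$ with $\alpha_a=\alpha_b$ on all of $S/_\sim$. Unwinding the definitions, $\alpha_a([s])=\alpha_b([s])$ is the assertion $[as]=[bs]$, i.e.\ $asc=bsd$ for suitable $c,d\in S_c$, which is exactly the membership condition for $[s]\in N^{-1}(n)/_\sim$ to lie in $F_n^{a,b}$ from \eqref{eq:F-n}. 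Therefore $\alpha_a=\alpha_b$ forces $F_n^{a,b}=N^{-1}(n)/_\sim$ for every $n\in N(S)$, and by Definition~\ref{def:gen-scale} the latter set has exactly $n$ elements, so $\lvert F_n^{a,b}\rvert=n$ for all $n$.

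Combining the two observations, since $S$ is right cancellative we have $A_n^{a,b}=\emptyset$, whence $\lvert F_n^{a,b}\setminus A_n^{a,b}\rvert=\lvert F_n^{a,b}\rvert=n$ and the quotient in \eqref{eq:ab-qa} is identically $1$, which does not tend to $0$. Thus $S$ fails to be $(a,b)$-regular, contradicting core regularity, and $\alpha$ must be faithful. I do not expect a genuine obstacle here; the only points to keep straight are that faithfulness of the action is injectivity of $a\mapsto\alpha_a$, and that the generalised-scale hypothesis is precisely what pins down $\lvert N^{-1}(n)/_\sim\rvert=n$, which is what turns the set-theoretic identity $F_n^{a,b}=N^{-1}(n)/_\sim$ into the quantitative failure of \eqref{eq:ab-qa}.
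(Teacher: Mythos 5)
Your proof is correct and follows essentially the same route as the paper: the first claim by cancelling $c$ and then $s$ on the right, and the second by noting that $\alpha_a=\alpha_b$ for $a\neq b$ forces $F_n^{a,b}=N^{-1}(n)/_\sim$ of cardinality $n$, so that with $A_n^{a,b}=\emptyset$ the ratio in \eqref{eq:ab-qa} is identically $1$. Your write-up merely spells out the details the paper leaves implicit.
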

\begin{proof}
The first claim follows from the definition. For the second part, suppose there are $a,b \in S_c, a\neq b$ with $\alpha_a=\alpha_b$. Then $F_n^{a,b}=N^{-1}(n)/_\sim$ for all $n \in N(S)$, thus $\vert F_n^{a,b}\vert=n$ and $S$ fails to be $(a,b)$-regular. By Definition~\ref{def:neg q-a}, $S$ is not core regular.
\end{proof}

Finally, we establish a sufficient condition for $S$ to be summably core regular. We start with a lemma, whose third part is a generalisation of \cite{ABLS}*{Equation (9.1)}.

\begin{lemma}\label{lem:factorization formula for fixed non-absorbing elements}
Let $a,b \in S_c$ and $m,n \in N(S)$. Then:
\begin{enumerate}[(i)]
\item $F_{mn}^{a,b} = \{ [st] \mid [s] \in F_m^{a,b}, [t] \in \alpha_c(F_n^{c,d}) \text{ for } c,d \in S_c, asS\cap bsS = ascS, asc=bsd\}$. \vspace*{-3mm}
\item $[s] \in A_m^{a,b}$ implies $[st] \in A_{mn}^{a,b}$ for all $t \in N^{-1}(n)$.\vspace*{1mm}
\item $F_{mn}^{a,b}\setminus A_{mn}^{a,b} = \{ [st] \mid [s] \in F_m^{a,b}\setminus A_m^{a,b} \text{ and } [t] \in \alpha_c(F_n^{c,d}\setminus A_n^{c,d}) \}$.
\end{enumerate}
\end{lemma}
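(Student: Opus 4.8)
\emph{Approach.} The plan is to first install three reductions and then handle (i), (ii), (iii) in that order, with (ii) and (iii) leaning on (i). The key simplification I would exploit is that, by definition, $[s]\in F_n^{a,b}$ is equivalent to $as\sim bs$, i.e. $\alpha_a([s])=\alpha_b([s])$; since $\alpha_a,\alpha_b$ are well-defined bijections of $S/_\sim$ restricting to each $N^{-1}(n)/_\sim$, the set $F_n^{a,b}$ is a well-defined set of $\sim$-classes, and $A_n^{a,b}$ is too by the diagram argument preceding Remark~\ref{rem:neg q-a for core being a group}, so I may argue with arbitrary representatives throughout. I would also record once and for all that for $[s]\in F_m^{a,b}$ there exist $c,d\in S_c$ with $asS\cap bsS=ascS$ and $asc=bsd$: this follows from $as\Cap bs$, $N_{as}=m\in mN(S)=N_{bs}N(S)$, Lemma~\ref{lem:consequence-ABLS-N-homomorphism}, and \cite{ABLS}*{Proposition~3.6(i)} (to see $N_d=1$ forces $d\in S_c$). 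Finally, I would note that every $[u]\in N^{-1}(mn)/_\sim$ is of the form $[st]$ with $N_s=m$, $N_t=n$: picking $s$ in a transversal of $N^{-1}(m)/_\sim$ (an accurate foundation set) with $sS\cap uS=ss'S$, compatibility of $N$ with right LCMs (\cite{ABLS}*{Proposition~3.6(iv)}) gives $N_{ss'}=mn$, hence $N_{s'}=n$, while $ss'=uu'$ forces $u'\in S_c$, so $[u]=[ss']$ with $t:=s'$.

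\emph{Part (i).} For $\supseteq$ I would simply compute: given $s$ with $[s]\in F_m^{a,b}$, any $c,d\in S_c$ with $asc=bsd$, and $t$ with $[t]=\alpha_c([r])=[cr]$ for some $[r]\in F_n^{c,d}$, choose $e_1,e_2\in S_c$ with $te_1=cre_2$ and $c',d'\in S_c$ with $crc'=drd'$; then $aste_1=ascre_2=bsdre_2$ and $bste_1=bscre_2$, so $[ast]=[bsdr]$ and $[bst]=[bscr]$, and $bscrc'=bsdrd'$ gives $[bscr]=[bsdr]$, whence $[ast]=[bst]$, i.e. $[st]\in F_{mn}^{a,b}$. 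For $\subseteq$ I would write $[u]\in F_{mn}^{a,b}$ as $[st]$ with $N_s=m$, $N_t=n$ using the factorisation above, fix $c,d$ for $[s]$, and unpack $[ast]=[bst]$ as $astx=bsty$ with $x,y\in S_c$: then $astx\in asS\cap bsS=ascS$ yields $tx=cr$ (left cancellation), so $[t]=[cr]=\alpha_c([r])$; comparing $astx=ascr=bsdr$ with $bsty$ yields $dr=ty$, so $[t]=[dr]$; hence $[cr]=[dr]$, i.e. $[r]\in F_n^{c,d}$ (with $N_r=n$ read off from $tx=cr$), and $[s]\in F_m^{a,b}$, exhibiting $[u]$ on the right.

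\emph{Parts (ii) and (iii).} Part (ii) is exactly Remark~\ref{rem:kappa monotone increasing} and needs no new argument. For (iii), the $\subseteq$ direction: starting from $[u]=[st]\in F_{mn}^{a,b}\setminus A_{mn}^{a,b}$ with $[s]\in F_m^{a,b}$, fixed $c,d$, and $[t]=[cr]$, $[r]\in F_n^{c,d}$ (from (i)), I would rule out $[s]\in A_m^{a,b}$ by (ii), and rule out $crc''=drc''$ with $c''\in S_c$ by reconciling $c''$ with the $e_2$ from $te_1=cre_2$ via $e_2S\cap c''S=e_2hS=c''kS$ ($h,k\in S_c$), which forces $aste_1h=bsdre_2h=bsdrc''k=bscrc''k=bscre_2h=bste_1h$ and hence $[st]\in A_{mn}^{a,b}$, a contradiction; so $[s]\notin A_m^{a,b}$ and $[r]\notin A_n^{c,d}$. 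For $\supseteq$: given $[s]\in F_m^{a,b}\setminus A_m^{a,b}$ and $[t]=[cr]$ with $[r]\in F_n^{c,d}\setminus A_n^{c,d}$, part (i) already gives $[st]\in F_{mn}^{a,b}$; if also $astx=bstx$ with $x\in S_c$, I would reconcile $x$ with $e_1$ via $e_1S\cap xS=e_1pS=xqS$ ($p,q\in S_c$), so $aste_1p=astxq=bstxq=bste_1p$, and then substituting $ste_1=scre_2$, replacing $asc$ by $bsd$, and left-cancelling $bs$ yields $dr(e_2p)=cr(e_2p)$, i.e. $[r]\in A_n^{c,d}$, a contradiction.

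\emph{Main obstacle.} The crux is the $\subseteq$ inclusion of (i), more precisely the factorisation of an arbitrary level-$mn$ class as a product of a level-$m$ and a level-$n$ class; this is the only place where the generalised-scale hypothesis is really used (accuracy of transversal foundation sets, plus compatibility of $N$ with right LCMs). Everything afterwards is bookkeeping of correction elements in $S_c$, which works smoothly only because $S_c$ is a right LCM monoid in which any two elements have a common right multiple lying in $S_c$ — a point I would state explicitly, since it underpins the reconciliations $e_2S\cap c''S$ and $e_1S\cap xS$ in part (iii).
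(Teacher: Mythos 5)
Your proposal is correct and follows essentially the same route as the paper: factorise each level-$mn$ class as $[st]$ with $N_s=m$, $N_t=n$, pass to the right LCM data $asS\cap bsS=ascS$, $asc=bsd$ via Lemma~\ref{lem:consequence-ABLS-N-homomorphism}, and track absorbing elements by reconciling correction factors inside $S_c$ (your part (ii) is indeed just Remark~\ref{rem:kappa monotone increasing}, which is also how the paper argues). The only differences are presentational — you spell out the factorisation of $N^{-1}(mn)/_\sim$ that the paper cites from \cite{ABLS}, and in (iii) you run two explicit contradiction computations where the paper states the single equivalence ``$[st]\in A_{mn}^{a,b}\Leftrightarrow[t]\in\alpha_c(A_n^{c,d})$'' — so there is nothing substantive to flag.
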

\begin{proof}
For (i), we first note that for all $m,n \in N(S)$, every element $[r] \in N^{-1}(mn)/_\sim$ is of the form $[st]$ for some $s \in N^{-1}(m), t \in N^{-1}(n)$. This follows from \cite{ABLS}*{Definition~3.1(A3)(b) and Proposition~3.6(iv)}. Let $s,t\in S$ be chosen accordingly for a fixed, but arbitrary $[r]$ as above with $[r]=[st] \in F_{mn}^{a,b}$, that is, there are $e,f \in S_c$ with $astS\cap bstS = asteS, aste= bstf$. Thus $as \not\perp bs$, and since  $N_{as}=N_s=N_{bs}$ we obtain by \cite{ABLS}*{Proposition~3.6(ii)} that $as\sim bs$. That is, $[s] \in F_m^{a,b}$. Thus there are $c,d \in S_c$ with $asS\cap bsS = ascS, asc=bsd$, for which we can find $t',t'' \in S$ with
\[asct' = aste = bstf = bsdt''\]
because $aste=bstf \in asS \cap bsS$. By $asc=bsd$ and left cancellation, we get $t''=t'$. Using left cancellation for $as$ and $bs$, we further deduce that $\alpha_c([t']) = [t]= \alpha_d([t'])$, that is, $[t] \in \alpha_c(F_n^{c,d})$. This shows ``$\subset$''.

To prove the reverse inclusion, let $[s] \in F_m^{a,b}$ and $[t] \in \alpha_c(F_n^{c,d})$ for $c,d \in S_c$ as prescribed in (i). Thus for $t' \in S$ with $[t'] = \alpha_c^{-1}([t])$, we get $ast \sim asct' = bsdt' \sim bst$, that is, $[st] \in F_{mn}^{a,b}$.

For (ii), suppose there exists $e \in S_c$ such that $ase=bse$. Given $t \in N^{-1}(n)$, there are $t' \in S,f \in S_c$ with $eS\cap tS=et'S, et'=tf$. This yields $astf=aset'=bset'=bstf$, that is, $[st] \in A_{mn}^{a,b}$.

For part (iii), we claim that for $[st] \in F_{mn}^{a,b}$, $[st] \in A_{mn}^{a,b}$ is equivalent to $[t] \in \alpha_c(A_n^{c,d})$: If there is $e \in S_c$ such that $aste=bste$, then $aste = asct'=bsdt'$ for some $t' \in S$ with $ct'\sim t\sim dt'$ (by left cancellation), that is, $[t] \in \alpha_c(F_n^{c,d})$. Conversely, if there is $t' \in S$ with $ct' \sim t$ such that $ct'=dt'$, then $ct'=te$ for some $e \in S_c$ and $aste=asct'=bsdt'=bste$. Together with (i) and (ii), this yields (iii).
\end{proof}

\begin{remark}\label{rem:factorization formula for fixed non-absorbing elements}
The set $\alpha_c(F_n^{c,d})$ appearing in Lemma~\ref{lem:factorization formula for fixed non-absorbing elements}~(i) may depend on the choice of the representative $s \in [s]$, but the resulting sets are related via $\alpha$: Let $a,b \in S_c$ and $m,n \in N(S)$, $[s] \in F_m^{a,b}$ and $s'\sim s$, say $s'S\cap sS = seS, se=s'f$. If we denote by $c,d,c',d' \in S_c$ elements that satisfy $asS\cap bsS=ascS, asc=bsd$ and $as'S\cap bs'S=as'c'S, asc'=bsd'$, then
\[\alpha_{c'}(F_n^{c',d'}) = \alpha_f\alpha_e^{-1}\alpha_c(F_n^{c,d}).\]
Indeed, for every $[t] \in F_n^{c,d}$ there is $t' \in S$ with $sctS\cap s'S = s(ctS\cap eS) = set'S$ and $et' \sim ct \sim dt$, which leads to
\[as'ft'= aset' \sim asct = bsdt \sim bsct \sim bset'=bs'ft'.\]
This yields $\alpha_f\alpha_e^{-1}\alpha_c(F_n^{c,d}) \subset \alpha_{c'}(F_n^{c',d'})$. Using the symmetry of the argument in $s$ and $s'$, and finiteness of the involved sets, we conclude that $\alpha_f\alpha_e^{-1}\alpha_c(F_n^{c,d}) = \alpha_{c'}(F_n^{c',d'})$.
\end{remark}

\begin{corollary}\label{cor:suff for q-a}
Let $S$ be a right LCM monoid $S$ with generalised scale $N$ and $a,b \in S_c$. If there exist a finite subset $E \ssubset \text{Irr}(N(S))$ and a constant $C>0$ such that $\lvert F_n^{a,b}\setminus A_n^{a,b}\rvert \leq C$ for all $n \in \langle \text{Irr}(N(S))\setminus E\rangle^+$, then $S$ is $\beta$-summably $(a,b)$-regular for every $\beta \in (\beta_c-1,\beta_c]$.
\end{corollary}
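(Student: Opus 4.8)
The plan is to reduce everything to the multiplicative structure. Since $N(S)$ is the free abelian monoid on $\Irr(N(S))$ by (A4), once $E$ is fixed I can write any finite $I\supseteq E$ as a disjoint union $I = E\sqcup J$ with $J := I\setminus E$, obtaining a unique factorisation $n = n_E n_J$ (with $n_E\in\langle E\rangle$, $n_J\in\langle J\rangle$) of each $n\in\langle I\rangle$ and, by \eqref{eq:restricted-zetaI}, $\zeta_I(\beta) = \zeta_E(\beta)\zeta_J(\beta)$. Since the finite $I$ with $E\subseteq I$ are cofinal among finite subsets of $\Irr(N(S))$, it suffices to show $\zeta_I(\beta)^{-1}\sum_{n\in\langle I\rangle^+} n^{-\beta}\lvert F_n^{a,b}\setminus A_n^{a,b}\rvert\to 0$ along such $I$; and I may assume $\beta>1$ from the outset, as for $\beta\le 1$ the notion of $\beta$-summable $(a,b)$-regularity is not defined.

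Next I would split the sum over $n\in\langle I\rangle^+$ according to whether $n_J = 1$, i.e.\ whether $n\in\langle E\rangle^+$ or $n_J\neq 1$. For the first group I use only the crude bound $\lvert F_{n_E}^{a,b}\setminus A_{n_E}^{a,b}\rvert\le\lvert N^{-1}(n_E)/_\sim\rvert = n_E$, so this part is at most $\sum_{n_E\in\langle E\rangle^+}n_E^{1-\beta} = \zeta_E(\beta)-1$, a constant independent of $I$. For the second group I would invoke Lemma~\ref{lem:factorization formula for fixed non-absorbing elements}~(iii) with $m = n_J$ and $n = n_E$: every class in $F_{n_E n_J}^{a,b}\setminus A_{n_E n_J}^{a,b}$ has the form $[st]$ with $[s]\in F_{n_J}^{a,b}\setminus A_{n_J}^{a,b}$ and $[t]$ in an $\alpha_c$-translate of $F_{n_E}^{c,d}\setminus A_{n_E}^{c,d}\subseteq N^{-1}(n_E)/_\sim$ for suitable $c,d\in S_c$ depending on $[s]$. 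For fixed $s$ the map $[t]\mapsto[st]$ is injective (left cancellation) and $\alpha_c$ permutes $N^{-1}(n_E)/_\sim$, so each $[s]$ contributes at most $n_E$ classes; and since $n_J\in\langle\Irr(N(S))\setminus E\rangle^+$, the hypothesis gives $\lvert F_{n_J}^{a,b}\setminus A_{n_J}^{a,b}\rvert\le C$. Hence $\lvert F_{n_E n_J}^{a,b}\setminus A_{n_E n_J}^{a,b}\rvert\le C\,n_E$, and summing over $n_E\in\langle E\rangle$, $n_J\in\langle J\rangle^+$ bounds the second group by $C\,\zeta_E(\beta)\sum_{n_J\in\langle J\rangle^+}n_J^{-\beta}\le C\,\zeta_E(\beta)\,\zeta(\beta+1)$, using $\sum_{n_J\in\langle J\rangle^+}n_J^{-\beta}\le\sum_{n\in N(S)}n^{-\beta}=\zeta(\beta+1)$.

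The last inequality is the single point where the precise range $\beta\in(\beta_c-1,\beta_c]$ is used: $\beta+1>\beta_c$ forces $\zeta(\beta+1)<\infty$, so the second group too is bounded by a constant independent of $I$. Dividing the combined estimate by $\zeta_I(\beta) = \zeta_E(\beta)\zeta_J(\beta)$ then gives a bound of the form $\bigl(1 + C\,\zeta(\beta+1)\bigr)/\zeta_J(\beta)$, and I finish by noting $\zeta_J(\beta)\to\infty$ as $J\nearrow\Irr(N(S))\setminus E$: this follows from $\zeta(\beta) = \zeta_E(\beta)\,\zeta_{\Irr(N(S))\setminus E}(\beta)$ together with $\zeta_E(\beta)<\infty$ and $\zeta(\beta) = \infty$ for $\beta\le\beta_c$ (the same convention already used in Proposition~\ref{prop:neg q-a vs. other conditions}).

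I expect the only real subtlety to be the bookkeeping in the factorisation step: one must attach the \emph{large} factor $n_E$ to the $\langle E\rangle$-block (about which nothing is assumed) and the \emph{small} factor $\le C$ to the complementary block (where the hypothesis applies). This is exactly why Lemma~\ref{lem:factorization formula for fixed non-absorbing elements}~(iii) has to be applied with $m = n_J$ rather than $m = n_E$, and it is harmless that the auxiliary pairs $(c,d)$ produced by the lemma differ from $(a,b)$, since their contribution enters only through the trivial cardinality bound $\lvert F_{n_E}^{c,d}\setminus A_{n_E}^{c,d}\rvert\le n_E$. Everything else is a routine rearrangement of absolutely convergent (or finite) sums.
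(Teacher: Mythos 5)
Your argument is correct and follows essentially the same route as the paper's proof: factor each $n\in\langle I\rangle^+$ into its $\langle I\setminus E\rangle$-part and its $\langle E\rangle$-part, apply Lemma~\ref{lem:factorization formula for fixed non-absorbing elements}~(iii) with the complement part as the first factor (so the hypothesis gives the bound $C$ there) and the trivial bound $n_E$ on the $E$-part, obtaining a majorant of the form $C\,\zeta_{\mathrm{Irr}(N(S))\setminus E}(\beta+1)\,\zeta_E(\beta)<\infty$ independent of $I$, and then divide by $\zeta_I(\beta)\nearrow\infty$. The extra splitting off of the $n_J=1$ terms and the restriction to $I\supseteq E$ are harmless cosmetic variations.
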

\begin{proof}
Fix $\beta \in (\beta_c-1,\beta_c]$ and $I\ssubset \text{Irr}(N(S))$. Decomposing each $n\in \langle I\rangle^+$ as $n=n_1n_2$ with $n_1 \in \langle I\setminus E\rangle^+$ and $n_2 \in \langle I\cap E\rangle^+$, we use Lemma~\ref{lem:factorization formula for fixed non-absorbing elements}(iii)  to estimate that
\[\begin{array}{lcl}
\sum\limits_{n \in \langle I\rangle^+} n^{-\beta} \lvert F_n^{a,b}\setminus A_n^{a,b} \rvert
&=& \sum\limits_{n \in \langle I\rangle^+} n_1^{-\beta} \sum\limits_{[s] \in F_{n_1}^{a,b}\setminus A_{n_1}^{a,b}} n_2^{-\beta}\lvert F_{n_2}^{c(s),d(s)}\setminus \alpha_{c(s)}(A_{n_2}^{c(s),d(s)}) \rvert \vspace*{2mm}\\
&\leq& C\zeta_{I\setminus E}(\beta+1)\zeta_{I\cap E}(\beta) \leq C\zeta_{\text{Irr}(N(S))\setminus E}(\beta+1)\zeta_E(\beta)<\infty.
\end{array}\]
On the other hand, we have $\zeta_I(\beta) \nearrow \infty$ as $I\nearrow \text{Irr}(N(S))$, and so   \eqref{eq:ab-summable-qa} is satisfied, as required for $\beta$-summably $(a,b)$-regular.
\end{proof}

\section{Positivity breaking in Baumslag--Solitar monoids}\label{subsec:BS3}
For nonzero integers $c$ and $d$, the Baumslag--Solitar group is the universal group $BS(c,d):= \langle \ia,\ib \mid \ia\ib^c=\ib^d\ia\rangle$ introduced in the 1960's in the influential note \cite{BaSo}.
 Here we are interested in the submonoid of $BS(c,d)$ generated by $\ia$ and $\ib$, the \emph{Baumslag--Solitar monoid} $BS(c,d)^+$. Since these are finitely generated one-relator monoids, Adjan's criterion for embeddability of universal semigroups defined by generators and relations into the corresponding group applies, see \cite[{Section II, Theorem 3}]{AdjTrans}. Thus $BS(c,d)^+$ is the universal monoid $\langle \ia,\ib \mid  \ia\ib^c=\ib^d\ia\rangle^+$ for $cd>0$, and $\langle \ia,\ib \mid  \ib^{|d|}\ia\ib^{|c|}=\ia\rangle^+$ for $cd<0$.

We record some facts about $BS(c,d)^+$ from \cite{Spi1}. The \emph{height} is the  homomorphism $\theta\colon BS(c,d) \to \Z$ determined by $\ia \mapsto 1, \ib \mapsto 0$. By \cite[{Proposition~2.3 (L)}]{Spi1}, each $s \in BS(c,d)^+$ admits a unique normal form
\begin{equation}\label{eq:BS normal form}
s= \ib^{i_1}\ia\ib^{i_2}\ia\ldots\ia\ib^{i_{\theta(s)}}\ia\ib^m, \quad \text{where } 0\leq i_1,\ldots,i_{\theta(s)} < |d|
\end{equation}
and $m \in \N$ in case $cd>0$, while in the case $cd<0$ and $\theta(s)>0$ we have $m \in \Z$. Thus in particular, if $cd>0$, then an element $s$ of $ BS(c,d)^+$ with trivial height must have normal form $b^m$ with $m\in \N$.
 This normal form allows for efficient computation of right common multiples, see \cite{Spi1}*{Proposition~2.10}: two elements $s,t \in BS(c,d)^+$ with normal forms $s= \ib^{i_1}\ia\ib^{i_2}\ia\ldots\ia\ib^{i_{\theta(s)}}\ia\ib^m$ and $t= \ib^{j_1}\ia\ib^{j_2}\ia\ldots\ia\ib^{j_{\theta(t)}}\ia\ib^n$ satisfy $s\Cap t$ if and only if $i_k=j_k$ for all $1\leq k \leq \min(\theta(s),\theta(t))$. If this is the case, then there is a unique right LCM for $s$ and $t$. In particular, it follows that $BS(c,d)^+$ is a right LCM semigroup for all $c,d \in \Z^\times$.

Spielberg proceeds by showing that the pair $(BS(c,d),BS(c,d)^+)$ is quasi lattice-ordered if and only if $cd>0$, see \cite{Spi1}*{Theorem~2.11}. The quasi lattice-order was the essential prerequisite for Clark--an Huef--Raeburn \cite{CaHR} to study the KMS-states for $C^*(BS(c,d)^+)$ with $cd>0$ via the classical Nica-Toeplitz algebra construction.

Our aim here is to complete the classification of KMS-states for the semigroup $C^*$-algebras of Baumslag--Solitar monoids, by not only incorporating the case that $cd<0$, but also fully describing the simplex of KMS$_\beta$-states at the critical value $\beta=1$. We summarise first some features we shall need in the sequel.

\begin{proposition}\label{prop:BS features}
Let $c,d \in \IZ^\times$.
\begin{enumerate}[(i)]
\item The core $(BS(c,d)^+)_c$ is canonically identified with the monoid $\langle \ib\rangle^+ \cong \N$ if $|d|>1$, and coincides with $BS(c,d)^+$ if $d=\pm 1$.
\item There exists a generalised scale $N$ on $BS(c,d)^+$ if and only if $|d|>1$, in which case $N$ is given by $s \mapsto |d|^{\theta(s)}$ for $s \in BS(c,d)^+$.
\item For $|d|>1$, two elements $s,t \in BS(c,d)^+$ satisfy $s\sim t$ if and only if $\theta(s)=\theta(t)=:k\geq 1$ and, if $k>1$, the respective normal forms $s= \ib^{i_1}\ia\ib^{i_2}\ia\ldots\ia\ib^{i_k}\ia\ib^m$ and $t= \ib^{j_1}\ia\ib^{j_2}\ia\ldots\ia\ib^{j_k}\ia\ib^n$ for $m,n\in \Z$ satisfy $i_\ell=j_\ell$ for all $1\leq \ell\leq k$.
\end{enumerate}
\end{proposition}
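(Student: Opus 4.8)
The plan is to verify the three assertions of Proposition~\ref{prop:BS features} directly from the normal form \eqref{eq:BS normal form} and Spielberg's right-common-multiple criterion, treating the cases $|d|>1$ and $|d|=1$ separately throughout.

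For part (i), I would first handle $d=\pm 1$. In that case every element of $BS(c,d)^+$ has a right common multiple with every other (the relation becomes $\ia\ib^c = \ib^{\pm 1}\ia$ or $\ib^{\mp 1}\ia\ib^{|c|}=\ia$, so $\ib$ can be pushed past $\ia$), so $BS(c,d)^+$ is left reversible and $(BS(c,d)^+)_c = BS(c,d)^+$; I would spell this out using the criterion $i_k=j_k$ for $k\le\min(\theta(s),\theta(t))$, which is vacuous when one of the heights is $0$ and $|d|=1$ forces $i_k,j_k=0$. For $|d|>1$: clearly $\langle\ib\rangle^+\subseteq(BS(c,d)^+)_c$ since $\ib^m\Cap s$ always (heights of $\ib^m$ are $0$). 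Conversely, if $s\in(BS(c,d)^+)_c$ has $\theta(s)=k\ge 1$ with normal form leading exponent $i_1<|d|$, then choosing $t=\ib^{j_1}\ia$ with $j_1\neq i_1$ in $\{0,\dots,|d|-1\}$ gives $s\perp t$, a contradiction; hence $\theta(s)=0$, so $s=\ib^m$ (when $cd>0$ this is immediate from \eqref{eq:BS normal form}; when $cd<0$ and $\theta(s)=0$ the normal form still reads $\ib^m$ with $m\in\N$). This also shows $S_c\cong\N$.

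For part (ii), the ``only if'' direction follows from part (i): if $|d|=1$ then $S_c=S$ and $S/_\sim$ is a single point, so no nontrivial $N:S\to\N^\times$ can satisfy $|N^{-1}(n)/_\sim|=n$ for $n>1$. For $|d|>1$, I would check that $N(s):=|d|^{\theta(s)}$ is a genuine generalised scale. It is a monoid homomorphism since $\theta$ is; it is nontrivial since $N(\ia)=|d|>1$; and $N^{-1}(|d|^k)/_\sim$ must be shown to have exactly $|d|^k$ elements with every transversal an accurate foundation set. Here I would invoke part (iii) (proved next), which identifies $N^{-1}(|d|^k)/_\sim$ with the set of tuples $(i_1,\dots,i_k)\in\{0,\dots,|d|-1\}^k$, giving cardinality $|d|^k$; accuracy and the foundation-set property then follow from Spielberg's criterion — two such words with distinct tuples are $\perp$ (accuracy), and any $s$ meets the transversal element indexed by its own first-$k$ exponents (foundation). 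I expect this is essentially the content already implicit in \cite{Spi1}, so citing \cite{Spi1}*{Proposition~2.10} at the key step should suffice.

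For part (iii), suppose $|d|>1$ and $s\sim t$, i.e. $sa=tb$ with $a,b\in S_c=\langle\ib\rangle^+$. Applying $\theta$ gives $\theta(s)=\theta(t)=:k$; since $S/_\sim$ must be nontrivial and $[\ib^m]$ is a single class of height $0$, actually $\theta(s)=0$ forces $s\sim t$ trivially, so the interesting case is $k\ge 1$, and I claim $k\ge 1$ is automatic for classes other than $[1]$. Multiplying $s$ and $t$ on the right by powers of $\ib$ only changes the final exponent $m$ (pushing $\ib$ through using the relation can alter later exponents, but not the first $k$ of them since $\ia\ib^c=\ib^d\ia$ only moves $\ib$'s to the left of an $\ia$ in blocks) — so I need to argue carefully that right multiplication by $\ib^j$ preserves $(i_1,\dots,i_k)$. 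Then $sa=tb$ forces the normal forms of $sa$ and $tb$ to agree, hence $(i_1,\dots,i_k)=(j_1,\dots,j_k)$. Conversely, if the heights and first-$k$ exponents agree, then $s$ and $t$ differ only in trailing $\ib$-powers $\ib^m,\ib^n$; using the relation to clear denominators one finds $a,b\in\langle\ib\rangle^+$ with $sa=tb$ (explicitly, $s\ib^{p}=t\ib^{q}$ for suitable $p,q$ large enough that the $\ib$-exponents can be matched), so $s\sim t$. The main obstacle I anticipate is precisely this bookkeeping in part (iii): showing that right multiplication by elements of $\langle\ib\rangle^+$ does not disturb the first $k$ exponents of the normal form, and producing the explicit $a,b$ in the converse — both require a careful induction using \eqref{eq:BS normal form} and the defining relation, especially in the $cd<0$ case where trailing exponents live in $\Z$ rather than $\N$.
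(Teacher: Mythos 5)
Your argument is correct and rests on exactly the two ingredients the paper's own (very terse) proof points to: the normal form \eqref{eq:BS normal form} and Spielberg's intersection criterion, namely that $s\Cap t$ if and only if the first $\min(\theta(s),\theta(t))$ exponents agree. Parts (i) and (iii) follow the paper's route, which simply cites \eqref{eq:BS normal form} and the proof of \cite{Spi1}*{Proposition~2.10}. The one genuine divergence is in (ii): the paper obtains the generalised scale by invoking the combinatorial characterisation of \cite{Sta5}*{Theorem~3.11}, whereas you verify Definition~\ref{def:gen-scale} directly --- counting $N^{-1}(|d|^k)/_\sim$ via (iii) and reading off accuracy and the foundation property from Spielberg's criterion. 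Your version is more self-contained at essentially no extra cost; the only imprecision is that when $\theta(s)=j<k$ the element $s$ does not have ``its own first $k$ exponents'', so the foundation-set step should say that $s$ meets every transversal element whose tuple extends $(i_1,\dots,i_j)$.

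The obstacle you anticipate in (iii) is not actually there: if $s=\ib^{i_1}\ia\cdots\ia\ib^{i_k}\ia\ib^m$ is in normal form with $k\geq 1$, then $s\ib^j=\ib^{i_1}\ia\cdots\ia\ib^{i_k}\ia\ib^{m+j}$ is \emph{already} in normal form (only the trailing exponent changes, and it stays in $\N$ resp.\ $\Z$ as required), so no rewriting occurs and uniqueness of the normal form immediately gives both that $(i_1,\dots,i_k)$ is preserved and, for the converse, that $p=\max(n-m,0)$, $q=\max(m-n,0)$ yield $s\ib^p=t\ib^q$. Note finally that the ``only if'' direction as stated in the proposition literally fails for $s,t\in\langle\ib\rangle^+$, where $k=0$ and $s\sim t$ trivially; this is a wording defect of the statement (as is ``$k>1$'' where ``$k\geq 1$'' is meant), not a gap in your proof, and you correctly isolate that case.
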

\begin{proof}
Part (i) follows from \eqref{eq:BS normal form}, as does (ii) when \eqref{eq:BS normal form} is combined with \cite{Sta5}*{Theorem~3.11}. Part (iii) follows from the proof of \cite{Spi1}*{Proposition~2.10}, which computes the right LCM of $s$ and $t$ for $s\Cap t$.
\end{proof}

We recall that a core irreducible element in a right LCM monoid $S$ is an element $s$ such that any decomposition $s=ta$ with $t\in S$ and $a\in S_c$ is only possible if $a$ is an invertible element in $S$, see \cite{ABLS}. Further, $S$ is core factorable if every element $s$ admits an expression as the product of an element in $S_c$ and one that is core irreducible.

\begin{proposition}\label{prop:BS no core irreds}
For $c,d \in \IZ^\times$ with $|d|>1$, $BS(c,d)^+$ is core factorable if and only if $cd>0$. For $cd<0$, the identity is the only core irreducible element. The core irreducibles are always $\cap$-closed in $BS(c,d)^+$.
\end{proposition}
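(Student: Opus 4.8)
The plan is to prove Proposition~\ref{prop:BS no core irreds} by working directly with the normal form \eqref{eq:BS normal form} and the description of core equivalence in Proposition~\ref{prop:BS features}(iii). Throughout, $|d|>1$, so by Proposition~\ref{prop:BS features}(i) the core is $\langle\ib\rangle^+\cong\N$, and the invertible elements of $BS(c,d)^+$ are just $\{1\}$ (the monoid embeds in $BS(c,d)$ and no nontrivial positive word can be a unit). Hence $s$ is core irreducible precisely when the only way to write $s=t\ib^k$ with $t\in BS(c,d)^+$, $k\geq 0$, is $k=0$.

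First I would settle the case $cd>0$. Here every $s$ has normal form $s=\ib^{i_1}\ia\ib^{i_2}\ia\cdots\ia\ib^{i_{\theta(s)}}\ia\ib^m$ with $m\in\N$, and I claim $s$ is core factorable: if $\theta(s)=0$ then $s=\ib^m\in S_c$ and $s=\ib^m\cdot 1$ works; if $\theta(s)\geq 1$ then $s = (\ib^{i_1}\ia\ib^{i_2}\ia\cdots\ia\ib^{i_{\theta(s)}}\ia)\ib^m$ expresses $s$ as the product of a core-irreducible element $t:=\ib^{i_1}\ia\cdots\ia\ib^{i_{\theta(s)}}\ia$ (it ends in $\ia$, so in any factorisation $t=t'\ib^k$ with $k>0$ the right-hand side, read in normal form, would end in $\ib$ with positive exponent — impossible since the normal form is unique and $t$'s ends with $\ia$) and $\ib^m\in S_c$. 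Conversely, to see that $cd>0$ is \emph{necessary} for core factorability, I would exhibit in the $cd<0$ regime an element with no such decomposition — this is subsumed by the second claim below, so it suffices to prove that.

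Next, for $cd<0$ I would show the only core irreducible element is $1$. The key structural fact from \eqref{eq:BS normal form} is that when $cd<0$ and $\theta(s)>0$ the final exponent $m$ ranges over all of $\Z$, not just $\N$; equivalently, $\ib$ is \emph{right-invertible up to sliding}: using the defining relation $\ib^{|d|}\ia\ib^{|c|}=\ia$, i.e.\ $\ia\ib^{-|c|}=\ib^{|d|}\ia$ inside the group, one checks that for any $s$ with $\theta(s)\geq 1$ one can write $s=s'\ib$ for a suitable $s'\in BS(c,d)^+$ (slide a $\ib$ past the last $\ia$ at the cost of changing the trailing power and possibly reshaping earlier blocks, staying inside the positive monoid). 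Concretely, if $s$ ends in $\ia\ib^m$ then $s=(\text{prefix}\,\ia\ib^{m-1})\ib$; iterating, no $s$ with $\theta(s)\geq 1$ is core irreducible, and the only $s$ with $\theta(s)=0$ is $s=\ib^m$, which for $m\geq 1$ equals $\ib^{m-1}\cdot\ib$ and so is not core irreducible either, leaving only $s=1$. This simultaneously shows $BS(c,d)^+$ is not core factorable when $cd<0$: any purported expression $s=\ib^k t$ with $t$ core irreducible forces $t=1$, hence $s=\ib^k\in S_c$, but there are plenty of $s\notin S_c$ (e.g.\ $s=\ia$).

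Finally, the $\cap$-closedness of the core irreducibles. When $cd<0$ the set of core irreducibles is $\{1\}$, and $1S=S$, so $1\Cap 1$ with right LCM $1$, trivially $\cap$-closed. When $cd>0$ I would argue that a core irreducible element is exactly one whose normal form has trailing power $m=0$, i.e.\ ends in $\ia$ (or is $1$); call these \emph{$\ia$-terminal}. Given two $\ia$-terminal elements $s,t$ with $s\Cap t$, Spielberg's right-LCM computation \cite{Spi1}*{Proposition~2.10} (recalled before Proposition~\ref{prop:BS features}) produces their right LCM $r$ with $rS=sS\cap tS$; I would check from the explicit form of that LCM that $r$ is again $\ia$-terminal — intuitively, the LCM of two words that each ``end with $\ia$'' is obtained by extending the longer one along $\ia$-blocks dictated by the shared $i_k$-pattern, and no trailing $\ib$-power is introduced. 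Hence the core irreducibles are $\cap$-closed.

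The main obstacle I anticipate is the bookkeeping in the $cd<0$ sliding argument: making precise, at the level of normal forms, that one can always peel off a $\ib$ on the right while remaining in $BS(c,d)^+$, and handling the interaction with the bound $0\le i_k<|d|$ on the interior blocks. Once that lemma is in hand — essentially a restatement of why the trailing exponent becomes unconstrained in $\Z$ — the rest is a direct read-off from \eqref{eq:BS normal form}, Proposition~\ref{prop:BS features}, and \cite{Spi1}*{Proposition~2.10}.
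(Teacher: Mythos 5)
Your argument is correct, and for the genuinely new content of the proposition --- the case $cd<0$ --- it is exactly the paper's argument: the paper also observes that, by \eqref{eq:BS normal form}, the trailing exponent of an element with positive height is unconstrained in $\Z$, so that $t=s\ib^{-n}$ lies in $BS(c,d)^+$ and $s=t\ib^n$ with $t\neq s$, whence no element of positive height (and no $\ib^m$ with $m\geq 1$) is core irreducible, and core factorability fails because $S_{ci}S_c=S_c\subsetneq S$. Your worry about ``bookkeeping'' in the sliding step is unfounded: this is precisely what the normal form statement delivers, and no further manipulation of the interior blocks $0\leq i_k<|d|$ is needed. Where you diverge from the paper is in the $cd>0$ statements: the paper simply cites \cite{ABLS}*{Proposition~5.10} both for core factorability and for $\cap$-closedness, identifying the core irreducibles with the stems $\ib^{i_1}\ia\cdots\ib^{i_{\theta(s)}}\ia$ of \cite{CaHR}, whereas you inline a self-contained normal-form proof. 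Your identification of the core irreducibles as the $\ia$-terminal elements (together with $1$) and the uniqueness-of-normal-form argument for their irreducibility are both correct. The one step you defer --- that the right LCM of two $\Cap$-comparable $\ia$-terminal elements is again $\ia$-terminal --- is in fact immediate from Proposition~\ref{prop:BS features}(iii) and Spielberg's criterion: if $s,t$ are $\ia$-terminal with $s\Cap t$ and $\theta(s)\leq\theta(t)$, then the matching of the blocks $i_k=j_k$ for $k\leq\theta(s)$ together with the vanishing of the trailing exponents forces $s$ to be a prefix of $t$ (or $s=t$), so $tS\subset sS$ and the right LCM is $t$ itself; no extension is needed. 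With that observation supplied, your proof is complete and somewhat more self-contained than the paper's, at the cost of redoing work the paper outsources to \cite{ABLS}.
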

\begin{proof}
The forward direction of the first claim is established in \cite{ABLS}*{Proposition~5.10}, which ought to assume $c\geq 1, d>1$ in place of $c,d\geq 1, cd>1$. Now suppose $cd<0$. Take any $s\in BS(c,d)^+$ with  $\theta(s) >0$ and let $n \in \N$ be arbitrary, then the element $t=s\ib^{-n}$ is in $BS(c,d)^+$ and satisfies $t \neq s=t\ib^n$. Hence the identity is the only core irreducible element in $BS(c,d)^+$.  Note for completeness that when $cd>0$, the core irreducible elements are precisely the \emph{stems}, $t\ib^n$ with $n\in \N$, from \cite{CaHR}.  In particular, $BS(c,d)^+$ is not core factorable when $cd<0$ because the core is a proper submonoid. The last claim follows again from \cite{ABLS}*{Proposition~5.10} for $cd>0$, and is trivial for $cd<0$.
\end{proof}

\begin{remark}\label{rem:positivity breaking}
In view of Proposition~\ref{prop:BS no core irreds}, the Baumslag--Solitar monoids $BS(c,d)^+$ with $cd<0$ provide the first examples of right LCM monoids that are not core factorable. Hence the Zappa-Sz\'{e}p product $S_{ci} \bowtie S_c$ is a proper submonoid of $S$. The phenomenon behind this remarkable behaviour is a form of \emph{positivity breaking}: In the simplest case of $(1,-2)$, the monoid $BS(c,d)^+$ can be thought of as a variant of $BS(1,2) \cong \N \rtimes_2 \N$ which does not act by an endomorphism of the positive cone $\N\subset \Z$, but a composition of such an endomorphism with the flip on $\Z$ restricted to the domain $\N$. In abstract terms, we combine an injective endomorphism of a positive cone $P$ inside a group $G$ with an automorphism of $G$ that does not preserve $P$. This recipe opens the gates to a wide range of examples of (right LCM) semigroups with positivity breaking.
\end{remark}

\begin{remark}\label{rem:not Toeplitz}
During the workshop in Newcastle mentioned earlier, it was observed by Xin Li that $BS(c,d)^+$ with $cd<0$ are also the first examples of group embeddable monoids that do not admit a Toeplitz group embedding. The latter condition allows to regard semigroup crossed products by $S$ as full corners in group crossed products for the respective group, see \cite{CELY}*{Proposition~5.8.5}. The failure of the Toeplitz condition for the embedding of $BS(c,d)^+$  in the respective group is due to the positivity breaking for $cd<0$, which means that $(BS(c,d), BS(c,d)^+)$ is not quasi-lattice ordered.
\end{remark}

\begin{proposition}\label{prop:BS KMS}
For all nonzero integers $c$ and $d$ with $|d|>1$, $\beta_c$ equals $1$, and the equilibrium states on $(C^*(BS(c,d)^+),\sigma)$ are as follows:
\begin{enumerate}[(i)]
\item The ground states are parameterised by states on $C^*(\N)$.
\item The KMS$_\beta$-states for $\beta \in (1,\infty]$ are parameterised by normalised traces on $C^*(\Z)$.
\end{enumerate}
\end{proposition}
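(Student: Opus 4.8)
The plan is to read off the whole statement from Theorem~\ref{thm:KMS results}, feeding in the structural description of $BS(c,d)^+$ provided by Proposition~\ref{prop:BS features}. Write $S:=BS(c,d)^+$ and assume $|d|>1$ throughout. Then Proposition~\ref{prop:BS features}(ii) supplies the generalised scale $N$ with $N_s=|d|^{\theta(s)}$, and Proposition~\ref{prop:BS features}(i) identifies the core as $S_c=\langle\ib\rangle^+\cong\N$, so that $C^*(S_c)\cong C^*(\N)$. In particular the time evolution $\sigma$ in the statement is precisely the one attached to $N$, and Theorem~\ref{thm:KMS results} applies.

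The first concrete step is to pin down the critical inverse temperature. Since $N(S)=\langle|d|\rangle^+$ is freely generated by the single irreducible $|d|$, we have $\text{Irr}(N(S))=\{|d|\}$, and the defining property of a generalised scale gives $\lvert N^{-1}(|d|^k)/_\sim\rvert=|d|^k$ for all $k\in\N$. Grouping the defining sum of $\zeta$ according to the value $N_s=|d|^k$ then yields
\[\zeta(\beta)=\sum_{[s]\in S/_\sim}N_s^{-\beta}=\sum_{k\geq0}|d|^{k}\,|d|^{-k\beta}=\sum_{k\geq0}|d|^{k(1-\beta)},\]
in agreement with the product formula $\zeta_{\{|d|\}}(\beta)=(1-|d|^{1-\beta})^{-1}$. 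This converges exactly when $|d|^{1-\beta}<1$, i.e.\ when $\beta>1$, and diverges for $\beta\leq1$ since $|d|\geq2$; hence $\beta_c=1$.

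With $\beta_c=1$ in hand, part (i) is immediate from the first assertion of Theorem~\ref{thm:KMS results}(3): the ground states correspond affinely to the states on $C^*(S_c)\cong C^*(\N)$. For part (ii) I would separate $\beta\in(1,\infty)$ from $\beta=\infty$. For finite $\beta>1=\beta_c$, Theorem~\ref{thm:KMS results}(2) gives an affine homeomorphism between the $\kms_\beta$-states and the normalised traces on $C^*(S_c)\cong C^*(\N)$; for $\beta=\infty$, since $\beta_c=1<\infty$, the second assertion of Theorem~\ref{thm:KMS results}(3) identifies the $\kms_\infty$-states with those ground states that arise from normalised traces on $C^*(S_c)$, hence again with the normalised traces on $C^*(\N)$. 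The only remaining point is to rephrase ``$C^*(\N)$'' as ``$C^*(\Z)$'': I would invoke the standard fact that every tracial state $\tau$ on the Toeplitz algebra $C^*(\N)$, generated by an isometry $v$, vanishes on the commutator ideal $\CK$, because $\tau(1-vv^*)=\tau(1)-\tau(v^*v)=0$ and a Cauchy--Schwarz estimate propagates this to every matrix unit $v^m(1-vv^*)(v^*)^n$; thus $\tau$ factors through $C^*(\N)/\CK\cong C^*(\Z)$, and this gives an affine homeomorphism between the normalised traces on $C^*(\N)$ and those on $C^*(\Z)$. Composing this with the bijections from Theorem~\ref{thm:KMS results} yields the claimed parametrisation of the $\kms_\beta$-states for $\beta\in(1,\infty]$ by normalised traces on $C^*(\Z)$.

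I do not anticipate a serious obstacle: the proposition is essentially a specialisation of Theorem~\ref{thm:KMS results}. The only step carrying content beyond bookkeeping is the identification of the trace simplices of $C^*(\N)$ and $C^*(\Z)$, and that is a classical Toeplitz-algebra computation which, if one prefers not to quote it, can be written out in a couple of lines along the lines indicated above.
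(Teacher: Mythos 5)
Your proof is correct and follows essentially the same route as the paper, which likewise derives the proposition by specialising the general parametrisation results (the ground-state and finite-type KMS-state propositions behind Theorem~\ref{thm:KMS results}) to $S_c\cong\N$ and $\beta_c=1$. You are in fact slightly more careful than the paper's one-line proof, since you explicitly justify the passage from normalised traces on the Toeplitz algebra $C^*(\N)$ to normalised traces on $C^*(\Z)$ (via vanishing on the commutator ideal), a step the paper leaves implicit.
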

\begin{proof}
As $|d|>1$, $BS(c,d)^+$ admits a generalised scale given by $N_s = |d|^{\theta(s)}$ for $s\in BS(c,d)^+$. Note that in this case $\beta_c=1$. Parts (i) and (ii) are due to Proposition~\ref{prop:BS features}(i) combined with Proposition~\ref{prop:construction of ground states} and Proposition~\ref{prop:construction of KMS-states}, respectively.
\end{proof}

By \cite{CaHR}*{Proposition~7.1, Examples~7.3 and 7.4}, if $c\geq 1$ and $d \geq 2$, the KMS$_1$-state is unique precisely when $c\notin d\Z$. In the case $c\in d\Z$, the authors of \cite{CaHR} constructed two distinct KMS$_1$-states. Furthermore, they observed that when $c=d$ there is an isomorphism $\CQ(BS(c,d)^+) \cong \CO_d \otimes C^*(\Z)$ carrying the generating isometries $\pi(v_{\ib^{j}\ia})$ of the boundary quotient to $s_j\otimes 1$, where $s_1,\dots ,s_d$ are the universal generating isometries of $\CO_d$, and taking $\pi(v_{\ib^{d}})$ to the unitary $1\otimes u$, where $u$ is the generating unitary of $C^*(Z)$.

We will now  characterise uniqueness of the KMS$_1$-state in the case of arbitrary $c,d \in \Z^\times$ with $|d|>1$ and also determine the simplex of KMS$_1$-states for $c \in d\Z$. For this purpose, let us denote by $\tau_0$ the normalised trace on $C^*(\langle{\ib}\rangle^+) \cong C^*(\N)$ determined by $\tau_0(v_{\ib^k}) = \delta_{0,k}$ for $k \in \N$. By right cancellation in $BS(c,d)^+$, we have $\psi_{\beta,\tau_0}\circ \varphi = \tau_0$ for all $\beta>1$. Thus,  in the notation of Theorem~\ref{thm:restricting to tau}, we have $\tau_0 \in T_{>1}(C^*(\langle{\ib}\rangle^+))$.

\begin{proposition}\label{prop:BS KMS uniqueness}
Let $c,d \in \Z^\times, |d| >1$.
\begin{enumerate}[(i)]
\item There is a KMS$_1$-state $\psi_1$ determined by $\psi_1\circ \varphi = \tau_0$.
\item For $c=md$ with $m \in \Z^\times$, the simplex of KMS$_1$-states is parameterised by normalised traces $\tau$ on $C^*(\Z)$ such that $\tau(u^k) =\tau(u^{km^n})$ for all $k\in \N$ and $n\geq 1$.
\item The state $\psi_1$ is the unique KMS$_1$-state if and only if $c\notin d\Z$.
\end{enumerate}
\end{proposition}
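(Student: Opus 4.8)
The plan is to prove all three parts of Proposition~\ref{prop:BS KMS uniqueness} together, using the general machinery from Sections~\ref{sec:param of KMS-states} and~\ref{sec:uniqueness} specialised to $S = BS(c,d)^+$ with core $S_c = \langle\ib\rangle^+\cong\N$ and generalised scale $N_s = |d|^{\theta(s)}$, so that $N(S) = \{|d|^k : k\in\N\}$ is singly generated and $\beta_c = 1$. The key combinatorial input is a description of the action $\alpha\colon\N\curvearrowright S/_\sim$ at level $n = |d|^k$. By Proposition~\ref{prop:BS features}(iii), an equivalence class $[s]$ with $\theta(s)=k\geq 1$ is determined by the tuple $(i_1,\dots,i_k)$ with $0\le i_\ell<|d|$ together with the trailing exponent $m\in\Z$ modulo $\sim$; in fact since $\ib^m\ia = \ia\ib^{cm}$ one checks that $[s]$ for $\theta(s)=k$ is determined by $(i_1,\dots,i_k)$ alone once $k\ge 1$, so $|N^{-1}(|d|^k)/_\sim| = |d|^k$ as the generalised scale requires. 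The left action of $\ib$ moves $\ib^{i_1}\ia\cdots$ to $\ib^{i_1+|d|}\ia\cdots$ when $i_1+|d|<|d|\cdot 1$... more carefully, $\ib\cdot s$ has normal form with first block $i_1$ replaced according to the rewriting $\ib\ib^{i_1}\ia = \ib^{i_1+1}\ia$ if $i_1+1<|d|$, and $\ib^{|d|}\ia = \ia\ib^{c}$ otherwise, which shifts the ``carry'' one level down. The upshot is that $\alpha_\ib$ acts on $(i_1,\dots,i_k)$ by adding $1$ with carry in base $|d|$, where a carry out of the top of the stack $i_1$ is fed, via $\ib^{|d|}\ia = \ia\ib^c$, into a multiplication-by-$c$ on the suppressed trailing exponent and thus disappears when $c$ is divisible by $d$, or reappears as further carries when it is not. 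I would make this precise in one lemma and reduce everything else to it.

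First I would prove (i): since $S$ is right cancellative (Proposition~\ref{prop:BS features} via Adjan's normal form, or directly), Proposition~\ref{prop:r canc: neg q-a forces alpha faithful} gives $A_n^{a,b}=\emptyset$ for $a\ne b$, so the candidate $\kms_1$-state of Theorem~\ref{thm:KMS results}(4) is the one with $\psi_1(v_a v_b^*) = \delta_{a,b}$, i.e.\ $\psi_1\circ\varphi = \tau_0$; existence is Proposition~\ref{prop:construction of KMS1-states} applied to $\tau_0$. For (iii), by Proposition~\ref{prop:uniqueness at 1 if neg q-a} it suffices to show that $BS(c,d)^+$ is core regular exactly when $c\notin d\Z$. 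Since $S_c=\N$ is totally ordered, Remark~\ref{rem:neg q-a for core being a group} reduces core regularity to the single-index condition $|F_{|d|^k}^{\ib^j}|/|d|^k\to 0$ as $k\to\infty$ for every $j\ge 1$, where $F_{|d|^k}^{\ib^j} = \{[s] : \theta(s)=k,\ \alpha_{\ib}^j[s]=[s]\}$ (the absorbing set being empty). Using the base-$|d|$-addition-with-carry description: if $d\nmid c$, a fixed point of ``add $j$'' forces (for $k$ large) every carry to propagate all the way and interact with the trailing exponent through multiplication by powers of $c$, and a counting argument shows the number of such $(i_1,\dots,i_k)$ is $o(|d|^k)$ — indeed the fixed-point condition becomes increasingly constraining as $k$ grows because the ``$\times c$'' twist prevents periodicity. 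Conversely, if $c = md$, the carry out of the top is killed (it becomes $\ia\ib^{c} = \ia\ib^{md}$, absorbing cleanly), so ``add $j$'' on $(i_1,\dots,i_k)$ is just addition in $\Z/|d|^k\Z$ suitably interpreted, and one exhibits $\Omega(|d|^k)$ fixed points (e.g.\ when $\gcd(j,|d|)>1$ there are already $\gcd(j,|d|)\cdot|d|^{k-1}$ of them), so $F^{\ib^j}/|d|^k \not\to 0$; hence $S$ is not core regular and by Corollary~\ref{cor:kms states via functional equation}(iii) or directly by exhibiting states, $\psi_1$ is not unique.

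For (ii), the case $c = md$: here $S$ is still right cancellative with $S_c=\N$, but I would instead pass to the boundary quotient picture. When $d\mid c$ the monoid $BS(c,d)^+$ is closely related to $BS(m\cdot d, d)^+$; Remark~\ref{rem:main thm extended} says $\kms_1$-states factor through $\pi\colon C^*(S)\to\CQ(S)$, and the structure of $\CQ(BS(c,d)^+)$ when $d\mid c$ should mirror the $c=d$ computation recalled before the proposition, giving an isomorphism $\CQ(BS(c,d)^+)\cong \CO_{|d|}\otimes C^*(\Z)$-type algebra in which the trailing-exponent circle is twisted by the ``$\times m$'' map. Concretely, by Theorem~\ref{thm:restricting to tau} the $\kms_1$-states correspond to traces $\tau$ on $C^*(S_c)$ in $T_{>1}(C^*(\langle\ib\rangle^+))$, but the $\kms_1$-state does \emph{not} remember only $\tau|_{C^*(\N)}$ in the naive way — rather the self-consistency in \eqref{eq:rec formula beta=1} applied to $v_{\ib^k}$ forces, after tracking carries through $n$ levels, the relation $\tau(u^k) = \tau(u^{km^n})$ for all $k\in\N$, $n\ge1$ (interpreting $\tau$ as a trace on $C^*(\Z)$ because the boundary relation makes $v_{\ib}$ unitary). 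I would verify that every such $\tau$ does arise, by constructing the corresponding state on $\CQ(S)$ directly from the Cuntz--algebra-tensor-$C^*(\Z)$ decomposition and checking the $\kms_1$-condition via Proposition~\ref{prop:alg char of KMS-states}. The main obstacle I anticipate is precisely the bookkeeping of the carry dynamics in part (ii)/(iii): getting the exact statement of which traces survive (the $m^n$-twist) and the asymptotic count of fixed points right will require a careful, somewhat delicate base-$|d|$ analysis, and distinguishing $d\mid c$ from $d\nmid c$ is exactly where the ``positivity breaking'' geometry enters, so that is where I would spend the bulk of the effort; the existence half of (ii) should then be comparatively routine given the $\CQ(S)$ model.
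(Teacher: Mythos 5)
Your part (i) is fine and essentially the paper's argument (right cancellativity gives $A_n^{a,b}=\emptyset$ for $a\neq b$, so the state from Theorem~\ref{thm:KMS results}(4) restricts to $\tau_0$), and your derivation of the \emph{necessary} condition $\tau(u^k)=\tau(u^{km^n})$ in (ii) via the reconstruction formula \eqref{eq:rec formula beta=1} is exactly what the paper does: for $k\in d\N$ one has $\ib^ks\sim s$ and $\ib^ks=s\ib^{km^{\theta(s)}}$, while $\ib^ks\perp s$ otherwise. The genuine gap is the \emph{sufficiency} half of (ii). You propose to realise each admissible $\tau$ as a KMS$_1$-state by writing down a state on $\CQ(S)$ using an ``$\CO_{|d|}\otimes C^*(\Z)$-type'' decomposition twisted by $\times m$. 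No such decomposition is available: the tensor-product picture is only recorded for $c=d$, and for $|m|>1$ the trailing-exponent circle is genuinely twisted, so establishing the structure of $\CQ(BS(md,d)^+)$ would be a separate project. The paper bypasses this entirely: it computes $\chi_{\tau,s}(v_{\ib^k})=\chi_{d\N}(k)\,\tau(v_{\ib^{km^{\theta(s)}}})$ from Lemma~\ref{lem:evaluation under state}(i), deduces $\psi_{\beta,\tau}\circ\varphi=\tau$ for all $\beta>1$, hence $\tau\in T_{>1}(C^*(\langle\ib\rangle^+))$, and then invokes Theorem~\ref{thm:restricting to tau}. You should replace the boundary-quotient construction by this $T_{>1}$ verification; it is short and uses only tools already proved.

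Two further points. First, in (iii) the step ``$S$ is not core regular, hence $\psi_1$ is not unique'' is a non sequitur: core regularity is only a \emph{sufficient} condition for uniqueness, so you must actually exhibit a second KMS$_1$-state. The paper writes down an explicit second trace; alternatively, once (ii) is proved, the simplex there visibly contains more than one point (both Haar measure and the point mass at $1$ on $\T$ satisfy $\tau(u^k)=\tau(u^{km^n})$), and Theorem~\ref{thm:restricting to tau} finishes the job. Second, your carry model is not quite right, and you plan to spend most of your effort there: the fixed-point condition for $\alpha_{\ib^j}$ at the first letter reads $(j+i_1)\bmod|d|=i_1$, which is independent of $(i_1,\dots,i_k)$, so $F_{|d|^k}^{\ib^j}$ is either empty or \emph{all} of $N^{-1}(|d|^k)/_\sim$; there is no gradual $o(|d|^k)$ decay and no count of the form $\gcd(j,|d|)\cdot|d|^{k-1}$. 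For $c\notin d\Z$ the correct statement is that these sets are eventually empty, i.e.\ $\alpha$ is almost free (the paper cites the argument of \cite{ABLS}*{Proposition~5.10(i)}), and core regularity then follows from Proposition~\ref{prop:neg q-a vs. other conditions}. (Also, $\ib^m\ia=\ia\ib^{cm}$ should read $\ib^{dm}\ia=\ia\ib^{cm}$.)
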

\begin{proof}
Part (i) is an application of Theorem~\ref{thm:restricting to tau} as $\tau_0 \in T_{>1}(C^*(\langle{\ib}\rangle^+))$. For (ii) we will also apply Theorem~\ref{thm:restricting to tau}. We will therefore determine the constraints for normalised traces $\tau$ on $C^*(\langle{\ib}\rangle^+) \cong C^*(\N)$ arising from $\tau \in T_{>1}(C^*(\N))$. As traces on $C^*(\N)$ correspond to traces on $C^*(\Z)$, the result is phrased in terms of the latter. So let $c=md$ and $\beta >1$, and take $s \in BS(c,d)^+$ with $\theta(s) \geq 1$, that is, $s \notin S_c$. Given $k \in \N$, Proposition~\ref{prop:BS features}~(iii) implies that ${\ib}^ks \sim s$ if $k \in d\N$, and ${\ib}^ks \perp s$ otherwise. In the first case, the assumption $c=md$ implies that ${\ib}^ks= s{\ib}^{km^{\theta(s)}}$. Thus, if $\phi$ is a KMS$_1$-state, then by Remark~\ref{rem:reconstruction for beta=1} we obtain
\[\begin{array}{c}
\phi(v_{{\ib}^k}) = n^{-1} \sum\limits_{[s] \in N^{-1}(n)/_\sim} \phi(v_s^*v_{{\ib}^ks}^{\phantom{*}}) = \chi_{d\N}(k) \ \phi(v_{{\ib}^{km^\ell}})  \quad\text{ for all } n=|d|^\ell \in N(S),
\end{array}\]
where $l=\theta(s)$ varies in $\N$ as $s$ varies, and we have used that $v_s^*v_{{\ib}^ks}^{\phantom{*}}$ vanishes whenever ${\ib}^ks \perp s$.
Upon identifying the simplex of normalised traces of $C^*(d\N) \cong C^*(\N)$ with the one of $C^*(\Z)$, this is precisely the claimed condition in (ii). To prove that the condition parametrises KMS$_1$-states we need to show that every normalised trace $\tau$ on $C^*(v_{\ib^d})\subset C^*(BS(c,d)^+)$ with $\tau(v_{\ib^k}) =\tau(v_{\ib^{km^n}})$ for all $n\geq 1$ satisfies $\psi_{\beta,\tau}\circ \varphi = \tau$. To do so, we apply Lemma~\ref{lem:evaluation under state}~(i) to get
\[
\chi_{\tau,s}(v_{\ib^k}) = \tau\circ E(v_s^*v_{\ib^ks}^{\phantom{*}}) = \chi_{d\N}(k) \tau(v_{\ib^{km^{\theta(s)}}}).
\]
Hence $\chi_{\tau,s}(v_{\ib^k})=0$ unless $k \in d\N$, and its value depends not on $[s]$, but only on $\theta(s)$. Therefore, with the notation of Lemma~\ref{lem:construction of KMS-states}, we have  $\psi_{\tau,n} = \chi_{\tau,s}$ for every $s \in N^{-1}(n)$. With this observation we compute
\[\begin{array}{c}
 \psi_{\beta,\tau}(v_{\ib^k}) = \chi_{d\N}(k) \zeta(\beta)^{-1} \sum\limits_{\ell \geq 0} (|d|^\ell)^{1-\beta} \tau(v_{\ib^{km^\ell}}) = \tau(v_{\ib^k})
\end{array}\]
for all $k \in \N$. In view of Theorem~\ref{thm:restricting to tau}, this proves (ii).

For (iii), we note that $\alpha$ is almost free if $c\notin d\Z$, where the argument from  \cite{ABLS}*{Proposition~5.10(i)} applies verbatim. Thus $S$ is core regular by Proposition~\ref{prop:neg q-a vs. other conditions}, and therefore uniqueness of the KMS$_1$-state follows from Proposition~\ref{prop:uniqueness at 1 if neg q-a}. If $c=md$ for some $m \in \Z^\times$, then
\[
\tau_m (u^k) := |m|^{-1} \sum_{j \in \Z/m\Z} e^{2\pi i kj/m}
\] defines a normalised trace on $C^*(\Z)$ with $\tau_m(u^{\ell m}) = 1 \neq 0 = \tau_0(u^{\ell m})$ for all $\ell \in \Z^\times$. Upon identifying $\tau_m$ and $\tau_0$ with the corresponding normalised traces on $C^*(\langle\ib\rangle^+)$, Theorem~\ref{thm:restricting to tau} shows that $\tau_m$ and $\tau_0$ give rise to distinct KMS$_1$-states on $C^*(BS(c,d)^+)$.
\end{proof}

\begin{remark}\label{rem:BS KMS uniqueness}
As opposed to the case $c=d$, where all traces $\tau$ on $C^*(\Z)$ yield distinct KMS$_1$-states for $C^*(BS(c,d)^+)$, the case $c=-d$ has the constraint that $\tau$ needs to be real valued on $u^k, k \in \Z$ because Proposition~\ref{prop:BS KMS uniqueness}~(ii) states that $\tau(u^k) = \tau(u^{-k}) = \overline{\tau(u^k)}$. Therefore, the parity of $c$ and $d$ has an impact on the KMS-state structure.
\end{remark}

\section{Virtual group endomorphisms and self-similar group actions}\label{subsec:Heisenberg}

In this section we consider the problem of constructing and classifying KMS-states for natural dynamics on $C^*$-algebras associated to self-similar actions $(G,X)$. This has been done earlier, cf. \cite{LRRW} using a Toeplitz-Pimsner $C^*$-algebra model based on the Hilbert $C^*$-correspondence from \cite{Nek2}, and \cite{ABLS} using a monoidal Zappa--Sz\'{e}p product $X^*\bowtie G$ and its $C^*$-algebra as in \cite{BRRW}. Uniqueness of the $\kms_1$-state at critical value $\beta_c=1$ was linked in \cite{LRRW} to the property of $(G,X)$ called finite-state, and in \cite{ABLS}, it was phrased in terms of a condition called finite propagation.

Our approach here is to use the virtual endomorphism picture of self-similar actions as in \cite{Nek}.  Using this setup, we  illustrate with an example from \cite{BK} that the semigroup $C^*$-algebra of $(G, X)$ cannot witness the finite-state property itself, especially not through uniqueness of the $\kms_1$-state, see Corollary~\ref{cor:applying BK} and Example~\ref{ex:neg q-a for Heisenberg}.

In fact, noting that uniqueness of the $\kms_1$-state is actually a property of the virtual endomorphism rather than the particular associated self-similar action, we will show that the same is true for core regularity of the associated monoid. This is accomplished in Proposition~\ref{prop:almost surely regular} by providing a measure theoretic characterisation of core regularity.

Let $G$ be a discrete group. Recall from  \cite{Nek}*{Definition~2.7} that a \emph{self-similar action} $(G,X)$ is given by a finite alphabet $X$ (in at least two letters) together with an action  of $G$ on the free monoid $X^*$ in $X$ with the following property: For all $x \in X, g\in G$, there are $y\in X, h\in G$ such that
\[g(xw) = yh(w) \quad \text{for all } w \in X^*.\]
When the action of $G$ on $X^*$ is faithful, the elements $y\in X^*$ and $h\in G$ are uniquely determined. Then $h$ is referred to as the restriction of $g$ with respect to $x$, denoted by $g|_x$. We will assume henceforth that $(G,X)$ is faithful. Intuitively, we think of $g$ as a level-preserving automorphism of the rooted $\lvert X\rvert$-regular tree $X^*$, in which case $g|_x$ is the action of $g$ on the subtree attached at $x$. The recursive nature of self-similar actions of groups makes them extremely versatile, see \cite{Nek1} and the references therein.

There is a  close connection between self-similar actions and virtual endomorphisms of groups, see   \cite{Nek}. A \emph{virtual endomorphism} of a group $G$ is a homomorphism $\phi\colon H \to G$ where $H$ is a finite index subgroup of $G$. To every virtual endomorphism $(G,\phi, H)$ there is an  associated self-similar action $(G,X)$ with $\lvert X\rvert = [G:H]$ defined as follows: let $X$ be an alphabet with $[G:H]$ distinct letters. Choose a transversal $D$ for the left cosets in $G/H$ (called the \emph{digit set}), and label the elements as $D= \{q_x \mid x \in X\}$. The self-similar action associated to $(G,\phi,D)$ is then given by $(G,X)$ with relation $g(xw) = yg|_x(w)$ for $g \in G,x \in X$ and all $w \in X^*$, where $y \in X$ is the unique letter satisfying
\begin{equation}\label{eq:selfsimilar-from-virtual-definition}
  gq_x \in q_yH\text{ and }g|_x:=\phi(q_y^{-1}gq_x^{\phantom{1}}).
\end{equation}
Conversely, whenever $(G,X)$ is a faithful self-similar action, one can construct a virtual endomorphism $\phi$ of $G$ whose domain is the stabilizer group of a letter $x \in X$. If $(G,X)$ is also recurrent (self-replicating), then $(G,X)$ can be recovered as the self-similar action associated to $(G,\phi,D)$ for some digit set $D$, see \cite{Nek1}*{\S 2.8} for more details.

When associating a self-similar action to a virtual endomorphism $(G,\phi, H)$, the choice of digit set may affect key properties of the resulting self-similar action. In \cite{BK}*{4~Example} Bondarenko and Kravchenko give an example with $G$ equal  the discrete Heisenberg group where changing only one of the digits leads to self-similar actions with and without the finite-state property.  Nevertheless, all self-similar actions arising from a virtual endomorphism as in equation \eqref{eq:selfsimilar-from-virtual-definition} are topologically conjugate, that is, their actions on the respective spaces $X^\N$ of one-sided infinite words are, see \cite{Nek}*{Proposition~4.19}.

 Recall from \cite{BRRW} that to a faithful self-similar action $(G,X)$ there is an associated right LCM monoid $X^*\bowtie G$ called the Zappa-Szep product of $X^*$ and $G$. We show next that when $(G,X)$ arises from a virtual endomorphism $(G,\phi,H)$  the associated $C^*$-algebra $C^*(X^*\bowtie G)$ does not depend on the choice of digit set.

\begin{definition}\label{def:C*-alg of virt endom}
Given a virtual endomorphism $\phi$ of a discrete group $G$ with domain $H\subset G$, we let $C^*(G,\phi)$ be the universal unital $C^*$-algebra generated by a unitary representation $u$ of $G$ and an isometry $s_\phi$ subject to the relations
\begin{equation}\label{eq:relations for C*(G,phi)}
\begin{array}{c}
u_h s_\phi = s_\phi u_{\phi(h)} \text{ for all $h \in H$, and } \sum\limits_{[g] \in G/H} u_g^{\phantom{*}}s_\phi^{\phantom{*}}s_\phi^*u_g^* \leq 1.
\end{array}
\end{equation}
\end{definition}

\begin{thm}\label{thm:SSA a virt endom C*-algebras}
Let $\phi$ be a virtual endomorphism of $G$ with domain $H$, choose a digit set $D$ and let $(G,X)$ be the associated self-similar action. Then \[
u_g \mapsto v_{(\varnothing,g)},\, s_\phi \mapsto v_{(\varnothing,q_x^{-1})(x,1_G)},
\]
with $g\in G$ and $x \in X$ arbitrary, defines an isomorphism $C^*(G,\phi) \cong C^*(X^*\bowtie G)$.
\end{thm}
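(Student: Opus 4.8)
The plan is to verify that the proposed assignments are well-defined in both directions and mutually inverse, using the concrete description of the Zappa--Sz\'ep product $X^*\bowtie G$ and its $C^*$-algebra from \cite{BRRW}. First I would check that the map $C^*(G,\phi)\to C^*(X^*\bowtie G)$ is well-defined by confirming that the images satisfy the two relations in \eqref{eq:relations for C*(G,phi)}. For the intertwining relation $u_h s_\phi = s_\phi u_{\phi(h)}$, one computes in $X^*\bowtie G$ using the Zappa--Sz\'ep rewriting rules: for $h\in H$ one has $h\cdot q_x = q_x \cdot \phi(q_x^{-1}hq_x) = q_x\cdot \phi(h)$ in the digit-set normalisation (since $hq_x\in q_xH$), which translates via the definition $g|_x = \phi(q_y^{-1}gq_x)$ at $y=x$ into exactly the required identity among the generators $v_{(\varnothing,g)}$ and $v_{(x,1_G)}$. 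For the subprojection relation one needs that the projections $v_{(\varnothing,q_x^{-1})(x,1_G)}$ conjugated by $u_g$, as $[g]$ runs over $G/H$, are mutually orthogonal and sum to a subprojection of $1$; this follows because $\{(\varnothing, q_x^{-1})(x,1_G)\cdot (\varnothing,g)\}$ ranges (up to the $\sim$-type identification) over a transversal of the level-one cylinders $xX^*$ in $X^*\bowtie G$, and orthogonality of the $e_{(x,1_G)S}$ for distinct $x$ is built into the right LCM relations. Independence of the isometry $s_\phi$ on the choice of $x\in X$ used to define it must also be checked: $v_{(\varnothing,q_x^{-1})(x,1_G)}$ and $v_{(\varnothing,q_{x'}^{-1})(x',1_G)}$ are intertwined by a unitary $u_g$, so after imposing the relation they represent the same abstract generator $s_\phi$ — this is the reason the target algebra does not see the digit set.

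**The reverse map.** Next I would construct $C^*(X^*\bowtie G)\to C^*(G,\phi)$ by specifying the images of the generating isometries $v_{(w,g)}$, $w\in X^*$, $g\in G$. Using the factorisation $(w,g) = (w,1_G)(\varnothing,g)$ and the inductive structure $(xw',1_G) = (x,1_G)(w',1_G)$, it suffices to send $v_{(\varnothing,g)}\mapsto u_g$ and $v_{(x,1_G)}\mapsto u_{q_x} s_\phi$, then extend multiplicatively; one checks this is consistent, i.e.\ that $u_{q_x}s_\phi$ is an isometry and the prescription respects the defining relations of $C^*(X^*\bowtie G)$ (the Zappa--Sz\'ep commutation $g\cdot(x,1_G) = (y,1_G)\cdot g|_x$ with $y,h=g|_x$ as in \eqref{eq:selfsimilar-from-virtual-definition}, and the right LCM / orthogonality relations among the $e_{(w,g)S}$). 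The commutation relation becomes $u_g u_{q_x}s_\phi = u_{q_y}s_\phi u_{g|_x} = u_{q_y}u_{\phi(q_y^{-1}gq_x)}s_\phi = u_{gq_x}s_\phi$, which is automatic since $u$ is a representation of $G$; the orthogonality relations reduce to the second relation in \eqref{eq:relations for C*(G,phi)} together with the fact that longer words give nested projections. Universality of each algebra then gives two $*$-homomorphisms, and checking them on generators shows they are mutually inverse.

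**Main obstacle.** The routine parts are the relation-checking; the genuinely delicate point is verifying that the reverse map is well-defined on all of $C^*(X^*\bowtie G)$ — that is, that the formula $v_{(w,g)}\mapsto u_{q_w}s_\phi^{\ell(w)}u_{g}$ (with $q_w$ the appropriate product of digits built from the path $w$) is independent of how one decomposes words and is compatible with every instance of the right LCM relations, not just the level-one ones. This requires a careful induction on word length using the associativity of the Zappa--Sz\'ep product and the cocycle-type identity satisfied by $(g,w)\mapsto g|_w$, and one must confirm that distinct words $w\ne w'$ of the same length always yield orthogonal projections $u_{q_w}s_\phi^{\ell}s_\phi^{*\ell}u_{q_w}^* \perp u_{q_{w'}}s_\phi^{\ell}s_\phi^{*\ell}u_{q_{w'}}^*$ in $C^*(G,\phi)$, which one proves by peeling off one letter at a time and invoking the orthogonality already established at level one. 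Once well-definedness is secured in both directions, checking that the composites are the identity on the generating sets is immediate, and the isomorphism follows.
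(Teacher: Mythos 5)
Your overall architecture coincides with the paper's: check the two relations \eqref{eq:relations for C*(G,phi)} for $u_g\mapsto v_{(\varnothing,g)}$ and $s_\phi\mapsto u_{q_x^{-1}}s_x$, then define the inverse on the generators $u_g$ and $s_x=v_{(x,1_G)}$ of $\CT(G,X)\cong C^*(X^*\bowtie G)$. However, your verification of the intertwining relation rests on a false claim: you assert that $hq_x\in q_xH$ for $h\in H$, i.e.\ that $h(x)=x$ with $h|_x=\phi(q_x^{-1}hq_x)$. This is equivalent to $h\in q_xHq_x^{-1}$ and fails whenever the digit $q_x$ does not normalise $H$ (take $G=S_3$, $H=\langle(12)\rangle$, $q_x=(13)$). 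Moreover, the group element that actually has to be pushed past $s_x$ is not $h$ but $hq_x^{-1}$, since $u_hs_\phi=u_{hq_x^{-1}}s_x$; and $(hq_x^{-1})q_x=h$ lies in $q_zH$, where $q_z$ is the unique digit belonging to $H$, so $(hq_x^{-1})(x)=z$, not $x$. The paper's proof runs the computation through this special digit, using $q_z^{-1}(z)=z$ and $q_z^{-1}|_z=\phi(q_z^{-1})$ to rewrite $t_\phi=s_zu_{\phi(q_z^{-1})}$ and match the two sides. (Alternatively one can write $u_hu_{q_x^{-1}}=u_{q_x^{-1}}u_{q_xhq_x^{-1}}$ and use the correct coset identity $(q_xhq_x^{-1})q_x=q_xh\in q_xH$, which gives $u_{q_xhq_x^{-1}}s_x=s_xu_{\phi(h)}$ directly.) The relation does hold, but not for the reason you give, and this relation is one of the two defining relations of $C^*(G,\phi)$, so the gap is in an essential step.

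On the reverse direction you make the problem considerably harder than it is. The paper invokes the presentation of $\CT(G,X)$ from \cite{LRRW}*{Proposition~3.2}: this algebra is universal for a unitary representation $u$ of $G$ and isometries $s_x$ subject to $u_gs_x=s_{g(x)}u_{g|_x}$ and $\sum_{x\in X}s_x^{\phantom{*}}s_x^*\leq 1$. Consequently the assignment $u_g\mapsto u_g$, $s_x\mapsto u_{q_x}s_\phi$ only needs to be checked against these two relations on single letters; there is no induction on word length, no separate orthogonality argument for longer words, and no independence-of-decomposition issue to resolve. Your proposed closed formula $v_{(w,g)}\mapsto u_{q_w}s_\phi^{\ell(w)}u_g$ is also incorrect in general, because $s_\phi u_{q_y}$ can be rewritten in the form $u_hs_\phi$ only when $q_y\in\phi(H)$; the image of $v_{(x_1\cdots x_\ell,1_G)}$ is the nested product $u_{q_{x_1}}s_\phi u_{q_{x_2}}s_\phi\cdots u_{q_{x_\ell}}s_\phi$, which need not collapse to a single unitary times a power of $s_\phi$.
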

\begin{proof}
We use the identification $C^*(X^*\bowtie G) \cong \CT(G,X)$ from \cite{BRRW} and employ the notation for $\CT(G,X)$ from \cite{LRRW}, that is, we also write $u$ for the unitary representation $v_{(\varnothing,\cdot)}$ of $G$ and $s_x$ for the isometries $v_{(x,1_G)}, x \in X$. By \cite{LRRW}*{Proposition~3.2}, the relations for $\CT(G,X)$ are
\begin{equation}\label{eq:relations for T(G,X)}
\begin{array}{c}
u_g s_x = s_{g(x)} u_{g|_x} \text{ and } \sum\limits_{x \in X} s_x^{\phantom{*}}s_x^* \leq 1.
\end{array}
\end{equation}
First we check that $u$ and $t_{\phi} := u_{q_x^{-1}}s_x$ for $x \in X$ define a representation of $C^*(G,\phi)$. To begin with, we show that
$t_{\phi}$ does not depend on the choice of $x\in X$. Let $z\in X$. By \eqref{eq:relations for T(G,X)}, to have $u_{q_x^{-1}}s_x=u_{q_z^{-1}}s_z$ is equivalent to
\[
s_{(q_z^{\phantom{}}q_x^{-1})(x)}u_{(q_z^{\phantom{}}q_x^{-1})|_x}=s_z.
\]
This equality is satisfied if $(q_z^{\phantom{}}q_x^{-1})(x)=z$ and $(q_z^{\phantom{}}q_x^{-1})|_x=1_G$, and these identities hold because the recipe in \eqref{eq:selfsimilar-from-virtual-definition} implies that $z$ is the unique element in $X$ such that $(q_z^{\phantom{}}q_x^{-1})q_x\in q_zH$ and
\[
(q_z^{\phantom{}}q_x^{-1})|_x=\phi(q_z^{-1}(q_z^{\phantom{}}q_x^{-1})q_x)=\phi(1_G).
\]
Let $z$ be the unique element in $X$ such that $q_z\in H$. Note that we then have $q_z^{-1}(z)=z$ and $q_z^{-1}|_{z}=\phi(q_z^{-1})$. For every $h \in H$, we have $hq_x^{-1}q_x =h \in q_zH$, and hence
\begin{equation*}
\begin{array}{lcl}
u_ht_\phi &=& u_{hq_x^{-1}}s_x = s_{(hq_x^{-1})(x)}u_{(hq_x^{-1})|_x} \\
&=&s_zu_{\phi(q_z^{-1})}u_{\phi(h)} \\
&=& t_\phi u_{\phi(h)}.
\end{array}
\end{equation*}
This establishes the first relation from \eqref{eq:relations for C*(G,phi)}, and implies $u_h^{\phantom{*}}t_\phi^{\phantom{*}}t_\phi^*u_h^* = t_\phi^{\phantom{*}}t_\phi^*$ for all $h\in H$. We deduce that $u_g^{\phantom{*}}t_\phi^{\phantom{*}} t_\phi^*u_g^*$ does not depend on the choice of representative for $[g]\in G/H$, and since $D$ is a transversal for $G/H$, there is a bijection between $\{u_g^{\phantom{*}}t_\phi^{\phantom{*}} t_\phi^*u_g^*\mid [g]\in G/H\}$ and $\{s_x^{\phantom{*}}s_x^*\mid x\in X\}$. Therefore
\[\begin{array}{c}
\sum\limits_{[g] \in G/H} u_g^{\phantom{*}}t_\phi^{\phantom{*}}t_\phi^*u_g^*  =  \sum\limits_{x \in X} s_x^{\phantom{*}}s_x^* \leq 1.
\end{array}\]
This gives rise to a surjective $*$-homomorphism $C^*(G,\phi) \to \CT(G,X) \cong C^*(X^*\bowtie G)$. It is not hard to see that the assignment $u_g \mapsto u_g$ and $s_x \mapsto u_{q_x}s_\phi =: t_x$ defines a $*$-homomorphism on $\CT(G,X)$ that is inverse to the first map: the Toeplitz-Cuntz relation is clear. For $g \in G$ and  $x \in X$ there is a unique $y \in X$ such that $q_y^{-1}gq_x \in H$, hence by \eqref{eq:relations for C*(G,phi)} implies that $u_g t_x = u_{q_y} (u_{q_y^{-1}gq_x}s_\phi) = t_y u_{g|_x}$, as required in \eqref{eq:relations for T(G,X)}.
\end{proof}

After this prelude, let us proceed with the analysis of KMS-states. The time evolution on $C^*(X^*\bowtie G)$ is the one from \cite{ABLS}*{Proposition~5.8}, thus for  $t\in \R$, $g\in G$ and $x\in X$, so that in particular $\ell(x)=1$, we have
\[
\sigma_t(v_{(\varnothing,g)})=v_{(\varnothing,g)}\, \sigma_t(v_{(\varnothing,q_x^{-1})(x,1_G)})=|X|^{it}v_{(\varnothing,q_x^{-1})(x,1_G)}.
\]
The natural time evolution on $C^*(G,\phi)$ for a given virtual endomorphism $(G,\phi,H)$ is given by $\sigma_t(u_g)=u_g$ and $\sigma_t(s_\phi)=[G:H]^{it}s_\phi$ for all $t\in \R$ and $g\in G$.  The isomorphisms between $C^*(X^*\bowtie G)$, $\CT(G,X)$ and $C^*(G,\phi)$ from Theorem~\ref{thm:SSA a virt endom C*-algebras}  are equivariant with respect to the respective time evolutions.
Via the identification $C^*(X^*\bowtie G)\cong \CT(G,X)$, recall from
 \cite{LRRW}*{Theorem~7.3(3)} that for every finite-state self-similar group action $(G,X)$, the $C^*$-algebra $C^*(X^*\bowtie G)$ has a unique KMS-state at the critical inverse temperature, see also \cite{ABLS}*{Proposition~5.8(iii)}. As we shall now argue, core regularity improves the results in this direction. It not only covers new, non-finite state examples, but adds a new perspective in that it connects with so-called $G$-regular points of $(G,X)$.

For a self-similar group action $(G,X)$ with $G$ countable, consider the induced action of $G$ on the Cantor set $X^\N$ of right-infinite words in $X$ (equipped with the product topology). Let $g \in G$. Following the more recent terminology of \cite{Nek3}*{Definition~2.1} rather than \cite{Nek2}*{Definition~3.3}, a point $w \in X^\N$ is \emph{$g$-regular} if either $w$ is not fixed by $g$ or it lies in the interior of the set of fixed points for $g$. The second condition means that there is a finite word $v \in X^*$ with $w \in vX^\N$ such that $v \in A_{\lvert X\rvert^{\ell(v)}}^g$. A point that is not $g$-regular is \emph{$g$-singular}. Finally, a point is called \emph{$G$-regular} if it is $g$-regular for all $g\in G$.

Let $\mu$ denote the Borel probability measure on $X^\N$ given by the product measure of uniform distribution on $X$ at each stage, that is, $\mu[vX^\N] = \lvert X\rvert^{-\ell(v)}$ for every $v \in X^*$.

\begin{proposition}\label{prop:almost surely regular}
For a faithful self-similar action $(G,X)$ with countable group $G$, the following are equivalent:
\begin{enumerate}[(i)]
\item $X^*\bowtie G$ is core regular.
\item $\mu$-almost every point in $X^\N$ is $G$-regular.
\end{enumerate}
\end{proposition}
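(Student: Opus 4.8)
The goal is to translate the combinatorial condition in Definition~\ref{def:neg q-a} into a measure-theoretic statement about the boundary of fixed-point sets. By Remark~\ref{rem:neg q-a for core being a group} the core $S_c = \{\varnothing\}\times G$ is a group, so core regularity amounts to $\lvert F_n^{g}\setminus A_n^{g}\rvert/n \to 0$ for every $g \in G$, where $n = \lvert X\rvert^{\ell}$ and $F_n^g$, $A_n^g$ refer to $(\varnothing,g)$ and $1$. By Proposition~\ref{prop:neg q-a explained for SSA}, the class $[(w,1_G)]$ with $\ell(w)=\ell$ lies in $F_n^g$ iff $g(w)=w$, and in $A_n^g$ iff $g(w)=w$ and $g|_w = 1_G$. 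So the plan is: first, identify $\lvert F_n^g\setminus A_n^g\rvert / n$ as the $\mu$-measure of the union of cylinders $wX^\N$ over the words $w$ of length $\ell$ with $g(w)=w$ but $g|_w \neq 1_G$; second, show this quantity decreases to $\mu$ of the set of $g$-singular points; and third, use countability of $G$ to pass from "$g$-regular for each fixed $g$, $\mu$-a.e." to "$G$-regular $\mu$-a.e."

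\textbf{Key steps.} For a fixed $g \in G$ and length $\ell$, set $B_\ell^g := \bigcup \{ wX^\N \mid \ell(w)=\ell,\ g(w)=w,\ g|_w \neq 1_G\}$; this is a clopen subset of $X^\N$ with $\mu(B_\ell^g) = \lvert X\rvert^{-\ell}\lvert F_n^g \setminus A_n^g\rvert$. The crucial observation is that the sequence $(B_\ell^g)_\ell$ is nested decreasing: if $\ell(w) = \ell$, $g(w)=w$, $g|_w\neq 1_G$, and $w'$ of length $\ell+1$ satisfies $w'X^\N \subset wX^\N$ and $g(w')=w'$, then $g|_{w'} = (g|_w)|_{x}$ for the last letter $x$ of $w'$, and one checks that $w'$ can contribute to $B_{\ell+1}^g$ only if it sits inside $wX^\N$ — conversely if $w \notin B_\ell^g$, i.e.\ either $g(w)\neq w$ or $g|_w=1_G$, then no length-$(\ell+1)$ extension of $w$ lies in $B_{\ell+1}^g$ (in the case $g(w)\neq w$ nothing below $w$ is fixed; in the case $g|_w=1_G$ every point of $wX^\N$ is an interior fixed point, so those extensions land in the $A$-set). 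Hence $B_{\ell+1}^g \subseteq B_\ell^g$, and by continuity of measure $\mu(B_\ell^g) \downarrow \mu(\bigcap_\ell B_\ell^g)$. The next step is to show $\bigcap_\ell B_\ell^g$ equals, up to $\mu$-null sets, the set $\mathrm{Sing}_g$ of $g$-singular points: a point $w^\infty \in X^\N$ lies in $\bigcap_\ell B_\ell^g$ iff every finite prefix $w$ of $w^\infty$ is fixed by $g$ with $g|_w \neq 1_G$, which forces $g(w^\infty) = w^\infty$ while no prefix witnesses $w^\infty$ as an interior fixed point, i.e.\ $w^\infty$ is exactly $g$-singular; and the reverse containment is immediate from the definition of $g$-singular via the cylinder characterisation already recalled before the proposition. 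Therefore $X^*\bowtie G$ is $(g,1)$-regular iff $\mu(\mathrm{Sing}_g) = 0$, i.e.\ iff $\mu$-a.e.\ point is $g$-regular. Finally, core regularity is $(g,1)$-regularity for all $g\in G$, so it is equivalent to $\mu(\mathrm{Sing}_g)=0$ for all $g$; since $G$ is countable, $\bigcup_{g\in G}\mathrm{Sing}_g$ is then $\mu$-null, which is precisely the statement that $\mu$-almost every point is $G$-regular. Conversely if $\mu$-a.e.\ point is $G$-regular, then each $\mathrm{Sing}_g$ is $\mu$-null, giving core regularity.

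\textbf{Main obstacle.} The routine parts are the identification of $\mu(B_\ell^g)$ with the proportion $\lvert F_n^g\setminus A_n^g\rvert/n$ and the countability argument at the end. The delicate point — the step I expect to need the most care — is establishing the nesting $B_{\ell+1}^g \subseteq B_\ell^g$ and the resulting clean identification $\bigcap_\ell B_\ell^g = \mathrm{Sing}_g$. This hinges on correctly tracking how the restriction $g|_w$ behaves under extending $w$ by one letter (the chain rule $g|_{wx} = (g|_w)|_x$ for self-similar actions) and on the dichotomy that once a prefix $w$ has $g|_w = 1_G$, the entire cylinder $wX^\N$ consists of interior fixed points and so drops out of all later $B$-sets — this is what makes the sequence monotone rather than merely having a limit. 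I would also be slightly careful that "interior of the set of fixed points" in the definition of $g$-regular matches exactly the condition "$\exists$ finite prefix $w$ with $g|_w = 1_G$" (this equivalence is stated in the paper just before the proposition via $v \in A_{\lvert X\rvert^{\ell(v)}}^g$ and Proposition~\ref{prop:neg q-a explained for SSA}), since the whole argument routes through that reformulation.
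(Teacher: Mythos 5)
Your proposal is correct and follows essentially the same route as the paper: form the decreasing unions of cylinders over $F^g_n\setminus A^g_n$, identify their intersection with the set of $g$-singular points, pass to measures, and use countability of $G$. The only cosmetic difference is that you verify the nesting by hand via the chain rule $g|_{wx}=(g|_w)|_x$, whereas the paper invokes its factorisation lemma for fixed non-absorbing elements (Lemma~\ref{lem:factorization formula for fixed non-absorbing elements}); the content is the same.
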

\begin{proof}
Let us fix $g\in G$ and note that $(S_n^g)_{n\geq 1}$ with $S_n^g:= \bigsqcup_{v \in F_{\lvert X\rvert^n}^g\setminus A_{\lvert X\rvert^n}^g} vX^\N$ forms a decreasing sequence of Borel subsets, see Lemma~\ref{lem:factorization formula for fixed non-absorbing elements}. We claim that the set of $g$-singular points coincides with $\bigcap_{n\geq 1}S_n^g = \lim_{n\to \infty} S_n^g$.

Suppose that $\omega$ is $g$-singular and $v \in X^n$ is the beginning of $\omega$ of length $n$. Then $v \in F_{\lvert X\rvert^n}^g$ as $g(\omega)=\omega$, but $v \notin A_{\lvert X\rvert^n}^g$ as there exists $\omega' \in vX^\N$ with $g(\omega') \neq \omega'$. Thus every $g$-singular point lies in $\lim_{n\to \infty} S_n^g$. Conversely, let $\omega \in \lim_{n\to \infty} S_n^g$. Then every beginning of $\omega$ is fixed, and thus $g(\omega) \in vX^\N$ for every beginning $v$ of $\omega$. As the cylinder sets in $vX^\N$ separate $\omega$ from every other point $\omega'$ ($X^\N$ is Hausdorff with the specified topology), we conclude that $g(\omega) = \omega$. Moreover, if $v$ is a beginning of $\omega$ of length $n$, then $v \in F_{\lvert X\rvert^n}^g\setminus A_{\lvert X\rvert^n}^g$, that is, $g(v)=v$ but $g_{|v}\neq 1_G$. By faithfulness of $(G,X)$, there is $v' \in X^*$ such that $g(vv') \neq vv'$. Thus every $\omega' \in vv'X^\N$ satisfies $g(\omega') \neq \omega'$, and hence $\omega$ is $g$-singular.

With the above claim, we easily calculate
\[\mu[\{g\text{-singular}\}] = \lim\limits_{n\to \infty} \mu[S_n^g] =\lim\limits_{n\to \infty} \frac{\lvert F_{\lvert X\rvert^n}^g\setminus A_{\lvert X\rvert^n}^g\rvert}{\lvert X\rvert^n}\]
for every $g \in G$. Taking into account that $\{G\text{-regular}\} = \bigcap_{g \in G}\{g\text{-regular}\}$ and that $G$ is countable, we therefore see that core regularity for $X^*\bowtie G$ is equivalent to $G$-regularity of $\mu$-almost all points in $X^\N$.
\end{proof}

\begin{remark}\label{rem:measure mu for right LCM with gs}
Every right LCM monoid $S$ with generalised scale $N$ admits an analogue of the measure $\mu$ on the spectrum $\widehat{\CD}$ of the diagonal subalgebra $\CD \subset C^*(S)$ generated by the commuting projections $e_{sS}, s\in S$. Its relevance in the context of KMS-states will be discussed in \cite{NS1}.
\end{remark}

\begin{remark}\label{rem:nqa indep of the digit set}
It follows from \cite{Nek1}*{Proposition~4.19} that for any two self-similar actions arising from the choice of two digit sets for a virtual endomorphism $(G,\phi)$, the respective actions $G\curvearrowright X^\N$ are topologically conjugate. In fact, the result includes an explicit description of the equivariant homeomorphism, say $h$, of $X^\N$, and we highlight that $h(vX^\N) =wX^\N$ with $\ell(w)=\ell(v)$ for every finite word $v$. This allows us to conclude that the probability measure $\mu$ appearing in Proposition~\ref{prop:almost surely regular} is $h$-invariant. Hence core regularity is a property of $(G,\phi)$, that is, it does not depend on the digit set.
\end{remark}

In view of Theorem~\ref{thm:SSA a virt endom C*-algebras} and Remark~\ref{rem:nqa indep of the digit set}, it seems that the natural approach to study the KMS-state structure of $(C^*(G\bowtie X^*),\sigma)$ is by transferring the question to  $(C^*(G,\phi),\sigma)$ for a virtual endomorphism $\phi$ that allows for a realization of $(G,X)$ via a suitably chosen digit set. Let us first note that injectivity of the virtual endomorphism simplifies the task:

\begin{lemma}\label{lem:independent of digit set}
Let $\phi$ be a virtual endomorphism of a group $G$ with domain $H$, $D$ a digit set for $(G,\phi, H)$ and $(G,X)$ the self-similar action associated to $(G,\phi,D)$. If $\phi$ is injective, then $X^*\bowtie G$ is right cancellative, and in particular there are no nontrivial absorbing words for all $g \neq 1_G$.
\end{lemma}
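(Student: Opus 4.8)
The plan is to establish right cancellation of $X^*\bowtie G$ by a direct combinatorial argument that uses the Zappa--Szép multiplication together with the formula \eqref{eq:selfsimilar-from-virtual-definition} expressing restrictions through $\phi$, and then to deduce the statement about absorbing words from Proposition~\ref{prop:r canc: neg q-a forces alpha faithful}. The heart of the matter is the following rigidity claim: \emph{for all $g,g'\in G$ and all $w\in X^*$, if $g(w)=g'(w)$ and $g|_w=g'|_w$, then $g=g'$.}

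I would prove the claim by induction on $\ell(w)$. For $\ell(w)=1$, write $w=x$ and $y:=g(x)=g'(x)$; then $gq_x,g'q_x\in q_yH$, and \eqref{eq:selfsimilar-from-virtual-definition} gives $\phi(q_y^{-1}gq_x^{\phantom{1}})=g|_x=g'|_x=\phi(q_y^{-1}g'q_x^{\phantom{1}})$, so injectivity of $\phi$ forces $q_y^{-1}gq_x^{\phantom{1}}=q_y^{-1}g'q_x^{\phantom{1}}$ and hence $g=g'$. For the inductive step, write $w=xu$ with $\ell(u)=\ell(w)-1$; comparing the initial letters in $g(x)\,g|_x(u)=g(xu)=g'(xu)=g'(x)\,g'|_x(u)$ yields $g(x)=g'(x)$ and $g|_x(u)=g'|_x(u)$, while $(g|_x)|_u=g|_{xu}=g'|_{xu}=(g'|_x)|_u$; the inductive hypothesis applied to $g|_x,g'|_x$ and the shorter word $u$ gives $g|_x=g'|_x$, and the base case applied to $g,g'$ and the letter $x$ then gives $g=g'$. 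Here I use that the $G$-action on $X^*$ preserves length, the cocycle identity $g(uv)=g(u)\,g|_u(v)$, and the identity $g|_{uv}=(g|_u)|_v$, the latter being a standard consequence of the cocycle identity and faithfulness of $(G,X)$.

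With the claim at hand, right cancellation is quick: if $(u,g)(w,h)=(u',g')(w,h)$ in $X^*\bowtie G$, then expanding both products gives $u\cdot g(w)=u'\cdot g'(w)$ in $X^*$ and $g|_w\,h=g'|_w\,h$ in $G$; cancelling $h$ in the group yields $g|_w=g'|_w$, while comparing lengths (using $\ell(g(w))=\ell(g'(w))=\ell(w)$) forces $\ell(u)=\ell(u')$, so $u$ and $u'$ are prefixes of the same word of equal length, hence $u=u'$ and then $g(w)=g'(w)$; the claim gives $g=g'$. Thus $X^*\bowtie G$ is right cancellative. The last assertion is then immediate from Proposition~\ref{prop:r canc: neg q-a forces alpha faithful}: $A_n^{a,b}=\emptyset$ for all $a\neq b$ in $(X^*\bowtie G)_c=\{\varnothing\}\times G$ and all $n$, and by Proposition~\ref{prop:neg q-a explained for SSA} this says precisely that, for $g\neq 1_G$, no word $w$ satisfies $g(w)=w$ and $g|_w=1_G$, i.e.\ there are no nontrivial absorbing words. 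The only delicate point is the base case of the rigidity claim, which is exactly where injectivity of $\phi$ is used --- and where it must be used, since this is what fails for non-injective virtual endomorphisms.
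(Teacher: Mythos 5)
Your proof is correct and follows essentially the same route as the paper: both arguments reduce the failure of right cancellation to a single letter and then invoke injectivity of $\phi$ through the formula $g|_x=\phi(q_y^{-1}gq_x^{\phantom{1}})$ from \eqref{eq:selfsimilar-from-virtual-definition}. The only difference is organisational — the paper states the equivalence with the nonexistence of nontrivial absorbing words up front and checks that condition for a single $g\neq 1_G$, whereas you prove the two-element rigidity statement by induction and recover the absorbing-words assertion afterwards via Propositions~\ref{prop:r canc: neg q-a forces alpha faithful} and~\ref{prop:neg q-a explained for SSA}.
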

\begin{proof}
We have that  $X^*\bowtie G$ is right cancellative if and only if there do not exist $g'\in G\setminus\{1_G\}$ and $w \in X^*$ with $g'(w)=w, g'_{|w}=1_G$,  that is, $w \in A_{|X|^{\ell(w)}}^{g'}$. If such $g'$ and $w$ existed, then we may assume the same phenomenon occurs already for some $g \in G\setminus\{1_G\}$ and $x \in X$. But $1_G=g|_x = \phi(q_x^{-1}gq_x^{\phantom{1}})$ for $g\neq 1$ would contradict injectivity of $\phi$.
\end{proof}

\begin{corollary}\label{cor:unique KMS1 for virt endom}
Suppose $\phi$ is a virtual endomorphism of a group $G$. If there exists a digit set $D$ for which the associated $X^*\bowtie G$ is core regular, for instance if $(G,X)$ is finite-state, then $(C^*(G,\phi),\sigma)$ has a unique KMS$_1$-state $\psi_1$. If $\phi$ is injective, then $\psi_1(u_g) = \delta_{g,1_G}$ for all $g\in G$.
\end{corollary}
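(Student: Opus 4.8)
The plan is to reduce the statement to the monoid $S:=X^*\bowtie G$ and then quote the general results already established. First I would fix a digit set $D$ for which the associated self-similar action $(G,X)$ has $X^*\bowtie G$ core regular --- this is exactly what the hypothesis provides --- and record that the finite-state case is subsumed: a finite-state action has finite propagation, and since the action of $G$ on $X^*$ is faithful the induced action $\alpha\colon S_c\curvearrowright S/_\sim$ is faithful, so $S$ is core regular by \cite{ABLS}*{Lemma~9.5} (cf.\ the remarks following Proposition~\ref{prop:uniqueness at 1 if neg q-a}). Next I would invoke Theorem~\ref{thm:SSA a virt endom C*-algebras}, which supplies an isomorphism $C^*(G,\phi)\cong C^*(X^*\bowtie G)$ intertwining the two time evolutions and sending $u_g$ to $v_{(\varnothing,g)}$ for every $g\in G$. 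Since $S$ is core regular, Theorem~\ref{thm:KMS results}~(4) gives that $(C^*(S),\sigma)$ has a unique KMS$_1$-state; transporting it along the isomorphism shows that $(C^*(G,\phi),\sigma)$ has a unique KMS$_1$-state, which I call $\psi_1$.

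For the explicit formula when $\phi$ is injective, I would use Lemma~\ref{lem:independent of digit set} to conclude that $S$ is right cancellative, so that by Proposition~\ref{prop:r canc: neg q-a forces alpha faithful} every absorbing set $A_n^{a,b}$ with $a\neq b$ in $S_c$ is empty. Under the identification $S_c=\{\varnothing\}\times G$, for $g\in G\setminus\{1_G\}$ the elements $(\varnothing,g)$ and $(\varnothing,1_G)$ are distinct in $S_c$, and $v_{(\varnothing,1_G)}$ is the unit of $C^*(S)$, so the formula in Theorem~\ref{thm:KMS results}~(4) reads
\[
\psi_1(u_g)=\psi_1\bigl(v_{(\varnothing,g)}^{\phantom{*}}v_{(\varnothing,1_G)}^*\bigr)=\lim_{n\in N(S)}\frac{\lvert A_n^{(\varnothing,g),(\varnothing,1_G)}\rvert}{n}=0;
\]
combined with $\psi_1(u_{1_G})=\psi_1(1)=1$ this gives $\psi_1(u_g)=\delta_{g,1_G}$ for all $g\in G$. (The same conclusion can alternatively be read off Proposition~\ref{prop:neg q-a explained for SSA} together with the ``no nontrivial absorbing word'' clause of Lemma~\ref{lem:independent of digit set}.)

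I do not anticipate any real obstacle here: the corollary is essentially an assembly of Theorem~\ref{thm:SSA a virt endom C*-algebras}, Theorem~\ref{thm:KMS results}~(4) and Lemma~\ref{lem:independent of digit set}. The one point that calls for a little care is the dependence on the digit set: the isomorphism $C^*(G,\phi)\cong C^*(X^*\bowtie G)$ depends on $D$, but $C^*(G,\phi)$ and its time evolution are defined intrinsically (Definition~\ref{def:C*-alg of virt endom}), so once a single core-regular $D$ exists the unique KMS$_1$-state on $C^*(G,\phi)$ is unambiguously defined; by Remark~\ref{rem:nqa indep of the digit set}, core regularity does not in fact depend on $D$ at all. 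The auxiliary implication ``finite-state $\Rightarrow$ finite propagation'' I would cite rather than prove.
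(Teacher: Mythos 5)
Your proposal is correct and follows essentially the same route as the paper: finite-state implies core regularity via \cite{ABLS}*{Lemma~9.5}, uniqueness then comes from Proposition~\ref{prop:uniqueness at 1 if neg q-a} (equivalently Theorem~\ref{thm:KMS results}~(4)) transported through the isomorphism of Theorem~\ref{thm:SSA a virt endom C*-algebras}, and the formula $\psi_1(u_g)=\delta_{g,1_G}$ in the injective case follows from the absence of nontrivial absorbing elements guaranteed by Lemma~\ref{lem:independent of digit set}. Your extra remarks on the digit-set independence and the explicit passage through Proposition~\ref{prop:r canc: neg q-a forces alpha faithful} only make explicit what the paper leaves implicit.
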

\begin{proof}
Core regularity is implied by the finite-state property, see \cite{ABLS}*{Proposition~5.8(iii) and Lemma~9.5}, and Proposition~\ref{prop:uniqueness at 1 if neg q-a} implies the first claim.  If $\phi$ is injective, Lemma~\ref{lem:independent of digit set} shows that there are no non-trivial absorbing elements.
\end{proof}

This applies to a class exhibited in \cite{BK}*{Theorem~2}, to which we refer for a thorough explanation of the terminology.

\begin{corollary}\label{cor:applying BK}
Suppose $\phi$ is a surjective virtual endomorphism of a finitely generated torsion-free nilpotent group $G$ with trivial $\phi$-core. Let $\widehat{\phi}$ denote the differential of $\phi$ at the identity inside the Lie algebra of the Mal'cev completion of $G$. If the spectral radius of $\widehat{\phi}$ is at most $1$, and the cells for eigenvalues of modulus $1$ in the Jordan normal form of $\widehat{\phi}$ all have size $1$, then $(C^*(G,\phi),\sigma)$ has a unique KMS$_1$-state $\psi_1$.
\end{corollary}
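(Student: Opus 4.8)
The plan is to deduce this from Corollary~\ref{cor:unique KMS1 for virt endom} together with the classification in \cite{BK}*{Theorem~2}; the role of the hypotheses on $G$ and $\phi$ is precisely that, by \cite{BK}, they are equivalent to the existence of a digit set realising the self-similar action in finite-state form. First I would note that the assumption of trivial $\phi$-core is exactly what guarantees that, for any digit set $D$, the self-similar action $(G,X)$ associated to $(G,\phi,D)$ is faithful: the $\phi$-core is the largest $\phi$-invariant subgroup of $H$, and under Nekrashevych's correspondence it coincides with the kernel of the action of $G$ on $X^*$ (equivalently on $X^\N$). Thus Theorem~\ref{thm:SSA a virt endom C*-algebras} applies and yields an equivariant isomorphism $C^*(G,\phi)\cong C^*(X^*\bowtie G)$ intertwining $\sigma$ with the time evolution on $C^*(X^*\bowtie G)$ determined by the generalised scale $N_{(w,g)}=\lvert X\rvert^{\ell(w)}=[G:H]^{\ell(w)}$. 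In particular $\beta_c=1$, so the critical interval degenerates to $\{1\}$ and Corollary~\ref{cor:unique KMS1 for virt endom} is the relevant uniqueness statement.

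Next I would invoke \cite{BK}*{Theorem~2}: the spectral radius of $\widehat{\phi}$ being at most $1$, with all Jordan cells for eigenvalues of modulus $1$ of size $1$, is exactly the condition under which some digit set $D$ for $(G,\phi,H)$ produces a finite-state self-similar action $(G,X)$. Fixing such a $D$, Corollary~\ref{cor:unique KMS1 for virt endom} (which invokes \cite{ABLS}*{Proposition~5.8(iii) and Lemma~9.5} to pass from finite-state to core regularity, and then Proposition~\ref{prop:uniqueness at 1 if neg q-a}) gives that $X^*\bowtie G$ is core regular, and hence that $(C^*(G,\phi),\sigma)$ has a unique KMS$_1$-state $\psi_1$. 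Since the statement does not assume $\phi$ injective, no further claim on the values of $\psi_1$ is needed; had we assumed injectivity, Lemma~\ref{lem:independent of digit set} would in addition give $\psi_1(u_g)=\delta_{g,1_G}$.

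I do not expect any substantial obstacle here: the entire content sits in \cite{BK}*{Theorem~2} and in the machinery already assembled, and Remark~\ref{rem:nqa indep of the digit set} even makes the particular choice of digit set immaterial (core regularity being a property of $(G,\phi)$ alone). The only points requiring care are matching the two formulations of the spectral condition and confirming that the differential $\widehat{\phi}$ of $\phi$ at the identity in the Lie algebra of the Mal'cev completion, as used in the statement, is the same object that \cite{BK}*{Theorem~2} is phrased in terms of.
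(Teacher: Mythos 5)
Your proposal is correct and follows essentially the same route as the paper: the paper's proof is a one-liner that invokes \cite{BK} (Theorem~1 there, rather than Theorem~2 as you cite, for the existence of a digit set yielding a finite-state self-similar action) and then applies Corollary~\ref{cor:unique KMS1 for virt endom}. Your additional remarks on faithfulness via the trivial $\phi$-core and on digit-set independence are consistent with the paper's setup, though note that \cite{BK} gives the spectral condition only as sufficient for the existence of a finite-state digit set, not as an equivalence.
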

\begin{proof}
By \cite{BK}*{Theorem~1}, there exists a digit set for $(G,\phi)$ whose associated self-similar action is finite-state, so that Corollary~\ref{cor:unique KMS1 for virt endom} applies.
\end{proof}

\begin{example}\label{ex:neg q-a for Heisenberg} Consider the \emph{discrete Heisenberg group}
\[G = \left\{ (x,y,z) := \scalebox{0.8}{$\begin{pmatrix} 1&x&z\\0&1&y\\0&0&1 \end{pmatrix}$} \mid x,y,z \in \Z\right\}.\]
For $m,n \in \Z^\times$, we consider the subgroup $H_{m,n} := \{ (x,y,z) \in G \mid x \in m\Z, y\in n\Z, z \in mn\Z\}$ of index $(mn)^2$ and the virtual endomorphism $\phi_{m,n}\colon H_{m,n} \to G$ given by $\text{diag}(1/m,1/n,1/mn)$, that is, $(mx,ny,mnz)\mapsto (x,y,z)$. In this case, $\widehat{\phi}_{m,n} = \text{diag}(1/m,1/n,1/mn)$. According to \cite{BK}*{Theorems~1 and 2}, the case $(m,n)=(1,2)$ allows for both finite-state and non-finite-state self-similar actions, which is demonstrated in \cite{BK}*{4~Example} for $X:= \{1,2,3,4\}$ by choosing $D_1 := \{0,e_2,e_3,e_2+e_3\}$ and $D_2 := \{e_1,e_2,e_3,e_2+e_3\}$, respectively.

 Corollary~\ref{cor:applying BK} applies, so that $(C^*(G,\phi_{m,n}),\sigma)$ has a unique KMS$_1$-state, whenever $|mn|>1$.

 It is known that the $G$-regular points for a faithful self-similar action of a countable group always form a dense $G_\delta$-set. It is natural to ask the following question: is there a countable discrete group $G$ that admits a faithful self-similar action $(G,X)$ with the property that the $G$-singular points have positive measure? Any such example is likely to be of great interest on its own as it shows quite distinct behaviour with respect to the two analogous statements from topology and ergodic theory.
\end{example}

\section{Shadowed natural numbers}\label{sec:unique despite unfaithful alpha}

We introduce now a class of right LCM monoids that are ultimatively a mock version of $\N\rtimes \N^\times$, but we restrict our scope to a single generator $\ic \in \N^\times$ to keep technicalities and notation to a minimum.  We replace $\N$ by the \emph{left-absorbing monoid} $L_1:= \langle \ia,\ib \mid \ia\ib=\ib^2\rangle^+$. If we think of $\ib$ as the generator of $\N$, then $\ia$ is merely a shadow that turns into $\ib$ as soon as it meets one on its right hand side.

For $n\geq 1$, let $L_n := \langle \ia,\ib \mid \ia\ib^n=\ib^{n+1}\rangle^+$ denote the \emph{left absorbing} monoid, see \cite{DDGKM}*{Reference Structure~8} for details. This is a right LCM monoid that is not right cancellative. In fact, every two elements in $L_n$ have a right common multiple, so $L_n$ is equal to its core, and hence does not admit a generalised scale. Nevertheless, we can use $L_1$, whose elements have a unique normal form $\ib^i\ia^j$ with $i,j \in \N$, to build an example with an interesting feature: For $m >1$, let
\[S_m := L_1 \rtimes_m \N = \langle \ia,\ib,\ic \mid \ia\ib=\ib^2, \ic\ia=\ia^m\ic, \ic\ib=\ib^m\ic\rangle^+.\]
Since the appearing endomorphism of $L_1$ is injective\footnote{Injectivity of such endomorphisms unfortunately only holds for $n=1$. For instance, $L_2$ has $\ia\ib\neq \ib^2$ but $\ic\ia\ib = \ib^{2m}\ic = \ic\ib^2$.}, $S_m$ is left cancellative, and we claim that it is right LCM. So let $s=\ib^{i_1}\ia^{j_1}\ic^{k_1}, t=\ib^{i_2}\ia^{j_2}\ic^{k_2} \in S_m$ with $k_1\leq k_2$. Then $s\Cap t$ if and only if $i_1+j_1 -(i_2+j_2) \in m^{k_1}\Z$. Suppose first that there are $r= \ib^{i_3}\ia^{j_3}\ic^{k_3}$ and $r'= \ib^{i_4}\ia^{j_4}\ic^{k_4}$ with $sr=tr'$. This yields
\begin{equation} \label{eq:left absorbing}
\ib^{i_1+j_1+m^{k_1}(i_3+j_3+m^{k_3})}\ic^{k_1+k_3} = sr\ib = tr'\ib = \ib^{i_2+j_2+m^{k_2}(i_4+j_4+m^{k_4})}\ic^{k_2+k_4},
\end{equation}
and as $k_2\geq k_1$, we deduce that $i_1+j_1 -(i_2+j_2) \in m^{k_1}\Z$. Conversely, if this condition holds, then there are  $j_3,j_4 \geq 0$ such that $i_1+j_1+m^{k_1}j_3 = i_2+j_2 +m^{k_2}j_4$, which yields $s\ib^{j_3}\ic^{k_2-k_1} = t\ib^{j_4}$.
In the case of $s\Cap t$, it is clear from \eqref{eq:left absorbing} that, given $r \in sS_m\cap tS_m$, we can always find $r' \in sS_m\cap tS_m$ with $r \in r'S_m$ such that the power of $\ic$ in $r'$ is $k_2$, i.e.~the maximum of $k_1$ and $k_2$. To conclude that $S_m$ is right LCM, we now only need to figure out that this case allows for exactly one minimal choice of a word in $\ia$ and $\ib$, which we leave to the reader as an exercise.

Borrowing terminology from \cite{Sta5}*{Definitions~3.2,3.4}, it follows that
\begin{enumerate}[(i)]
\item $(S_m)_c$ coincides with $L_1$;
\item for $s=\ib^{i_1}\ia^{j_1}\ic^{k_1}$ and $t=\ib^{i_2}\ia^{j_2}\ic^{k_2} \in S_m$, $s \sim t$ holds if and only if $k_1 = k_2$ and $i_1+j_1 = i_2+j_2 (\text{mod } m^{k_1})$;
\item an element $s=\ib^i\ia^j\ic^k$ is noncore irreducible if and only if $k=1$;
\item the core graph $\Gamma(S_m)$ is the empty graph on $m$ vertices.
\end{enumerate}
As $S_m$ is noncore factorable (and has balanced factorisation for trivial reasons), \cite{Sta5}*{Theorem~3.11} implies that $S_m$ has a generalised scale $N$ determined by $\ia,\ib \mapsto 1, \ic\mapsto m$.
In particular, $\beta_c=1$ holds for all $m>1$.

\begin{proposition}\label{prop:unfaithful, but neg q-a}
For every $m>1$, the action $\alpha\colon L_1 \curvearrowright S_m$ is not faithful, but $S_m$ is core regular, and therefore $(C^*(S_m),\sigma)$ has a unique KMS$_1$-state.
\end{proposition}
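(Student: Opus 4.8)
The plan is to verify the two assertions separately, then invoke Proposition~\ref{prop:uniqueness at 1 if neg q-a} for the last conclusion. \emph{Non-faithfulness of $\alpha$.} I would exhibit two distinct elements of $L_1=(S_m)_c$ inducing the same bijection of $S_m/_\sim$. Using the description of $\sim$ recorded in item (ii) above --- namely $[\ib^{i_1}\ia^{j_1}\ic^{k_1}]=[\ib^{i_2}\ia^{j_2}\ic^{k_2}]$ iff $k_1=k_2$ and $i_1+j_1\equiv i_2+j_2 \pmod{m^{k_1}}$ --- the class of an element $s$ with $\ic$-exponent $k$ depends only on $k$ and on the residue of $i+j$ modulo $m^k$. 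Left multiplication by $\ib$ sends $[\ib^i\ia^j\ic^k]$ to $[\ib^{i+1}\ia^j\ic^k]$, and left multiplication by $\ia$ sends it to $[\ib^i\ia^{j+1}\ic^k]$; in both cases $i+j$ is increased by $1$, so $\alpha_\ia=\alpha_\ib$ on all of $S_m/_\sim$ even though $\ia\neq\ib$ in $L_1$. This gives non-faithfulness with no computation beyond reading off (ii).

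\emph{Core regularity.} By Definition~\ref{def:neg q-a} I must show $\lvert F_n^{a,b}\setminus A_n^{a,b}\rvert/n\to 0$ for all $a,b\in L_1$; since $L_1$ is generated by $\ia,\ib$ subject to $\ia\ib=\ib^2$, a short reduction (analogous to Remark~\ref{rem:neg q-a for core being a group}, using that every element of $L_1$ has normal form $\ib^i\ia^j$) reduces this to a single pair, and in fact the natural candidate pair is $(a,b)=(\ia,\ib)$, or more generally pairs of the form $(\ib^i\ia^j, \ib^{i'}\ia^{j'})$ with $i+j=i'+j'$, since by (ii) two core elements with different "total degree" $i+j$ already induce different maps $\alpha$ only partially; one checks which pairs have $F_n^{a,b}$ all of $N^{-1}(n)/_\sim$ and which have it empty. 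The key point is that $N^{-1}(m^k)/_\sim$ has exactly $m^k$ elements (this is forced by the generalised scale, Definition~\ref{def:gen-scale}), indexed by the residue $i+j \bmod m^k$ together with $k$ fixed. For a pair $a=\ib^{i_1}\ia^{j_1}$, $b=\ib^{i_2}\ia^{j_2}$ in $L_1$, left multiplying a representative $s$ of a class in $N^{-1}(m^k)/_\sim$ and using $\ia\ib^p=\ib^{p+1}$ one computes that $as\sim bs$ holds for all such $s$ precisely when $i_1+j_1\equiv i_2+j_2 \pmod{m^k}$, and that in that case the absorbing condition $asc=bsc$ is actually satisfied for $c=\ib$ (because multiplying on the right by a single $\ib$ turns all the shadow letters $\ia$ into $\ib$, collapsing $a$ and $b$ to the same word $\ib^{i_1+j_1}=\ib^{i_2+j_2}$ once enough $\ib$'s are present). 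Hence on each level either $F_n^{a,b}=A_n^{a,b}$ or $F_n^{a,b}=\emptyset$, so $F_n^{a,b}\setminus A_n^{a,b}=\emptyset$ for every $n$, and core regularity holds trivially.

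\emph{Conclusion.} With $S_m$ core regular, Proposition~\ref{prop:uniqueness at 1 if neg q-a} (equivalently Theorem~\ref{thm:KMS results}(4)) gives that $(C^*(S_m),\sigma)$ has a unique $\kms_1$-state, namely $\psi_1$ determined by $\psi_1(v_a^{\phantom{*}}v_b^*)=\lim_n \lvert A_n^{a,b}\rvert/n$; here that limit equals $1$ whenever $a$ and $b$ have equal total $\ia\ib$-degree and $0$ otherwise. The main obstacle I anticipate is bookkeeping inside $L_1$: the relation $\ia\ib=\ib^2$ makes $L_1$ non-cancellative, so I must be careful that the reduction to a single representative pair $(a,b)$, and the verification that the absorbing witness $c$ can always be taken to be a power of $\ib$, are done using only the normal form $\ib^i\ia^j$ and the explicit right-multiplication rules, rather than appealing to cancellation. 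Everything else is a direct consequence of the structural facts (i)--(iv) already established together with the cited propositions.
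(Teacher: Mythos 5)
Your strategy is the same as the paper's: show $\alpha_{\ia}=\alpha_{\ib}$ from the description of $\sim$, show that for $a=\ib^{i_1}\ia^{j_1}$, $b=\ib^{i_2}\ia^{j_2}$ the sets $F_n^{a,b}\setminus A_n^{a,b}$ vanish because a single $\ib$ on the right absorbs the difference between $a$ and $b$, and then invoke Proposition~\ref{prop:uniqueness at 1 if neg q-a}. One step, however, is asserted too strongly and is false as written: you claim that whenever $i_1+j_1\equiv i_2+j_2 \pmod{m^k}$ (so that $F_{m^k}^{a,b}=N^{-1}(m^k)/_\sim$), the witness $c=\ib$ gives $as\ib=bs\ib$, and hence that $F_n^{a,b}\setminus A_n^{a,b}=\emptyset$ \emph{for every} $n$. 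But for $s=\ib^i\ia^j\ic^k$ one has $as\ib=\ib^{\,i_1+j_1+i+j+m^k}\ic^k$ and $bs\ib=\ib^{\,i_2+j_2+i+j+m^k}\ic^k$, and absorption requires these to be \emph{equal as elements}, which happens only if $i_1+j_1=i_2+j_2$ exactly --- a congruence modulo $m^k$ is not enough, and no other $c\in L_1$ does better. Concretely, for $m=2$, $a=\ib^3$, $b=\ib$ and $n=2$ one gets $F_2^{a,b}=N^{-1}(2)/_\sim$ but $A_2^{a,b}=\emptyset$, so $\lvert F_2^{a,b}\setminus A_2^{a,b}\rvert/2=1$.

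This does not break the proof, because core regularity is an asymptotic condition: once $m^k>\max(i_1+j_1,\,i_2+j_2)$, the congruence forces the equality $i_1+j_1=i_2+j_2$, in which case $c=\ib$ does witness absorption and $F_n^{a,b}=A_n^{a,b}$; otherwise $F_n^{a,b}=\emptyset$. Either way $F_n^{a,b}\setminus A_n^{a,b}=\emptyset$ for all sufficiently large $n$, which is all that \eqref{eq:ab-qa} requires. This ``for $n$ large enough'' quantifier is exactly what the paper inserts, and with that correction your argument coincides with the published one; the non-faithfulness part, the reduction discussion (which is in fact unnecessary since you treat arbitrary pairs anyway), and the final appeal to Proposition~\ref{prop:uniqueness at 1 if neg q-a} are all fine.
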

\begin{proof}
Due to $\ia\ib=\ib^2$, every $[s] \in S/_\sim$ is represented by an element of the form $\ib^j\ic^k$ with $j>0$. Thus $\alpha$ is not faithful at $\ia \neq \ib$. For $\ib^{i_1}\ia^{j_1}, \ib^{i_2}\ia^{j_2} \in L_1$, observation (ii) from the list above entails that, for $n=m^k$ large enough so that $i_1+j_1,i_2+j_2 < m^k$, the set $F_n^{\ib^{i_1}\ia^{j_1},\ib^{i_2}\ia^{j_2}}$ can only be nonempty if $i_1+j_1=i_2+j_2$. But in this case we get
\[\ib^{i_1}\ia^{j_1}s \sim \ib^{i_1}\ia^{j_1}s\ib = \ib^{i_1+j_1+m^k}s = \ib^{i_2}\ia^{j_2}s\ib \sim \ib^{i_2}\ia^{j_2}s\]
for every $s \in N^{-1}(n)$, so that $A_n^{\ib^{i_1}\ia^{j_1},\ib^{i_2}\ia^{j_2}} = F_n^{\ib^{i_1}\ia^{j_1},\ib^{i_2}\ia^{j_2}} = N^{-1}(n)/_\sim$ holds. Thus $S_m$ is core regular and the KMS$_1$-state is unique by Proposition~\ref{prop:uniqueness at 1 if neg q-a}.
\end{proof}

This example shows the strength of the approach taken in Proposition~\ref{prop:uniqueness at 1 if neg q-a}, as both results for uniqueness of the KMS$_1$-state in \cite{ABLS} require $\alpha$ to be faithful.

\section{Summable quasi-absorption in the absence of almost freeness}\label{subsec:q-a but not almost free}

 There are only few explicit examples of monoids $S$ for which the critical inverse temperature $\beta_c$ is strictly larger than the minimal value $1$ and uniqueness of the $\kms_\beta$-state is known to hold for all $\beta$ in the critical interval $[1,\beta_c]$, namely $\N\rtimes \N^\times$ through \cite{LR2}, $R\rtimes R^\times$ for rings of integers $R$ in a number field, see \cite{CDL} or \cite{Nes}. In the case of $\N\rtimes \N^\times$, it was demonstrated in \cite{ABLS} that uniqueness is due to almost freeness of the core action $\alpha$. This is no longer true even for $\Z\rtimes \Z^\times$, but this example is covered by the involved arguments of \cite{CDL}. Here, we show that one can simply apply our conditions of core regularity to get uniqueness for $\Z\rtimes\Z^\times$, see Proposition~\ref{prop:ZxZtimes}.

 In a similar way, we treat an example built on a shift space over the finite cyclic groups involving a Frobenius automorphism in one direction, see Proposition~\ref{prop:shift space with permutations}. Thereby, we provide two examples of right LCM monoids that are core regular and summably core regular, while $\alpha$ fails to be almost free. In view of Proposition~\ref{prop:r canc: neg q-a forces alpha faithful}, uniqueness of the KMS$_1$-state is only possible due to failure of right cancellation.  The reason why the monoids in these examples are core regular is that whenever $\alpha_a=\alpha_b$ for $a,b \in S_c, a \neq b$, then this is witnessed already by absorption in $S$, that is, for every $s \in S$ there are $c,d \in S_c$ with $asc=bsd$.

The first example relates to $\N\rtimes \N^\times$ of \cite{LR2}: $S:=\Z\rtimes\Z^\times$ with $S_c=S^*=\Z\rtimes \Z^*$.

\begin{proposition}\label{prop:ZxZtimes}
The right LCM monoid $S$ has a generalised scale with $\beta_c=2$. The action $\alpha\colon S_c \curvearrowright S/_\sim$ is faithful, but not almost free. The monoid $S$ is core regular and summably core regular. In particular, $(C^*(S),\sigma)$ has a unique KMS$_\beta$-state $\psi_\beta$ for each $\beta \in [1,2]$ determined by $\psi_\beta(v_g) = \delta_{g,1}$ for $g \in S^*$.
\end{proposition}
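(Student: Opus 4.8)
The plan is to verify each of the stated properties of $S=\Z\rtimes\Z^\times$ in turn, leaning on the general machinery built earlier in the paper. First I would record the structural facts: $S^*=\Z\rtimes\{\pm1\}$ is the unit group, $S_c=S^*$ because any $(k,m)$ with $|m|>1$ fails to have a common right multiple with, say, $(0,p)$ for a prime $p\nmid m$; hence $C^*(S_c)=C^*(\Z\rtimes\Z/2\Z)$. The map $N(k,m)=|m|$ is a monoidal homomorphism to $\N^\times$, and the verification that it is a generalised scale is the same computation as for $\N\rtimes\N^\times$ in \cite{LR2}: the classes of $N^{-1}(n)/_\sim$ are indexed by residues mod $n$ (since $(k,m)\sim(k',m')$ iff $|m|=|m'|=n$ and $k\equiv k'\ (\mathrm{mod}\ n)$, using that $S_c$ contains the full additive $\Z$), so $|N^{-1}(n)/_\sim|=n$, and a transversal is an accurate foundation set because distinct residues are $\perp$ while every element meets some residue. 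Then $\zeta(\beta)=\sum_n\sum_{[t]:N_t=n}n^{-\beta}=\sum_n n\cdot n^{-\beta}=\sum_n n^{1-\beta}$ converges iff $\beta>2$, so $\beta_c=2$.

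Next I would address the core action. Faithfulness: $\alpha_{(j,\pm1)}([(k,m)])=[(\pm k+j,\pm m)]$, which on the residue label $k\bmod |m|$ acts by $k\mapsto \pm k+j$; if $(j,\varepsilon)\neq(0,1)$ this is a nontrivial affine map for $|m|$ large, so no nonidentity element of $S_c$ acts trivially on all of $S/_\sim$, giving faithfulness. It is \emph{not} almost free: the element $(0,-1)\in S_c$ fixes $[(0,m)]$ (and more generally residue $0$ and, for even $n$, residue $n/2$) for every $n$, so $F_n^{(0,-1),1}$ is nonempty for all $n$ — in fact of size roughly $2$ — contradicting almost freeness. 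The point of the example is that these fixed points are nonetheless \emph{absorbing}: if $(0,-1)\cdot(k,m)\sim(k,m)$, then because $S$ is not right cancellative one can absorb the sign by multiplying on the right by a suitable $(j,1)$, i.e. there is $s'\sim(k,m)$ with $(0,-1)s'=s'$ (concretely, $(0,-1)(k,m)=( -k,-m)$ and one checks $(-k,-m)(2k,1)=(k,-m)(2k\cdot\text{something})$ — the relation $(j,-1)(2j,1)=(j,1)(0,-1)$-type identity in $\Z\rtimes\Z^\times$ makes the difference collapse). More carefully: in $S$ one has $(0,-1)(k,m)=(-k,-m)$ and $(k,m)=(-k,-m)(\text{unit reversing sign})$ only up to units, so the correct statement is that for each residue-$0$ or residue-$n/2$ class there is a representative $s'$ with $as'=bs'$; I would pin this down by an explicit normal-form calculation. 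Granting that, $F_n^{a,b}=A_n^{a,b}$ for all $a,b\in S_c$ and all $n$, whence $|F_n^{a,b}\setminus A_n^{a,b}|=0$, so both \eqref{eq:ab-qa} and \eqref{eq:ab-summable-qa} hold trivially, i.e. $S$ is core regular and $\beta$-summably core regular for every $\beta\in(1,2]$.

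With those in hand the KMS conclusions are immediate from the general theory: Proposition~\ref{prop:uniqueness at 1 if neg q-a} gives a unique $\kms_1$-state $\psi_1$ with $\psi_1(v_g)=\lim_n|A_n^{g,1}|/n$ for $g\in S_c$, and Proposition~\ref{prop:uniqueness in the crit interval by s q-a} gives a unique $\kms_\beta$-state for each $\beta\in(1,2]$ with $\psi_\beta(v_g)=\lim_I\zeta_I(\beta)^{-1}\sum_{n\in\langle I\rangle^+}n^{-\beta}|A_n^{g,1}|$. Since for $g\neq 1$ the absorbing set $A_n^{g,1}$ is either empty (when $\alpha_g$ genuinely moves almost all classes, e.g. $g=(j,1)$ with $j\neq0$) or has $|A_n^{g,1}|/n\to 0$ (when $g=(0,-1)$ or $(j,-1)$, as $|A_n^{g,1}|$ stays bounded while $n\to\infty$), we get $\psi_\beta(v_g)=\delta_{g,1}$ in all cases, while $\psi_\beta(v_1)=1$; and for $\beta=1$ the same limits give $\delta_{g,1}$. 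I expect the main obstacle to be the careful normal-form bookkeeping establishing that every fixed class of $(0,-1)$ (and of the reflections $(j,-1)$) is absorbing in $S$ — i.e. verifying the identity $as'=bs'$ for a suitable $s'\sim s$ — since this is exactly where failure of right cancellation is used and where an off-by-a-unit error would break the argument; everything else is a routine specialisation of \cite{LR2} and the propositions of Section~\ref{sec:uniqueness}.
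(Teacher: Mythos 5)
There is a genuine gap, and it sits exactly where you yourself flag ``the main obstacle'': the claim that the fixed classes of the reflections $(j,-1)$ are \emph{absorbing}. The monoid $S=\Z\rtimes\Z^\times$ is right (indeed two-sided) cancellative: from $(a,m)(k,l)=(b,n)(k,l)$ one gets $ml=nl$, hence $m=n$, hence $a=b$. So your assertion that ``$S$ is not right cancellative'' is false, and by Proposition~\ref{prop:r canc: neg q-a forces alpha faithful} one has $A_n^{a,b}=\emptyset$ for all $a\neq b$ --- no identity of the form $as'=bs'$ with $s'\sim s$ can ever hold, and the normal-form calculation you propose to ``pin this down'' cannot succeed. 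Consequently your route to core regularity, namely $F_n^{a,b}\setminus A_n^{a,b}=\emptyset$, collapses: for $a=(g',-1)$, $b=1$ the set $F_n^{a,b}\setminus A_n^{a,b}$ equals $F_n^{a,b}$ and is \emph{nonempty} for every $n$.

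The proposition is nevertheless true, and the paper's proof shows why: the point is not that fixed points are absorbed, but that there are uniformly few of them. Since $S_c=S^*$ is a group, Remark~\ref{rem:neg q-a for core being a group} reduces everything to the sets $F_p^g$ for $g\in\Z\rtimes\{\pm1\}$, $g\neq 1$. For $g=(g',1)$ with $g'\neq 0$ one has $F_p^g=\emptyset$ once $p>|g'|$ (almost freeness on this part), and for $g=(g',-1)$ the congruence $g'-2h\in p\Z$ has at most two solutions mod $p$, so $\lvert F_p^g\setminus A_p^g\rvert=\lvert F_p^g\rvert\leq 2$ for all $p$. Core regularity then follows from $2/n\to 0$, and summable core regularity from Corollary~\ref{cor:suff for q-a} with $E=\emptyset$ and $C=2$ (using $\zeta_I(\beta+1)<\infty$ for $\beta>1=\beta_c-1$ while $\zeta_I(\beta)\nearrow\infty$). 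Your computation of the final values $\psi_\beta(v_g)=\delta_{g,1}$ survives, but only because $A_n^{g,1}=\emptyset$ for $g\neq 1$, not because $\lvert A_n^{g,1}\rvert$ ``stays bounded'' at some nonzero size. The remaining parts of your write-up (identification of $S_c$, the generalised scale, $\beta_c=2$, faithfulness, failure of almost freeness) are correct and agree with the paper.
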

\begin{proof}
Noting that $S$ arises as the semidirect product of the algebraic dynamical system $(\Z,\theta,\Z^\times)$ (with $\theta$ being multiplication), we deduce from \cite{ABLS}*{Proposition~5.11} that $S$ has a generalised scale, but $\alpha\colon \Z\rtimes \Z^* \curvearrowright S/_\sim$ is not almost free. Similar to the case of the monoid $\N\rtimes \N^\times$, we have $\beta_c=2$ for $S$. Since $S$ is cancellative, there are no nontrivial absorbing elements, see Proposition~\ref{prop:r canc: neg q-a forces alpha faithful}. Moreover, $S_c=S^*$ is a group, so Remark~\ref{rem:neg q-a for core being a group} reduces the question of core regularity to studying the fixedpoint sets $F_p^g = \{[(h,p)] \mid h+p\Z \in \Z/p\Z, \alpha_g([(h,p)]) = [(h,p)]\}$ for $g \in \Z\rtimes\Z^*\setminus\{0\}$ and $p \in N(S)=\N^\times$. We have that $\alpha_g([(h,p)])=[(h,p)]$ for $g=(g',\pm1)$ if and only if $g'\pm h-h \in p\Z$. The case of $g \in \Z\times\{1\}, g\neq (0,1)=1_S$ resembles the case for $\N\rtimes\N^\times$: The map $\alpha_g$ only has fixedpoints of the form $[(h,p)]$ with $p\leq |g'|$. So the action $\alpha$ restricted to $\Z\times\{1\}$ is almost free. Hence  both types of core regularity hold by Proposition~\ref{prop:neg q-a vs. other conditions}. Thus let $g=(g',-1)$. We need to determine the number of cosets $h+pZ \in \Z/p\Z$ with $g'-2h \in p\Z$.
\begin{description}
\item[Case $p \notin 2\Z$] then $2$ defines a permutation of $\Z/p\Z$, so $h+p\Z$ with $g'-2h \in p\Z$ is uniquely determined and $\lvert F_p^g\rvert = 1$.
\item[Case $p \in 2\Z, g' \notin 2\Z$] there is no such $h$, so $F_p^g=\emptyset$.
\item[Case $p \in 2\Z, g' \in 2\Z$] then $g'-2h \in p\Z$ is equivalent to $h \in g'/2 +p/2\Z\setminus p\Z \sqcup g'/2 +p\Z$, so that $\lvert F_p^g\rvert = 2$.
\end{description}
Thus we see that $\lvert F_p^g\rvert \leq 2$ for all $g\in \Z\rtimes\{-1\}, p \in \N^\times$, and conclude that $S$ is core regular. In addition, Corollary~\ref{cor:suff for q-a} with $E=\emptyset$ yields summable core regularity for these remaining cases.  The claimed uniqueness of the KMS$_\beta$-state and its form for $\beta$ inside the critical interval now follow from Proposition~\ref{prop:uniqueness at 1 if neg q-a} and Proposition~\ref{prop:uniqueness in the crit interval by s q-a}.
\end{proof}

The second example is inspired by \cite{ABLS}*{Example~5.13}. We decided to keep it explicit instead of aiming for the greatest generality, with the hope that the interested reader will be able to extract the essential ingredients for the recipe. Let us denote the set of all primes in $\N$ by $\FP$, and fix $q\in \FP$. For $p \in \FP$, we let $G_p := \bigoplus_\N \Z/p\Z$ as a group. For each $p \in \FP\setminus \{q\}$, the Frobenius map $g \mapsto g^q$ defines an automorphism of $\Z/p\Z$, and hence an automomorphism of the group $G_p$ in the natural way. Acting as the identity on $G_q$, we obtain an automorphism $f_q$ of the group
\[G:= \bigoplus_{p \in \FP} G_p = \{ (g_{p,n})_{p \in \FP,n \in \N} \mid g_{p,n} \in \Z/p\Z\}.\]
In addition to $f_q$, we will let $\N^\times$ act on $G$  via the shift maps $\Sigma$ with $\Sigma_p|_{G\setminus G_p} = \id$ and $(g_1,g_2,g_3,\ldots) \mapsto (0,g_1,g_2,g_3,\ldots)$ on $G_p$. Each $\Sigma_p$ is an injective endomorphism of $G$ whose image has index $p$. As $\Sigma_p$ commutes with $f_q$, it is easy to check that $(G,\Sigma\times f_q, \N^\times\times \Z)$ forms an algebraic dynamical system in the sense of \cite{BLS2}. Hence the induced semidirect product $S:= G \rtimes_\theta (\N^\times \times \Z)$ is a cancellative right LCM monoid with $S_c=S^*=G\rtimes_{f_q} \Z$.

\begin{proposition}\label{prop:shift space with permutations}
For every $q \in \FP$, the right LCM semigroup $S$ has a generalised scale with $\beta_c = 2$. The action $\alpha\colon S^* \curvearrowright S/_\sim$ is faithful, but not almost free. The monoid $S$ is core regular and summably core regular. In particular, $(C^*(S),\sigma)$ has a unique KMS$_\beta$-state $\psi_\beta$ for each $\beta \in [1,2]$ determined by $\psi_\beta(v_g) = \delta_{g,1}$ for $g \in S^*$.
\end{proposition}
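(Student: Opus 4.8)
The plan is to follow the proof of Proposition~\ref{prop:ZxZtimes} almost line by line, with the Frobenius direction $\{0\}\rtimes_{f_q}\Z$ playing the role that the ``flip'' direction $\Z\rtimes\{-1\}$ played there. First I would record the algebraic-dynamical-system structure: the $\Sigma_p$ are injective endomorphisms of $G$ with images of index $p$, the images $\Sigma_p(G)$ form an independent family, and each $\Sigma_p$ commutes with the automorphism $f_q$, so $(G,\Sigma\times f_q,\N^\times\times\Z)$ is an algebraic dynamical system in the sense of \cite{BLS2}. Hence $S=G\rtimes_\theta(\N^\times\times\Z)$ is a cancellative right LCM monoid with $S_c=S^*=G\rtimes_{f_q}\Z$, and, as in \cite{ABLS}*{Proposition~5.11}, it carries a generalised scale $N$ determined by $N_{(g,(n,k))}=n$. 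Since $[G:\Sigma_n(G)]=n$, we get $\lvert N^{-1}(n)/_\sim\rvert=\lvert G/\Sigma_n(G)\rvert=n$, so $\zeta(\beta)=\sum_n n^{1-\beta}$ and $\beta_c=2$, exactly as for $\N\rtimes\N^\times$; the ground states and the KMS$_\beta$-states for $\beta>2$ are then described by Proposition~\ref{prop:construction of ground states} and Proposition~\ref{prop:KMS-states parametrization above crit}.

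Next I would make the core action explicit: identifying $N^{-1}(n)/_\sim$ with $G/\Sigma_n(G)$ via $[(g,(n,k))]\mapsto g+\Sigma_n(G)$, the map $\alpha_{(h,(1,j))}$ is the affine bijection $\bar g\mapsto\bar h+\overline{f_q^{\,j}}(\bar g)$ of the finite abelian group $G/\Sigma_n(G)$, where $\overline{f_q^{\,j}}$ is the automorphism induced by $f_q^{\,j}$ (well defined since $f_q$ preserves each $\Sigma_n(G)$). Faithfulness of $\alpha$ follows by separating linear and constant parts: for $j\neq0$, multiplication by $q^{\,j}$ is not the identity on $\Z/p\Z$ for a suitable prime $p$, so $\overline{f_q^{\,j}}\neq\mathrm{id}$ already on $G/\Sigma_p(G)$; for $j=0$, distinct translations are separated on $G/\Sigma_n(G)$ for $n$ divisible by a sufficiently high power of a prime in the support of $h$. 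Failure of almost freeness is caused precisely by the Frobenius direction: for $a=(0,(1,j))$ with $j\neq0$ and $b=1$ one has $F_n^{a,b}=\ker\!\big(\overline{f_q^{\,j}}-\mathrm{id}\big)$, which always contains $0$ (indeed all of the $q$-component, on which $f_q$ is trivial), hence is nonempty for infinitely many $n$.

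The substance of the proof is core regularity and summable core regularity. Since $S$ is cancellative, $A_n^{a,b}=\varnothing$ whenever $a\neq b$ by Proposition~\ref{prop:r canc: neg q-a forces alpha faithful}, and since $S_c=S^*$ is a group, Remark~\ref{rem:neg q-a for core being a group} reduces the task to estimating $\lvert F_n^{g,1}\rvert$ for $g=(h,(1,j))\in S^*\setminus\{1\}$. Here $F_n^{g,1}$ is the solution set in $G/\Sigma_n(G)$ of $\big(\overline{f_q^{\,j}}-\mathrm{id}\big)(\bar x)=-\bar h$, hence empty or a coset of $\ker\big(\overline{f_q^{\,j}}-\mathrm{id}\big)$; on the $p$-component with $p\neq q$ the operator $\overline{f_q^{\,j}}-\mathrm{id}$ is multiplication by $q^{\,j}-1$, which is invertible modulo all but the finitely many primes dividing $q^{\,j}-1$. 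I would aim for a bound $\lvert F_n^{g,1}\rvert\le C$ valid once $n$ is built from primes outside a fixed finite set $E$, so that Corollary~\ref{cor:suff for q-a} yields $\beta$-summable $(g,1)$-regularity for $\beta\in(1,2]$, together with a separate direct estimate giving $\lvert F_n^{g,1}\rvert/n\to0$ for core regularity.

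I expect the main obstacle to be exactly this uniform control of $F_n^{g,1}$: the failure of $\overline{f_q^{\,j}}-\mathrm{id}$ to be injective on the components at the finitely many bad primes, and on the $q$-component, must be offset by the shift structure, the point being that each $\Sigma_p$ genuinely discards one coordinate of $G_p$, so that as $n$ grows the ``absorption'' inherited from $S^*$ being a group kills the discrepancy between $g$ and $1$ on all but a vanishing proportion of $[s]\in N^{-1}(n)/_\sim$; turning this into a quantitative estimate, and checking it is not destroyed by the Frobenius twist at the bad primes, is the step with no analogue in the right-cancellative examples treated via almost freeness in \cite{ABLS}. Granting core regularity and summable core regularity, Proposition~\ref{prop:uniqueness at 1 if neg q-a} and Proposition~\ref{prop:uniqueness in the crit interval by s q-a} produce the unique KMS$_\beta$-state $\psi_\beta=\psi_{\beta,\tau_0}$ for each $\beta\in[1,2]$, where $\tau_0$ is the canonical trace on $C^*(S^*)$; and $\psi_\beta(v_g)=\delta_{g,1}$ for $g\in S^*$ because right cancellation forces $\psi_{\beta,\tau_0}\circ\varphi=\tau_0$ and $\tau_0(v_g)=\delta_{g,1}$.
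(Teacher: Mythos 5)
Your architecture is the paper's: cancellativity gives $A_n^{a,b}=\emptyset$, the reduction of Remark~\ref{rem:neg q-a for core being a group} brings everything down to pairs $(g,1)$ with $g\in S^*$, and the key computation is the invertibility of $q^{j}-1$ modulo $p$ for all $p$ outside a finite set, after which Corollary~\ref{cor:suff for q-a}, Proposition~\ref{prop:uniqueness at 1 if neg q-a} and Proposition~\ref{prop:uniqueness in the crit interval by s q-a} finish the job. One remark on where you locate the difficulty: the obstacle you flag --- controlling $F_n^{g,1}$ at the finitely many primes dividing $q^{j}-1$ and at $q$ itself --- is not an obstacle. For $g=(h,(1,j))$ with $j\neq 0$ and $n\in\langle\FP\setminus E_{q,j}\rangle^+$, where $E_{q,j}=\{q\}\cup\{p\in\FP : p\mid q^{j}-1\}$, the map $\overline{f_q^{\,j}}-\id$ is invertible on all of $G/\Sigma_n(G)$, so $\lvert F_n^{g,1}\rvert\leq 1$ and Corollary~\ref{cor:suff for q-a} applies with $C=1$ and $E=E_{q,j}$; the bad primes are absorbed by the finite factor $\zeta_{I\cap E}(\beta)$ inside that corollary, and no ``offsetting by the shift structure'' is needed.

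The genuine gap sits where you did not look: at $j=0$. For $g=(h,(1,0))$ with $h\neq 0$ the operator $\overline{f_q^{\,0}}-\id$ is identically zero, so $F_n^{g,1}$ is \emph{all} of $N^{-1}(n)/_\sim$ when $h\in\Sigma_n(G)$ and empty otherwise; writing $d_p=\min\{k : h_{p,k}\neq 0\}$, the first alternative occurs precisely when $p^{d_p}\nmid n$ for every $p$ with $d_p<\infty$, hence for every $n$ coprime to the (finitely many) primes in the support of $h$. Consequently no bound $\lvert F_n^{g,1}\rvert\leq C$ holds on $\langle\FP\setminus E\rangle^+$ for any finite $E$, and a direct computation gives $\zeta_I(\beta)^{-1}\sum_{n\in\langle I\rangle^+}n^{-\beta}\lvert F_n^{g,1}\rvert\to\prod_{p:\,d_p<\infty}\bigl(1-p^{d_p(1-\beta)}\bigr)>0$, so condition \eqref{eq:ab-summable-qa} actually \emph{fails} for such pairs: $S$ is not $\beta$-summably $(g,1)$-regular. (Core regularity survives, because the limit in Definition~\ref{def:neg q-a} is a net limit along divisibility and $F_n^{g,1}=\emptyset$ once $\prod_{d_p<\infty}p^{d_p}$ divides $n$; this also rescues faithfulness via Proposition~\ref{prop:r canc: neg q-a forces alpha faithful} and the KMS$_1$ statement.) Be aware that the paper's own proof has the same lacuna --- it verifies $\lvert F_p^{g}\rvert=1$ only for $g=(g',1)$ and then asserts it ``for all $g\in S^*$'' --- so you cannot close this step by consulting it: the translation pairs need a separate argument for uniqueness on $(1,2]$, or the summable core regularity claim must be weakened, and as written neither your proposal nor the published proof establishes it.
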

\begin{proof}
The proof of Proposition~\ref{prop:ZxZtimes} can be transferred to this setting. Therefore, we restrict ourselves to the analysis of $F_p^g$ for $g \in G\rtimes_{f_q} \Z$ and $p \in \N^\times$. For $p\in \FP$ and $g=(g',1)$, we find that $\alpha_g [(h,p)] = [(h,p)]$ if and only if $g'_{p,1}h_{p,1}^q = h_{p,1}$. As $\Z/p\Z$ is abelian, this is equivalent to $h_{p,1}^{q-1} =g'_{p,1}$. If $q-1$ and $p$ are relatively prime, the map $\tilde{h} \mapsto \tilde{h}^{q-1}$ defines an automorphism of $\Z/p\Z$, so that $h_{p,1}$ and hence $[(h,p)]$ is uniquely determined in this case. As $p \in \FP$, this will be true for almost all $p \in \FP$. More precisely, it fails for the finite set $E_q$ given by $q$ and the primes dividing $q-1$. Then we conclude that $\lvert F_p^g\rvert = 1$ for all $p \in \FP\setminus E_q$ and all $g \in S^*$. Since $\FP\setminus E_q \neq \emptyset$, this shows core regularity. Faithfulness of $\alpha$ now follows from right cancellation and Proposition~\ref{prop:r canc: neg q-a forces alpha faithful}. Summable core regularity follows from Corollary~\ref{cor:suff for q-a}. The claimed uniqueness of the KMS$_\beta$-state and its form for $\beta$ inside the critical interval now follow from Proposition~\ref{prop:uniqueness at 1 if neg q-a} and Proposition~\ref{prop:uniqueness in the crit interval by s q-a}.
\end{proof}

\section*{References}
\begin{biblist}
\bibselect{bib}
\end{biblist}

\end{document}